\newtheorem{theorem}{Theorem}[section]
\newtheorem{proposition}[theorem]{Proposition}
\newtheorem{corollary}[theorem]{Corollary}
\newtheorem{lemma}[theorem]{Lemma}
\newtheorem{conjecture}[theorem]{Conjecture}
\newtheorem{convention}[theorem]{Convention}
\theoremstyle{definition}
\newtheorem{definition}[theorem]{Definition}
\newtheorem{example}[theorem]{Example}
\newtheorem{examples}[theorem]{Examples}
\newtheorem{question}[theorem]{Question}
\newtheorem{problem}[theorem]{Problem}
\newtheorem{remark}[theorem]{Remark}
\numberwithin{equation}{section}
\newcommand{\into}{\hookrightarrow}
\newcommand{\onto}{\twoheadrightarrow}
\newcommand{\Ac}{\mathcal{A}}
\newcommand{\tAc}{\widetilde{\Ac}}
\newcommand{\Cc}{\mathcal{C}}
\newcommand{\Mc}{\mathcal{M}}
\newcommand{\Uc}{\mathcal{U}}
\newcommand{\Oc}{\mathcal{O}}
\newcommand{\Pc}{\mathcal{P}}
\newcommand{\Hc}{\mathcal{H}}
\newcommand{\xdownarrow}[1]{%
  {\left\downarrow\vbox to #1{}\right.\kern-\nulldelimiterspace}
}
\newcommand{\Ab}{\mathbb{A}}
\newcommand{\Cb}{\mathbb{C}}
\newcommand{\C}{\mathbb{C}}
\newcommand{\F}{\mathbb{F}}
\newcommand{\Nb}{\mathbb{N}}
\newcommand{\Pb}{\mathbb{P}}
\newcommand{\Z}{\mathbb{Z}}
\newcommand{\et}{{et}}
\newcommand{\dr}{\dashrightarrow}
\newcommand{\Aut}{Aut}
\newcommand{\Spec}{Spec}
\newcommand{\Gr}{Gr}
\DeclareMathOperator{\PSL}{PSL}
\newcommand{\Sp}{Sp}
\newcommand{\tX}{\widetilde{X}}
\newcommand{\tPc}{\widetilde{\Pc}}
\newcommand{\St}{\mathcal{S}}
\DeclareMathOperator{\RD}{RD}
\DeclareMathOperator{\Bl}{Bl}
\DeclareMathOperator{\ed}{ed}
\DeclareMathOperator{\PGL}{PGL}
\DeclareMathOperator{\Gal}{Gal}
\DeclareMathOperator{\Alg}{Alg}
\DeclareMathOperator{\trdeg}{tr.deg}
\DeclareMathOperator{\Perm}{Perm}
\DeclareMathOperator{\Image}{Image}
\DeclareMathOperator{\Cay}{Cay}
\renewcommand{\P}{\operatorname{P}}
\newcommand{\para}[1]{\medskip\noindent\textbf{#1.}}
\title{Resolvent degree, Hilbert's 13th Problem and geometry}
\author{Benson Farb and Jesse Wolfson
\thanks{The first author was supported in part by National Science Foundation Grant Nos. DMS-1105643 and DMS-1406209, and the Jump Trading Mathlab Research Fund. The second author was supported in part by National Science Foundation Grant No. DMS-1400349.
}}
\begin{document}
\maketitle
\begin{abstract}
We develop the theory of resolvent degree, introduced by Brauer \cite{Br} in order to study the complexity of formulas for roots of polynomials and to give a precise formulation of Hilbert's 13th Problem.  We extend the context of this theory to enumerative problems in algebraic geometry, and consider it as an intrinsic invariant of a finite group.  As one application of this point of view, we prove that Hilbert's 13th Problem, and his Sextic and Octic Conjectures, are equivalent to various enumerative geometry problems, for example problems of finding lines on a smooth cubic surface or bitangents on a smooth planar quartic. 
\end{abstract}

\thispagestyle{empty}
\tableofcontents
\thispagestyle{empty}

\clearpage
\pagenumbering{arabic}

\section{Introduction}
In a never-cited 1975 paper \cite{Br}, Brauer introduced for a field extension $L/K$ an integer-valued invariant $\RD(L/K)$  that we call  {\em resolvent degree}.  Applying $\RD$ to function fields gives an invariant $\RD(Y\dashrightarrow X)$ of rational covers \footnote{See Definition~\ref{definition:ratcover} below.}  (e.g. finite branched covers) of complex algebraic varieties.  The resolvent degree $\RD(\widetilde{{\mathcal P}}_n\to {\mathcal P}_n)$ of the root cover of the universal family ${\mathcal P}_n$ of degree $n$ polynomials has the interpretation:
\[
\begin{array}{ll}
\RD(\widetilde{{\mathcal P}}_n\to {\mathcal P}_n)=&
\text{the least $d$ for which there exists a formula in}\\
&\text{algebraic functions of at most $d$ variables for the}\\
&\text{roots of a polynomial in terms of its coefficients.}
\end{array}
\]

While the formal definition seems to have waited until Brauer, the study of ``reduction of parameters'' for polynomials was initiated by Tschirnhaus \cite{Ts} in 1683.  It was developed and refined by Hamilton, Sylvester, Klein, Hilbert, Segre and others.   As we explain below, $\RD$ allows one to go beyond the solvable/unsolvable dichotomy provided by Galois theory; in particular, it was introduced by Brauer to give a precise formulation of Hilbert's 13th Problem (see below).

In this paper we pick up where Brauer left off.  We extend the scope of $\RD$ from polynomials to classical enumerative problems, placing Hilbert's 13th Problem in a broader context and restoring the geometric perspective pioneered by Klein in his study of quintic equations \cite{Kl2}.  One use of resolvent degree is that it gives a uniform framework for stating and relating disparate classical results.  As an example, we prove (Theorem~\ref{theorem:S6:varieties}) an equivalence of Hilbert's Sextic Conjecture to seven other problems, for example relating it to finding lines on cubic surfaces and finding fixed points for hyperelliptic involutions on genus $2$ curves. We prove similar theorems for Hilbert's 13th problem (Theorem~\ref{theorem:S7:varieties}), and Hilbert's Octic Conjecture (Theorem~\ref{theorem:S8:varieties}).

In \cite{W}, this viewpoint is used to extend a beautiful but little-known trick of Hilbert (who used the existence of lines on a smooth cubic surface to give an upper bound on $\RD(\widetilde{{\mathcal P}}_9\to {\mathcal P}_9)$) to improve the upper bounds on $\RD(\widetilde{{\mathcal P}}_n\to {\mathcal P}_n)$ given by Hamilton, Sylvester, B. Segre, Brauer and others.

\subsection{Resolvent degree}

We start with a problem central to classical (and modern) mathematics.

\begin{problem}
\label{problem:19th1}
Find and understand formulas for the roots of a polynomial
\begin{equation}
\label{eq:poly:general}
P(z)=z^n+a_1z^{n-1}+\cdots +a_n
\end{equation}
in terms of the coefficients $a_1,\ldots ,a_n$.
\end{problem}

It is well known that if $n\geq 5$ then no formula exists using only radicals and arithmetic operations in the coefficients $a_i$.\footnote{This was claimed by Ruffini in 1799; a complete proof was given by Abel in 1824.}  Less known is Bring's 1786 theorem \cite{Bri} that any quintic can be reduced via radicals to a quintic of the form $Q(z)=z^5+az+1$ (see \cite{CHM} for a contemporary translation).  In 1836, Hamilton \cite{Ham} extended Bring's results to higher degrees, showing, for example, that any sextic can be reduced via radicals to $Q(z)=z^6+az^2+bz+1$, making it a $2$-parameter ($a$ and $b$) problem.  He also proved that any degree $7$ polynomial can be reduced via radicals to one of the form
\begin{equation}
\label{eq:septic1}
Q(z)=z^7+az^3+bz^2+cz+1,
\end{equation}
and that any degree $8$ polynomial can be reduced via radicals to one of the form $Q(z)=z^8+az^4+bz^3+cz^2+dz+1$. Hilbert conjectured explicitly that one cannot do better: solving a sextic (resp.\ septic, resp.\ octic) is fundamentally a $2$-parameter (resp.\ $3$-parameter, resp.\ $4$-parameter) problem.    Of course we need to know the exact rules of the game here; that is, we need to give a precise definition of what it means to reduce a problem to $r$ parameters.  Surprisingly, a precise definition was only written down in 1975, by Brauer \cite{Br}, and a year later by Arnol'd-Shimura \cite{AS}, apparently unaware of Brauer's paper.  For motivation, let's look at an example.

Let ${\mathcal P}_n\cong\C^n$ be the space of monic, degree $n$ complex polynomials, and let
$\widetilde{{\mathcal P}}_n$ be the {\em root cover} of $\Pc_n$:
\[\widetilde{{\mathcal P}}_n:=\{(P,\lambda): P(\lambda)=0\}\subset {\mathcal P}_n\times\C.\]
The map $(P,\lambda)\mapsto P$ gives an $n$-sheeted branched cover $\widetilde{{\mathcal P}}_n\to{\mathcal P}_n$, with branch locus precisely the subset of ${\mathcal P}_n$ consisting of polynomials with a repeated root, given by the zero-set of the {\em discriminant} $\Delta_n(a_1,\ldots,a_n)$, a polynomial in the coefficients $a_i$.

Recall that a rational map $f\colon X\dashrightarrow Y$ between irreducible varieties is {\em dominant} if the image of $f$ is Zariski dense in $Y$; it is {\em generically finite} if the generic fiber is finite. For such a map there are Zariski opens $U\subseteq X, V\subseteq Y$ so that the restriction $f:U\to V$ is a finite cover.  

\begin{definition}[{\bf Rational cover}]
\label{definition:ratcover}
Let $X$ and $Y$ be irreducible varieties.\footnote{See Convention~\ref{convention:reducible} for the case of reducible varieties.}   A {\em rational cover} $f: X\dashrightarrow Y$ is a generically finite dominant rational map.
\end{definition}

With this definition in hand, ``solving an arbitrary degree $n$ polynomial by radicals'' means precisely that there is a sequence of rational covers \[X_r\dr\cdots\dr X_0={\mathcal P}_n\]
such that $X_r\dr\Pc_n$ factors through a rational cover 
$X_r\dr \widetilde{{\mathcal P}}_n$, and where each $X_{i+1}\dr X_i$ is birationally a pullback
\begin{equation*}
	\xymatrix{
		X_{i+1} \ar@{-->}[r] \ar@{-->}[d] & \Pb^1 \ar[d] & z \ar@{|->}[d] \\
		X_i \ar@{-->}[r]  & \Pb^1 & z^{d_i}
	}
\end{equation*}
The fact that each cover $X_{i+1}\dr X_i$ is a pullback from $\Pb^1$ reflects the fact that it is specified by $\dim_{\C}\Pb^1=1$ parameter, namely taking a $d_i$-th root, and so ``solving by radicals'' is a process involving only $1$ parameter at a time.  The final map $X_r\dr\widetilde{{\mathcal P}}_n$ is crucial. For example, for Cardano's solution in radicals of the cubic, this map has degree $2$, reflecting the fact that Cardano's formula actually produces $6$ solutions (with multiplicity), not just $3$.    While such towers of radicals exist only for $n\leq 4$, Bring's reduction of quintics mentioned above gives for $n=5$ a tower with each $X_{i+1}\dr X_i$ either a radical, or the pullback of the ``Bring curve'' $\Cc\to\Pb^1$ (see \cite{Green} for a beautiful treatment of this genus 4 curve); in particular we see that solving a general quintic is also a $1$-parameter problem.  More precisely, we have the following.

\begin{definition}[{\bf Resolvent degree}]
\label{definition:rd1}
Let $k$ be a field of characterisitc 0 and let $Y\dr X$ be a rational cover of $k$-varieties. The {\em essential dimension} $\ed_k(Y\dr X)$ is the minimal $d$ so that $Y\dr X$ is the ``rational pullback'' of a rational cover of $d$-dimensional varieties: there exists a rational cover $\widetilde{W}\dr W$ with $\dim(W)=d$, a Zariski open $U\subseteq X$, and a morphism $f:U\to W$ such that $f^*\widetilde{W}\cong Y|_U$.

The {\em resolvent degree} $\RD_k(Y\dr X)$ is the minimal $d$ for which there exists a tower of rational covers 
\begin{equation}
\label{rdtower}
    X_r\dr X_{r-1}\dr \cdots\dr X_1\dr X_0=X
\end{equation}
with $\ed_k(X_i\dr X_{i-1})\le d$ for all $i$ and with a dominant map of $X$-schemes $X_r\dr Y$.

\end{definition}
Definition~\ref{definition:rd1} is equivalent to Brauer's original, purely field-theoretic definition; see \S\ref{subsection:definition} below.  One can easily check \footnote{This is somewhat more clear via Brauer's definition.}  that $\RD(\widetilde{{\mathcal P}}_n\to{\mathcal P}_n)$ is the minimal number of parameters to which one can reduce a general degree $n$ polynomial in order to find a formula for the roots.  In this language, the results mentioned above on reduction of parameters can be restated succinctly as:
\[\RD(\widetilde{{\mathcal P}}_n\to{\mathcal P}_n)=1\ \ \forall n\leq 5, \ \ \ \ \text{and}\ \ \ \ \RD(\widetilde{{\mathcal P}}_n\to{\mathcal P}_n)\leq n-4\ \ \forall n> 5.\]

\begin{remark}
The theory of essential dimension has been developed by Buhler--Reichstein, Merkurjev and others into a 
beautiful and widely applicable theory; see Reichstein's 2010 ICM paper \cite{Re} for a survey.   This disallowing of so-called ``accessory irrationalities'' captures more of the arithmetic of the function field of the base, whereas $\RD$ captures more of the intrinsic complexity of the branched cover.  For the problems we are considering, forcing a solution in a single step does not give the correct measure.  For example, there are finite covers $\widetilde{X}\to X$ that are solvable (hence $\RD(\widetilde{X}\to X)=1$) but with $\ed(\widetilde{X}\to X)$ as large as one wants; and for example $\ed(\widetilde{{\mathcal P}}_4\to {\mathcal P}_4)=\ed(\widetilde{{\mathcal P}}_5\to {\mathcal P}_5)=2$, even though (as mentioned above) it was known by 1786 that these problems reduce to $1$ parameter.
\end{remark}

\subsection{Hilbert's problems}
As already noted by Brauer \cite{Br}, Hilbert's conjecture (explicitly asked by Hilbert in \cite[p.424]{Hi1} and \cite[p.247]{Hi2}) that Hamilton's reduction of parameters for the general polynomial of degree $6,7$, or $8$ is optimal,  can now be stated precisely, as can the problem for all degrees.  Both Klein and Hilbert worked on this general problem for decades (see \cite{Kl1,Hi1,Hi2}).

\begin{problem}[{\bf Klein, Hilbert, Brauer}]
\label{hilbert:problems}
Compute $\RD(\widetilde{{\mathcal P}}_n\to {\mathcal P}_n)$. In particular:

\medskip
\noindent
{\em Hilbert's Sextic Conjecture (\cite{Hi2}, p.247): } $\RD(\widetilde{{\mathcal P}}_6\to {\mathcal P}_6)=2$.

\medskip
\noindent
{\em Hilbert's 13th Problem (\cite{Hi1},p.424): } $\RD(\widetilde{{\mathcal P}}_7\to {\mathcal P}_7)=3$.

\medskip
\noindent
{\em Hilbert's Octic Conjecture (\cite{Hi2}, p.247): } $\RD(\tPc_8\to\Pc_8)=4$.
\end{problem}

Amazingly, no progress has been made on any of these three conjectures since Hilbert stated them. In 1957, Arnol'd and Kolmogorov proved (see \cite{Ar}) that there is no local topological obstruction to reducing the number of variables; however, as Arnol'd and many others have noted, the global problem remains open.  A lot of work has been done on finding upper bounds on $\RD(\widetilde{{\mathcal P}}_n\to {\mathcal P}_n)$.  This includes (in other language) theorems of Tschirnhaus (1683), Bring (1786), Hamilton (1836), Sylvester (1887), Klein (1888), Hilbert (1927), and Segre (1945).

The best general upper bound on $\RD(\widetilde{{\mathcal P}}_n\to {\mathcal P}_n)$, prior to the present, was given by Brauer \cite{Br}.  He proved for $n\geq 4$ that $\RD(\widetilde{{\mathcal P}}_n\to {\mathcal P}_n)\leq n-r$ once $n\geq (r-1)!+1$.  Brauer's method was to systematize the classical method of Tschirnhaus transformations.    In \cite{W}, the point of view developed here is used to give a significant improvement on Brauer's bound.  One of the key ideas is to expand the context of resolvent degree.

\subsection{Expanding the context}

Since Hilbert, resolvent degree has been considered primarily for root covers of polynomials.  However, as Klein first realized \cite{Kl1}, $\RD$ is much more widely applicable. After all, many algebraic problems can be reformulated in terms of a rational cover $(P,s)\mapsto P$ from the space $\widetilde{X}$ of pairs $(P,s)$ of input parameters $P$ and solutions $s$ to the space $X$ of parameters $P$, and
\[
\begin{array}{ll}
\RD(\widetilde{X}\dr X)=&\text{minimal number of parameters of any algebraic}\\
&\text{formula for $s$ in the coefficients of $P$.}
\end{array}\]

As Klein himself realized \cite{Kl71}, this general setup includes not only roots of polynomials  $\widetilde{{\mathcal P}}_n\to {\mathcal P}_n$ (see \S\ref{section:brauer}), but also a second fundamental source of examples, namely incidence varieties (see \S\ref{section:enumerative}).

\para{Incidence varieties}
Problems in enumerative geometry are typically set up with the following data:
\begin{enumerate}
\item a pair of moduli spaces $\Mc, {\mathcal C}$ of algebraic varieties;
\item a subvariety $\widetilde{\mathcal M}\subseteq {\mathcal M}\times{\mathcal C}$, called an {\em incidence variety}, consisting of pairs $(M,C)$ satisfying a given incidence relation; and
\item a rational cover $\pi: \widetilde{\mathcal M}\dr {\mathcal M}$ defined by
$\pi(M,C):=M$.
\end{enumerate}

We restrict to characteristic $0$ throughout this paper. By the definition of a rational cover, for each component ${\mathcal M}_0$ of
${\mathcal M}$ there exists $n\geq 1$ so that $\pi$ is an $n$-sheeted covering space over some Zariski open $U\subseteq{\mathcal M}_0$.  In particular for each $M\in U$ there is a set
$\pi^{-1}(M)=\{C_1,\ldots ,C_n\}$ of $n$ varieties in $\mathcal C$, satisfying the given incidence relation, varying in an algebraic way with
$M$.    Here are some examples.

\begin{examples} Let $\Hc_{d,n}$ denote the moduli space of smooth, degree $d$ hypersurfaces in $\Pb^n$.

\begin{enumerate}
\item 27 lines on a smooth cubic surface:
\[\Hc_{3,3}(1):=\{(S,L): \text{$S$ a smooth cubic surface, $L\subset S$ a line}\}\]
and $\pi:\Hc_{3,3}(1)\to \Hc_{3,3}$ is a $27$-sheeted cover.  See \S\ref{section:cubic:surfaces} for precise definitions.
\item 28 bitangents on a smooth planar quartic:
\[\Hc_{4,2}(1):=\{(C,L): \text{$C\subset\Pb^2$ a smooth quartic, $L\subset\Pb^2$ a line tangent to $C$ at $2$ points}\}\]
and $\pi:\Hc_{4,2}(1)\to \Hc_{4,2}$ is a $28$-sheeted cover.  See \S\ref{section:bitangents} for precise definitions.
\item 3264 conics tangent to $5$ given conics: Let $W$ be the linear system of conics in $\Pb^2$ and $W_0\subset W$ the Zariski open consisting of smooth conics. Then we can define
\[Y:=\{(C_1,\ldots ,C_5,C): \ \text{$C$ is tangent to each $C_i$}\}\in W^5\times W_0\]
and $\pi:Y\to W^5$ is a 3264-sheeted dominant map.
\end{enumerate}
\end{examples}

A first goal of enumerative problems is to find such $\widetilde\Mc\dr \Mc$
and then to compute the degree $n$.   One then wants to find points in $\pi^{-1}(M)$ in terms of the data needed to specify $M$.  ``Find'' can have several meanings.

\begin{example}[{\bf Finding a line on a cubic surface}] Cayley-Salmon proved in 1856 that a smooth cubic surface has $27$ lines.  How hard is it to find such a line? all $27$ lines given one of them?
Let $\Hc_{3,3}(r)$ (resp. \ $\Hc_{3,3}^{\rm skew}(r)$) denote the moduli space of $(r+1)$-tuples
$(S; L_1,\ldots ,L_r)$ where $S\in\Hc_{3,3}$ and $\{L_i\}$ are lines (resp.\ disjoint lines) in $S$; see \S\ref{section:cubic:surfaces} for precise definitions.   Harris \cite{Har} proved \footnote{The  first
statement was known to Camille Jordan.} :

\begin{itemize}
\item The monodromy group of the $27$-sheeted cover $\Hc_{3,3}(1)\to \Hc_{3,3}$ is the Weyl group $W(E_6)$; in particular it is not solvable.  Harris \cite[p. 718]{Har} deduces that ``there does not exist a formula
for the 27 lines of a general cubic surface.''

\item The monodromy group of $\Hc_{3,3}(27)\to \Hc_{3,3}^{\rm skew}(r)$ is solvable for $r=3$ but not for $r<3$. Thus there is a formula in radicals for the $27$ lines, given $3$ disjoint ones, but no fewer.
 \end{itemize}
\end{example}

The question remains: how hard is it to find a line on a smooth cubic surface? or $27$ lines given $1$?  We just saw examples where a formula in radicals does not exist, and
indeed this is typical for enumerative problems; this is the main theme of \cite{Har}.  But, in contrast to Harris's conclusion, algebraic formulas not-in-radicals do exist, and indeed have been an object of study since the 17th century.  Resolvent degree allows us to move beyond the solvable/unsolvable dichotomy to give a quantitative measure of the possible complexity of such formulas. In particular it allows us to ask:  what is $\RD(\Hc_{3,3}(r)\to \Hc_{3,3}(s))$?  Here is a simple but illustrative example.

\begin{example}
\label{example:lines1}
$\RD(\Hc_{3,3}(27)\to \Hc_{3,3}(1))\leq \RD(\widetilde{{\mathcal P}}_5\to {\mathcal P}_5)=1$.
\end{example}

Example \ref{example:lines1} follows from a beautiful classical trick: given a line $L$ on a smooth cubic surface $S$, each plane in the pencil containing $L$ intersects $S$ in $L$ union a conic, and this conic degenerates into a union of two lines at the roots of the discriminant $\Delta_L$ of this pencil of conics.  $\Delta_L$ is a one-variable polynomial of degree $5$, which by Bring \cite{Bri} has $\RD=1$.   One then gets $5$ pairs of distinct lines on $S$, and gets the other $16$ via radicals, by Harris's theorem.

\begin{conjecture}[{\bf The line-finding conjecture}]
\label{conjecture:lines2}
\
\[\RD(\Hc_{3,3}(27)\to \Hc_{3,3})=\RD(\Hc_{3,3}(1)\to \Hc_{3,3})=3.\]
\end{conjecture}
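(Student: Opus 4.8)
The plan is to reduce the displayed triple identity to the single computation $\RD(\Hc_{3,3}(1)\to\Hc_{3,3})=3$, to obtain the upper bound $\leq 3$ by transporting the problem to Hamilton's reduction of the septic, and to isolate the lower bound $\geq 3$ as the real difficulty.

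\smallskip\noindent\textbf{Step 1 (the outer equality).} First show $\RD(\Hc_{3,3}(27)\to\Hc_{3,3})=\RD(\Hc_{3,3}(1)\to\Hc_{3,3})$. For ``$\geq$'', the forgetful rational map $\Hc_{3,3}(27)\dr\Hc_{3,3}(1)$, $(S;L_1,\ldots,L_{27})\mapsto(S;L_1)$, is a dominant map of $\Hc_{3,3}$-schemes, so any tower of rational covers resolving $\Hc_{3,3}(27)\to\Hc_{3,3}$ also resolves $\Hc_{3,3}(1)\to\Hc_{3,3}$. For ``$\leq$'', concatenate any tower resolving $\Hc_{3,3}(1)\to\Hc_{3,3}$ with the pullback of the tower of Example~\ref{example:lines1}, which realizes $\RD(\Hc_{3,3}(27)\to\Hc_{3,3}(1))\leq 1$; since $\Hc_{3,3}(1)\to\Hc_{3,3}$ is an irreducible cover of degree $27>1$ (Harris: the monodromy $W(E_6)$ is transitive) its resolvent degree is already $\geq 1$, so this concatenation has essential dimension $\leq\RD(\Hc_{3,3}(1)\to\Hc_{3,3})$ at each step. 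This step is routine; it reduces the conjecture to proving $\RD(\Hc_{3,3}(1)\to\Hc_{3,3})=3$.

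\smallskip\noindent\textbf{Step 2 (the upper bound $\leq 3$).} I would prove $\RD(\Hc_{3,3}(1)\to\Hc_{3,3})\leq\RD(\tPc_7\to\Pc_7)$ --- this comparison with the septic being carried out in Theorem~\ref{theorem:S7:varieties} --- which, with Hamilton's 1836 reduction of the general septic to the three-parameter family $z^7+az^3+bz^2+cz+1$, gives $\leq 3$. Concretely: realize $\Hc_{3,3}$ birationally as six unordered points in general position in $\Pb^2$ modulo $\PGL_3$ (equivalently, work with the $W(E_6)$-monodromy of Harris), and resolve the marking by a tower of reductions, each of essential dimension $\leq 3$ --- first present $S$ as a blow-up $S\cong\Bl_{p_1,\ldots,p_6}\Pb^2$ at six points (one of $72$ such presentations), then apply Tschirnhaus-type normalizations to that six-point configuration --- until the $27$ lines become a pullback from the root cover of a degree-$7$ polynomial. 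The delicate point is to carry out each reduction at cost $\leq 3$, driving the resolvent chain down to $S_7$ exactly rather than to $S_8$ or $W(E_7)$: these govern the octic (conjecturally of resolvent degree $4$), and the naive reductions overshoot by a parameter. Thus Steps 1--2 already yield $\RD(\Hc_{3,3}(27)\to\Hc_{3,3})=\RD(\Hc_{3,3}(1)\to\Hc_{3,3})\leq 3$ unconditionally.

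\smallskip\noindent\textbf{Step 3 (the lower bound, and the main obstacle).} It remains to prove $\RD(\Hc_{3,3}(1)\to\Hc_{3,3})\geq 3$, which is the real content of the conjecture. Since Step 2 gives $\RD(\Hc_{3,3}(1)\to\Hc_{3,3})\leq\RD(\tPc_7\to\Pc_7)$, any such lower bound would in particular prove $\RD(\tPc_7\to\Pc_7)\geq 3$, i.e.\ Hilbert's 13th Problem --- so the task is at least as hard as Hilbert's 13th. One expects it to be exactly equivalent, via a reverse inequality $\RD(\Hc_{3,3}(1)\to\Hc_{3,3})\geq\RD(\tPc_7\to\Pc_7)$; but that direction is itself delicate, since there is no subgroup relation between $W(E_6)$ and $S_7$, so it cannot be effected by a single inclusion of fields and must instead be obtained by re-engineering entire resolvent towers. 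As for Hilbert's 13th itself: there is no local topological obstruction (Arnol'd--Kolmogorov), and no birational or cohomological invariant is presently known that is at once insensitive to pullback along rational covers of dimension $\leq 2$ and nonzero on the $W(E_6)$-cover $\Hc_{3,3}(1)\to\Hc_{3,3}$. This lower bound is thus the main obstacle, and it is why the statement is recorded as a conjecture rather than a theorem.
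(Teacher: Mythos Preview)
Your Step~1 is fine, though the paper handles the outer equality in one stroke via Lemma~\ref{lemma:galois}: $\Hc_{3,3}(27)\to\Hc_{3,3}$ is the Galois closure of $\Hc_{3,3}(1)\to\Hc_{3,3}$, so their resolvent degrees coincide.

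Step~2 contains a genuine gap. You assert $\RD(\Hc_{3,3}(1)\to\Hc_{3,3})\le\RD(\tPc_7\to\Pc_7)$ and attribute this to Theorem~\ref{theorem:S7:varieties}, but that theorem is purely about $S_7$-varieties and makes no statement about $W(E_6)$-covers. Nor can such an inequality come from a subgroup relation: $7$ divides $|S_7|$ but not $|W(E_6)|=51840$, so $S_7\not\subset W(E_6)$; and $|W(E_6)|>|S_7|$ rules out $W(E_6)\subset S_7$ (as you yourself note in Step~3). Your sketched ``Tschirnhaus-type normalizations'' driving a configuration of six points in $\Pb^2$ to a degree-$7$ root cover is not an actual construction. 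The paper's proof of the upper bound (Theorem~\ref{thm:RD:lines}) proceeds along an entirely different line, due to Klein and Burkhardt: one reduces to $\RD(W(E_6)^+)$ via Theorem~\ref{thm:JH}, then exploits the faithful projective action of $W(E_6)^+\cong\Sp_4(\F_3)/\{\pm1\}$ on $\Pb^3$, using Merkurjev's theorem on $2$-torsion in the Brauer group to show that $\Pb^3$ is solvably versal for $W(E_6)^+$, whence $\RD(W(E_6)^+)\le\dim\Pb^3=3$.

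Step~3 is right that the lower bound is the open content of the conjecture. But because the inequality in Step~2 is not established, your claim that the line-finding conjecture would \emph{imply} Hilbert's 13th Problem is itself unjustified: the two conjectures assert $\RD(W(E_6))=3$ and $\RD(S_7)=3$ respectively, and with no containment either way between $W(E_6)$ and $S_7$, neither is currently known to imply the other.
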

The upper bound of 3 comes from work of Klein and Burkhardt \cite{Kl1,Bur}. We give a concise proof in Theorem~\ref{thm:RD:lines} below.

In \S\ref{section:enumerative} we will see how theorems from classical geometry can be used to relate the resolvent degrees of different problems.  For example,  we use the result described in Figure \ref{figure:cubic:to:quartic} to prove the following.

\begin{theorem}
Any minimal algebraic formula for the $27$ lines on a smooth cubic surface (in terms of its coefficients) has the same number of parameters as any minimal algebraic formula for the $28$ bitangents on a plane quartic curve,  given one of them:
\[\RD(\Hc_{3,3}(27)\to\Hc_{3,3})= \RD(\Hc_{4,2}(28)\to \Hc_{4,2}(1)).\]
\end{theorem}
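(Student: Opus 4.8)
The plan is to realise the classical correspondence of Figure~\ref{figure:cubic:to:quartic} as a birational identification of rational covers and then to read off the equality of resolvent degrees from the formal properties of $\RD$. The geometric content is this: if $p$ is a point of a smooth cubic surface $S$ lying on none of its $27$ lines, then $X=\Bl_p(S)$ is a del Pezzo surface of degree $2$, and the anticanonical system $|-K_X|$ --- the pencil of plane sections of $S\subset\Pb^3$ through $p$, with the exceptional divisor $E_p$ subtracted --- realises $X$ as a double cover $\pi\colon X\to\Pb^2$ branched over a smooth plane quartic $C$, whose deck transformation is the Geiser involution $\sigma$. Among the $56$ lines of $X$ one finds the strict transforms of the $27$ lines of $S$, the strict transforms of the $27$ conics on $S$ through $p$ (one from each pencil of conics residual to a line of $S$), the curve $E_p$, and $\sigma(E_p)$; the involution $\sigma$ partitions the $56$ lines into $28$ pairs, each mapping under $\pi$ onto a bitangent of $C$, and the pair $\{E_p,\sigma(E_p)\}$ maps to a distinguished bitangent $\beta$. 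Thus $(S,p)\mapsto(C,\beta)$ is a dominant rational map.

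Two observations convert this into a statement about covers. First, the $29$ lines of $X$ other than the strict transforms of the $27$ lines of $S$ are rational functions of $(S,p)$ and those $27$ lines: $E_p$ depends only on $p$; the conic through $p$ residual to a line $L$ is cut on $S$ by the plane spanned by $p$ and $L$; and the last line is $\sigma(E_p)$. Hence over the space of cubic surfaces equipped with a point and all $27$ lines, the quartic $C$ and all $28$ of its bitangents are rational functions of the data, and symmetrically over the space of quartics equipped with a bitangent and all $28$ bitangents one recovers $(S,p)$ and the $27$ lines, the only ambiguity being the choice of $E_p$ versus $\sigma(E_p)$. Second, marking $E_p$ rather than merely the pair $\{E_p,\sigma(E_p)\}$ singles out, in each of the other $27$ bitangent-pairs, the unique member disjoint from $E_p$, which blows down to a line of $S$; this produces a bijection, compatible with algebraic families, between the $27$ bitangents of $C$ distinct from $\beta$ and the $27$ lines of $S$. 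Consequently the natural map from pointed cubic surfaces to $\Hc_{4,2}(1)$ is an irreducible rational cover of degree $2$, hence of resolvent degree $1$.

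With the dictionary in hand, the comparison is a matter of the $\RD$-calculus developed earlier. The second observation exhibits the pullback of $\Hc_{4,2}(28)\to\Hc_{4,2}(1)$ along the degree-$2$ cover of pointed cubic surfaces as (the pullback of) the $27$-line cover; since $\RD$ is monotone under pullback, the resolvent degree of a composite of rational covers is at most the maximum of those of its factors, and a nontrivial cover has resolvent degree $\ge 1$ (so the degree-$2$ step is absorbed), one obtains
\[
\RD(\Hc_{4,2}(28)\to\Hc_{4,2}(1))=\RD\bigl(\text{the $27$-line cover over pointed cubic surfaces}\bigr).
\]
One then invokes the properties of $\RD$ from the earlier sections to identify the right-hand side with $\RD(\Hc_{3,3}(27)\to\Hc_{3,3})$: the inequality $\le$ is again pullback-monotonicity, and the reverse inequality --- that adjoining a general point of $S$ cannot lower the resolvent degree of the $27$-line cover --- follows because that cover remains versal for its monodromy group after base change along the dominant map to $\Hc_{3,3}$.

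The step that requires real care is the second paragraph: promoting the set-theoretic dictionary --- which on Picard lattices is nothing more than the inclusion of the $W(E_6)$-action inside the $W(E_7)$-action as the stabiliser of a line --- to an honest birational isomorphism of rational covers. One must verify that the residual conics through $p$, the Geiser conjugation, the selection of the member of a bitangent-pair disjoint from $E_p$, and the reconstruction of $(S,p)$ from $(C,\beta)$ are all given by rational maps that vary algebraically in families and are generically of the stated degree, and one must keep careful track of which structures are marked on each side so that the two moduli spaces are matched correctly up to the unavoidable $\Zb/2$ coming from $\sigma$; the analogous care is needed for the versality statement that underlies the passage to pointed cubic surfaces. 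Granting these, the resolvent-degree bookkeeping above is routine.
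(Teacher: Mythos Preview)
Your proposal is correct and closely parallels the paper's \emph{second}, explicit proof (given after Theorem~\ref{theorem:quartics-to-cubics} under ``The classical construction'' and ``Modular interpretation''): you use the double cover $\Bl_p(S)\to\Pb^2$ to exhibit a generically $2$-to-$1$ map $\Uc_{3,3}(27)\dr\Hc_{4,2}(28)$, recognise the resulting pullback square with $G=W(E_6)$, and then compare $\Uc_{3,3}(27)\to\Uc_{3,3}$ with $\Hc_{3,3}(27)\to\Hc_{3,3}$.

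The paper's \emph{primary} proof, however, takes a shorter and more uniform route that bypasses the pointed moduli entirely. It shows (Propositions~\ref{prop:linesversal} and~\ref{prop:bitangentsversal}) that both $\Hc_{3,3}(27)$ and $\Hc_{4,2}(28)$ are versal $G$-varieties for every $G\subset W(E_6)\subset W(E_7)^+$; since the stabiliser of a bitangent is $W(E_6)$, one has $\Hc_{4,2}(1)=\Hc_{4,2}(28)/W(E_6)$, and Proposition~\ref{prop:whyversal} immediately gives
\[
\RD(\Hc_{3,3}(27)\to\Hc_{3,3})=\RD(W(E_6))=\RD(\Hc_{4,2}(28)\to\Hc_{4,2}(1)).
\]
This buys uniformity: the same argument yields $\RD(\Hc_{3,3}(27)\to\Hc_{3,3}(27)/G)=\RD(\Hc_{4,2}(28)\to\Hc_{4,2}(28)/G)$ for \emph{every} $G\subset W(E_6)$ at once, with no explicit geometry required. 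Your approach, by contrast, makes the classical dictionary fully visible. One small point: your final step, passing from $\Uc_{3,3}$ back to $\Hc_{3,3}$, is justified in the paper not by an appeal to versality (which would be circular if one has not yet established versality of $\Uc_{3,3}(27)$) but by an explicit Bezout argument---intersecting the cubic with a fixed line in $\Pb^3$ gives a degree-$3$ section $\widetilde{U}_L\to\Hc_{3,3}$, so the passage costs at most $\RD(S_3)=1$. Your versality justification does work, but only because the paper has already shown $\Cc_7(\Pb^2)\simeq\Uc_{3,3}(27)$ is versal via the Cartan subalgebra map; you should make that dependence explicit.
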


\begin{figure}
\includegraphics[scale=0.65]{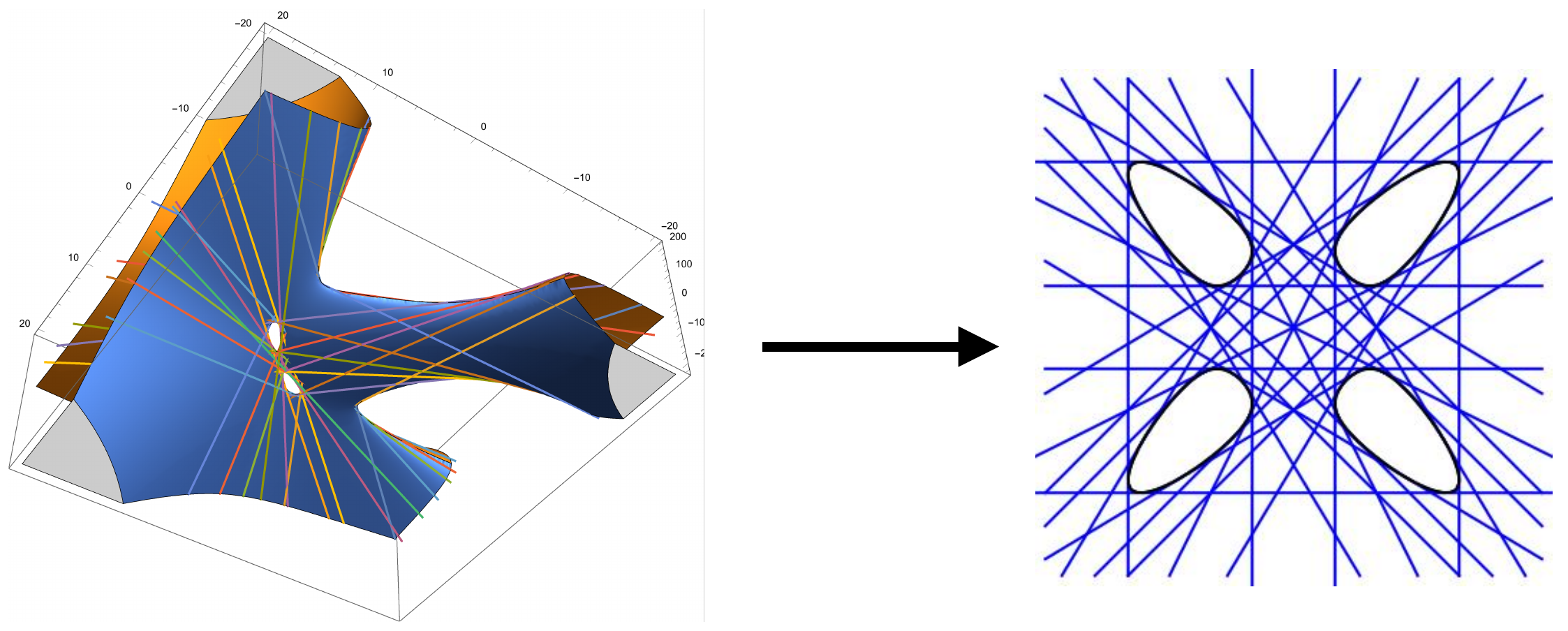}
\caption{\footnotesize The projection $\pi:\Bl_p(S)\rightarrow\Pb^2$
of the blowup at a point $p$ of a smooth cubic surface
$S$  is a $2$-sheeted branched cover, branched over a smooth
plane quartic $C$.  The branching locus in $S$ is the inner rim of each of the four holes in $S$, two of which go off to infinity in the left-hand picture.  The image $\pi(C)$ of each of the $27$ lines in $S$ is a bitangent of $C$.  Here we see (the real points of) a branched cover given by projection to the plane of the paper.  The left part of the figure is taken from \cite{SS}; the right from \cite{PSV}.}
\label{figure:cubic:to:quartic}
\end{figure}

We discuss in depth lines on smooth cubic surfaces and bitangents on smooth plane quartics in
\S\ref{section:cubic:surfaces}  and \S\ref{section:bitangents}, respectively.   We focus on these examples because of their richness and their close relationship to Hilbert's problems (see below).
In \S\ref{section:enumerative} we discuss $\RD$ of some other enumerative problems.  It is our hope that others will work out the resolvent degree story for these problems (and many more).

\begin{remark}[{\bf Explicit formulas}]
Part of the usefulness of the Galois criterion for solvability in radicals is that one can prove it without finding such a formula explicitly.  Similarly, one can give an upper bound for the resolvent degree of a problem without finding an explicit formula.  At the same time, the answers given by non-explicit methods can sometimes help indicate where to look for explicit formulas.
\end{remark}

\subsection{The scope of Hilbert's problems}

As with many of Hilbert's problems, the 13th Problem and the Sextic and Octic Conjectures are meant to
indicate a fundamental phenomenon whose understanding should have implications far beyond the original problem.  Hilbert was clearly interested in, and worked on (see, e.g.,  \cite{Hi1,Hi2}), the general problem of determining $\RD(\tPc_n\to\Pc_n)$, the cases $n=6$, $n=7$, and $n=8$ being the first open cases.    In \S\ref{section:equivalence} we prove the equivalence of the Sextic Conjecture with seven  other statements, the equivalence of Hilbert's 13th Problem with four other statements, and the equivalence of the Octic Conjecture with six other statements.

The point is both to exhibit how rich these problems are, and also to recast them in ways that may be more amenable to solution.   As a sample, here is an abridged version of Theorem~\ref{theorem:S6:varieties} below; for definitions see \S\ref{section:equivalence}.

\begin{theorem}[{\bf The geometry in Hilbert's Sextic Conjecture}]\label{thm:H6intro}
The following statements are equivalent:

\begin{enumerate}
\item Hilbert's Sextic Conjecture is true: $\RD(\tPc_6\to\Pc_6)=2$.
\item $\RD=2$ for the problem of finding the $27$ lines on a cubic, given a ``double six'' set of lines (unordered) (see \S\ref{subsection:h33covers} and Figure~\ref{figure:double-six}):
\[\RD(\Hc_{3,3}(27)\to \Hc_{3,3}(6,6))=2.\]
\end{enumerate}

In fact, the resolvent degrees of the above problems coincide.
\end{theorem}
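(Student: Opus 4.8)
\para{Proof proposal}
The plan is to factor $\Hc_{3,3}(27)\dr\Hc_{3,3}(6,6)$ through the double cover that chooses one of the two sixes of a double-six, to recognize the two steps as an $S_6$-torsor and a degree-two cover, and then to establish the two inequalities between $\RD\bigl(\Hc_{3,3}(27)\dr\Hc_{3,3}(6,6)\bigr)$ and $\RD(\tPc_6\to\Pc_6)$. First I recall the combinatorics of the $27$ lines: the monodromy of $\Hc_{3,3}(1)\to\Hc_{3,3}$ is $W(E_6)$ (Harris \cite{Har}), it is transitive on the $36$ double-sixes, and the stabilizer of an unordered double-six is $\cong S_6\times\Z/2$ --- labeling the two sixes $\{a_i\}$, $\{b_i\}$ and the remaining lines $\{c_{ij}\}$ (with $c_{ij}$ incident to $a_i,a_j,b_i,b_j$), the $S_6$ permutes the indices and the $\Z/2$ swaps $a_i\leftrightarrow b_i$. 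Hence $\Hc_{3,3}(27)\to\Hc_{3,3}(6,6)$ has monodromy $S_6\times\Z/2$. Let $\Hc'\to\Hc_{3,3}(6,6)$ be the degree-two cover recording a choice of one of the two sixes; since a six determines its partner (Schläfli: $b_i$ is the unique line meeting $a_j$ for all $j\ne i$), $\Hc'$ is the moduli of a cubic surface with a six, and $\Hc_{3,3}(27)\to\Hc'$ is the $S_6$-torsor ordering the chosen six. Here I use the classical incidence formulas by which an \emph{ordered} six on $S$ produces the other $21$ lines rationally; in particular $\Hc_{3,3}(27)$ is birational over $\Hc_{3,3}$ to the (irreducible) cover parametrizing an ordered six, and I work with the latter throughout, handling the reducibility of $\Hc_{3,3}(27)$ via Convention~\ref{convention:reducible}.

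Next, the upper bound, along the tower $\Hc_{3,3}(27)\to\Hc'\to\Hc_{3,3}(6,6)$. The bottom step is a double cover, so $\ed\le 1$. For the top step: it is an $S_6$-torsor, so by the primitive element theorem its associated degree-$6$ cover $\Hc_{3,3}(27)/S_5\to\Hc'$ is a rational pullback of $\tPc_6\to\Pc_6$ along a classifying map $\Hc'\dr\Pc_6$; since the monodromy is all of $S_6$ the Galois closures agree, so $\Hc_{3,3}(27)\to\Hc'$ is a rational pullback of the ordered-root cover $\widehat{\Pc}_6\to\Pc_6$, giving $\RD(\Hc_{3,3}(27)\to\Hc')\le\RD(\widehat{\Pc}_6\to\Pc_6)$. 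Moreover $\RD(\widehat{\Pc}_6\to\Pc_6)=\RD(\tPc_6\to\Pc_6)$: one factors through $\tPc_6\to\Pc_6$ for one inequality, and for the other recovers all six ordered roots by peeling them off one at a time, each later step a cover of degree $\le 5$ and hence of resolvent degree $1$. Concatenating a resolving tower for $\Hc_{3,3}(27)\to\Hc'$ with the single step $\Hc'\to\Hc_{3,3}(6,6)$, and using $\RD(\tPc_6\to\Pc_6)\ge 1$, yields $\RD(\Hc_{3,3}(27)\to\Hc_{3,3}(6,6))\le\RD(\tPc_6\to\Pc_6)$.

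For the lower bound I embed the polynomial problem into the cubic-surface problem. Fix a birational parametrization $\phi\colon\Pb^1\dr C$ of an irreducible plane cubic curve $C\subset\Pb^2$. For $P$ in a dense open $U\subseteq\Pc_6$ the six points $\{\phi(\lambda):P(\lambda)=0\}$ are distinct and in general position --- no three collinear and no conic through all six, since a line meets $C$ in $3$ points and a conic in $6$ --- so $S_P:=\Bl_{\{\phi(\lambda)\}}\Pb^2$ is a smooth cubic surface with distinguished six $\{E_\lambda\}$. As $\{\phi(\lambda)\}$ depends only on $P$, this defines $U\dr\Hc'$, and the pullback of $\Hc_{3,3}(27)\to\Hc'$ along it has fiber over $P$ the set of orderings of $\{E_\lambda\}$, i.e.\ (as $\phi$ is generically injective on root sets) of the roots of $P$; that is, the pullback is $\widehat{\Pc}_6|_U$. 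Hence $\RD(\tPc_6\to\Pc_6)=\RD(\widehat{\Pc}_6\to\Pc_6)\le\RD(\Hc_{3,3}(27)\to\Hc')$. Finally $\Hc_{3,3}(27)\to\Hc_{3,3}(6,6)$ factors through $\Hc'$, and pulling a resolving tower for $\Hc_{3,3}(27)\to\Hc_{3,3}(6,6)$ back along $\Hc'\to\Hc_{3,3}(6,6)$ produces one for $\Hc_{3,3}(27)\to\Hc'$ with the same bound, so $\RD(\Hc_{3,3}(27)\to\Hc')\le\RD(\Hc_{3,3}(27)\to\Hc_{3,3}(6,6))$. Combining with the upper bound gives $\RD(\Hc_{3,3}(27)\to\Hc_{3,3}(6,6))=\RD(\tPc_6\to\Pc_6)$, and in particular (1)$\Leftrightarrow$(2).

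I expect the main obstacle to be the lower-bound construction. One must verify that for generic $P$ the six points $\phi(\lambda)$ are genuinely in general position, and --- more delicately --- that the monodromy of the resulting degree-$6$ cover of $\Pc_6$ is all of $S_6$ and agrees with the Galois action on the roots, so that the pullback really is the full ordered-root cover rather than a cover with smaller monodromy; this is what prevents using a line or a conic in place of the cubic $C$ (six points on those are never in general position), and is the step most in need of care. The remaining ingredients --- the incidence formulas recovering $21$ lines from an ordered six, the basic monotonicity and pullback properties of $\RD$, and the bookkeeping around the components of $\Hc_{3,3}(27)$ --- are routine given the framework already set up.
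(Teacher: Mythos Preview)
Your argument is correct and takes a genuinely different, more hands-on route than the paper. The paper proceeds by developing the invariant $\RD(G)$ for finite groups, proving that $\Hc_{3,3}(27)$ is a \emph{versal} $G$-variety for every $G\subset W(E_6)$ (Proposition~\ref{prop:linesversal}), and then invoking Proposition~\ref{prop:whyversal} to conclude $\RD(\Hc_{3,3}(27)\to\Hc_{3,3}(27)/G)=\RD(G)$ for $G=S_6\times\Z/2$; finally Theorem~\ref{thm:JH} and Corollary~\ref{cor:Hfaithandn1} give $\RD(S_6\times\Z/2)=\RD(S_6)=\RD(\tPc_6\to\Pc_6)$. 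You instead bypass the versality machinery entirely: your upper bound is just Lemma~\ref{lemma:universal} plus Lemma~\ref{lemma:max}, and your lower bound is an explicit pullback construction. Interestingly, your key geometric input---sending the roots of $P$ to six points on a fixed rational plane cubic and blowing up---is exactly the construction underlying the paper's proof of versality (see the map $\Ab(\mathfrak{h})\dr\Cc_6'(\Pb^2)$ in the proof of Proposition~\ref{prop:linesversal}, which comes from \cite[Lemma~6.1]{DuRe2} and, for $E_7$, is spelled out via a cuspidal cubic in the proof of Proposition~\ref{prop:bitangentsversal}). So the two proofs share the same geometric kernel; the paper packages it as ``$\Hc_{3,3}(27)$ is versal,'' which then yields all of Theorem~\ref{theorem:S6:varieties} at once, while you extract just what is needed for this one equivalence.

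Two small corrections. First, your worry in the final paragraph is slightly misdirected: the point that needs checking is not the monodromy of the pulled-back cover (once you have identified the pullback with $\widehat{\Pc}_6|_U$ fibrewise, it is automatically the connected $S_6$-cover), but rather the \emph{dominance} of $U\dr\Hc'$, since Lemma~\ref{lemma:first}(3) is stated for dominant maps. This does hold when $C$ is singular (cuspidal or nodal): through six general points of $\Pb^2$ the $\Pb^3$ of cubics meets the codimension-two locus of cuspidal cubics, and all cuspidal cubics are $\PGL_3$-equivalent, so a generic unordered six in $\Hc'$ arises from six points on $C$. Second, $\Hc_{3,3}(27)$ is in fact irreducible (it is isomorphic to $\Cc_6'(\Pb^2)$, see \S\ref{subsection:moduli:p2}), so the remark about handling its reducibility is unnecessary.
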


%Note that Hilbert's Sextic Conjecture is {\it a priori} just a special case of Item $2$ of Theorem~\ref{thm:H6intro}, with $V$ being the standard $6$-dimensional permutation representation of $S_6$.   
For further equivalences, as well as for problems about $G$-varieties with $G=W(E_6),S_7, S_8$ or  $W(E_7)$, see
\S\ref{section:equivalence}.

\medskip
Our approach to proving Theorem~\ref{thm:H6intro} (and the versions for other $G$) is to define $\RD$ as an intrinsic invariant of a finite group, in this case $S_6$ and $S_2\times S_6$ respectively.  We do this in \S\ref{section:RD:group}.  We then show that each of the specific covers in the theorem realizes the resolvent degree of their Galois group. Finally, we show that if a group contains as subgroups all the simple factors in its Jordan-holder decomposition, then its resolvent degree is the maximum of these simple factors (Theorem \ref{thm:JH}).  From a classical perspective, a $G$-variety $X$ gives an algebraic function expressing $X$ in terms of coordinates on $X/G$.   The proof of Theorem~\ref{thm:H6intro} proceeds by showing that $\RD(G)=\RD(X\to X/G)$ when $X$ is a ``{\em versal}'' $G$-variety, for an appropriate notion of ``versal'', and then to prove the versality of the varieties listed above.  What ``versality'' means, in this context, is that, up to accessory irrationalities, all $G$-varieties are birationally pullbacks of any versal one.  See \S\ref{subsection:versal} for details.  We give a similar treatment for Hilbert's 13th Problem and $S_7$, Hilbert's Octic Conjecture and $S_8$, as well as for various $W(E_6)$ and $W(E_7)$-varieties. For a more detailed treatment of versality in connection with modular functions, see \cite{FKW}. 

\begin{figure}
\begin{center}
\includegraphics[scale=0.4]{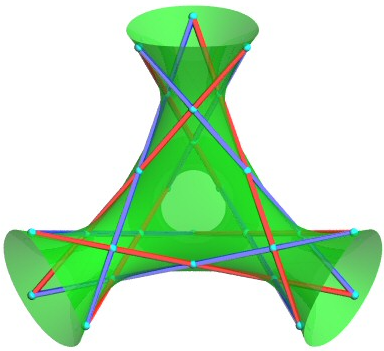}
\end{center}
\caption{\footnotesize A double-six of lines on (the real points of) a smooth cubic surface.  The intersection pattern is given in \eqref{equation:double-six}, with the $a_i$ colored blue and $b_j$ colored red.  One can ask for a formula for the other $15$ lines on a smooth cubic given a double-six.  The resolvent degree of this problem is $2$ if and only if Hilbert's Sextic Conjecture is true. Figure taken
from https://www.mathcurve.com/surfaces.gb/clebsch/doublesix.shtml.}
\label{figure:double-six}
\end{figure}

%Versal $G$-varieties should be geometrically natural, and indeed this is so for many of the examples we study.  We prove that a versal $G$-variety is a versal $H$-variety for any subgroup $H\subseteq G$.  This indicates that for a strongly versal $G$-variety $Z$, there should be a bijective correspondence between subgroups of $G$ and geometrically natural problems about $H$.  This idea was likely assumed by Klein and his school.  We discuss this in more detail below.
%
%From a classical perspective, this method shows that, for each group $G$, there exists a collection of ``versal'' algebraic functions such that every $G$-cover can be solved using these functions.  While the points of any $G$-cover can be expressed by a versal function of the coordinates of its base, these functions are not in general unique, and the problem of resolvent degree is to find a versal function of minimal complexity. We prove the above theorem by showing that each of the algebraic functions listed in the theorem is versal for $S_6$ (and in fact for any subgroup of $S_6$). We give a similar treatment for Hilbert's 13th and for the $W(E_6)$ and $\Sp_6(\F_2)$-varieties we consider.
%

\subsection{Lower bounds}
\label{section:lower}
Theorems on resolvent degree to date have exclusively concerned providing upper bounds. As Dixmier concludes in his 1993 paper \cite{Di} (using `$s(n)$' for $\RD(\tPc_n\to\Pc_n)$):
\begin{quote}
{\it Terminons sur une note dramatique, qui prouve notre incroyable ignorance. Bien que cela paraisse improbable, il n'est pas exclu que $s(n) =1$ pour tout $n$!  $\ldots$  Toute minoration de $s(n)$ serait un progr\`{e}s s\'{e}rieux. En particulier, il serait temps de savoir si $s(6)=1$ ou $s(6) = 2$.''}\  \footnote{In English: ``Let's end on a dramatic note, which proves our incredible ignorance. Although this seems unlikely, it is not excluded that $s(n)=1$ for all $n$!  $\ldots$ Any lower bound for $s(n)$ would be serious progress.
In particular, it's time that we know whether $s(6)=1$ or $s(6)=2$.''}
\end{quote}

In fact, we still cannot solve the following problem, implicit in Klein, Hilbert and Brauer, and stated more explicitly by Arnol'd-Shimura \cite{AS}.

\begin{problem}[{\bf Arnol'd-Shimura}]
\label{problem:main5}
Prove that there exists $\widetilde{X}\dr X$ with \[\RD(\widetilde{X}\dr X)>1.\]
\end{problem}

In fact, we believe that the following stronger statement should hold.

\begin{conjecture}
\label{conjecture:main2}
$\RD(\widetilde{{\mathcal P}}_n\to {\mathcal P}_n)\to\infty$ as $n\to \infty$.
\end{conjecture}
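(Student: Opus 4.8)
The plan is to reduce the conjecture to a statement about the resolvent degrees of the alternating groups, and then to look for a lower‑bound invariant for resolvent degree that is insensitive to accessory irrationalities. \emph{Step 1: reduction to alternating groups.} The root cover $\tPc_n\to\Pc_n$ is the quotient of a versal $S_n$‑variety, so $\RD(\tPc_n\to\Pc_n)=\RD(S_n)$. For $n\ge 5$ the Jordan--H\"{o}lder factors of $S_n$ are $A_n$ and $\Z/2$, each of which occurs as a subgroup of $S_n$; hence Theorem~\ref{thm:JH} gives
\[
\RD(\tPc_n\to\Pc_n)=\RD(S_n)=\max\big(\RD(A_n),\,\RD(\Z/2)\big)=\RD(A_n),
\]
using that a double cover is a rational pullback of $z\mapsto z^2$, so $\RD(\Z/2)=1$. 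Thus Conjecture~\ref{conjecture:main2} is equivalent to the statement that the resolvent degrees $\RD(A_n)$ of the family $\{A_n\}$ of finite simple groups tend to $\infty$.

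\emph{Step 2: what must be produced.} It would suffice to construct a function $\beta$ from rational covers of $\C$‑varieties to $\Z_{\ge 0}\cup\{\infty\}$ with: (i) $\beta$ depends only on the birational class of the cover; (ii) \emph{pullback monotonicity}, $\beta(f^*\widetilde{W}\dr X)\le\beta(\widetilde{W}\dr W)$ for every $f\colon X\dr W$; (iii) a \emph{dimension bound}, $\beta(\widetilde{W}\dr W)\le\dim W$; and (iv) the \emph{tower inequality}: for any tower $X_r\dr\cdots\dr X_0$ carrying a dominant $X_0$‑map $X_r\dr Y$, one has $\beta(Y\dr X_0)\le\max_i\beta(X_i\dr X_{i-1})$. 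From (i)--(iv) it follows formally that $\beta(\widetilde{X}\dr X)\le\RD(\widetilde{X}\dr X)$: each step of an optimal tower has $\beta\le\dim(\text{base})\le\RD$ by (i)--(iii), and (iv) propagates this to the composite. We would then be reduced to showing that $\beta$ of a versal $A_n$‑cover tends to $\infty$ with $n$, which ought to be governed by the unbounded growth of coarse invariants of $A_n$ (minimal dimension of a faithful representation, cohomological dimension, complexity of the subgroup lattice). The delicate property is (iv); arranging it is precisely what makes $\beta$ bound $\RD$ from below rather than merely $\ed$, since the composite $X_r\dr X_0$ absorbs all of the accessory irrationalities appearing along the tower while its individual steps need not.

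\emph{Step 3: candidates for $\beta$.} Here is the obstruction. Every lower bound supplied by essential‑dimension theory — cohomological invariants in the sense of Serre and Reichstein, canonical dimension, dimensions of $p$‑special fibers — bounds $\ed\ge\RD$ from below, the wrong direction, and is moreover destroyed by a finite base change, so it fails (iv). One needs an invariant that genuinely survives accessory irrationalities. Three directions seem worth attempting. \emph{(a) Motivic:} work in a localization of a category of (birational) motives in which the class of every dominant generically finite morphism has been inverted, and read $\beta$ off from a weight or slice level there, so that accessory‑irrationality‑invariance holds essentially by construction; the task then becomes to show this level is nonzero. \emph{(b) Arithmetic:} spread the cover over $\Z[1/N]$, reduce mod $p$, and extract an invariant of the $\Frob$‑action on the \'etale cohomology of the generic fiber of the cover, the crux being to force such an invariant to be stable under finite base change over $\F_p$. \emph{(c) Topological:} attach to a versal $A_n$‑cover $V\to V/A_n$ its classifying map $V/A_n\to BA_n$ and seek an obstruction to assembling that map, up to finite covers, out of classifying maps of covers of curves; here the unbounded growth of $H^*(A_n)$ and of the stable cohomology of the braid groups (in the spirit of Arnol'd and Vassiliev) is precisely the desired input, but one must build in the constraint that the superpositions be \emph{algebraic} rather than merely continuous, since for continuous superpositions there is no obstruction at all (Arnol'd--Kolmogorov).

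\emph{The main obstacle.} The hard part is Step 3, and indeed already its qualitative shadow, Problem~\ref{problem:main5}: producing \emph{any} invariant that is at once nontrivial on some cover and insensitive to accessory irrationalities. Every obvious candidate is either not birationally invariant or is trivialized by a finite base change. I expect the viable route to be motivic or arithmetic; the topological approach is known to collapse at the level of continuous superpositions, so its real content would lie in capturing ``algebraicity'' cohomologically — which is itself the heart of the difficulty. Once a $\beta$ as in Step~2 is in hand with $\beta>1$ somewhere, upgrading this to $\beta(A_n)\to\infty$ should, by comparison, be soft.
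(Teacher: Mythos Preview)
The statement you are attempting is Conjecture~\ref{conjecture:main2}, and the paper contains no proof of it: the authors explicitly write, immediately after stating it, that ``we make no definite progress in this paper toward solving these problems.'' So there is nothing in the paper to compare your proposal against, and more to the point, the conjecture is open --- it is not known whether there exists even a single cover with $\RD>1$ (this is Problem~\ref{problem:main5}, which you yourself invoke). Your write-up is not a proof and does not pretend to be one: Step~1 is a correct and clean reduction (using Corollary~\ref{cor:Hfaithandn1} and Theorem~\ref{thm:JH} to get $\RD(\tPc_n\to\Pc_n)=\RD(A_n)$ for $n\ge 5$), but Steps~2 and~3 are a wish list for an invariant $\beta$ that does not currently exist, together with an honest catalogue of why the obvious candidates fail.

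If this was meant as a proof, the gap is total: you have reformulated the problem and surveyed obstructions, but produced no $\beta$ satisfying your properties (i)--(iv), and you say as much in your ``main obstacle'' paragraph. If instead it was meant as a research outline, it is a reasonable one and is consonant with the paper's own framing in \S\ref{section:lower}, but it should be labeled as such rather than as a proof proposal. Either way, be aware that nothing here goes beyond what the paper already flags as open.
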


Along with Hilbert's Sextic and Octic Conjectures and Hilbert's 13th Problem, these are clearly among the most important conjectures about resolvent degree. While we make no definite progress in this paper toward solving these problems, we hope that with renewed attention to them, and to the broader framework of resolvent degree, future progress may be more forthcoming.

\subsection{Historical Remarks}

The concept of resolvent degree originates with the classical problem of solving polynomials.  It emerged in the 17th century with the work \cite{Ts} of Tschirnhaus.\footnote{See \cite{KK} for a discussion of Tschirnhaus' work and the relevant correspondence with Leibniz.} In 1786, Bring \cite{Bri} proved $\RD=1$ for the problem of solving the quintic, and in 1836 Hamilton \cite{Ham} gave a general sequence of upper bounds on $\RD(\tPc_n\to\Pc_n)$ for increasing $n$.  Hamilton's work was picked up by Sylvester and his student Hammond \cite{Sy,SH1,SH2}, by Klein \cite{Kl1,Kl2}, and by Hilbert \cite{Hi1,Hi2}.  Sixty-four years after Hamilton's work, Hilbert brought to the fore the fundamental issue: no {\em lower} bounds for $\RD(\tPc_n\to\Pc_n)$ had ever been shown.  Hilbert's Sextic Conjecture, Hilbert's 13th Problem \footnote{We will state what is sometimes called the ``algebraic version'' of this problem.  Hilbert's original phrasing of the problem leaves room for various  interpretations.}, and Hilbert's Octic Conjecture pose the challenge of proving that $\RD(\tPc_6\to\Pc_6)=2$, $\RD(\tPc_7\to\Pc_7)=3$, and $\RD(\tPc_8\to\Pc_8)=4$ respectively.

Resolvent degree was first defined explicitly in 1975 by Brauer \cite{Br} in order to make precise Hilbert's 13th Problem.  Brauer also gave new upper bounds on $\RD(\tPc_n\to\Pc_n)$ for all $n$; see
\S\ref{section:brauer} below.  A year later Arnol'd-Shimura \cite{AS}, apparently unaware of
Brauer's paper, gave an equivalent definition of $\RD$, also in order to make precise Hilbert's 13th.
The definition of $\RD$ seems to have lain dormant until the paper \cite{Di} of Dixmier, who helped publicize the concept of resolvent degree.  This concept was also discussed in passing by Buhler-Reichstein \cite{BuRe2} and Chernousov--Gille--Reichstein \cite{CGR}.   The present paper is the first to cite \cite{Br}.  The problem of finding any extension $L/K$ with $\RD(L/K)>1$ remains open.

\subsection{Acknowledgements}
This paper owes a huge intellectual debt to many people. First, the ideas of Klein, Hilbert and their contemporaries are seminal to most work in this area.  The idea of resolvent degree was implicit in their work, but it was first defined formally by Brauer, to whom we owe the modern start of the theory.
We have benefited greatly from the work of Igor Dolgachev, which has (among other things) helped to explain and place in modern terms the work of the classical algebraic geometers.  We mention in particular his book \cite{Do} and his book \cite{DoOr} with Ortland.   We have also benefitted greatly from the point of view of Bruce Hunt's book \cite{Hu}, which emphasizes the connections with locally symmetric varieties.  The first author would like to thank Eduard Looijenga, from whom he learned many of the classical constructions used in this paper, and the modern take on these.  Finally, this paper builds on the theory of essential dimension, introduced by Buhler-Reichstein \cite{BuRe1} and developed by  Merkurjev, Reichstein and many others.

It is a pleasure to thank the following people for useful conversations, correspondence and comments: Maxime Bergeron, Izzet Coskun, Igor Dolgachev, Jordan Ellenberg, Etienne Ghys, Sam Grushevsky, Nate Harman, Eriko Hironaka, Heisuke Hironaka, Sean Howe, Mark Kisin, Eduard Looijenga, Curt McMullen, Madhav Nori, Daniil Rudenko, Aaron Silberstein and Amie Wilkinson.  We especially thank Nat Mayer and Zinovy Reichstein for careful readings and numerous detailed comments on an earlier draft of this paper.  We would also like to thank the algebraic geometry/topology/representation theory working group at the University of Chicago for many conversations. It a pleasure to thank Sebastian Hensel for translating Hilbert's paper \cite{Hi2} for us.  Finally, we thank the anonymous referee, whose comments helped to greatly improve this paper.

\section{The resolvent degree of a rational cover} 

In this section we study the basics of resolvent degree $\RD$.  After giving the definition of $\RD$ of a rational cover, we establish some basic properties of $\RD$, we prove that our definition is equivalent to Brauer's original definition in \cite{Br} of the resolvent degree of a finite field extension, and we prove a number of technical foundational results that are useful for computations.  More specifically, we relate $\RD$ of an extension to that of its Galois closure, and we prove a crucial result on ``accessory irrationalities'', a classical concept studied by Kronecker, Klein and others, that is a key feature of $\RD$.

\subsection{Definitions of resolvent degree}
\label{subsection:definition}

For expositional reasons, we state the results in this paper in the language of $k$-varieties. For the reader who prefers to work with schemes, we will signal when a result or proof does not trivially extend to this case.

\begin{convention}\mbox{}
\begin{enumerate}
\item Unless otherwise specified, throughout this paper we take the base field $k$ to be an arbitrary field of characteristic 0.  
\item By a {\em $k$-variety} we mean a reduced, possibly reducible $k$-scheme of finite type.  
\item When the ground field $k$ is clear we will generally omit the subscript $k$ and simply write $\RD(-)$.
\item A solid arrow $X\to Y$ denotes a regular map of varieties; a dashed arrow $X\dashrightarrow Y$ denotes a rational map of varieties.
\item Given a rational cover $\tX\dr X$, we will refer to a tower \eqref{rdtower} as in Definition~\ref{definition:rd1} 
as a ``tower solving $\tX\dr X$ in $d$ variables'', or as a ``tower solving $\tX$'' for short.
\item We say that $f\colon X\dr Y$ is a ``rational pullback'' of $g\colon W\dr Z$ if there exist dense opens $U'\subset X$, $U\subset Y$, $V'\subset W$, $V\subset Z$ and a pullback square of regular maps
	\begin{equation*}
		\xymatrix{
				U' \ar[r] \ar[d]_{f|_U} & V' \ar[d]^{g|_V} \\
				U \ar[r] & V
		}
	\end{equation*}
\item The ``domain'' of a rational map $f\colon X\dr Y$ is the largest $U\subset X$ for which $f|_U$ is a regular. map. The ``image'' of $f$ is defined to be $f(U)$.
\end{enumerate}
\end{convention}

\begin{convention}[{\bf Rational cover of a reducible variety}]
\label{convention:reducible}
Let $\tX$ and $X$ be a (possibly reducible) varieties.  By a {\em rational cover} $\tX\dashrightarrow X$ we mean 
a rational map $\tX\dashrightarrow X$ with which restricts on each irreducible component $\tX_i\subset\tX$ to a dominant rational map $\tX_i\dr X_j$ for some irreducible component $X_j\subset X$; some Zariski open of each $X_j$ lies in the image of some $\tX_i$; and for each $j$ the generic fiber of $\tX$ over $X_j$ 
is finite. In particular, we want to avoid pathologies such as $X\coprod \{x\}\to X$ (where $\dim(X)>0$ and 
$x\in X(k)$).
\end{convention}

Recall that we defined in Definition~\ref{definition:rd1} the resolvent degree of a rational cover.  We can also define it in terms of field extensions.

\begin{definition}[{\bf Resolvent Degree of a field extension}]
\label{definition:RD2}
    Let $K\into L$ be a finite extension of fields over $k$. The {\em resolvent degree} $\RD_k(L/K)$ is the minimal $d$ for which there exists a finite sequence of finite extensions
    \begin{equation*}
        K=L_0\into L_1\into\cdots \into L_r
    \end{equation*}
    with $L\into L_r$ (as extensions of $K$) and for all $i=1,\ldots,r$,
    \begin{equation*}
        L_i=L_{i-1}\otimes_{F_i}\tilde{F}_i
    \end{equation*}
    where $F_i\into L_{i-1}$ is a subfield with $\trdeg_k(F_i)\le d$ and where $F_i\into \tilde{F}_i$ is a finite extension.  Here $\trdeg_k(F_i)$ denotes the transcendence degree of $F_i$ over $k$.
\end{definition}

The definition of resolvent degree in terms of rational covers and in terms of field extensions are equivalent. 
\begin{proposition}[{\bf Equivalence of definitions}]\label{prop:equivofdefs}
    If $\tX\dr X$ is a rational cover of irreducible $k$-varieties then
    \[\RD(\tX\dr X)=\RD(k(\tX)/k(X)).\]
\end{proposition}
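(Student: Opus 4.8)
The plan is to prove the two inequalities $\RD(\tX\dr X) \le \RD(k(\tX)/k(X))$ and $\RD(k(\tX)/k(X)) \le \RD(\tX\dr X)$ separately, each by translating a ``witness'' tower from one side into a witness tower for the other. The crux is the standard anti-equivalence between the category of irreducible $k$-varieties with dominant rational maps and the category of finitely generated field extensions of $k$: a dominant rational map $V \dr W$ of irreducible varieties corresponds exactly to an inclusion of function fields $k(W) \into k(V)$, generic finiteness corresponds to the extension being finite (algebraic and finitely generated), and $\dim W = \trdeg_k k(W)$. I would record this dictionary first, together with the observation that a rational pullback square of varieties, in the sense of the Convention, corresponds precisely to a tensor-product (compositum) decomposition $L_i = L_{i-1} \otimes_{F_i} \tilde F_i$ of function fields: if $U' \to V'$ over $U \to V$ is a pullback of regular maps with $U' = U \times_V V'$, then $k(U') = k(U) \otimes_{k(V)} k(V')$ (using that $U'$ is irreducible, so this tensor product is a field), and conversely such a tensor decomposition, with the pieces $F_i \subset L_{i-1}$ realized as function fields of appropriate models, gives back a pullback diagram on suitable Zariski opens.

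For the inequality $\RD(k(\tX)/k(X)) \le \RD(\tX\dr X)$: start from a tower \eqref{rdtower}, $X_r \dr \cdots \dr X_0 = X$, with $\ed_k(X_i \dr X_{i-1}) \le d$ and a dominant $X$-map $X_r \dr \tX$. Applying $k(-)$ gives a tower of field extensions $k(X) = k(X_0) \into \cdots \into k(X_r)$ with $k(\tX) \into k(X_r)$ over $k(X)$. For each $i$, the bound $\ed_k(X_i \dr X_{i-1}) \le d$ means $X_i \dr X_{i-1}$ is a rational pullback of some rational cover $\widetilde W \dr W$ with $\dim W \le d$; on function fields this says $k(X_i) = k(X_{i-1}) \otimes_{k(W)} k(\widetilde W)$, and $F_i := k(W) \subseteq k(X_{i-1})$ has $\trdeg_k F_i = \dim W \le d$, with $F_i \into \tilde F_i := k(\widetilde W)$ finite. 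Hence the tower witnesses $\RD_k(k(\tX)/k(X)) \le d$; taking the minimal $d$ gives the inequality.

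For the reverse inequality $\RD(\tX\dr X) \le \RD(k(\tX)/k(X))$: start from a tower $k(X) = L_0 \into \cdots \into L_r$ with $k(\tX) \into L_r$ and $L_i = L_{i-1} \otimes_{F_i} \tilde F_i$, $\trdeg_k F_i \le d$, $F_i \into \tilde F_i$ finite. Choose irreducible $k$-varieties $X_i$ with $k(X_i) = L_i$ (any model will do, adjusted so the inclusions $L_{i-1} \into L_i$ come from dominant generically finite rational maps $X_i \dr X_{i-1}$, which one can arrange by spreading out), choose a model $W_i$ of $F_i$ and $\widetilde W_i$ of $\tilde F_i$; then $\widetilde W_i \dr W_i$ is a rational cover with $\dim W_i \le d$, the inclusion $F_i \into L_{i-1}$ gives a rational map $X_{i-1} \dr W_i$, and the tensor decomposition $L_i = L_{i-1} \otimes_{F_i} \tilde F_i$ gives, after passing to suitable Zariski opens, a pullback square exhibiting $X_i \dr X_{i-1}$ as a rational pullback of $\widetilde W_i \dr W_i$; hence $\ed_k(X_i \dr X_{i-1}) \le d$. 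Finally $k(\tX) \into L_r = k(X_r)$ over $k(X)$ gives a dominant $X$-map $X_r \dr \tX$. So the tower of varieties witnesses $\RD(\tX\dr X) \le d$.

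The main obstacle — and the only place any care is needed — is the bookkeeping in the second direction: realizing the abstract subfields $F_i \subseteq L_{i-1}$ as function fields of genuine varieties mapping from (a birational model of) $X_{i-1}$, and simultaneously choosing compatible Zariski-open loci on which all the relevant rational maps are regular and the fiber products are honest pullbacks of the chosen covers. This is precisely the classical ``spreading out'' argument: every step is birational, finitely many conditions (domain of definition of each rational map, flatness/finiteness of each cover, the fiber product being reduced and irreducible) each fail only on a proper closed subset, so one can intersect down to a dense open where everything is simultaneously nice. I expect this to be routine given characteristic $0$ and the Conventions already in force, so the proof is essentially the dictionary plus this standard spreading-out, applied to each rung of the tower.
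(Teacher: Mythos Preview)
Your approach is essentially the paper's: translate towers back and forth via the function-field/variety dictionary, using spreading out for the field-to-variety direction. The second direction (choosing models $X_i$, $W_i$, $\widetilde W_i$ for the fields and realizing the tensor decomposition as a rational pullback square) matches the paper's argument exactly, and your discussion of the spreading-out bookkeeping is, if anything, more careful than the paper's one-line treatment.

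There is, however, a genuine gap in your first direction. You write ``Applying $k(-)$ gives a tower of field extensions $k(X)=k(X_0)\into\cdots\into k(X_r)$'', which presupposes that every $X_i$ in the witnessing tower is irreducible. Definition~\ref{definition:rd1} does not require this: by Convention~\ref{convention:reducible} the intermediate $X_i$ may well be reducible, in which case $k(X_i)$ is not a field and your tensor-product identity $k(X_i)=k(X_{i-1})\otimes_{k(W)}k(\widetilde W)$ fails as stated. The paper closes this gap by invoking Lemma~\ref{lemma:conn} (on irreducible components): one pulls the tower back to $\Spec(k(X))$ and then uses that lemma to pass to a single irreducible component at each stage without increasing the bound. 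You should either cite Lemma~\ref{lemma:conn} for this reduction, or argue directly that one may replace the given tower by one with irreducible stages while preserving both the essential-dimension bounds and the final dominant map to $\tX$ --- which is precisely the nontrivial content of that lemma.
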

We defer the proof until we have assembled basic properties of $\RD$ (as defined in Definition~\ref{definition:rd1}) in the next section. 

\para{Comparison with essential dimension}  Essential dimension has its origins in work of Hermite \cite{He}, Kronecker \cite{Kr}, Joubert \cite{Jou} and Klein \cite{Kl2}.  The theory was revived, and definitions made explicit, around twenty years ago by Buhler-Reichstein in \cite{BuRe1}.  It has been studied intensively ever since.    See \cite{Re} and \cite{Me} for recent surveys.

A central feature of the theory of essential dimension are the invariants $\ed_k(-;p)$.  These measure the {\em prime-to-$p$ essential dimension}; that is, any auxiliary tower of covers of degree prime to $p$ is allowed before one finds a dominant map to a variety of minimal dimension.  One could define the analogous invariant $\RD_k(-;p)$ by saying that in the tower giving a solution, one allows arbitrary prime-to-$p$ covers, but for covers whose degree is divisible by $p$, only those of $\ed_k(-)\le d$. Field theoretically, this amounts to working over the prime-to-$p$ closure of the function field of the base; that is,  base-changing to $\Spec$ of the fixed field of a $p$-Sylow of the absolute Galois group of $k(X)$. Since $p$-groups and pro-$p$ groups are solvable, we immediately see that $\RD_k(-;p)\equiv 1$ for all $k$ and all $p$. This is in strong contrast to the case of essential dimension, and shows that the study of resolvent degree is a strictly ``Type 2'' problem in the dichotomy of \cite[\S 5]{Re}.

\subsection{Basic properties}

In this subsection we establish some of the basic properties of $\RD$.

\begin{lemma}[{\bf Easy upper bounds}]
\label{lemma:first}
    Let $\tX\dr X$ be a rational cover of $k$-varieties.
    \begin{enumerate}
        \item $\RD(\tX\dr X)\le \ed(\tX\dr X)\le \dim(X)$.
        \item Let $k\into k'$ be any field extension. Then
            \begin{equation*}
                \RD_{k'}(\tX\times_k k'\dr X\times_k k')\le \RD_k(\tX\dr X).
            \end{equation*}
        \item Let $Y\dr X$ be any dominant rational map of $k$-varieties. Then
            \begin{equation*}
                \RD(\tX\times_X Y\dr Y)\le \RD(\tX\dr X).
            \end{equation*}
        \item If the rational map $\tX\dr X$ is birational over $k$ to $\widetilde{Y}\dr Y$; that is, if
        \begin{equation*}
        \xymatrix{
            \widetilde{X}\ar@{-->}[r]^\sim \ar@{-->}[d] & \widetilde{Y} \ar@{-->}[d] \\
            X \ar@{-->}[r]^\sim & Y
        }
    \end{equation*}
        for some birational horizontal maps, then
            \begin{equation*}
                \RD(\tX\dr X)=\RD(\widetilde{Y}\dr Y).
            \end{equation*}
    \end{enumerate}
\end{lemma}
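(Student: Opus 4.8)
The plan is to prove each of the four statements by unwinding the definitions and using the fact that the defining data of $\RD$ (towers of rational covers, together with a dominant map from the top of the tower) can be transported along the relevant operations. Throughout I would work directly with Definition~\ref{definition:rd1}, since Proposition~\ref{prop:equivofdefs} is deferred. Recall that a witness for $\RD(\tX\dr X)\le d$ is a tower $X_r\dr\cdots\dr X_0=X$ with $\ed(X_i\dr X_{i-1})\le d$ for each $i$ together with a dominant map of $X$-schemes $X_r\dr \tX$.

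For (1): the first inequality $\RD(\tX\dr X)\le\ed(\tX\dr X)$ follows by taking the trivial tower $X_0=X$ of length $0$ and noting that $\tX\dr X$ is then itself the sole step, which is a rational pullback of some cover of a $\ed(\tX\dr X)$-dimensional variety by definition of essential dimension. The second inequality $\ed(\tX\dr X)\le\dim(X)$ is immediate from the definition of $\ed$ by taking $W=X$ and $\widetilde W=\tX$ with $f=\id$. For (2): given a tower solving $\tX\dr X$ in $d$ variables over $k$, I would base-change the entire tower and the map $X_r\dr\tX$ along $k\into k'$. Base change preserves dominant rational maps, preserves rational pullback squares (pullbacks commute with base change), and can only decrease dimension of the intermediate varieties $W$ appearing in the $\ed$ witnesses (or keep it equal), so the base-changed tower solves $\tX\times_k k'\dr X\times_k k'$ in $\le d$ variables over $k'$; here one should be slightly careful that $X\times_k k'$ may acquire new components or become nonreduced, but Convention~\ref{convention:reducible} and the fact that we only need a dominant map onto each component handle this. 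For (3): given a tower $X_r\dr\cdots\dr X_0=X$ solving $\tX\dr X$ in $d$ variables, form the fiber product tower $X_r\times_X Y\dr\cdots\dr X_0\times_X Y=Y$. Each step $X_i\times_X Y\dr X_{i-1}\times_X Y$ is the pullback along $X_{i-1}\times_X Y\dr X_{i-1}$ of the step $X_i\dr X_{i-1}$, so its essential dimension is at most $\ed(X_i\dr X_{i-1})\le d$ by part (3) of the same Lemma already in hand for $\ed$ — or, more honestly, by the analogous elementary statement for $\ed$ that pullback does not increase essential dimension, which follows by composing the two pullback squares. Finally $X_r\times_X Y\dr\tX\times_X Y$ is dominant (pullback of a dominant map) and is a map of $Y$-schemes, giving $\RD(\tX\times_X Y\dr Y)\le d$. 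For (4): this is the formal consequence that $\RD$ only depends on birational equivalence classes; given a tower solving $\tX\dr X$, I would replace $X_0=X$ by $Y$ using the birational identification $X\dashrightarrow Y$ (this is harmless since all maps are rational and $\ed$, rational pullback, and dominance are birational notions), and compose $X_r\dr\tX\dashrightarrow\widetilde Y$ with the birational map to get a dominant map of $Y$-schemes $X_r\dr\widetilde Y$; symmetry of the hypothesis then gives the reverse inequality, hence equality.

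The only genuinely delicate point, and the one I would spend the most care on, is the bookkeeping in (3) (and implicitly in (2)): one must check that fiber-producting an entire tower with $Y\dr X$ along the rational maps $X_i\dr X$ — which are the composites down the tower — actually produces a legitimate tower of \emph{rational covers} in the sense of Definition~\ref{definition:ratcover} and Convention~\ref{convention:reducible}, i.e. that dominance and generic finiteness of each new step survive, and that the new intermediate varieties are the appropriate reduced schemes. Since all the maps in sight are only rational, one works on suitable dense Zariski opens where everything becomes an honest finite morphism, forms the (scheme-theoretic, then reduced) fiber products there, and checks that the relevant squares are genuine pullback squares of regular maps on these opens; this is exactly the sort of "pass to opens" argument the Convention block was set up to license. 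Everything else is a direct transport of the tower witnessing $\RD\le d$ through an operation (base change, pullback, birational substitution) under which the three constituent properties — having $\ed\le d$, being a rational pullback square, being dominant — are each manifestly stable.
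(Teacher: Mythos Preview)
Your proposal is correct and follows essentially the same approach as the paper's proof, which simply says part (1) is ``immediate from the definitions'' and that (2)--(4) ``follow from base change'' of the tower; you have supplied the details the paper omits. One small slip: in (1) you write ``the trivial tower $X_0=X$ of length $0$,'' but you clearly mean the length-$1$ tower $X_1=\tX\dr X_0=X$ (with the identity $\tX\dr\tX$ as the dominant map), since a length-$0$ tower would require a rational section $X\dr\tX$.
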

\begin{proof}
    The first statement is immediate from the definitions. The second, third and fourth statements follow from base change: e.g. given a tower solving $\tX\dr X$ over $k$, by base change we obtain an analogous tower over $k'$ solving $\tX\times_k k'\dr X\times_k k'$.  This shows that any upper bound for towers over $k$ immediately gives one over $k'$ as well. The argument for the third and fourth is analogous.
\end{proof}

Many natural branched covers are reducible; indeed such covers arise in Cardano's solution to the cubic; these components are responsible for so-called ``parasitic roots'' in the solution.  The following lemma allows us to reduce the study of $\RD$ to irreducible components.

\begin{lemma}[{\bf Irreducible components}]
\label{lemma:conn}
    Let $\tX\dr X$ be a rational cover.  Let $\{X_i\subset X\}$ be the set of irreducible components of $X$, and let $\{\tX_{i,j}\subset \tX|_{X_i}\}$ be the set of irreducible components of $\tX|_{X_i}\dr X_i$. Then
    \begin{equation*}
        \RD(\tX\dr X)=\max_{i,j}\{\RD(\tX_{i,j}\dr X_i)\}.
    \end{equation*}
\end{lemma}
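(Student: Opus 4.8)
The statement is that $\RD(\tX \dr X) = \max_{i,j} \RD(\tX_{i,j} \dr X_i)$, where $X_i$ are the irreducible components of $X$ and $\tX_{i,j}$ the irreducible components of $\tX|_{X_i}$. I would prove two inequalities. For "$\geq$": given a tower \eqref{rdtower} solving $\tX \dr X$ in $d$ variables, I restrict the whole tower over a single component $X_i$. Concretely, pick the component $X_0^{(i)} = X_i$ of $X = X_0$, and inductively let $X_\ell^{(i)}$ be a component of $X_\ell$ dominating $X_{\ell-1}^{(i)}$ (such a component exists by the definition of rational cover, Convention~\ref{convention:reducible}). Each map $X_\ell^{(i)} \dr X_{\ell-1}^{(i)}$ is still a rational pullback of a cover of dimension $\leq d$ — restricting the base of a pullback square to a subvariety and taking a dominating component of the total space again gives a pullback square — so $\ed(X_\ell^{(i)} \dr X_{\ell-1}^{(i)}) \leq d$. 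The dominant map $X_r \dr \tX$ of $X$-schemes, restricted to $X_r^{(i)}$, lands in $\tX|_{X_i}$; composing with projection to a chosen component $\tX_{i,j}$ and checking this composite is still dominant onto $\tX_{i,j}$ (using that the original map was dominant onto each component of $\tX$ lying over $X_i$, possibly after passing to a suitable $r$-th level component) yields a tower solving $\tX_{i,j} \dr X_i$ in $\leq d$ variables. Hence $\RD(\tX_{i,j} \dr X_i) \leq \RD(\tX \dr X)$ for all $i,j$, giving $\geq$.

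For "$\leq$": set $d = \max_{i,j} \RD(\tX_{i,j} \dr X_i)$. For each component $X_i$ I have a tower $X^{(i)}_{r_i} \dr \cdots \dr X^{(i)}_0 = X_i$ solving $\tX_{i,j} \dr X_i$ in $\leq d$ variables, simultaneously for all $j$ — here I should note that a single tower over $X_i$ can be arranged to dominate all the $\tX_{i,j}$ at once, e.g. by first taking the tower solving the (disjoint, or fibered-over-$X_i$) union $\tX|_{X_i} \dr X_i$, whose $\RD$ over $X_i$ equals $\max_j \RD(\tX_{i,j} \dr X_i)$ by the irreducible case applied one level down (or more simply by concatenating the finitely many towers, since concatenation of towers in $\leq d$ variables is again a tower in $\leq d$ variables and pullbacks compose). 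Then I assemble these component-wise towers into a single tower over $X = \coprod_i X_i$: at each level take the disjoint union (as $X$-schemes) of the $X^{(i)}_\ell$, padding shorter towers by identity maps so all have the same length $r = \max_i r_i$. The essential dimension of a disjoint union of rational covers over a disjoint base is the max of the pieces (each piece is independently a rational pullback of something of dimension $\leq d$, and one takes the disjoint union of the targets), so each level of the assembled tower has $\ed \leq d$. The top of the assembled tower maps dominantly (as $X$-schemes) onto $\coprod_{i,j}\tX_{i,j} = \tX$, component by component. This gives a tower solving $\tX \dr X$ in $\leq d$ variables, hence $\RD(\tX \dr X) \leq d$.

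**Main obstacle.** The genuinely delicate point is the bookkeeping with reducible total spaces in the "$\geq$" direction: when I restrict a tower over $X$ to a component $X_i$, the total spaces $X_\ell$ may have several components over $X_\ell^{(i)}$, and I must check that (a) I can always choose a chain of dominating components, and (b) the terminal choice still maps dominantly onto the component $\tX_{i,j}$ I want — this may require choosing the $r$-th level component of the restricted tower with some care, or passing to the Galois-type normalization, to guarantee surjectivity onto \emph{every} component of $\tX$ over $X_i$ rather than just one. The cleanest fix is probably to observe that it suffices to prove each inequality after replacing all varieties by their normalizations (which does not change function fields, hence does not change $\RD$ by Lemma~\ref{lemma:first}(4) together with Proposition~\ref{prop:equivofdefs}), so that components become genuinely disjoint and the whole argument reduces to the two trivial facts "$\ed$ and $\RD$ of a disjoint union over a disjoint base is the max" and "restriction of a rational-pullback tower to a sub-base is again such a tower." Everything else is formal manipulation of pullback squares.
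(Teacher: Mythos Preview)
Your ``$\leq$'' direction matches the paper: pad the towers to equal length and take disjoint unions stage by stage. That is fine.

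The gap is in your ``$\geq$'' direction. You assert that ``restricting the base of a pullback square to a subvariety and taking a dominating component of the total space again gives a pullback square,'' and hence $\ed(X_\ell^{(i)}\dr X_{\ell-1}^{(i)})\le d$. This is the entire content of the lemma, and it is not automatic. Concretely: suppose $X_{\ell-1}^{(i)}$ is irreducible, $Z$ is irreducible with $\dim Z\le d$, $\tilde Z\to Z$ is a \emph{connected} finite \'etale cover, and $X_\ell|_{X_{\ell-1}^{(i)}} = X_{\ell-1}^{(i)}\times_Z\tilde Z$ happens to be disconnected. Then a single component $X_\ell^{(i)}$ is \emph{not} of the form $X_{\ell-1}^{(i)}\times_Z W$ for any $W\to Z$; the cover $\tilde Z\to Z$ has transitive monodromy, so no sub-cover of $\tilde Z$ pulls back to a proper component. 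Your proposed fix (normalize so components are disjoint) is irrelevant here: the components already are disjoint, and the problem is not about intersections but about monodromy.

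The paper fills this gap with a covering-space argument. Writing $\pi_1^{\et}(X_{\ell-1}^{(i)})\to\pi_1^{\et}(Z)\to\Perm(\tilde Z|_\Omega)$, the splitting of the pullback means the image of $\pi_1^{\et}(X_{\ell-1}^{(i)})$ lands in a product $\Perm(A_1)\times\Perm(A_2)$. Let $H\subset\pi_1^{\et}(Z)$ be the preimage of this product and let $Z_H\to Z$ be the corresponding \'etale cover. Then $X_{\ell-1}^{(i)}\to Z$ factors through $Z_H$, and over $Z_H$ the cover $\tilde Z\times_Z Z_H$ already splits as a disjoint union whose pieces pull back to the components $X_\ell^{(i)}$. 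Since $\dim Z_H=\dim Z\le d$, this exhibits $X_\ell^{(i)}\to X_{\ell-1}^{(i)}$ as a rational pullback from dimension $\le d$. An induction on the number of reducible stages then produces a tower solving $\tX_1\dr X$ with the same bound. This monodromy/factorization step is the missing idea in your sketch.
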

\begin{proof}
    From the definition of resolvent degree, if $X=\coprod_i X_i$, then
    \begin{equation*}
        \RD(\tX\dr X)=\max_i \{\RD(\tX|_{X_i}\dr X_i)\}.
    \end{equation*}
    Let $X=\bigcup X_i$, and let $X^\sigma=\bigcup_{i\neq j} X_i\cap X_j$ be the set of points contained in more than one irreducible component. Then $X-X^\sigma$ is a disjoint union of irreducible components, and $X-X^\sigma$ is birationally equivalent to $X$. Because resolvent degree is a birational invariant (Lemma \ref{lemma:first}), it suffices to assume that $X$ is irreducible, and that $\tX=\coprod_i \tX_i$.

    The inequality
    \begin{equation*}
        \RD(\tX\dr X)\le \max_i\{\RD(\tX_i\dr X)\}
    \end{equation*}
    is clear.  Indeed, given a tower solving $\tX_i\dr X$ for each $i$, we construct a tower solving $\tX\dr X$ as follows, first if $r$ is the length of the longest tower solving one of the $\tX_i\dr X$, we extend all the other towers (for $j\neq i$) to towers of length $r$ by adding identity maps after the final stage.  Next, we form a tower over $X$ whose $\ell^{th}$ stage is the disjoint union of the $\ell^{th}$ stages of the towers for the $\tX_i$s.  By construction, each stage of this tower is pulled back from something of dimension at most $\max_i\{\RD(\tX_i\dr X)\}$.  It remains to show that
    \begin{equation*}
        \RD(\tX\dr X)\ge \RD(\tX_i\dr X)
    \end{equation*}
    for any $i$. This follows from a standard argument in covering space theory (equivalently the \'etale fundamental group). Without loss of generality, take $i=1$. A simple induction reduces us to the case where $\tX$ is the disjoint union of two irreducible components.  Write $\tX=\tX_1\coprod \tX_2$. Shrinking $X$ as necessary, we can further assume that $X$ and $\tX$ are regular (since, here and throughout this paper, we work in characteristic 0). Suppose now that we have a tower of rational covers 
    \begin{equation*}
        Y_r\dr \cdots \dr Y_0= X
    \end{equation*}
    solving $\tX\dr X$ in functions of at most $d$ variables. Let $U_i\subset Y_i$ be smooth dense opens such that we have a tower of regular \'etale maps
    \begin{equation*}
    	U_r\to \cdots\to U_0\subset X,
    \end{equation*}
    a dominant regular map $p\colon U_r\to \tX$, and for each $i$, a pullback diagram
    \begin{equation*}
    	\xymatrix{
    		U_i \ar[r] \ar[d] & \tilde{Z}_i \ar[d] \\
    		U_{i-1} \ar[r] & Z_i 
		}
    \end{equation*}
    where $\dim Z_i\le d$. Let $U_{r,i}$ be the union of irreducible components mapping dominantly onto $\tX_i$. Let $s$ be the greatest integer for which $U_s$ is irreducible (note that by assumption, $U_0\subset X$ is irreducible). We induct on $r-s$. For the base, $r-s=1$, we have a pullback diagram
    \begin{equation*}
        \xymatrix{
            U_r \ar[r] \ar[d] & \tilde{Z}_r \ar[d]\\
            U_{r-1} \ar[r] & Z_r
        }
    \end{equation*}
    where $U_{r-1}$ is irreducible, and without loss of generality $Z_r$ is too.  If the branched cover $\tilde{Z}_r$ can be partitioned as $\tilde{Z}_{r,i}$ with $U_{r,i}\cong U_{r-1}\times_{Z_r}\tilde{Z}_{r,i}$, then, by replacing $U_r$ with $U_{r,1}$, we obtain a tower solving $\tX_1$ in the same number of variables as the tower solving $\tX$. Suppose therefore that $\tilde{Z}_r$ is connected. Therefore, the connected generically \'etale map $\tilde{Z}_r\to Z_r$ splits when pulled back along $U_{r-1}\to Z_r$. Equivalently, fixing a geometric point $\Omega\to U_{r-1}\to Z_r$, the image
    \begin{equation*}
        \pi_1^{\et}(U_{r-1},\Omega)\to \pi_1^{\et}(Z_r,\Omega)\to \Perm(\tilde{Z}_r|_\Omega)
    \end{equation*}
    lies in a subgroup of the form $\Perm(A_1)\times\Perm(A_2)\subset\Perm(\tilde{Z}_r|_\Omega)$. Let $H\subset\pi_1^{\et}(Z_r,\Omega)$ be the pre-image of $\Perm(A_1)\times\Perm(A_2)$, and let
    \begin{equation*}
        \tilde{Z}_H\to Z_r
    \end{equation*}
    denote the corresponding \'etale map. Because $\pi_1^{\et}(U_{r-1},\Omega)$ factors through the inclusion $H\subset\pi_1^{\et}(Z_r,\Omega)$, the map $U_{r-1}\to Z_r$ factors through $\tilde{Z}_H$.   By construction, the pullback $\tilde{Z}_r\times_{Z_r} \tilde{Z}_H$ splits as
    \begin{equation*}
        (\tilde{Z}_r\times_{Z_r} \tilde{Z}_H)_1\coprod (\tilde{Z}_r\times_{Z_r} \tilde{Z}_H)_2
    \end{equation*}
    with $(\tilde{Z}_r\times_{Z_r} \tilde{Z}_H)_i\times_{Z_H} U_{r-1}\cong U_{r,i}$. Because $\dim(Z_H)=\dim(Z_r)$, we have reduced to the case where the cover $\tilde{Z}_r\to Z_r$ is disconnected, and thus have exhibited a tower solving $\tX_1\to X$ with the same bounds as the tower solving $\tX\to X$. This completes the base of the induction. The inductive step follows from the same construction.  If $r-s>1$, then applying the above construction in sequence, we obtain a tower
    \begin{equation*}
        U_r'\to \cdots U_{s+1}'\to U_s'=U_s\to\cdots \to U_0\subset X
    \end{equation*}
    solving $\tX_1\to X$, which agrees with the tower solving $\tX\to X$ for $i\le s$, and in which $U_i'\to U_{i-1}'$ for $i>s$ is pulled back from a variety of the same dimension which $U_i\to U_{i-1}$ is. We conclude that $\RD(\tX\dr X)\ge \RD(\tX_1\dr X)$.
\end{proof}

\begin{proof}[Proof of Proposition~\ref{prop:equivofdefs}]
	The inequality $\RD(\tX\dr X)\ge \RD(k(\tX)/k(X))$ follows from pulling back any tower solving $\tX\dr X$ along the map $\Spec(k(X))\to X$, and then applying Lemma~\ref{lemma:conn}.
	
	For the reverse inequality, let
	\begin{equation*}
	k(X)=L_0\into L_1\into\cdots \into L_r
	\end{equation*}
	be any tower solving $k(\tX)/k(X)$. For each $i$, pick varieties $Y_i$, $Z_i$ and $\tilde{Z}_i$ such that  $k(Y_i)=L_i$, $k(Z_i)=F_i$ and $k(\tilde{Z}_i)=\tilde{F}_i$ respectively.  Then we obtain a tower of 
	rational covers 
	\begin{equation*}
	Y_r\dr\cdots\dr Y_1\dr Y_0=X
	\end{equation*}
	such that $Y_r\dr X$ factors through a rational cover $Y_r\dr \tX\dr X$, and such that each $Y_i$ sits in a birational pullback diagram
	\begin{equation*}
	\xymatrix{
		Y_i \ar@{-->}[r] \ar@{-->}[d] & \tilde{Z_i} \ar@{-->}[d] \\
		Y_{i-1} \ar@{-->}[r] & Z_i
	}
	\end{equation*}
	Because $\dim(Z_i)=\trdeg(F_i)$, the upper bound on $\RD(k(\tX)/k(X))$ provided by the tower over $k(X)$ carries over to give an identical upper bound on $\RD(\tX\dr X)$. Taking the minimum over all such towers gives
	\[\RD(\tX\dr X)\le \RD(k(\tX)/k(X))\]
	as desired.
\end{proof}

\begin{lemma}[{\bf \boldmath$\RD$ of a composition}]
\label{lemma:max}
    Let $Z\dr Y\dr X$ be a pair of rational covers of $k$-varieties. Then
    \begin{equation*}
        \RD(Z\dr X)=\max\{\RD(Z\dr Y),\RD(Y\dr X)\}.
    \end{equation*}
\end{lemma}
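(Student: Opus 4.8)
The plan is to establish the two inequalities $\RD(Z\dr X)\le\max\{\RD(Z\dr Y),\RD(Y\dr X)\}$ and $\RD(Z\dr X)\ge\max\{\RD(Z\dr Y),\RD(Y\dr X)\}$ separately; by Lemma~\ref{lemma:conn} it suffices to treat the case where $X$, $Y$, $Z$ are irreducible. Throughout write $\rho\colon Z\dr Y$ and $p\colon Y\dr X$ for the two given rational covers, and set $a:=\RD(Y\dr X)$, $b:=\RD(Z\dr Y)$.

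For the upper bound I would concatenate towers. Fix a tower $X_r\dr\cdots\dr X_0=X$ with every stage of $\ed\le a$ and a dominant $X$-map $g\colon X_r\dr Y$, and a tower $W_s\dr\cdots\dr W_0=Y$ with every stage of $\ed\le b$ and a dominant $Y$-map $h\colon W_s\dr Z$. Pulling the second tower back along $g$, i.e. replacing $W_j$ by $W_j\times_Y X_r$, produces a tower over $W_0\times_Y X_r=X_r$ whose stages still have $\ed\le b$, since $\ed$ cannot increase under rational pullback (immediate from Definition~\ref{definition:rd1}: the same $d$-dimensional base works after base change). Stacking this on top of the first tower yields a tower over $X$ with every stage of $\ed\le\max\{a,b\}$, whose top variety $W_s\times_Y X_r$ maps via $h\times\mathrm{id}$ to $Z\times_Y X_r$ and then dominantly to $Z$ by the first projection; that this is a dominant map of $X$-schemes is a routine diagram chase using that $g$ is an $X$-map and $h$ is a $Y$-map. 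Hence $\RD(Z\dr X)\le\max\{a,b\}$.

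For the lower bound there are two things to check. That $\RD(Z\dr X)\ge a$ is immediate: if $\phi\colon X_r\dr Z$ is the top map of a tower solving $Z\dr X$, then $\rho\circ\phi\colon X_r\dr Y$ is again a dominant map of $X$-schemes, so the same tower solves $Y\dr X$. The inequality $\RD(Z\dr X)\ge b$ is the substantive point, and I would deduce it from base change together with Lemma~\ref{lemma:conn}. Applying Lemma~\ref{lemma:first}(3) with the dominant map $p\colon Y\dr X$ and with $Z$ in place of $\widetilde X$ gives $\RD(Z\times_X Y\dr Y)\le\RD(Z\dr X)$, where $Z\times_X Y$ is formed using $p\circ\rho\colon Z\dr X$ and $p\colon Y\dr X$. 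Now the graph map $\delta\colon Z\dr Z\times_X Y$, $z\mapsto(z,\rho(z))$, is a rational section of the first projection, so (the closure of) its image $\Gamma_\rho$ is irreducible and is identified with $Z$ by $\delta$; since $\dim Z=\dim Y=\dim(Z\times_X Y)$ (the last because $Z\times_X Y\dr Y$ is again generically finite), $\Gamma_\rho$ is in fact an irreducible component of $Z\times_X Y$, and under $\Gamma_\rho\cong Z$ the second projection $\Gamma_\rho\to Y$ is exactly $\rho$. Therefore, by Lemma~\ref{lemma:conn},
\[
\RD(Z\dr Y)=\RD(\Gamma_\rho\dr Y)\le\RD(Z\times_X Y\dr Y)\le\RD(Z\dr X),
\]
and combining the two bounds proves the lemma.

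The hard part is the step $\RD(Z\dr X)\ge\RD(Z\dr Y)$. The naive approach — base-changing a solving tower for $Z\dr X$ directly along $Y\dr X$ — fails, because the ``solving'' map $X_r\dr Z$ points the wrong way relative to $Y\dr X$, and the fiber product $Z\times_X Y$ is genuinely reducible, so one cannot recover the cover $Z\dr Y$ from it on the nose. Isolating the diagonal component $\Gamma_\rho\cong Z$ and applying Lemma~\ref{lemma:conn} is precisely what sidesteps this; with those two prior results in hand, the remainder of the argument is formal.
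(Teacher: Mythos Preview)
Your proof is correct and follows essentially the same approach as the paper's: the upper bound by concatenating towers, the inequality $\RD(Z\dr X)\ge\RD(Y\dr X)$ by composing the solving map with $\rho$, and the key inequality $\RD(Z\dr X)\ge\RD(Z\dr Y)$ via base change to $Z\times_X Y$ followed by identifying $Z$ as a component and invoking Lemma~\ref{lemma:conn}. The paper states these steps tersely (``the definition immediately implies\ldots'' and ``$Z\dr Y$ embeds as a collection of components of $Z\times_X Y\dr Y$''), while you spell out the tower concatenation and the graph-component argument in detail, but the structure is the same.
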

\begin{proof}
    The definition immediately implies that $\RD(Z\dr X)\le\max\{\RD(Z\dr Y),\RD(Y\dr X)\}$ and $\RD(Z\dr X)\ge\RD(Y\dr X)$. To see that $\RD(Z\dr X)\ge \RD(Z\dr Y)$, note that
    \begin{align*}
        \RD(Z\dr X)&\ge \RD(Z\times_X Y\dr Y)\intertext{and, because $Z\dr Y$ embeds as a collection of components of $Z\times_X Y\dr Y$, Lemma \ref{lemma:conn} implies}
        \RD(Z\times_X Y\dr Y)&\ge \RD(Z\dr Y).
    \end{align*}
\end{proof}

\begin{definition}
        A rational cover $\tX\dr X$ is {\em generically n-to-1} if $n=[k(X_i):\Oc(\tX|_{\Spec(k(X_i))})]$ for each irreducible component $X_i\subset X$.
        %\item A rational cover $\tX\dr X$ is {\em separable} if for each irreducible component $\tX_{ij}$ of $\tX$ the field extension $k(\tX_{ij})/k(X_i)$ is separable.
\end{definition}

\begin{comment}
\begin{remark}[{\bf Reduction to separable extensions}]
A purely inseparable extension $L/K$ is radical, hence $\RD(L/K)=1$.  An arbitrary finite extension $L/F$ can be written as a composition of a purely inseparable extensions $L/K$ and a separable extension $K/F$, where $K$ is the separable closure of $F$ in $L$.  Lemma~\ref{lemma:max} now implies that $\RD(L/F ) = \RD(K/F )$.  
Thus for a separable extension we can replace each $K_i$ in the tower in the definition of $\RD$ by the separable closure of $K_{i-1}$ in $K_i$, so that each $K_i/K_{i-1}$ is separable.  It is thus enough to work with separable extensions only, which we henceforth do.
\end{remark}
\end{comment}

While the resolvent degree $\RD(\tPc_n\to \Pc_n)$ of the root cover of the space of degree $n$ polynomials is a specific example, it is universal in the following sense.

\smallskip
\begin{lemma}[{\bf Universality of \boldmath$\tPc_n\to \Pc_n$}]
\label{lemma:universal}
    Let $\tX\dr X$ be a generically n-to-1 rational cover. Then
    \begin{equation*}
        \RD(\tX\dr X)\le \RD(\tPc_n\to\Pc_n).
    \end{equation*}
\end{lemma}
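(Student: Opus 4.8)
The plan is to present $\tX\dr X$, up to birational equivalence, as the pullback of the root cover $\tPc_n\to\Pc_n$ along a \emph{dominant} rational map, and then to quote base change (Lemma~\ref{lemma:first}(3)). By Lemma~\ref{lemma:conn} it suffices to treat each irreducible component of $X$ separately, so assume $X$ is irreducible with $F:=k(X)$. Since $\tX\dr X$ is generically $n$-to-$1$ and $\tX$ is reduced, in characteristic $0$ the generic fibre $A:=\Oc\big(\tX|_{\Spec F}\big)$ is an $n$-dimensional étale $F$-algebra; shrinking $X$ to a dense open $V$, spread this out to a finite $V$-scheme with generic fibre $\Spec A$. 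As $\RD$ is a birational invariant (Lemma~\ref{lemma:first}(4)), it is enough to bound $\RD(\tX|_V\to V)$.

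The obstacle to handle is that the coefficient map $V\dr\Pc_n$ of a polynomial presenting $A$ over $F$ need not be dominant, so one cannot simply restrict a solving tower for $\tPc_n\to\Pc_n$ over its image. I would circumvent this by enlarging the base. Pick an $F$-basis $b_1,\dots,b_n$ of $A$, adjoin indeterminates $t_1,\dots,t_n$, and over $F(t_\bullet):=F(t_1,\dots,t_n)$ set $\theta:=\sum_l t_l b_l\in A\otimes_F F(t_\bullet)$. Since the matrix $\big(\sigma_i(b_l)\big)_{i,l}$ built from the $n$ embeddings $\sigma_i\colon A\to\overline F$ is invertible, the conjugates $\mu_i:=\sigma_i(\theta)$ are pairwise distinct, so $\theta$ is primitive and $A\otimes_F F(t_\bullet)\cong F(t_\bullet)[z]/\big(\prod_i(z-\mu_i)\big)$ is a separable presentation of degree $n$. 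The coefficients of $\prod_i(z-\mu_i)$ lie in $F(t_\bullet)=k(V\times\A^n)$ and define a rational map $\psi\colon V\times\A^n\dr\Pc_n$; for a general point of $V$ the corresponding $\A^n$-slice is carried by $\psi$ \emph{onto} all of $\Pc_n$ (the $\mu_i$ sweep out $\A^n$ by invertibility of a Vandermonde matrix), so $\psi$ is dominant. By construction $\psi^*\tPc_n$ is birational over $V\times\A^n$ to $\tX|_V\times\A^n$, both having generic fibre $\Spec\big(A\otimes_F F(t_\bullet)\big)$; hence Lemma~\ref{lemma:first}(3) and (4) give $\RD\big(\tX|_V\times\A^n\to V\times\A^n\big)\le\RD(\tPc_n\to\Pc_n)$.

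It remains to descend from $V\times\A^n$ back to $V$. Given a tower solving $\tX|_V\times\A^n\to V\times\A^n$ in $d:=\RD(\tPc_n\to\Pc_n)$ variables, the union of the ``bad loci'' of its finitely many stages --- the places where some stage fails to be a genuine pullback of the $\le d$-dimensional cover witnessing $\ed\le d$, or fails to be finite --- is a proper closed subset of $V\times\A^n$. For a general $t_0\in\A^n$ the slice $V\times\{t_0\}$ avoids it, so restricting the whole tower over $V\times\{t_0\}\cong V$ produces a tower solving $(\tX|_V\times\A^n)|_{V\times\{t_0\}}=\tX|_V\to V$ in $\le d$ variables. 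Hence $\RD(\tX\dr X)=\RD(\tX|_V\to V)\le\RD(\tPc_n\to\Pc_n)$. The genuinely delicate part of the argument is this last generic-slice bookkeeping (together with the spreading-out at the start): one must be sure the general slice meets the open locus over which every stage of the tower is honestly a pullback of the auxiliary $\le d$-dimensional cover, so that the restricted tower still certifies $\ed\le d$ at each stage. Everything else is formal. One could instead isolate and prove the stand-alone statement that restricting a rational cover to a subvariety not contained in its degeneracy locus cannot raise $\RD$, and apply it to $\tPc_n\to\Pc_n$ --- but that is proved by exactly the same genericity argument.
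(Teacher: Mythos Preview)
Your proof is correct, but it takes a substantially more elaborate route than the paper's.  The paper simply invokes the Primitive Element Theorem (using characteristic~$0$) to write $k(\tX)\cong k(X)[z]/p_\alpha(z)$ for a single degree-$n$ monic polynomial $p_\alpha$; the coefficients then give a regular map $p_\alpha\colon U\to\Pc_n$ (on the open $U$ where they are defined) exhibiting $\tX|_U$ as the pullback of $\tPc_n$, and Lemma~\ref{lemma:first} finishes.

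Your worry that $p_\alpha$ need not be dominant---so that Lemma~\ref{lemma:first}(3) does not literally apply---is technically fair.  But your remedy (parametrise all primitive elements by $\A^n$, enlarge the base to $V\times\A^n$ to force dominance, then slice back down to $V\times\{t_0\}$) is heavier than needed.  The same genericity that makes your slice argument work already fixes the paper's argument directly: any tower solving $\tPc_n\to\Pc_n$ in $d$ variables can be made genuine over a dense open $\Omega\subset\Pc_n$, and since your own map $\psi$ is dominant, for generic $t_0\in\A^n$ the single primitive element $\theta_{t_0}=\sum_l(t_0)_l b_l$ gives a coefficient map $p_{\theta_{t_0}}\colon V\to\Pc_n$ whose image meets $\Omega$.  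Pulling back the tower along that one map already yields a tower for $\tX|_V\to V$ in $d$ variables.  In other words, the ``delicate generic-slice bookkeeping'' you perform at the end is exactly the step needed to repair the non-dominant pullback---you have just postponed it past an unnecessary enlargement of the base rather than using it to pick a good $\alpha$ at the outset.
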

\begin{proof}
    By the Theorem of the Primitive Element (using that we are in characteristic 0), there exists $\alpha\in k(\tX)$ such that
    \begin{equation*}
        k(\tX)\cong k(X)(\alpha)\cong k(X)[z]/p_\alpha(z)
    \end{equation*}
    where
    \begin{equation*}
        p_\alpha(z)=z^n+a_1z^{n-1}+\cdots+a_n
    \end{equation*}
    is a minimal polynomial for $\alpha$. Let $U\subset X$ denote the largest Zariski open for which all the coefficients $a_i\in k(X)$ are regular functions. The polynomial $p_\alpha$ determines a map
    \begin{align*}
        U&\to^{p_\alpha} \Pc_n\\
        u&\mapsto (a_1(u),\ldots,a_n(u))
    \end{align*}
    and this map determines a pullback square
    \begin{equation*}
        \xymatrix{
            \tX|_U \ar[r] \ar[d] & \tPc_n \ar[d] \\
            U \ar[r]^{p_\alpha} & \Pc_n
        }
    \end{equation*}
    Therefore, by Lemma \ref{lemma:first},
    \begin{align*}
        \RD(\tX\dr X)=\RD(\tX|_U\to U)\le \RD(\tPc_n\to\Pc_n).
    \end{align*}
\end{proof}

This universal property will show up in many of the examples and computations below.

\subsection{Galois closures and resolvent degree}

In this subsection we will relate the resolvent degree of $L/K$ with the resolvent degree of various related extensions, for example the Galois closure of $L$ over $K$.  This often will allow us in practice to reduce to the case of Galois covers.

\begin{definition}[{\bf Galois theory terminology for rational covers}]
    Let $\tX\dr X$ be a rational cover of $k$-varieties.
    \begin{enumerate}
        \item If $\tX$ is irreducible, then the map $\tX\dr X$ is {\em Galois} if the associated extension of function fields $k(X)\into k(\tX)$ is Galois.  We write $\Gal(\tX\dr X)$ for the Galois group of the associated extension of function fields.
        \item If $\tX$ is irreducible, we say that a map $\tX'\dr X$ is a {\em Galois closure} of $\tX\dr X$ if it factors as $\tX'\dr \tX\dr X$ and if $k(X)\into k(\tX')$ is a Galois closure of $k(X)\into k(\tX)$.
        \item Given $Z\dr Y\dr X$ irreducible, with $Z\dr X$ Galois, the {\em Galois closure} of $Y\dr X$ in $Z\dr X$ is any integral model of the Galois closure of $k(X)\into k(Y)$ in $k(Z)$.
        \item If $\tX$ is reducible, we say $\tX\dr X$ is {\em Galois} if the restriction of the map to each irreducible component of $\tX$ is Galois. Similarly, we say $\tX'\dr X$ is a {\em Galois closure} of $\tX\dr X$ if there is a bijection between the set of irreducible components of $\tX'$ and of $\tX$ such that the restiction of the map $\tX'\dr X$ realizes each component of $\tX'$ as a Galois closure of the corresponding component of $\tX$. Given $Z\dr Y\dr X$ with $Z$ Galois, a {\em Galois closure} of $Y$ in $Z\dr X$ is union of Galois closures of the components of $Y$.
    \end{enumerate}
\end{definition}

The following lemma will allow us to pass to Galois closures when computing $\RD$.  The analogous lemma for $\ed$ is Lemma 2.3 of \cite{BuRe1}.

\begin{lemma}[{\bf \boldmath$\RD$ is preserved under Galois closure}]
\label{lemma:galois}
    Let $\tX\dr X$ be a rational cover of $k$-varieties. Let $\tX'\dr X$ be a Galois closure of $\tX$. Then
    \begin{equation*}
        \RD(\tX\dr X)=\RD(\tX'\dr X).
    \end{equation*}
\end{lemma}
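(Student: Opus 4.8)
The plan is to establish the two inequalities separately, after a reduction to the irreducible case. First I would use Lemma~\ref{lemma:conn}: since $\RD(\tX\dr X)=\max_{i,j}\RD(\tX_{i,j}\dr X_i)$ and, by definition, a Galois closure of $\tX\dr X$ restricts on each corresponding component to a Galois closure of $\tX_{i,j}\dr X_i$, it suffices to treat the case where $X$ and $\tX$ are irreducible. Then $K:=k(X)\into L:=k(\tX)$ is a finite (separable, as $\mathrm{char}\,k=0$) extension, say of degree $n$, and $\tX'\dr X$ is irreducible with function field the splitting closure $M$ of $L/K$. The inequality $\RD(\tX'\dr X)\ge\RD(\tX\dr X)$ is then immediate: by definition $\tX'\dr X$ factors as $\tX'\dr\tX\dr X$ through rational covers, so Lemma~\ref{lemma:max} gives $\RD(\tX'\dr X)=\max\{\RD(\tX'\dr\tX),\RD(\tX\dr X)\}\ge\RD(\tX\dr X)$.

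The content is the reverse inequality $\RD(\tX'\dr X)\le\RD(\tX\dr X)$, and the key point I would isolate is that $\tX'$ occurs, up to birational equivalence over $X$, as an irreducible component of the $n$-fold fibre power $\tX^{\times_X n}:=\tX\times_X\cdots\times_X\tX$. To see this, shrink $X$ to a dense open over which $\tX\to X$ is finite étale; this is harmless since $\RD$ is a birational invariant (Lemma~\ref{lemma:first}(4)). Then the generic fibre of $\tX^{\times_X n}$ over $X$ is $\Spec(L^{\otimes_K n})$, and $L^{\otimes_K n}$ is a finite product of finite separable field extensions of $K$. Writing $L=K(\alpha)$ with $\alpha$ a primitive element having conjugates $\alpha_1,\dots,\alpha_n$ in $M$, the $n$ embeddings $\iota_i\colon L\into M$ with $\iota_i(\alpha)=\alpha_i$ assemble into a $K$-algebra surjection $L^{\otimes_K n}\onto M$ (product of the $\iota_i$ inside $M$), which identifies $M$ with one of the field factors of $L^{\otimes_K n}$; hence $\tX'$ is birational over $X$ to one of the irreducible components of $\tX^{\times_X n}$.

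Granting this, I would finish with a short formal chain. The fibre powers sit in a tower of rational covers
\[
\tX^{\times_X n}\dr\tX^{\times_X(n-1)}\dr\cdots\dr\tX^{\times_X 2}\dr\tX\dr X,
\]
in which each step $\tX^{\times_X(k+1)}=\tX^{\times_X k}\times_X\tX\dr\tX^{\times_X k}$ is a rational pullback of $\tX\dr X$ along $\tX^{\times_X k}\dr X$; so $\RD(\tX^{\times_X(k+1)}\dr\tX^{\times_X k})\le\RD(\tX\dr X)$ by Lemma~\ref{lemma:first}(3). Iterating Lemma~\ref{lemma:max} along this tower gives $\RD(\tX^{\times_X n}\dr X)\le\RD(\tX\dr X)$. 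Since $\tX'$ is birational over $X$ to a component of $\tX^{\times_X n}$, Lemma~\ref{lemma:conn} (together with birational invariance) yields $\RD(\tX'\dr X)\le\RD(\tX^{\times_X n}\dr X)\le\RD(\tX\dr X)$, as needed.

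I expect the only real friction to be the reducibility/étale-locus bookkeeping: one must choose the dense open of $X$ so that $\tX^{\times_X n}\to X$ is finite étale and its irreducible components match the field factors of $L^{\otimes_K n}$, and one must check compatibility of ``Galois closure'' with the component-wise decomposition of Lemma~\ref{lemma:conn}. Both are routine in characteristic $0$; the genuine idea is simply that the splitting field is a component of a fibre power of $\tX\dr X$, which lets the formal properties of $\RD$ squeeze $\RD(\tX'\dr X)$ between $\RD(\tX\dr X)$ from both sides.
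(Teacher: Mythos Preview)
Your argument is correct and rests on the same underlying observation as the paper: the Galois closure appears inside iterated fibre products of $\tX$ over $X$, so its $\RD$ is controlled by $\RD(\tX\dr X)$ via the formal properties (pullback, composition, components). The packaging differs slightly. The paper proceeds by induction on the degree $n$: it writes $\tX\times_X\tX=\tX\sqcup\tX_1$ with $\tX_1\dr\tX$ of degree $n-1$, applies the inductive hypothesis to the Galois closure $\tX_1'$ of $\tX_1\dr\tX$, and then observes that $\tX_1'\dr X$ is already a Galois closure of $\tX\dr X$. You instead go directly to the $n$-fold fibre power and identify the splitting field as a single field factor of $L^{\otimes_K n}$, then bound the whole tower $\tX^{\times_X n}\dr\cdots\dr\tX\dr X$ at once. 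Your route is marginally cleaner in that it avoids the induction and makes the role of the fibre power explicit; the paper's route has the small advantage that each step only involves a two-fold product, so the decomposition $\tX\times_X\tX=\tX\sqcup\tX_1$ via the primitive element theorem is immediate without discussing the full factor decomposition of $L^{\otimes_K n}$. Either way, the substance is the same.
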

\begin{proof}
    By Lemma \ref{lemma:conn}, it suffices to prove this in the case where $\tX$ is irreducible. For this, we induct on the degree of the map $\tX\dr X$. For the base case, $n=2$, every quadratic extension (in characteristic 0) is already Galois, so the lemma holds trivially.

    For the induction step, assume the lemma holds for all rational covers of $k$-varieties of degree less than $n$.

    Let $\tX\dr X$ be a rational cover of degree $n$.  Consider the composition
    \begin{equation*}
        \tX\times_X\tX \dr \tX\dr X
    \end{equation*}
    The fiber product $\tX\times_X\tX$ splits as $\tX\coprod \tX_1$ (at the level of function fields, this follows from the Primitive Element Theorem), where $\tX\to\tX$ is the identity, and $\tX_1\dr \tX$ is a rational cover of degree $n-1$. By the inductive hypothesis,
    \begin{equation*}
        \RD(\tX_1'\dr\tX)=\RD(\tX_1\dr\tX)
    \end{equation*}
    for any Galois closure $\tX_1'\dr\tX$ of $\tX_1\dr\tX$. By Lemma \ref{lemma:conn},
    \begin{equation*}
        \RD(\tX_1\dr\tX)\le \RD(\tX\dr X).
    \end{equation*}
    Therefore, by Lemma \ref{lemma:max},
    \begin{equation*}
        \RD(\tX_1'\dr X)=\max\{\RD(\tX_1\dr\tX),\RD(\tX\dr X)\}=\RD(\tX\dr X).
    \end{equation*}
    But, by construction, we see that $\tX_1'\dr X$ is a Galois closure of $\tX\dr X$, and this completes the induction step.

\begin{comment}
    By the Primitive Element Theorem (applied to the function fields), for any separable rational cover $\tX\dr X$ there exists a Zariski open $U\subset X$ and a pullback diagram
    \begin{equation*}
        \xymatrix{
            \tX|_U \ar[r] \ar[d] \Ab^n/S_{n-1} \ar[d] \\
            U \ar[r] & \Ab^n/S_n
        }

    \end{equation*}

    for some irreducible polynomial $p(x)\in K[x]$. Fix some isomorphism, and let $\alpha$ denote the image of $x$. Then
    \begin{equation*}
        L[x]/p(x)\cong L\otimes_K K[x]/p(x)\cong L\otimes_K L.
    \end{equation*}
    Note also that $L\otimes_K L\cong L\times L[x]/\frac{p(x)}{x-\alpha}=:L\times L'$. By Lemma \ref{lemma:first},
    \begin{equation*}
        \RD((L\otimes_K L)/L)\le \RD(L/K).
    \end{equation*}
    By Lemma \ref{lemma:conn}
    \begin{equation*}
        \RD(L'/L)\le \RD(L\otimes_K L/L)\le \RD(L/K).
    \end{equation*}
    Let $\widetilde{L'}$ denote a splitting field for $L'$ over $L$. By construction, $[L':L]\le n-1$, so by the inductive hypothesis,
    \begin{equation*}
        \RD(\widetilde{L'}/L)=\RD(L'/L).
    \end{equation*}
    But, by construction $\widetilde{L'}$ is also a splitting field for $L$ over $K$. Therefore, by Lemma \ref{lemma:max}
    \begin{align*}
        \RD(\widetilde{L'}/K)&\le \max\{\RD(\widetilde{L'}/L),\RD(L/K)\}\\
        &=\max\{\RD(L'/L),\RD(L/K)\}\\
        &=\RD(L/K).
    \end{align*}
\end{comment}
\end{proof}

\subsection{Accessory irrationalities}

We now give two results about resolvent degree of field extensions; we defer stating the corresponding results for rational covers of $k$-varieties to below.  We adopt this presentation to make use of constructions such as compositum and intersection of subfields which are easier to state in the setting of field extensions than for covering spaces, where they correspond to greatest lower bounds and least upper bounds in a lattice of covering spaces.

The following allows one to pass to towers of Galois covers when analyzing $\RD$.

\begin{lemma}[{\bf Improving towers}]
\label{lemma:galoisstep}
  Let $K\into L$ be a finite extension of $k$-fields. Then without loss of generality, in any tower realizing $\RD(L/K)$, we can assume that the extension at each stage is Galois.  More precisely, for any $d>0$ (e.g. $d=\RD(L/K)$), let
  \begin{equation*}
        K=K_0\into K_1\into\cdots\into K_r
  \end{equation*}
  be any sequence of extensions with $L\into K_r$ (as fields over $K$) and such that $\ed(K_i/K_{i-1})\le d$ for all $i$. Then there exists a diagram of sequences of extensions
  \begin{equation}\label{galois1}
    \xymatrix{
        K \ar@{=}[d] \ar@{^{(}->}[r] & K_1 \ar@{^{(}->}[r] \ar@{^{(}->}[d] & K_2 \ar@{^{(}->}[r] \ar@{^{(}->}[d] & \cdots \ar@{^{(}->}[r] & K_r \ar@{^{(}->}[d]\\
        K \ar@{=}[d] \ar@{^{(}->}[r] & K_1' \ar@{^{(}->}[r] \ar@{=}[d] & K_2' \ar@{^{(}->}[r] \ar@{^{(}->}[d] & \cdots \ar@{^{(}->}[r] & K_r' \ar@{^{(}->}[d]\\
        K \ar@{^{(}->}[r] & \tilde{K}_1 \ar@{^{(}->}[r] & \tilde{K}_2 \ar@{^{(}->}[r] & \cdots \ar@{^{(}->}[r] & \tilde{K}_r
    }
  \end{equation}
  such that for all $i$,
  \begin{enumerate}
    \item $K_i'$ is Galois over $K_{i-1}'$,
    \item $\tilde{K}_i$ is a Galois closure of $K_i$ over $K$,
    \item $\ed(K_i'/K_{i-1}')\le d$ for all $i$, and
    \item $\RD(\tilde{K}_i/K)=\RD(K_i/K)\le d$ for all $i$.
  \end{enumerate}
\end{lemma}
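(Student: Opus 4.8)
The plan is to prove the four statements simultaneously by building the two new towers from the top down using Galois closures, and then to check that the essential-dimension and resolvent-degree bounds survive this operation. The key input is that resolvent degree is preserved under Galois closure (Lemma~\ref{lemma:galois}), together with the behavior of $\RD$ under composition (Lemma~\ref{lemma:max}) and the base-change inequalities (Lemma~\ref{lemma:first}); the analogue for $\ed$ is what the reference to Lemma 2.3 of \cite{BuRe1} supplies.

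First I would define the bottom row: set $\tilde{K}_0 = K$ and inductively let $\tilde{K}_i$ be a Galois closure of $K_i$ over $K$, chosen inside a fixed algebraic closure $\overline{K}$ so that $\tilde{K}_{i-1} \subseteq \tilde{K}_i$ (this is possible since $K_i \supseteq K_{i-1}$ forces the Galois closure of $K_i/K$ to contain that of $K_{i-1}/K$). This gives statement (2) by construction, and statement (4) is then immediate from Lemma~\ref{lemma:galois} applied to $K_i/K$: $\RD(\tilde{K}_i/K)=\RD(K_i/K)$, and $\RD(K_i/K)\le d$ follows from Lemma~\ref{lemma:max} since $K_i/K$ decomposes through the tower $K_0\into\cdots\into K_i$ each step of which has $\ed\le d$, hence $\RD\le \ed\le d$ by Lemma~\ref{lemma:first}(1). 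Next I would define the middle row by setting $K_i' := \tilde{K}_{i-1}\cdot K_i$, the compositum inside $\overline{K}$ of $\tilde{K}_{i-1}$ with $K_i$. This choice makes $K_{i-1}' \subseteq \tilde{K}_{i-1} \subseteq K_i'$ automatically, so the vertical maps $K_i' \into \tilde{K}_i$ and $K_{i-1}' \into K_i'$ of the diagram make sense, and one checks $K_1' = \tilde{K}_0\cdot K_1 = K_1$, matching the equality in the left column of \eqref{galois1}. The point of composing with $\tilde{K}_{i-1}$ (rather than just taking $\tilde{K}_{i-1}$ itself at each stage, which would fail (1)) is that $K_i'$ is now generated over $\tilde{K}_{i-1}$ by $K_i$, which is generated over $K_{i-1}$ by a primitive element whose conjugates all already lie in $\tilde{K}_{i-1}\cdot\tilde{K}_i$; one then argues $K_i'$ is in fact Galois over $\tilde{K}_{i-1}$, hence a fortiori we want it Galois over $K_{i-1}'$ — which is statement (1) — and this is where I would be most careful.

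For statement (1), the cleanest route is: $K_i'/K_{i-1}'$ is obtained from $K_i/K_{i-1}$ by the base change $K_{i-1}\into K_{i-1}'$ (i.e. $K_i' = K_i \otimes_{K_{i-1}} K_{i-1}'$, after checking this compositum is a field, or passing to the relevant field factor), and since $K_i/K_{i-1}$ sits inside its Galois closure $\tilde{K}_i/K$ which contains $K_{i-1}'$ (indeed $\tilde{K}_{i-1}\subseteq\tilde{K}_i$), base-changing up to $K_{i-1}'$ makes the extension Galois — concretely, $K_i'$ is the splitting field over $K_{i-1}'$ of the minimal polynomial of a primitive element of $K_i/K_{i-1}$, because that polynomial already splits into linear factors over $\tilde{K}_i \supseteq K_i', K_{i-1}'$. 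For statement (3), $\ed(K_i'/K_{i-1}')\le d$ follows from Lemma~\ref{lemma:first}(2)–(3) (base change only decreases $\ed$) applied to the extension $K_i/K_{i-1}$ of $\ed\le d$, using the realization of $K_i'/K_{i-1}'$ as a base change of (a field factor of) $K_i\otimes_{K_{i-1}}K_{i-1}'$ and Lemma~\ref{lemma:conn} to pass to components. Finally, since each $K_i'/K_{i-1}'$ is Galois with $\ed\le d$, the middle row is itself a tower realizing (the bound $d$ on) $\RD(L/K)$: $L\into K_r \into K_r'$, and $\RD(K_i'/K_{i-1}')\le \ed(K_i'/K_{i-1}')\le d$, which together with Lemma~\ref{lemma:max} re-proves $\RD(L/K)\le d$ — so nothing is lost by demanding Galois steps.

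The main obstacle I anticipate is the bookkeeping around compositum fields: one must know these composita are fields (not just étale algebras) and must track that all the inclusions in diagram \eqref{galois1} actually hold, which forces the specific inductive choice $K_i' = \tilde{K}_{i-1}\cdot K_i$ and the choice of all Galois closures inside one fixed $\overline{K}$. If a compositum fails to be a field one should instead take an appropriate field factor and invoke Lemma~\ref{lemma:conn} to control $\RD$; the argument goes through with that caveat. Everything else — statements (2) and (4), and the $\ed$ and $\RD$ bounds — is then a formal consequence of Lemmas~\ref{lemma:first}, \ref{lemma:conn}, \ref{lemma:max} and \ref{lemma:galois}.
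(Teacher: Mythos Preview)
Your construction of the middle row has a genuine gap: defining $K_i' := \tilde{K}_{i-1}\cdot K_i$ does \emph{not} make $K_i'/K_{i-1}'$ Galois. Already at $i=1$ your formula gives $K_1' = \tilde{K}_0\cdot K_1 = K_1$, which is not Galois over $K$ in general; note also that the diagram \eqref{galois1} actually asserts $K_1' = \tilde{K}_1$, not $K_1' = K_1$ (the equality sign is between $K_1'$ and $\tilde{K}_1$). More seriously, for $i\ge 2$ your argument that ``$K_i'$ is the splitting field over $K_{i-1}'$ of the minimal polynomial of a primitive element of $K_i/K_{i-1}$'' is incorrect: the conjugates of such a primitive element lie in the Galois closure of $K_i/K_{i-1}$, which sits inside $\tilde{K}_i$, but there is no reason for them to lie in $\tilde{K}_{i-1}\cdot K_i$. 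For a concrete counterexample, take $K=\Qb$, $K_1=\Qb(\sqrt{2})$, $K_2=\Qb(\sqrt{2},\sqrt[3]{3})$: then $\tilde{K}_1=K_1$, so your $K_2'=K_2$, and $K_2/K_1$ is not Galois.

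The paper's fix is exactly what is missing from your construction: one must take a further Galois closure at each step. Inductively, $K_{i+1}'$ is defined as the Galois closure (inside $\tilde{K}_r$) of the compositum $K_{i+1}\cdot K_i'$ over $K_i'$; in particular $K_1'=\tilde{K}_1$. Statement (1) then holds by construction, and the essential-dimension bound (3) survives because, after the base-change step (which you handle correctly via Lemma~\ref{lemma:first}), one invokes \cite[Lemma~2.3]{BuRe1} to see that passing to the Galois closure does not increase $\ed$. Your treatment of the bottom row and of statements (2) and (4) is fine and matches the paper.
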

\begin{proof}
	Because we work in characterstic 0, all extensions are separable.  Therefore, for the bottom row of \eqref{galois1}, define $\tilde{K}_r$ to be a Galois closure of $K_r$ over $K$, and for $i<r$, let $\tilde{K}_i$ denote the Galois closure over $K$ of $K_i$ in $\tilde{K}_r$. Lemma \ref{lemma:galois} implies that
    \[\RD(\tilde{K}_i/K)\le\RD(K_i/K)\le d.\]

    To construct the middle row, we prove by induction that for any $1\le j\le r$ there exists a diagram of sequences of extensions of the form \eqref{galois1} in which $\ed(K_i'/K_{i-1}')\le \ed(K_i/K_{i-1})$ for all $i$, and in which $K_i'$ is Galois over $K_{i-1}'$ for $i\le j$. For the base case $j=1$, let $K_1'=\tilde{K}_1$. This is Galois over $K_0$.  For the induction step, suppose that we have defined $K_j'$ for $j\le i$.  Define $K_{i+1}'$ to be the Galois closure (in $\tilde{K}_r$) of the compositum (in $\tilde{K}_r$) of $K_{i+1}$ with $K_i'$ over $K_i$. Then the definition of essential dimension and \cite[Lemma 2.3]{BuRe1} imply that
    \begin{align*}
        \ed(K_{i+1}'/K_i')\le\ed(K_{i+1}/K_i)
    \end{align*}
    as required to complete the induction step.
\end{proof}

The following proposition is quite useful when analyzing the resolvent degree of $G$-covers (and their subcovers) for $G$ simple. In particular, it shows that a general solution can always be put into a reduced form where the monodromy of the original rational cover occurs precisely at the last stage.

\begin{proposition}[{\bf Accessory irrationalities}]\label{prop:accirr}
    Let $G$ be a finite simple group.  Let $K\into L$ be a Galois extension of $k$-fields with $\Gal(L/K)=G$. Fix $d\ge 0$. Let
    \begin{equation}\label{accstart}
        K=K_0\into K_1\into\cdots\into K_r
    \end{equation}
    be a sequence of extensions such that
    \begin{enumerate}
        \item $\ed(K_i/K_{i-1})\le d$ for all $i$, and
        \item $L\into K_r$ as fields over $K$.
    \end{enumerate}
    Then, there exists $s<r$ and a modified tower
    \begin{equation*}
         K=K_0\into K_1\into\cdots\into K_s\into K_s'
    \end{equation*}
    such that
    \begin{enumerate}
        \item $K_s'$ is a subfield of the Galois closure of $K_{s+1}$ over $K_s$,
        \item $\ed(K_s'/K_s)\le \ed(K_{s+1}/K_s)\le d$,
        \item $L\into K_s'$ as $K$-fields, and under this embedding, $K_s\otimes_K L\to^{\cong} K_s'$.
    \end{enumerate}
\end{proposition}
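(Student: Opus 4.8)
The plan is to show that one can always push the entire monodromy of $L/K$ into a single final step, by taking $K_s'$ to be the compositum $K_sL$ for a well-chosen index $s$. I would work inside a fixed algebraic closure of $K$ containing $K_r$ (hence $L$ and all the $K_i$), forming composita there. Since $G$ is simple it is nontrivial, so $L\neq K$; since $L\cap K_0=L\cap K=K$ while $L\cap K_r=L$ (because $L\into K_r$), and since $L\cap K_i$ is nondecreasing in $i$, the set $\{i: L\cap K_i=K\}$ is an initial segment $\{0,\dots,s\}$ with $0\le s<r$. I take $K_s':=K_sL$ and verify the three asserted properties in turn.

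Property (3) is quick. The inclusion $L\hookrightarrow K_s'$ is defined over $K$. The multiplication map $K_s\otimes_K L\to K_s'$ is a surjection of $K_s$-algebras; since $L/K$ is Galois and $L\cap K_s=K$, the theorem on natural irrationalities gives $[K_s':K_s]=[L:L\cap K_s]=[L:K]=\dim_{K_s}(K_s\otimes_K L)$, so this surjection of $K_s$-vector spaces of equal finite dimension is an isomorphism. For property (1), let $\widetilde{K_{s+1}}$ be the Galois closure of $K_{s+1}$ over $K_s$ and put $D:=L\cap K_{s+1}$. By maximality of $s$ we have $D\supsetneq K$, so $\Gal(L/D)\subsetneq G$; as $G$ is simple the normal core of $\Gal(L/D)$ in $G$ is trivial, hence $L$ is the Galois closure of $D$ over $K$. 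Because $L\cap K_s=K$, the restriction map $\Gal(\overline{K_s}/K_s)\to\Gal(L/K)=G$ is surjective, so the $K_s$-conjugates of $D$ are precisely its $G$-translates, whose compositum is $L$; therefore the Galois closure of $K_sD$ over $K_s$ equals $K_sL=K_s'$. Since $K_sD\subseteq K_{s+1}$, this closure lies in $\widetilde{K_{s+1}}$, giving $K_s'\subseteq\widetilde{K_{s+1}}$.

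The essential-dimension estimate (2) is the main point. Put $e:=\ed(K_{s+1}/K_s)\le d$ and write $K_{s+1}\cong K_s\otimes_F\tilde F$ with $F\subseteq K_s$, $\trdeg_k F=e$, and $\tilde F/F$ finite. Let $M$ be a Galois closure of $\tilde F/F$ and form $E:=K_sM$ inside $\overline{K_s}$; then $E/K_s$ is Galois and contains $K_{s+1}$, hence contains $\widetilde{K_{s+1}}$ and therefore $K_s'$. The crucial observation is that $E$ still has small essential dimension over $K_s$: setting $F_1:=M\cap K_s\subseteq K_s$, the extension $M/F_1$ is Galois (so $M$ and $K_s$ are linearly disjoint over $F_1$) and $\trdeg_k F_1=\trdeg_k F=e$, whence $E\cong K_s\otimes_{F_1}M$ and $\ed(E/K_s)\le e$. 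Finally $K_s'/K_s$ is Galois, so $K_s'=E^{N}$ for some $N\trianglelefteq\Gal(E/K_s)\cong\Gal(M/F_1)$; passing to $N$-invariants in the presentation $E\cong K_s\otimes_{F_1}M$ gives $K_s'\cong K_s\otimes_{F_1}M^{N}$, which exhibits $K_s'/K_s$ as a rational pullback from a variety with function field $F_1$, so $\ed(K_s'/K_s)\le\trdeg_k F_1=e=\ed(K_{s+1}/K_s)\le d$. (In effect this last chain is the essential-dimension analogue of Lemma~\ref{lemma:galois}: $\ed(\widetilde{K_{s+1}}/K_s)=\ed(K_{s+1}/K_s)$.)

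I expect (2) to be the real obstacle. One cannot bound $\ed(K_s'/K_s)$ through the inclusion $K_s'\subseteq K_{s+1}$ alone, since $\ed$ is not monotone under passing to subextensions in general; and one cannot route naively through the Galois closure of $K_{s+1}$ over $K_s$, since — unlike $\RD$ (Lemma~\ref{lemma:galois}) — $\ed$ is not obviously preserved by Galois closure. The resolution is to descend to the field of definition $F_1\subseteq K_s$, over which the relevant Galois extension $M$ genuinely base-changes, and only then to take the $G$-quotient; the routine verifications are that $E\cong K_s\otimes_{F_1}M$ (linear disjointness of a Galois extension from $K_s$ over the intersection) and that taking $N$-invariants commutes with this base change. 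Simplicity of $G$ enters twice: in property (1) via the triviality of the normal core, and in property (3) via the identity $L\cap K_s=K$ at stage $s$, which is what forces $K_s'$ to be exactly the base change $K_s\otimes_K L$.
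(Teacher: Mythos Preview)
Your proof is correct and follows essentially the same route as the paper. Your index $s=\max\{i:L\cap K_i=K\}$ coincides with the paper's $s=\max\{i:\Gal(\overline K/K_i)\twoheadrightarrow G\}$, since the restriction $\Gal(\overline K/K_i)\to\Gal(L/K)=G$ is onto precisely when $L\cap K_i=K$; and your $K_s'=K_sL$ is exactly the paper's $(\tilde K_{s+1})^N$ with $N=\ker(\Gal(\tilde K_{s+1}/K_s)\to G)$, both being $K_s\otimes_K L$.

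The one place you diverge in presentation is property~(2). The paper asserts $\ed(\tilde K_{s+1}/K_s)=\ed(K_{s+1}/K_s)$ by citing (what should be) \cite[Lemma~2.3]{BuRe1}, and then observes that if $\tilde K_{s+1}=K_s\otimes_F\tilde F$ one may pass to $N$-invariants to get $K_s'=K_s\otimes_F\tilde F^{\,N}$. You instead build the Galois closure $M$ of $\tilde F/F$ downstairs, replace $F$ by $F_1=M\cap K_s$, and only then take invariants. Your version is a bit more explicit about why the Galois group acts compatibly on the small field, but the content is the same argument; in particular your parenthetical identification of this step with the $\ed$-analogue of Lemma~\ref{lemma:galois} is exactly what the paper is invoking.
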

\begin{proof}
    Define $s$ to be the maximum $i$ such that the absolute Galois group of $K_i$ surjects onto $G$, i.e.
    \begin{equation*}
        s:=\max\{i~|~\Gal(\overline{K}/K_i)\onto G\}.
    \end{equation*}
    Let $\tilde{K}_{s+1}$ denote the Galois closure of $K_{s+1}$ over $K_s$. Then
    \begin{align*}
        \Gal(\overline{K}/\tilde{K}_{s+1})&\unlhd \Gal(\overline{K}/K_s)\intertext{and, by Lemma \ref{lemma:galois}}
        \ed(\tilde{K}_{s+1}/K_s)&=\ed(K_{s+1}/K_s).
    \end{align*}
    Because $\Gal(\overline{K}/K_s)\onto G$ is a surjection, it must take $\Gal(\overline{K}/\tilde{K}_{s+1})$ to a normal subgroup of $G$.  By the definition of $s$, $\Gal(\overline{K}/\tilde{K}_{s+1})\subset \Gal(\overline{K}/K_{s+1})$ does not surject onto $G$.  Therefore, because $G$ is simple, $\Gal(\overline{K}/\tilde{K}_{s+1})$ must be in the kernel of the map to $G$.  This implies that $L$ is contained in $\tilde{K}_{s+1}$, because
    \begin{equation*}
        L=\overline{K}^{\Gal(\overline{K}/L)}=\overline{K}^{\ker(\Gal(\overline{K}/K)\to G)}\subset\overline{K}^{\Gal(\overline{K}/\tilde{K}_{s+1})}=\tilde{K}_{s+1}.
    \end{equation*}
    Therefore, we have $L\into\tilde{K}_{s+1}$ but $L$ is not contained in $K_s$. Define
    \begin{equation*}
        N:=\ker(\Gal(\tilde{K}_{s+1}/K_s)\onto G).
    \end{equation*}
    Define
    \begin{equation*}
        K_{s'}:=(\tilde{K}_{s+1})^N.
    \end{equation*}
    Observe that $\ed(K_{s'}/K_s)\le \ed(K_{s+1}/K_s)=\ed(\tilde{K}_{s+1}/K_s)$, because if $\tilde{K}_{s+1}=K_s\otimes_F \tilde{F}$, then $K_{s'}:=K_s\otimes_F \tilde{F}^N$. Finally, because $\Gal(\overline{K}/K_s)$ surjects onto $G=\Gal(L/K)$, we conclude that
    \begin{equation*}
        K_{s'}=K_s\otimes_K L.
    \end{equation*}
\end{proof}

\begin{corollary}
    Let $G$ be a finite simple group.  Let $L/K$ be any finite extension of $k$-fields for which the Galois closure has Galois group $G$. Then $\RD(L/K)$ equals the minimal $d$ for which there exists a tower
    \begin{equation*}
        K=K_0\into K_1\into\cdots \into K_{r-1} \into K_r
    \end{equation*}
    for which
    \begin{enumerate}
        \item $\ed(K_i/K_{i-1})\le d$, and
        \item $K_r\cong K_{r-1}\otimes_K L$.
    \end{enumerate}
\end{corollary}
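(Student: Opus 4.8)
The plan is to derive this corollary directly from Proposition~\ref{prop:accirr}, using the preliminary reductions of Lemma~\ref{lemma:galoisstep} together with Lemma~\ref{lemma:galois} to handle the fact that $L/K$ need not itself be Galois. First I would observe that the minimal $d$ over towers of the restricted shape in the statement is certainly $\geq \RD(L/K)$, since any such tower is in particular an admissible tower in the sense of Definition~\ref{definition:RD2} (the final step $K_r \cong K_{r-1}\otimes_K L$ is a base change, hence has $\ed \le d$ by Lemma~\ref{lemma:first} applied to $\ed(L/K)$, provided $\ed(L/K)\le d$; but $\ed(L/K)=\ed(L'/K)\le\RD(L/K)\le d$ where $L'$ is the Galois closure — alternatively, just note $K_{r-1}\otimes_K L$ is a rational pullback of $\Spec L\to\Spec K$ along a map to a $\le d$-dimensional variety since $\RD(L/K)\le d$ means the last step of \emph{some} optimal tower already achieves this). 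So the content is the reverse inequality: any optimal tower for $\RD(L/K)$ can be replaced by one of the restricted form.

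For the reverse inequality, set $d=\RD(L/K)$ and let $K=K_0\into\cdots\into K_r$ be a tower achieving it, so $\ed(K_i/K_{i-1})\le d$ for all $i$ and $L\into K_r$ over $K$. Let $L'/K$ be the Galois closure of $L/K$, with $\Gal(L'/K)=G$ simple by hypothesis. By Lemma~\ref{lemma:galois}, $\RD(L'/K)=\RD(L/K)=d$, so by definition there is a tower $K=K_0\into\cdots\into K_{r'}$ with $\ed$-steps $\le d$ and $L'\into K_{r'}$; replacing the original tower by this one, we may assume $L'\into K_r$. Now apply Proposition~\ref{prop:accirr} to $L'/K$ (which is Galois with group $G$) and this tower: it produces $s<r$ and a modified tower $K=K_0\into\cdots\into K_s\into K_s'$ with $\ed(K_s'/K_s)\le d$, with $L'\into K_s'$ over $K$, and with $K_s\otimes_K L' \xrightarrow{\cong} K_s'$. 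Renaming, set $K_r := K_s'$ and $K_{r-1}:=K_s$ in a tower of length one less; iterating is unnecessary since a single application already gives the desired shape, but note the new tower has length $s+1\le r$, so induction on $r$ (or simply the fact that the new tower still satisfies the hypotheses of the corollary with $L$ replaced by $L'$) closes the argument. Finally, to pass from $L'$ back to $L$: since $L\into L'$ over $K$, tensoring the final isomorphism with $\Gal$-invariants, or simply observing $L\into L'\into K_s' = K_s\otimes_K L'$ and $K_s\otimes_K L\into K_s\otimes_K L'$, lets us take as our terminal step $K_{r-1}\otimes_K L$, which embeds in $K_s'$ and has $\ed \le \ed(K_s'/K_s)\le d$ by Lemma~\ref{lemma:first}(3); and by Lemma~\ref{lemma:max} the augmented tower still has all $\ed$-steps $\le d$. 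This exhibits a tower of the required form with parameter $\le d$.

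The main obstacle — and the place where simplicity of $G$ is essential — is the step inside Proposition~\ref{prop:accirr}: controlling where in the tower the monodromy of $L'/K$ first ``appears'' (the index $s$ with $\Gal(\overline K/K_s)\onto G$ but $\Gal(\overline K/K_{s+1})$ not surjecting) requires that the Galois closure of $K_{s+1}/K_s$ contain $L'$, which uses that the image in $G$ of the corresponding normal subgroup is a normal subgroup of the simple group $G$, hence trivial. For non-simple $G$ one would instead need to peel off composition factors one at a time, which is exactly why the paper later invokes the Jordan--Hölder strategy (Theorem~\ref{thm:JH}) rather than a single clean reduction. Here, however, all of that work is already packaged in Proposition~\ref{prop:accirr}, so the corollary is essentially a bookkeeping exercise: the only care needed is (i) the passage $L\rightsquigarrow L'$ via Lemma~\ref{lemma:galois}, (ii) verifying the terminal base-change step really does have $\ed\le d$ (which is where $\RD(L/K)\le d$ gets used, via $\ed(L/K)\le\RD(L/K)$ or directly via Lemma~\ref{lemma:first}(3)), and (iii) noting that a tower of the restricted form is a legitimate competitor in Definition~\ref{definition:RD2} so that the two minima agree.
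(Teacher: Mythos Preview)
Your approach is essentially the paper's: pass to the Galois closure $L'$ via Lemma~\ref{lemma:galois}, apply Proposition~\ref{prop:accirr} to obtain a tower terminating in $K_s\otimes_K L'$, then descend to $K_s\otimes_K L$ by taking $H$-invariants where $L=(L')^H$. The paper's writeup is simply more compact---it replaces $K_r$ by $K_r^H$ in one line---but the content is the same.

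Two points of sloppiness worth flagging. First, in your easy direction you write $\ed(L'/K)\le\RD(L/K)$; this inequality goes the wrong way (one always has $\RD\le\ed$, not the reverse). Fortunately the easy direction needs no such argument: condition (1) in the statement already requires $\ed(K_r/K_{r-1})\le d$, so any tower of the restricted form is automatically an admissible tower for Definition~\ref{definition:RD2}, and the inequality $\min\ge\RD(L/K)$ is immediate. Second, in the hard direction you justify $\ed(K_s\otimes_K L/K_s)\le\ed(K_s'/K_s)$ by citing Lemma~\ref{lemma:first}(3), but that lemma concerns pullbacks, whereas $K_s\otimes_K L$ is a \emph{subextension} of $K_s'=K_s\otimes_K L'$, not a pullback of it. The correct justification is that $K_s'$ is the Galois closure over $K_s$ of $K_s\otimes_K L$ (since $G$ is simple and $H\subsetneq G$ has trivial normal core), so the two have equal essential dimension by \cite[Lemma~2.3]{BuRe1}; equivalently, in the presentation $K_s'=K_s\otimes_F\tilde F$ produced inside Proposition~\ref{prop:accirr}, one has $(K_s')^H=K_s\otimes_F\tilde F^H$, exhibiting $K_s\otimes_K L$ as pulled back from the same base $F$ of transcendence degree $\le d$. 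This is exactly what the paper's one-line ``replace $K_r$ by $K_r^H$'' is invoking.
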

\begin{proof}
    For any tower solving the Galois closure $\tilde{L}$ of $L$ over $K$, we can apply Proposition~\ref{prop:accirr}.  Let $H\subset G$ be the subgroup such that $L=\tilde{L}^H$. Applying Proposition \ref{prop:accirr} and Lemma \ref{lemma:galois}, $\RD(L/K)$ is the minimal $d$ for which there exists a tower
    \begin{equation*}
        K=K_0\into K_1\into\cdots \into K_{r-1} \into K_r
    \end{equation*}
    for which
    \begin{enumerate}
        \item $\ed(K_i/K_{i-1})\le d$, and
        \item $K_r\cong K_{r-1}\otimes_K \tilde{L}$.
    \end{enumerate}
    Replacing $K_r$ by $K_r^H\cong K_{r-1}\otimes_K L$, we obtain a tower of the desired form.
\end{proof}

\begin{remark}
An {\em accessory irrationality} to a rational cover $\tilde{X}\dr X$ is any rational cover 
$E\dr X$ which does not factor through $\tilde{X}$.  If $\RD(L/K)\neq\ed(L/K)$, then accessory irrationalities are intrinsic features of any solution of $L/K$ in $d<\ed(L/K)$ variables.  The notion of accessory irrationality first appeared in work of Kronecker and received intensive study in Klein's lectures on the icosahedron \cite{Kl2} (see also the appendix to \cite{DoMc}). In particular, Klein proved that
    \[\ed(\tPc_5\to\Pc_5)=2\neq \RD(\tPc_5\to\Pc_5)=1\] and thus that accessory irrationalities are an inescapable feature of solutions of the quintic in one variable.
\end{remark}

\begin{question}
    Let $K\into L$ be a finite extension of $k$-fields.  Among towers solving $L/K$ in the minimal number of variables, can we always find one in which the stages of the tower have monotone increasing essential dimension?
\end{question}

The geometric statement of Lemma \ref{lemma:galoisstep} is the following.
\begin{corollary}[{\bf Improving towers, geometric version}]
\label{cor:galoisstep}
  Let $\tX\dr X$ be a rational cover. Then without loss of generality, in any tower solving $\tX\dr X$ in $d$ variables, we can assume that the map at each stage is Galois.  More precisely, for any $d>0$ (e.g. $d=\RD(\tX\dr X)$), let
  \begin{equation*}
        Y_r\dr \cdots \dr Y_1\dr Y_0=X
  \end{equation*}
  be a tower of rational covers with $Y_r\dr X$ factoring through $\tX$ and such that for all $i$, $Y_i\dr Y_{i-1}$ is pulled back from a rational cover of varieties of dimension at most $d$. Then there exists a diagram of sequences of rational covers
  \begin{equation}\label{galois}
    \xymatrix{
        \tilde{Y}_r \ar@{-->}[r] \ar@{-->}[d] & \cdots \ar@{-->}[r] & \tilde{Y}_2 \ar@{-->}[r] \ar[d] & \tilde{Y_1} \ar@{-->}[r] \ar@{=}[d] & X \ar@{=}[d] \\
        Y_r' \ar@{-->}[r] \ar@{-->}[d] & \cdots \ar@{-->}[r] & Y_2' \ar@{-->}[d]\ar@{-->}[r] & Y_1'\ar@{-->}[r] \ar@{-->}[d] & X \ar@{=}[d]\\
        Y_r \ar@{-->}[r] & \cdots \ar@{-->}[r] & Y_2 \ar@{-->}[r] & Y_1\ar@{-->}[r] & X
    }
  \end{equation}
  such that for all $i$,
  \begin{enumerate}
    \item $Y_i'\dr Y_{i-1}'$ is Galois,
    \item $\tilde{Y}_i\dr X$ is a Galois closure of $Y_i\dr X$,
    \item $\ed(Y_i'\dr Y_{i-1}')\le d$, and
    \item $\RD(\tilde{Y}_i\dr X)=\RD(Y_i\dr X)\le d$.
  \end{enumerate}
\end{corollary}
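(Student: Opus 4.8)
The plan is to deduce this geometric statement directly from its field-theoretic counterpart, Lemma~\ref{lemma:galoisstep}, via the dictionary between rational covers and finite extensions of function fields, just as in the proof of Proposition~\ref{prop:equivofdefs}. First I would reduce to the case where $X$ and $\tX$ are irreducible: by Lemma~\ref{lemma:conn} (and the shrinking argument in its proof, which replaces $X$ by the disjoint union of its irreducible components) it suffices to treat one component of $X$ at a time, and over an irreducible base the notion of Galois closure of a possibly reducible $\tX$ is defined component-by-component, so the diagram \eqref{galois} may be assembled component-wise. After shrinking $X$ further we may also assume $X$ and the $Y_i$ are smooth. So assume $\tX$ and $X$ are irreducible.

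Next, pass to function fields. The tower $Y_r\dr\cdots\dr Y_0=X$ with $Y_r\dr X$ factoring through $\tX$ corresponds to a tower of finite extensions $k(X)=k(Y_0)\into k(Y_1)\into\cdots\into k(Y_r)$ with $k(\tX)\into k(Y_r)$ as extensions of $k(X)$. Unwinding Definition~\ref{definition:rd1}, the hypothesis that $Y_i\dr Y_{i-1}$ is a rational pullback of a rational cover of varieties of dimension at most $d$ is precisely the statement that $\ed(k(Y_i)/k(Y_{i-1}))\le d$: a rational pullback of some $\widetilde W\dr W$ with $\dim W=d$ gives $k(Y_i)\cong k(Y_{i-1})\otimes_{k(W)}k(\widetilde W)$ birationally with $\trdeg_k k(W)=d$, and conversely a model of such a tensor decomposition exhibits the required pullback square. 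Thus we are exactly in the situation of Lemma~\ref{lemma:galoisstep}, with $K=k(X)$, $L=k(\tX)$, and $K_i=k(Y_i)$.

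Now apply Lemma~\ref{lemma:galoisstep} to produce the diagram \eqref{galois1}: a middle row $K=K_0'\into K_1'\into\cdots\into K_r'$ with each $K_i'$ Galois over $K_{i-1}'$ and $\ed(K_i'/K_{i-1}')\le d$, and a bottom row $K=\tilde K_0\into\cdots\into\tilde K_r$ with $\tilde K_i$ a Galois closure of $K_i$ over $K$ and $\RD(\tilde K_i/K)=\RD(K_i/K)\le d$, all fitting into a commuting ladder over the original tower. I would then choose integral models compatibly: for each field occurring in \eqref{galois1} pick an irreducible $k$-variety with that function field (e.g.\ the normalization of $X$ in the corresponding extension), arranged so that every inclusion of fields is realized by a dominant rational map. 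Since being Galois, being a Galois closure, $\ed$, and $\RD$ are all birational invariants (Lemma~\ref{lemma:first}(4), Lemma~\ref{lemma:galois}, and the definitions), the choice of models is immaterial. This yields the diagram \eqref{galois} of rational covers with $Y_i'\dr Y_{i-1}'$ Galois, $\tilde Y_i\dr X$ a Galois closure of $Y_i\dr X$, $\ed(Y_i'\dr Y_{i-1}')\le d$, and, using Proposition~\ref{prop:equivofdefs}, $\RD(\tilde Y_i\dr X)=\RD(\tilde K_i/K)=\RD(K_i/K)=\RD(Y_i\dr X)\le d$, as required.

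I do not expect a genuine obstacle: all the real content sits in Lemma~\ref{lemma:galoisstep}, and the rest is the by-now-standard translation between rational covers and function-field extensions. The only points needing a little care are the identification of the geometric essential dimension of a single step (rational pullback from a $d$-dimensional cover) with the field-theoretic $\ed$ of the corresponding extension, and the simultaneous choice of integral models making the entire ladder \eqref{galois1} lift to \eqref{galois} — and both are handled exactly as in the proof of Proposition~\ref{prop:equivofdefs}.
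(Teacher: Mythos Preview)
Your proposal is correct and matches the paper's approach exactly: the paper gives no explicit proof of this corollary, simply introducing it with ``The geometric statement of Lemma~\ref{lemma:galoisstep} is the following,'' thereby relying on precisely the dictionary you spell out (Proposition~\ref{prop:equivofdefs}, Lemma~\ref{lemma:conn}, and the definitions). Your write-up is in fact more careful than the paper's treatment, correctly flagging the reduction to irreducible components and the compatible choice of integral models.
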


The geometric statement of Proposition \ref{prop:accirr} is the following.
\begin{corollary}[{\bf Geometric accessory irrationalities}]
\label{cor:accirr}
    Let $G$ be a finite simple group.  Let $\tX\dr X$ be a 
    rational cover for which the Galois closure has Galois group $G$. Then $\RD(\tX\dr X)$ equals the minimal $d$ for which there exists a tower
    \begin{equation*}
        Y_r\dr\cdots\dr Y_1\dr Y_0=X
    \end{equation*}
    for which
    \begin{enumerate}
        \item $Y_r\cong Y_{r-1}\times_X \tX$, and
        \item for each $i$, $Y_{i+1}\dr Y_i$ is pulled back from a map of varieties of dimension at most $d$, i.e. there is a rational pullback square with $\dim_k(Z_i)\le d$
            \begin{equation*}
                \xymatrix{
                    Y_{i+1}\ar@{-->}[r] \ar@{-->}[d] & \widetilde{Z}_i \ar@{-->}[d]\\
                    Y_i \ar@{-->}[r] & Z_i
                }
            \end{equation*}
    \end{enumerate}
\end{corollary}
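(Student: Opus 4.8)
The plan is to obtain this as the geometric counterpart of the Corollary to Proposition~\ref{prop:accirr}, using the dictionary between rational covers and function-field extensions supplied by Proposition~\ref{prop:equivofdefs}. One inequality is free: any tower $Y_r\dr\cdots\dr Y_0=X$ satisfying (1) and (2) is in particular a tower solving $\tX\dr X$ in $d$ variables, since (2) gives $\ed(Y_{i+1}\dr Y_i)\le d$ at every stage and (1) provides the required dominant map of $X$-schemes $Y_r\cong Y_{r-1}\times_X\tX\dr\tX$; hence $\RD(\tX\dr X)$ is at most the minimal such $d$. It remains to start from an optimal tower solving $\tX\dr X$ and reshape it into one of the restricted form with $d=\RD(\tX\dr X)$.

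I would first treat the case where $\tX$ (and hence $X$) is irreducible. Put $K:=k(X)$ and $L:=k(\tX)$; by hypothesis the Galois closure $\widetilde{L}$ of $L$ over $K$ has $\Gal(\widetilde{L}/K)=G$, which is simple, and by Proposition~\ref{prop:equivofdefs} we have $\RD(\tX\dr X)=\RD(L/K)=:d$. The Corollary to Proposition~\ref{prop:accirr} then furnishes a tower
\begin{equation*}
  K=K_0\into K_1\into\cdots\into K_{r-1}\into K_r
\end{equation*}
with $\ed(K_i/K_{i-1})\le d$ for all $i$ and $K_r\cong K_{r-1}\otimes_K L$. Now I would realize this tower geometrically exactly as in the proof of Proposition~\ref{prop:equivofdefs}: pick integral models $Y_i$ with $k(Y_i)=K_i$ (so $Y_0$ is birational to $X$); for each $i$ pick a subfield $F_i\subseteq K_{i-1}$ and finite extension $F_i\subseteq \tilde{F}_i$ witnessing $\ed(K_i/K_{i-1})\le d$, i.e. with $K_i\cong K_{i-1}\otimes_{F_i}\tilde{F}_i$ and $\trdeg_k(F_i)\le d$, and pick models $Z_i,\widetilde{Z}_i$ for $F_i,\tilde{F}_i$; then $\dim_k Z_i=\trdeg_k(F_i)\le d$, and $Y_i\dr Y_{i-1}$ is birationally the pullback of $\widetilde{Z}_i\dr Z_i$ along $Y_{i-1}\dr Z_i$, which is the rational pullback square required by (2). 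The identification $K_r\cong K_{r-1}\otimes_K L$ translates into a birational identification $Y_r\cong Y_{r-1}\times_X\tX$, which is (1). Since $\RD$ is a birational invariant (Lemma~\ref{lemma:first}), I may replace every variety by its chosen model, and the resulting tower has exactly the shape demanded, with $d=\RD(\tX\dr X)$; this yields the reverse inequality in the irreducible case.

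For reducible $\tX$ with irreducible components $\tX_1,\dots,\tX_m$ over an irreducible $X$ (the general case reduces to this by Lemma~\ref{lemma:conn}), I would proceed via Galois closures. Each $\widetilde{\tX_j}\dr X$ is an irreducible cover with Galois group $G$, so the irreducible case already proved gives for each $j$ a restricted-form tower $X\dr\cdots\dr B^j_{m_j-1}\dr B^j_{m_j}=B^j_{m_j-1}\times_X\widetilde{\tX_j}$, all of whose stages have essential dimension $\le d:=\RD(\tX\dr X)=\max_j\RD(\tX_j\dr X)$. Amalgamating these into a single tower ending at $Y_{r-1}:=B^1_{m_1}\times_X\cdots\times_X B^m_{m_m}$ (interleaving the stages, each new stage being a rational pullback of some $B^j$-stage and hence of essential dimension $\le d$), one obtains a tower through which every $\widetilde{\tX_j}$, and hence every $\tX_j$, factors over $X$. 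Consequently the pullback $Y_{r-1}\times_X\tX=\coprod_j Y_{r-1}\times_X\tX_j\dr Y_{r-1}$ is a split cover, so the final stage $Y_r:=Y_{r-1}\times_X\tX\dr Y_{r-1}$ has essential dimension $0\le d$, and (1)--(2) hold with $d=\RD(\tX\dr X)$.

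The substantive input here is entirely Proposition~\ref{prop:accirr}, where the simplicity of $G$ is used to force $L$ into the Galois closure of the penultimate stage; the rest is the cover/function-field translation via Proposition~\ref{prop:equivofdefs} together with birational invariance. The point I expect to require the most care is the reducible case: making the top stage literally the fibre product $Y_{r-1}\times_X\tX$ rather than merely a cover dominating $\tX$, and keeping the essential dimensions under control through the amalgamation of the component towers.
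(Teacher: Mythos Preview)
Your proposal is correct and follows exactly the approach the paper intends: the paper offers no proof at all, simply declaring Corollary~\ref{cor:accirr} to be ``the geometric statement of Proposition~\ref{prop:accirr},'' and you have correctly recognized that the precise field-theoretic input is the unnamed Corollary following Proposition~\ref{prop:accirr} (for not-necessarily-Galois $L/K$), and then carried out the translation via Proposition~\ref{prop:equivofdefs} and birational invariance. Your treatment of the reducible case goes beyond anything the paper supplies; the phrase ``the Galois closure has Galois group $G$'' is most naturally read for irreducible $\tX$, so that extra argument, while sound in outline, is more than the paper asks for.
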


\section{The resolvent degree of a finite group}
\label{section:RD:group}

In this section we define the resolvent degree $\RD(G)$ of a finite group $G$.  This intrinsic invariant of $G$ gives a uniform upper bound on the complexity of all $G$-covers of all varieties.  Just as with the theory of essential dimension from which it was inspired, $\RD(G)$ will be quite useful.

\subsection{Definition and basic properties}
Throughout this section we fix a ground field $k$ of characteristic 0.  We will consider finite groups $G$ with $G$-actions by automorphisms on varieties $X$, so that $X/G$ is a variety.  We say that a $G$-variety $X$ is {\em primitive} if $G$ acts transitively on the set of irreducible components of $X$.  We say that $X$ is {\em faithful} if the representation $G\to\Aut(X)$ is injective.

\begin{definition}[{\bf Resolvent degree of a finite group}]
\label{definition:RD:group}
    Let $G$ be a finite group. The {\em resolvent degree} $\RD(G)$ of $G$ is defined to be
    \[\RD(G):=\sup\{\RD(X\to X/G): \text{$X$ is a primitive, faithful $G$-variety over $k$}\}.  \]
\end{definition}

While $\RD(G)$ gives a universal upper bound on any $\RD(X\to X/G)$, it does not in general provide any lower bound on any particular $G$-cover; see below. On the other hand we will prove that
$\RD(G)=\RD(V\to V/G)$ for any faithful linear $G$-variety $V$, and more generally for any ``versal'' $G$-variety.  Replacing $\RD$ by $\ed$ in Definition~\ref{definition:RD:group} gives the definition
of Buhler-Reichstein \cite{BuRe1} for the essential dimension of a finite group.  Indeed, the two invariants of $G$-varieties compare as follows.

\begin{lemma}
    Let $G$ be any finite group. Then
    \[\RD(G)\le \ed(G)<\infty.\]
\end{lemma}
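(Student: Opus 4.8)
The statement to prove is $\RD(G) \le \ed(G) < \infty$ for any finite group $G$. I would break this into the two inequalities: first the finiteness $\ed(G) < \infty$, then the comparison $\RD(G) \le \ed(G)$. The finiteness of $\ed(G)$ is the Buhler–Reichstein result and can be invoked: embed $G \hookrightarrow \GL_n(k)$ for some $n$ (e.g.\ via the regular representation), giving a faithful linear $G$-variety $V = \A^n$; then $\ed(G) \le \dim(V) = n < \infty$ by the definition of $\ed(G)$ as a supremum of $\ed(X \to X/G)$ over faithful primitive $G$-varieties together with the bound $\ed(X\to X/G) \le \dim X$. (One should note that a faithful linear representation is primitive as a $G$-variety since $\A^n$ is irreducible, so it is an admissible test object in the supremum.)

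**The comparison inequality.** For $\RD(G) \le \ed(G)$, the natural approach is to compare the two suprema term by term: I want to show that for \emph{every} primitive, faithful $G$-variety $X$,
\[
\RD(X \to X/G) \le \ed(X \to X/G).
\]
Taking the supremum over all such $X$ on both sides then yields $\RD(G) \le \ed(G)$ directly from the definitions (Definition~\ref{definition:RD:group} and its $\ed$-analogue). The per-variety inequality $\RD(\tX \to X) \le \ed(\tX \to X)$ is exactly Lemma~\ref{lemma:first}(1), which was proved earlier in the excerpt. So the argument reduces to: (i) observe $\ed(G) < \infty$ from a faithful linear model; (ii) for each test variety apply Lemma~\ref{lemma:first}(1); (iii) pass to suprema.

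**Where the care is needed.** There is essentially no "hard part" here — the lemma is a packaging result — but the one point to be careful about is matching the index sets of the two suprema. Both $\RD(G)$ and $\ed(G)$ are defined as suprema over the \emph{same} class of $G$-varieties (primitive, faithful, over $k$), so the termwise inequality from Lemma~\ref{lemma:first}(1) transfers cleanly. The only subtlety worth a sentence is confirming that this class is nonempty and contains an object of finite dimension (so that $\ed(G)$, and hence $\RD(G)$, is a finite supremum rather than a supremum over the empty set or an unbounded one): the regular representation $k[G]$ provides a faithful, irreducible (hence primitive) $G$-variety of dimension $|G|$, giving the crude bound $\RD(G) \le \ed(G) \le |G|$. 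I would phrase the proof in three lines: cite Buhler–Reichstein (or the regular representation bound) for $\ed(G) < \infty$, then write $\RD(X \to X/G) \le \ed(X \to X/G)$ for each admissible $X$ by Lemma~\ref{lemma:first}(1), then take $\sup_X$ of both sides.
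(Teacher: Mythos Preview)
Your approach is correct and essentially the same as the paper's: use the pointwise inequality $\RD(X\to X/G)\le\ed(X\to X/G)$ from Lemma~\ref{lemma:first}(1) and pass to the supremum, then invoke Buhler--Reichstein for finiteness of $\ed(G)$. The paper organizes this as a single chain $\RD(X\to X/G)\le\ed(X\to X/G)\le\ed(\Ab^G\to\Ab^G/G)=\ed(G)$, citing \cite[Theorem~3.1]{BuRe1} for the middle step, but the content is identical.

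One small expository slip: your justification for $\ed(G)\le\dim V$ (``by the definition of $\ed(G)$ as a supremum \ldots\ together with the bound $\ed(X\to X/G)\le\dim X$'') does not actually work as stated, since $\dim X$ is unbounded over all faithful $G$-varieties. What you need---and what Buhler--Reichstein actually proves---is that a faithful linear $V$ is \emph{versal}, so $\ed(X\to X/G)\le\ed(V\to V/G)\le\dim V$ for \emph{every} faithful $X$. You do cite Buhler--Reichstein for the conclusion, so this is only a wording issue, but be careful not to suggest that the naive dimension bound alone gives finiteness.
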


\begin{proof}
For any rational cover $X\dr Y$ we have by definition $\RD(X\dr Y)\le \ed(X\dr Y)$. In particular, if $X$ is any faithful $G$-variety then
    \begin{align*}
        \RD(X\to X/G)&\le \ed(X\to X/G)\\
        &\le \ed(\Ab^G\to\Ab^G/G)\tag{by Theorem 3.1 of \cite{BuRe1}}\\
        &=\ed(G)<\infty
    \end{align*}
    where $\Ab^G$ denotes the regular representation of $G$ viewed as a faithful linear $G$-variety.
\end{proof}

\begin{theorem}\label{thm:JH}
   Let $G$ be a finite group, and let $\{G_i\}_{i=1}^n$ denote the set of simple factors in its Jordan--H\"older decomposition. Then
   \begin{equation*}
        \RD(G)\le\max_{1\le i\le n}\{\RD(G_i)\}.
   \end{equation*}
   Moreover, if $G_i\hookrightarrow G$ for all $i$, then
   \begin{equation*}
        \RD(G)=\max_{1\le i\le n}\{\RD(G_i)\}.
   \end{equation*}
\end{theorem}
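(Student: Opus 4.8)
The plan is to prove the two inequalities separately, using the tools on Galois closures, compositions, and accessory irrationalities assembled above. For the upper bound $\RD(G)\le\max_i\{\RD(G_i)\}$, fix a primitive faithful $G$-variety $X$; by Lemma~\ref{lemma:conn} and Lemma~\ref{lemma:galois} we may pass to a component and then to the Galois closure, so it suffices to bound $\RD(\widetilde X\to \widetilde X/G)$ for $\widetilde X\to \widetilde X/G$ a $G$-Galois cover, i.e. to bound $\RD(L/K)$ for $L/K$ Galois with group $G$. Choose a composition series $G=N_0\rhd N_1\rhd\cdots\rhd N_n=1$ with $N_{i-1}/N_i\cong G_i$. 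This gives a tower of fields $K=L^{N_0}\subset L^{N_1}\subset\cdots\subset L^{N_n}=L$, in which each stage $L^{N_i}/L^{N_{i-1}}$ is Galois with group $G_i$ (a subquotient realized as an honest Galois group over the intermediate field). By Lemma~\ref{lemma:max} ($\RD$ of a composition is the max of the stages) it suffices to bound each $\RD(L^{N_i}/L^{N_{i-1}})$, and this is a $G_i$-Galois cover of the variety with function field $L^{N_{i-1}}$, hence bounded by $\RD(G_i)$. Taking the max over $i$ and then the sup over $X$ gives the claimed inequality.

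For the reverse inequality, assume each $G_i\hookrightarrow G$. Fix the index $i_0$ achieving $\max_i\RD(G_i)$, and pick a primitive faithful $G_{i_0}$-variety $W$ with $\RD(W\to W/G_{i_0})$ as close to $\RD(G_{i_0})$ as we like (or equal, if the sup is attained). The idea is to produce a faithful $G$-variety whose associated cover has resolvent degree at least $\RD(W\to W/G_{i_0})$. The natural candidate is $V$ a faithful linear representation of $G$ (e.g. the regular representation), restricted along $G_{i_0}\hookrightarrow G$: then $V$ is also a faithful $G_{i_0}$-variety, and I would use that $\RD(V\to V/G)\ge \RD(V\to V/G_{i_0})$ — because $V/G_{i_0}\to V/G$ is a dominant rational map and $V\to V/G_{i_0}$ is obtained from $V\to V/G$ by base change along it (Lemma~\ref{lemma:first}(3) together with Lemma~\ref{lemma:conn}, since $V\times_{V/G}(V/G_{i_0})$ contains $V\to V/G_{i_0}$ as a component). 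Then I would invoke versality of the linear action: any faithful $G_{i_0}$-variety, in particular $W$, is (up to accessory irrationalities) a rational pullback of $V\to V/G_{i_0}$, which gives $\RD(V\to V/G_{i_0})\ge \RD(W\to W/G_{i_0})$. Chaining these, $\RD(G)\ge\RD(V\to V/G)\ge\RD(V\to V/G_{i_0})\ge\RD(W\to W/G_{i_0})$, and letting $W$ vary yields $\RD(G)\ge\RD(G_{i_0})=\max_i\RD(G_i)$.

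The main obstacle is the versality input used in the lower bound: one needs that a faithful linear $G_{i_0}$-variety is "versal" in the sense that, after allowing accessory irrationalities, every $G_{i_0}$-variety is a rational pullback of it, so that its cover dominates the resolvent degree of every $G_{i_0}$-cover. In the essential-dimension world this is the no-name lemma / Buhler–Reichstein versality, and the paper presumably establishes the $\RD$ analogue (it is alluded to in the introduction as the statement $\RD(G)=\RD(V\to V/G)$ for $V$ faithful linear); I would either cite that result once it is in place, or, if it is only proved later, reorganize so that Theorem~\ref{thm:JH}'s lower bound is deduced after it. A secondary point to handle carefully is the subquotient-realization step in the upper bound: one must check that $L^{N_i}/L^{N_{i-1}}$ really is Galois with group $G_i$ and that the base variety $L^{N_{i-1}}$ carries a genuine faithful primitive $G_i$-variety structure so that $\RD(G_i)$ applies — this is standard Galois theory (the intermediate field $L^{N_{i-1}}$ is fixed under the normal subgroup $N_{i-1}$, and $L/L^{N_{i-1}}$ is $N_{i-1}$-Galois with $L^{N_i}$ its $G_i$-Galois subextension), but it should be spelled out.
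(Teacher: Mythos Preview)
Your approach is essentially the same as the paper's. For the upper bound, both you and the paper factor a $G$-cover along a composition series and apply Lemma~\ref{lemma:max}; the paper works directly with a primitive faithful $G$-variety $X$ and inducts on the number of Jordan--H\"older factors, carefully partitioning $X$ into primitive $H_{n-1}$-pieces when $X$ is not itself $H_{n-1}$-primitive, while you first reduce to the irreducible $G$-Galois (field-theoretic) case. One small point to tighten: after passing to a component of a primitive but reducible $X$ and taking the Galois closure over $X/G$, the resulting Galois group is in general only a subquotient of $G$, not $G$ itself. This is harmless for the inequality (its Jordan--H\"older factors lie among the $G_i$, so your tower argument still gives the bound $\max_i\RD(G_i)$), but your phrase ``a $G$-Galois cover'' should be weakened accordingly, or you should note that an induction on $|G|$ handles the reducible case.

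For the lower bound, the paper packages exactly your argument as a separate lemma (Lemma~\ref{lemma:subgroup}): $\RD(H)\le\RD(G)$ for any subgroup $H\subset G$, proved via versality of faithful linear representations (Lemma~\ref{lemma:subversal}, Proposition~\ref{proposition:RD:linear}, Proposition~\ref{prop:whyversal}), and then simply cites it. Your identification of this versality input as the main obstacle, and your suggestion to establish it first and then deduce the equality, is precisely the paper's organization.
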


The analogue of Theorem~\ref{thm:JH} for essential dimension is false, even in simple examples: take $G_1=G_2=\Z/2\Z$ and $G=G_1\times G_2$.  Note too that $\ed(G/H)$ can be much larger than $\ed(G)$ for normal subgroups $H\lhd G$; see Theorem 1.5 of \cite{MR}.  We do not know if the hypothesis in Theorem~\ref{thm:JH} that $G_i\subseteq G$ for all $i$ is necessary.

\begin{proof}
    If $G_i\hookrightarrow G$ for all $i$, then by Lemma \ref{lemma:subgroup} below, $\RD(G)\ge\max_i\{\RD(G_i)\}$. To show the opposite inequality in general, we induct on the number of simple factors (with multiplicity). For the base of the induction $n=1$, there is nothing to show.  Assume therefore that we have shown it for $n-1$. Let
    \begin{equation*}
        0\unlhd H_1\unlhd \cdots\unlhd H_n=G
    \end{equation*}
    be a composition series for $G$ with $H_i/H_{i-1}=G_i$. Let $X$ be a primitive faithful $G$-variety.

    The map $X\to X/G$ factors as
    \begin{equation*}
        X\to X/H_{n-1}\to X/G.
    \end{equation*}
    If $X$ is not primitive as an $H_{n-1}$-variety, then the set of $H_{n-1}$-orbits on the set of irreducible components of $X$ partitions $X$ into a union of primitive $H_{n-1}$-varieties. Moreover, because the $G$-action is primitive and $H_{n-1}\unlhd G$, the union of the $H_{n-1}$-quotients is a primitive $G_n=G/H_{n-1}$-variety. Lemma \ref{lemma:conn} implies that
    \begin{equation*}
       \RD(X\to X/H_{n-1})=\max_j\{\RD(X_j\to X_j/H_{n-1})\}
    \end{equation*}
    where the maximum is taken over the set of primitive $H_{n-1}$-varieties in the above partition of $X$. In particular,
    \begin{equation*}
        \RD(H_{n-1})\ge \RD(X\to X/H_{n-1}).
    \end{equation*}
    Therefore
    \begin{align*}
        \max\{\RD(G_n),\RD(H_{n-1})\}&\ge \max\{\RD(X\to X/H_{n-1}),\RD(X/H_{n-1}\to X/G)\}\\
        &=\RD(X\to X/G)\tag{by Lemma \ref{lemma:max}.}
    \end{align*}
    Passing to the supremum and invoking the induction hypothesis, we obtain the desired inequality
    \begin{equation*}
        \max_{1\le i\le n}\{\RD(G_i)\}\ge \RD(G).
    \end{equation*}
\end{proof}

As a simple application of Theorem~\ref{thm:JH}, we have the following.

\begin{corollary}\label{cor:asolv}
    Let $G$ be an ``almost solvable'' group, i.e. a group whose simple factors are cyclic or $A_5$. Then $\RD(G)=1$.
\end{corollary}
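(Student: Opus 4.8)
The plan is to deduce Corollary~\ref{cor:asolv} directly from Theorem~\ref{thm:JH}, the main theorem just proved. First I would recall that an ``almost solvable'' group $G$ is, by definition, one all of whose Jordan--H\"older factors $\{G_i\}_{i=1}^n$ are either cyclic groups of prime order or copies of $A_5$. So it suffices to check that $\RD(G_i)=1$ for each such simple factor and then invoke the inequality $\RD(G)\le \max_i\{\RD(G_i)\}$ from the first part of Theorem~\ref{thm:JH}. (Only the \emph{first} part of Theorem~\ref{thm:JH} is needed, since we are after an upper bound; the harder ``moreover'' direction about $G_i\hookrightarrow G$ is irrelevant here.) Finally, since $G$ is assumed nontrivial (or else the statement is vacuous, with $\RD$ of the trivial group equal to $1$ by convention), $\RD(G)\ge 1$ is automatic, giving equality.

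The one substantive point is the claim $\RD(G_i)=1$ for $G_i$ cyclic of prime order or $G_i\cong A_5$. For $G_i$ cyclic this is classical: a $\Z/p$-cover, after passing to its Galois closure (which is the cover itself), is a cyclic cover; over a field of characteristic $0$ containing enough roots of unity such a cover is a radical, i.e.\ a pullback from $z\mapsto z^p$ on $\Pb^1$, so it has $\ed\le 1$ and hence $\RD\le 1$; the general case follows because adjoining roots of unity is itself a solvable (hence $\RD=1$) operation that can be absorbed into the tower, so $\RD(\Z/p)=1$. For $A_5$, this is precisely the statement that the general quintic is solvable in one parameter: by Lemma~\ref{lemma:universal} and Lemma~\ref{lemma:galois}, any faithful primitive $A_5$-cover has $\RD$ bounded by $\RD(\tPc_5\to\Pc_5)$ (pass to the Galois closure, which is an $S_5$- or $A_5$-cover, realize it via a degree-$5$ or degree-$6$ resolvent as a pullback of the root cover of a quintic), and Bring's theorem \cite{Bri} gives $\RD(\tPc_5\to\Pc_5)=1$; alternatively one cites Klein's icosahedral solution of the quintic directly. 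Either way $\RD(A_5)=1$.

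Assembling: for any primitive faithful $G$-variety $X$, Theorem~\ref{thm:JH} gives $\RD(X\to X/G)\le \RD(G)\le \max_i\{\RD(G_i)\}=1$, and since $\RD$ of any rational cover of positive-dimensional base is at least $1$, we conclude $\RD(G)=1$. I do not anticipate a genuine obstacle here — the corollary is a formal consequence of Theorem~\ref{thm:JH} together with the two base computations $\RD(\Z/p)=1$ and $\RD(A_5)=1$, both of which are either classical or already implicit in the machinery of Section~2. The only thing to be careful about is the degenerate cases (the trivial group, and making sure the inequality $\RD(G)\ge 1$ is justified), which are handled by the standing conventions on $\RD$.
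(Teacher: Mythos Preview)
Your proposal is correct and follows essentially the same strategy as the paper: invoke the first inequality of Theorem~\ref{thm:JH} to reduce to the simple factors, verify $\RD=1$ for cyclic groups (radicals) and for $A_5$ (Bring), and note the trivial lower bound $\RD(G)\ge 1$ for nontrivial $G$.

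The one place you differ slightly from the paper is in deducing $\RD(A_5)=1$. The paper takes a detour through $S_5$: it forward-references Corollary~\ref{cor:Hfaithandn1} to get $\RD(S_5)=\RD(\tPc_5\to\Pc_5)=1$, and then applies the ``moreover'' clause of Theorem~\ref{thm:JH} (both $A_5$ and $\Z/2$ embed in $S_5$) to conclude $\RD(A_5)=1$. Your route via Lemmas~\ref{lemma:universal} and~\ref{lemma:galois} is more self-contained, but your wording is garbled: an $A_5$-cover $X\to X/A_5$ is already Galois, so ``pass to the Galois closure'' is not the right phrase. What you mean is: take the intermediate degree-$5$ cover $X/A_4\to X/A_5$, bound its $\RD$ by $\RD(\tPc_5\to\Pc_5)$ via Lemma~\ref{lemma:universal}, and then recover $\RD(X\to X/A_5)$ from Lemma~\ref{lemma:galois} since $X\to X/A_5$ is the Galois closure of that degree-$5$ cover. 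Also, your lower-bound sentence ``$\RD$ of any rational cover of positive-dimensional base is at least $1$'' is false as stated (the identity has $\RD=0$); the paper is more careful here, arguing that a faithful geometrically connected $G$-variety admits no $G$-equivariant rational map to a faithful $0$-dimensional $G$-variety.
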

\begin{proof}
    By Theorem \ref{thm:JH},
    \begin{equation*}
        \RD(G)\le\max\{\{\RD(\Z/n\Z)\}_{n\in\Nb},\RD(A_5)\}.
    \end{equation*}
    Because $G$ is nontrivial, there exists a faithful, geometrically connected $G$-variety $X$ of dimension $\ge 1$. Because $X$ is geometrically connected, there is no faithful $G$-equivariant rational map $X\dr Z$ for $Z$ any faithful 0-dimensional $G$-variety. We conclude that $\RD(G)\ge 1$.

    By Bring's bound and item \ref{corollary:Hilbert:for:faithful} of Corollary \ref{cor:Hfaithandn1} \ref{corollary:Hilbert:for:faithful} below,
    \begin{equation*}
        1=\RD(\tPc_5\to\Pc_5)=\RD(S_5)=\RD(A_5)
    \end{equation*}
    where the last equality follows from Theorem \ref{thm:JH}. The result now follows from the equality
    \begin{equation*}
        \RD(\Z/n\Z)=1 \ \ \text{for all $n\geq 2$}
    \end{equation*}
    which follows from the classical fact that any characteristic 0 field extension with solvable Galois group is solvable in radicals.
\end{proof}

Corollary \ref{cor:asolv} follows from the primary cases of simple groups where $\RD$ is currently known
exactly (i.e. cyclic groups and $A_5$).\footnote{Klein also proved that $\RD(\PSL_2(\F_7)=1)$. See \cite[Proposition 4.2.4]{FKW}  for a contemporary treatment.} In general, we have at best upper bounds, e.g. $\RD(A_6)\leq 2$ and $\RD(A_7)\leq 3$.  Theorem \ref{thm:JH} indicates the importance of computing the resolvent degree of finite groups.

\begin{problem}[{\bf \boldmath$\RD(G)$ for \boldmath$G$ finite simple}]
    Compute the resolvent degree of all finite simple groups $G$.
\end{problem}

\subsection{Versal $G$-varieties}
\label{subsection:versal}

It is useful to have a model (not always unique) $G$-variety to which all other $G$-varieties can be compared.  Such varieties, called ``versal $G$-varieties'', play a crucial role in the theory of essential dimension.  After recalling the definition (cf. \cite{DuRe1}) and some variations that arise naturally when studying resolvent degree, we give some examples.

\begin{definition}[{\bf Versal \boldmath$G$-variety}]
    A faithful $G$-variety $X$ is {\em versal} if for every $G$-invariant Zariski open $U\subset X$ and every faithful $G$-variety $Y$, there exists a $G$-equivariant rational map $Y\dr U$.
\end{definition}

Our interest in versality comes from the following.
\begin{proposition}\label{prop:whyversal}
    Let $X$ be a versal $G$-variety. Then
    \begin{enumerate}
        \item $\ed(X\to X/G)=\ed(G)$.
        \item $\RD(X\to X/G)=\RD(G)$.
    \end{enumerate}
\end{proposition}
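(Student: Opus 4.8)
The plan is to prove both equalities in parallel, deducing them from versality together with the base-change and birational-invariance properties of $\ed$ and $\RD$ already in hand (Lemma~\ref{lemma:first}, together with the corresponding standard facts for $\ed$; see \cite{BuRe1}). The inequalities $\ed(X\to X/G)\le\ed(G)$ and $\RD(X\to X/G)\le\RD(G)$ are immediate from Definition~\ref{definition:RD:group}, since $X$ is a faithful $G$-variety; if one wants to match the restriction to primitive varieties there, one replaces $X$ by a primitive $G$-invariant dense open or applies Lemma~\ref{lemma:conn}. So the real content is the reverse inequalities, and for these it suffices to show, for \emph{every} primitive faithful $G$-variety $Y$,
\[\ed(Y\to Y/G)\le\ed(X\to X/G)\qquad\text{and}\qquad\RD(Y\to Y/G)\le\RD(X\to X/G);\]
taking the supremum over all such $Y$ then finishes the proof.

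Fix a primitive faithful $G$-variety $Y$. Since $X$ is versal, there is a $G$-equivariant rational map $\phi\colon Y\dr X$; passing to quotients gives $\bar\phi\colon Y/G\dr X/G$, and the data $(Y\to Y/G,\ \phi)$ assemble into a $G$-equivariant rational map $\psi\colon Y\dr (Y/G)\times_{X/G}X$ lying over $Y/G$. The key step is to show that $\psi$ is a birational isomorphism, i.e. that $Y\to Y/G$ is birationally the pullback of the $G$-cover $X\to X/G$ along $\bar\phi$. For this one uses that a faithful action of a finite group on an irreducible variety in characteristic $0$ is generically free (each nonidentity element has a proper fixed locus, and there are only finitely many of them), so that, reducing to the irreducible case, $Y\to Y/G$ is generically a $G$-torsor; likewise $X\to X/G$ is a $G$-torsor over a dense open $V\subseteq X/G$, and since $\bar\phi$ is dominant onto its image $Z:=\overline{\image(\bar\phi)}$, and $Z$ meets $V$ because $V$ is dense, the base change $(Y/G)\times_{X/G}X\to Y/G$ is generically a $G$-torsor as well. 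Since a $G$-equivariant morphism of two $G$-torsors over a common base is necessarily an isomorphism, $\psi$ is birational.

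Granting this, both inequalities are formal. For $\RD$: put $d=\RD(X\to X/G)$ and take a tower $W_r\dr\cdots\dr W_0=X/G$ solving $X\to X/G$ in $d$ variables. Pulling it back stage by stage along $\bar\phi$ (equivalently along the dominant map $Y/G\dr Z$, so as not to require dominance onto $X/G$) yields a tower $W'_r\dr\cdots\dr W'_0=Y/G$ in which each $W'_i\dr W'_{i-1}$ is a base change of $W_i\dr W_{i-1}$, so $\ed(W'_i\dr W'_{i-1})\le\ed(W_i\dr W_{i-1})\le d$. The final map $W_r\dr X$ is dominant and generically finite over $X/G$, hence generically finite and surjective over $X$, so its base change $W'_r\dr (Y/G)\times_{X/G}X$ is dominant; composing with $\psi^{-1}$ gives a dominant map $W'_r\dr Y$. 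Thus $W'_\bullet$ solves $Y\to Y/G$ in $d$ variables and $\RD(Y\to Y/G)\le d$. The same argument, with the one-step base-change property and birational invariance of $\ed$ in place of the tower, gives $\ed(Y\to Y/G)\le\ed(X\to X/G)$. Taking suprema over $Y$ completes the proof.

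The only non-formal ingredient is the birationality of $\psi$ in the second paragraph; once that is in place, the rest is routine bookkeeping with pullbacks. The single point requiring care is the case $\dim(Y/G)<\dim(X/G)$, where $\bar\phi$ fails to be dominant onto $X/G$: this is handled uniformly by restricting everything over $Z=\overline{\image(\bar\phi)}\subseteq X/G$, which still meets the torsor locus $V$, so that $(Y/G)\times_{X/G}X$ is unchanged and the restricted tower is still a solving tower.
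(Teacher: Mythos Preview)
Your argument has a genuine gap: you only invoke the \emph{weakly versal} property (the existence of some $G$-equivariant rational map $\phi\colon Y\dr X$), whereas the definition of versal gives you a map into \emph{any prescribed} $G$-invariant dense open $U\subset X$. This distinction is precisely what the proof needs. Concretely, the tower $W_r\dr\cdots\dr W_0=X/G$ solving $X\to X/G$ in $d$ variables consists of \emph{rational} covers and is therefore only defined over some dense open $\bar U\subset X/G$. If you first choose $\phi$ and then the tower, there is no reason for $Z=\overline{\image(\bar\phi)}$ to meet $\bar U$: $Z$ could lie entirely in the indeterminacy/complement locus, and then ``restricting the tower to $Z$'' yields nothing at all. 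Your final paragraph conflates the torsor locus $V$ with the (possibly much smaller) open $\bar U$ over which the tower is defined; meeting $V$ does not help you pull back the tower.

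In fact even the claim that $Z$ meets the torsor locus $V$ is not justified. A $G$-equivariant rational map $Y\dr X$ can have image contained in the non-free locus: for $G=\Zb/2\Zb$ acting on $X=\Ab^1$ by $x\mapsto -x$, the constant map $Y\to\{0\}\subset X$ is $G$-equivariant. In that case the fiber product $(Y/G)\times_{X/G}X$ is not a $G$-torsor over $Y/G$ and your map $\psi$ is not birational. The same issue undermines your $\ed$ argument: the compression $U\to Z_0$ witnessing $\ed(X\to X/G)$ is defined only on a dense open $U\subset X$, and the image of your chosen $\phi$ need not meet $U$. The fix is exactly what the paper does: fix the tower (or compression) first, let $U\subset X$ be the preimage of the open on which it is defined, and \emph{then} apply versality to obtain $Y\dr U$. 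With $\phi$ landing in $U$, both the birationality of $\psi$ and the pullback of the tower go through as you wrote.
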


\begin{proof}
    The proof for essential dimension is standard; we recall it here as we will use it.  Let $X$ be a versal $G$-variety. Recall that $\ed(G)=\sup\{\ed(Y\to Y/G)\}$ where the supremum is over all faithful $G$-varieties $Y$. Let $U\subset X$ be a dense $G$-invariant Zariski open which admits a $G$-equivariant dominant map $U\to Z$ to a faithful $G$-variety $Z$ with $\dim(Z)=\ed(X\to X/G)$. By the definition of versality, there exists a $G$-equivariant rational map $Y\dr U$. Composing with $U\to Z$, we obtain a $G$-equivariant rational dominant map $Y\dr Z$, which implies
    \[\ed(Y\to Y/G)\le \dim(Z)=\ed(X\to X/G).\] Therefore $\ed(X\to X/G)=\ed(G)$.

    We now prove the statement for resolvent degree. By definition, $\RD(X\to X/G)\le \RD(G)$. It remains to prove that $\RD(X\to X/G)\ge \RD(Y\to Y/G)$ for any faithful $G$-variety $Y$. Let
    \begin{equation*}
        \xymatrix{
            &&& X \ar[d] \\
            X_r \ar@{-->}[urrr] \ar@{-->}[r] & \cdots \ar@{-->}[r] & X_1 \ar@{-->}[r] & X/G
        }
    \end{equation*}
    be a solution of $X\to X/G$.  Let $\bar{U}\subset \Image(X_r\dr X/G)$ be a Zariski open, and let $U\subset X$ be its preimage under the map $X\to X/G$. By the definition of versality, there exists a $G$-equivariant map
    \begin{equation*}
        V\to U
    \end{equation*}
    for some dense Zariski open $V\subset Y$. Since both $G$-varieties are faithful, this determines a pullback diagram
    \begin{equation*}
        \xymatrix{
            V \ar[r] \ar[d] & U \ar[d] \\
            V/G \ar[r] & U/G
        }
    \end{equation*}
    and we can pull back the above solution of $X\to X/G$ to $V\to V/G$.  Since every solution in $d$-variables of $X\to X/G$ gives rise to a solution in $d$-variables of $V\to V/G$, and since $V\to V/G$ is birational to $Y\to Y/G$, we conclude, from the definition, that $\RD(X\to X/G)\ge\RD(Y\to Y/G))$.
\end{proof}

The notion of versal is stronger than we strictly need for resolvent degree.
\begin{definition}[{\bf Solvably-versal, RD-versal}]
    Let $G$ be a finite group. A faithful $G$-variety $X$ is:
    \begin{enumerate}
        \item {\em solvably-versal} if, for every $G$-invariant Zariski open $U\subset X$ and any faithful $G$-variety $Y$, there exists a rational cover
                \begin{equation*}
                    \widetilde{Y}\dr Y/G
                \end{equation*}
                with $k(Y/G)\into k(\widetilde{Y})$ a solvable extension, and a $G$-equivariant rational map
                \begin{equation*}
                    \widetilde{Y}\times_{Y/G} Y\dr U;
                \end{equation*}
        \item {\em $\RD$-versal} if, for every $G$-invariant Zariski open $U\subset X$ and any faithful $G$-variety $Y$, there exists a rational cover
                \begin{equation*}
                    \widetilde{Y}\dr Y/G
                \end{equation*}
                with $\RD(\widetilde{Y}\dr Y/G)\le \RD(X\to X/G)$ and a $G$-equivariant rational map \begin{equation*}
                    \widetilde{Y}\times_{Y/G} Y\dr U.
                \end{equation*}
    \end{enumerate}
\end{definition}

Note that solvably-versal implies $\RD$-versal; we do not know if the converse is true or not.

\begin{example}[{\bf Klein}]
    Klein \cite{Kl2} proved a ``Normalformsatz'' for the group $A_5$, showing that perhaps after passing to an intermediate degree 2 cover, every $A_5$-cover is pulled back from the canonical $A_5$-cover of $\Pb^1\to\Pb^1/A_5\cong\Pb^1$. In our language, this shows that $\Pb^1$ with its standard $A_5$ action is solvably versal.
\end{example}

$\RD$-versal $G$-varieties realize the resolvent degree of $G$.

\begin{proposition}\label{prop:whyRDversal}
    Let $G$ be a finite group, and let $X$ be an $\RD$-versal $G$-variety. Then
    \begin{equation*}
        \RD(X\to X/G)=\RD(G).
    \end{equation*}
\end{proposition}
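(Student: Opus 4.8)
The plan is to run the argument of Proposition~\ref{prop:whyversal}(2) almost verbatim, carrying along the one extra piece of data that $\RD$-versality (as opposed to versality) provides — an auxiliary rational cover $\widetilde Y\dr Y/G$ with $\RD(\widetilde Y\dr Y/G)\le\RD(X\to X/G)$ — and then ``paying for it'' by prepending a solution of that cover to the pulled-back solution of $X\to X/G$. The inequality $\RD(X\to X/G)\le\RD(G)$ is immediate from Definition~\ref{definition:RD:group}, so the content is the reverse inequality $\RD(X\to X/G)\ge\RD(Y\to Y/G)$ for every faithful $G$-variety $Y$; passing to the supremum over primitive faithful $Y$ then gives $\RD(X\to X/G)\ge\RD(G)$.

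Set $d:=\RD(X\to X/G)$ and fix a tower $X_r\dr\cdots\dr X_1\dr X_0=X/G$ solving $X\to X/G$ in $d$ variables, so $X_r\dr X/G$ factors through $X\to X/G$ and each $X_i\dr X_{i-1}$ is a rational pullback from a variety of dimension $\le d$. Let $\bar U\subset\Image(X_r\dr X/G)$ be a dense Zariski open and $U\subset X$ its $G$-invariant preimage, so $U/G=\bar U$, $X|_{\bar U}=U$, and $U\to\bar U$ is birational to $X\to X/G$. Apply $\RD$-versality to this $U$ and to $Y$: one gets a rational cover $\widetilde Y\dr Y/G$ with $\RD(\widetilde Y\dr Y/G)\le d$ together with a $G$-equivariant rational map $\widetilde Y\times_{Y/G}Y\dr U$, where $G$ acts on the $Y$-factor. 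Since this action and the $G$-action on $U$ are faithful, passing to $G$-quotients (exactly as in Proposition~\ref{prop:whyversal}) turns this into a rational pullback square over the bottom map $\widetilde Y=(\widetilde Y\times_{Y/G}Y)/G\dr U/G=\bar U$; equivalently $\widetilde Y\times_{Y/G}Y\cong U\times_{\bar U}\widetilde Y$.

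Now do two base changes, using repeatedly that a rational pullback of a rational pullback from a $\le d$-dimensional variety is again one, so each base change preserves the property ``every stage of the tower is pulled back from a variety of dimension $\le d$''. First, restrict the tower $X_\bullet$ over $\bar U$ and pull the result back along $\widetilde Y\dr\bar U$; since $X_r|_{\bar U}\dr\bar U$ factors through $U\dr\bar U$, the top of the new tower factors through $\widetilde Y\times_{Y/G}Y\dr\widetilde Y$. Second, since $\RD(\widetilde Y\dr Y/G)\le d$, fix a tower $W_m\dr\cdots\dr W_0=Y/G$ solving $\widetilde Y\dr Y/G$ in $d$ variables (so $W_m\dr Y/G$ factors through $\widetilde Y$), and pull the tower over $\widetilde Y$ from the previous step back along $W_m\dr\widetilde Y$; its top now dominates $W_m\times_{Y/G}Y$. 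Concatenating $W_m\dr\cdots\dr Y/G$ with this last tower produces a tower over $Y/G$, every stage of which is pulled back from a variety of dimension $\le d$, and whose top dominates $W_m\times_{Y/G}Y$. Being the pullback of the dominant map $W_m\dr Y/G$ along $Y\to Y/G$, that top dominates $Y$ as a $Y/G$-scheme, so the concatenated tower solves $Y\to Y/G$ in $d$ variables; hence $\RD(Y\to Y/G)\le d$.

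The steps requiring care — though none is a genuine obstacle — are: (i) checking that the $G$-equivariant map furnished by $\RD$-versality really becomes a rational pullback square after quotienting, which rests on faithfulness of both $G$-actions, precisely as in Proposition~\ref{prop:whyversal}; and (ii) the bookkeeping that, after the two base changes and the concatenation, the composite genuinely solves $Y\to Y/G$ — i.e.\ that a pullback of a rational cover along a dominant rational map is again a rational cover, and that the top of the composite maps to $Y/G$ through $Y$. The conceptual point, and the reason the inequality $\RD(\widetilde Y\dr Y/G)\le\RD(X\to X/G)$ is built into the definition of $\RD$-versal, is exactly that one must budget $d$ variables for the accessory cover $\widetilde Y\dr Y/G$ before spending $d$ more variables on the pulled-back solution of $X\to X/G$.
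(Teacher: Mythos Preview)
Your proof is correct and follows essentially the same approach as the paper's. The only difference is cosmetic: where you explicitly build the solving tower over $Y/G$ by base-changing the $X$-tower along $\widetilde Y\dr\bar U$, then along $W_m\dr\widetilde Y$, and finally concatenating with the tower $W_\bullet$ solving $\widetilde Y\dr Y/G$, the paper instead packages this into two applications of Lemma~\ref{lemma:max}, first obtaining $\RD(\widetilde Y\times_{Y/G}Y\dr Y/G)=\max\{\RD(\widetilde Y\times_{Y/G}Y\dr\widetilde Y),\RD(\widetilde Y\dr Y/G)\}\le\RD(X\to X/G)$ and then (implicitly) using the factorization through $Y$ to deduce $\RD(Y\to Y/G)\le\RD(\widetilde Y\times_{Y/G}Y\dr Y/G)$.
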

\begin{proof}
    The proof is similar to that of Proposition \ref{prop:whyversal}. It suffices to show that $\RD(X\to X/G)\ge \RD(Y\to Y/G)$ for any faithful $G$-variety $Y$. Let
    \begin{equation*}
        \xymatrix{
            &&& X \ar[d] \\
            X_r \ar@{-->}[urrr] \ar@{-->}[r] & \cdots \ar@{-->}[r] & X_1 \ar@{-->}[r] & X/G
        }
    \end{equation*}
    be a solution of $X\to X/G$.  Let $\bar{U}\subset \Image(X_r\dr X/G)$ be a Zariski open, and let $U\subset X$ be its preimage under the map $X\to X/G$. By the definition of $\RD$-versality, there exists a 
    rational cover
    \begin{equation*}
        \widetilde{Y}\dr Y/G
    \end{equation*}
    with $\RD(\widetilde{Y}\dr Y/G)\le \RD(X\to X/G)$ and with a $G$-equivariant map
    \begin{equation*}
        V\to U
    \end{equation*}
    for some dense Zariski open $V\subset \widetilde{Y}\times_{Y/G}Y$. Since both $G$-varieties are faithful, this determines a pullback diagram
    \begin{equation*}
        \xymatrix{
            V \ar[r] \ar[d] & U \ar[d] \\
            V/G \ar[r] & U/G
        }
    \end{equation*}
    and we can pullback the above solution of $X\to X/G$ to $V\to V/G$.  Since every solution in $d$-variables of $X\to X/G$ gives rise to a solution in $d$-variables of $V\to V/G$, and since $V\to V/G$ is birational to $\widetilde{Y}\times_{Y/G}Y\dr \widetilde{Y}$, we conclude, from the definition, that $\RD(X\to X/G)\ge\RD(\widetilde{Y}\times_{Y/G}Y\dr \widetilde{Y})$. By Lemma \ref{lemma:max},
    \begin{align*}
        \RD(\widetilde{Y}\times_{Y/G}Y\dr Y/G)&=\max\{\RD(\widetilde{Y}\times_{Y/G}Y\dr \widetilde{Y}),\RD(\widetilde{Y}\dr Y/G)\}\\
        &\le \RD(X\to X/G).
    \end{align*}
\end{proof}

\subsection{Criteria for versality}
\label{subsection:versality:criteria}

In this subsection we give some basic properties of versality, as well as criteria for detecting it.
To start, a {\em $G$-compression}  (i.e. $G$-equivariant dominant rational map) of a versal $G$-variety is versal.

\begin{lemma}[{\bf Compressions of versal are versal}]
\label{lemma:dom}
    Let $X$ be a faithful $G$-variety, and let $Y$ be a versal $G$-variety. If there exists a $G$-equivariant dominant rational map $f\colon Y\dr X$, then $X$ is versal.
\end{lemma}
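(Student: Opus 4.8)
The plan is to verify the definition of versality for $X$ directly, using the dominant $G$-equivariant rational map $f\colon Y\dashrightarrow X$ together with the versality of $Y$. So let $U\subseteq X$ be a $G$-invariant Zariski open, and let $Z$ be an arbitrary faithful $G$-variety; we must produce a $G$-equivariant rational map $Z\dashrightarrow U$. The first step is to pull $U$ back along $f$: since $f$ is dominant, $f^{-1}(U)$ is a nonempty $G$-invariant open subset of (the domain of $f$ in) $Y$, hence contains a nonempty $G$-invariant Zariski open $U'\subseteq Y$. Here I would be slightly careful that $f^{-1}(U)$ is genuinely $G$-invariant — this holds because $f$ is $G$-equivariant and $U$ is $G$-invariant — and that it is open and dense, which holds because $f$ is a dominant rational map and $U$ is dense in $X$ (as $X$ is irreducible, or, in the reducible case, because $f$ restricts to a dominant map on each relevant component, per Convention~\ref{convention:reducible}).

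The second step is to apply versality of $Y$ to the $G$-invariant open $U'\subseteq Y$ and the faithful $G$-variety $Z$: this yields a $G$-equivariant rational map $g\colon Z\dashrightarrow U'$. The third step is simply to compose: $f|_{U'}\colon U'\dashrightarrow U$ is a $G$-equivariant rational map (it is the restriction of $f$, and by construction lands in $U$), so $f|_{U'}\circ g\colon Z\dashrightarrow U$ is a $G$-equivariant rational map, which is exactly what versality of $X$ requires. Composability of rational maps is not automatic in general, but here it is fine because $g$ is dominant onto $U'$ (it need not be — but one only needs the image of $g$ to meet the domain of $f|_{U'}$; since the domain of $f|_{U'}$ is a dense open of $U'$ and $g$ is a rational map to $U'$, the image of the domain of $g$ is a nonempty, hence dense, constructible subset of $U'$ when $Z$ is irreducible, so it meets any dense open — and in the reducible/primitive case one argues component-by-component as in the conventions).

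I expect the only genuine subtlety — the ``main obstacle,'' such as it is — to be the bookkeeping around composing rational maps and ensuring everything stays $G$-equivariant and dominant/dense at each stage, particularly in the possibly-reducible setting allowed by Convention~\ref{convention:reducible}; the mathematical content is essentially immediate once the openness and invariance of $f^{-1}(U)$ are pinned down. This is the standard "pull back the open set, apply versality upstairs, push forward" argument, and the lemma is really just the observation that versality, being a statement about mapping *into* arbitrarily small invariant opens, propagates along surjections.
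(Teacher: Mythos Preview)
Your proof is correct and follows essentially the same approach as the paper: pull back the $G$-invariant open $U\subset X$ along $f$ to obtain a $G$-invariant open $f^{-1}(U)\subset Y$, invoke versality of $Y$ to get a $G$-equivariant rational map $Z\dr f^{-1}(U)$, and compose with $f$. The paper's proof is terser and does not dwell on the composability or reducibility issues you flag, but the argument is the same.
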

\begin{proof}
    Let $U\subset X$ be a $G$-invariant Zariski open, and let $Z$ be any faithful $G$-variety. Then $f^{-1}(U)\subset Y$ is a $G$-invariant Zariski open, and by the definition of versality, there exists a $G$-equivariant rational map $Z\dr f^{-1}(U)$. Composing with $f$, we obtain a $G$-equivariant rational map $Z\dr U$ as desired.
\end{proof}

Versal $G$-varieties are also versal for subgroups.

\begin{lemma}[{\bf Versality descends}]
\label{lemma:subversal}
    Let $G$ be a finite group. If $X$ is a versal $G$-variety, then $X$ is also a versal $H$-variety for any subgroup $H\subseteq G$.
\end{lemma}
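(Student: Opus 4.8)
The plan is to unwind the definition of versality directly, using that the class of faithful $H$-varieties is contained in (indeed, is a restriction of) the class of faithful $G$-varieties in a suitable sense. Let $H\subseteq G$ and let $U\subseteq X$ be an $H$-invariant Zariski open. First I would observe that $U$ need not be $G$-invariant, so the hypothesis that $X$ is $G$-versal cannot be applied to $U$ as it stands; the key trick is to pass to the $G$-invariant open $U':=\bigcap_{g\in G}gU\subseteq U$, which is nonempty (a finite intersection of dense opens), $G$-invariant, and contained in $U$. It suffices to produce an $H$-equivariant rational map $Y\dashrightarrow U'$ for every faithful $H$-variety $Y$, since composing with $U'\hookrightarrow U$ gives the desired map to $U$.

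Next, given a faithful $H$-variety $Y$, I would manufacture a faithful $G$-variety from it so as to invoke $G$-versality. The natural candidate is the induced variety $\Ind_H^G Y := (G\times Y)/H$, where $H$ acts on $G\times Y$ by $h\cdot(g,y)=(gh^{-1},hy)$; this carries a $G$-action by left multiplication on the first factor, is faithful as a $G$-variety (the $G$-action permutes the $[G:H]$ copies of $Y$ transitively, and the stabilizer of a copy acts as $H$ does on $Y$, which is faithful), and contains $Y$ as the $H$-invariant subvariety $\{[e,y]\}$ in an $H$-equivariant way. Applying $G$-versality of $X$ to the $G$-invariant open $U'$ and the faithful $G$-variety $\Ind_H^G Y$ yields a $G$-equivariant rational map $\Ind_H^G Y\dashrightarrow U'$. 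Restricting this map along the $H$-equivariant inclusion $Y\hookrightarrow \Ind_H^G Y$ gives an $H$-equivariant rational map $Y\dashrightarrow U'\subseteq U$, as required. (One should check the restriction is still a well-defined rational map, i.e. that $Y$ is not contained in the indeterminacy locus; this holds because the indeterminacy locus is a proper $G$-invariant closed subset, hence meets $Y$ in a proper closed subset, since $Y$ is not contained in any proper $G$-invariant closed subset of $\Ind_H^G Y$ — that subset would have to contain all $G$-translates of $Y$, i.e. all of $\Ind_H^G Y$.)

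The main obstacle I anticipate is purely bookkeeping: making sure the induced $G$-variety $\Ind_H^G Y$ is genuinely a variety in the paper's sense (reduced, finite type), that the $G$-action is by automorphisms with a quotient that is again a variety, and that the embedding $Y\hookrightarrow\Ind_H^G Y$ is $H$-equivariant with $Y$ not lying in the indeterminacy locus of the compression. An alternative, if one wishes to avoid induced varieties entirely, is to take the faithful $G$-variety to be $X\times Y$ with the diagonal-type action (letting $G$ act on $X$ and $H\subseteq G$ act on $Y$, extended arbitrarily — but this requires a $G$-variety structure on $Y$, which may not exist, so the induction route seems cleanest). Either way the heart of the argument is the passage from the merely $H$-invariant open $U$ to the $G$-invariant open $U'=\bigcap_{g\in G}gU$, together with the restriction-of-equivariant-maps step; no new ideas beyond the definitions are needed.
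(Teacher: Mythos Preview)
Your proposal is correct and follows essentially the same approach as the paper: pass from the $H$-invariant open $U$ to the $G$-invariant open $U'=\bigcap_{g\in G}gU$, form the induced $G$-variety $G\times_H Y$ (your $\Ind_H^G Y$), apply $G$-versality to get a $G$-equivariant rational map to $U'$, and restrict along the $H$-equivariant inclusion $y\mapsto [e,y]$. Your additional remark about the indeterminacy locus not containing $Y$ is a point the paper leaves implicit, and your justification for it is sound.
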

\begin{proof}
    By the definition of versal, we must show that for every $H$-invariant Zariski open $U\subset X$ and every faithful $H$-variety $Y$, there exists an $H$-equivariant rational map $Y\dr U$. Given $U$, let $U'\subseteq U$ be the maximal $G$-invariant Zariski open contained in $U$ (i.e. $U'=\bigcap_{g\in G} g\cdot U$).  Consider the $G$-variety
    \begin{equation*}
        G\times_H Y:=G\times Y/\sim
    \end{equation*}
    where $\sim$ is the equivalence relation given by $(g,hy)\sim (gh,y)$, and the $G$-action given by
    \begin{equation*}
        g'\cdot[(g,y)]:=[(g'g,y)].
    \end{equation*}
    It is straightforward to check that because $Y$ is a faithful $H$-variety, the variety $G\times_H Y$ is a faithful $G$-variety. Because $X$ is versal, there exists a $G$-equivariant rational map
    \begin{equation}\label{subversal}
        G\times_H Y\dr U'
    \end{equation}
    One can check explicitly that the map
    \begin{align*}
        Y&\to G\times_H Y\\
        y&\mapsto [(e,y)]
    \end{align*}
    is $H$-equivariant. Composing this with \eqref{subversal}, we obtain an $H$-equivariant rational map
    \begin{equation*}
        Y\dr U'\subset U
    \end{equation*}
    as required.
\end{proof}

Lemma~\ref{lemma:subversal} has the following consequence.

\begin{lemma}\label{lemma:subgroup}
    Let $H\subset G$ be a subgroup. Then $\RD(H)\le\RD(G)$.
    \end{lemma}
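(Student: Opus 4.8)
The plan is to deduce Lemma~\ref{lemma:subgroup} directly from Lemma~\ref{lemma:subversal} together with Proposition~\ref{prop:whyversal}. The key observation is that there always exists a versal $G$-variety whose resolvent degree computes $\RD(G)$: indeed, for any faithful linear representation $V$ of $G$ (for instance the regular representation $\Ab^G$ used in the proof of the preceding lemma), $V$ is a versal $G$-variety, and so $\RD(V\to V/G)=\RD(G)$ by part (2) of Proposition~\ref{prop:whyversal}.

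So the main steps are as follows. First I would fix a faithful versal $G$-variety $X$ (e.g. $X=V$ for $V$ a faithful linear representation of $G$). Second, I would invoke Lemma~\ref{lemma:subversal} to conclude that $X$, regarded with its restricted $H$-action, is also a versal $H$-variety; note that the restricted action is still faithful since the original action was and $H\subseteq G$. Third, applying Proposition~\ref{prop:whyversal}(2) both to $X$ as a $G$-variety and to $X$ as an $H$-variety gives
\begin{equation*}
    \RD(G)=\RD(X\to X/G)\quad\text{and}\quad \RD(H)=\RD(X\to X/H).
\end{equation*}
Finally, the map $X\to X/G$ factors through $X/H$, so $X\to X/G$ is the composition $X\to X/H\to X/G$; by Lemma~\ref{lemma:max},
\begin{equation*}
    \RD(X\to X/G)=\max\{\RD(X\to X/H),\RD(X/H\to X/G)\}\ge \RD(X\to X/H),
\end{equation*}
and combining the three displayed facts yields $\RD(G)\ge\RD(H)$, as desired.

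I do not anticipate a serious obstacle here; the lemma is essentially a formal consequence of material already established. The one point that requires a moment of care is that Proposition~\ref{prop:whyversal} must be applied to the \emph{same} variety $X$ for both groups, which is exactly what Lemma~\ref{lemma:subversal} makes possible — this is the whole reason that lemma is stated before this one. One should also make sure the chosen $X$ is genuinely versal (a faithful linear $G$-variety works, as used above) rather than merely $\RD$-versal, since Lemma~\ref{lemma:subversal} is stated for versal varieties; alternatively one could note that linear faithful $G$-varieties restrict to linear faithful $H$-varieties, which are themselves versal, giving a second route to the same conclusion.
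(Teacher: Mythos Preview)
Your proposal is correct and follows essentially the same approach as the paper: take a versal $G$-variety $X$, invoke Lemma~\ref{lemma:subversal} to see it is versal for $H$, apply Proposition~\ref{prop:whyversal} to both groups, and then use Lemma~\ref{lemma:max} on the factorization $X\to X/H\to X/G$. The only minor wrinkle is your parenthetical that $\Ab^G$ was ``used in the proof of the preceding lemma'' --- it was not, and in the paper's ordering the versality of $\Ab^G$ (and of general linear representations) is established only after this lemma; the paper simply posits a versal $G$-variety without justification at this point, so your argument works but the specific cross-reference should be dropped.
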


\begin{proof}
    Let $X$ be a versal $G$-variety. Then $X$ is a versal $H$-variety by Lemma \ref{lemma:subversal}. By Proposition \ref{prop:whyversal} and Lemma \ref{lemma:max},
    \begin{align*}
        \RD(G)&=\RD(X\to X/G)\\
        &=\max\{\RD(X\to X/H),\RD(X/H\to X/G)\}\\
        &=\max\{\RD(H),\RD(X/H\to X/G)\}\\
        &\ge \RD(H).
    \end{align*}
\end{proof}

There exist criteria to check whether a given $G$-variety is versal.

\begin{lemma}[{\bf Versality criterion}]
\label{lemma:showversal}
    Let $X$ be a faithful $G$-variety. Suppose both of the following statements hold.
    \begin{enumerate}
        \item \label{Gmoving} For every faithful, closed $G$-invariant subvariety $Z_1\subset X$, and any closed (not necessarily faithful) $G$-invariant subvariety $Z_2\subsetneq X$, there exists a $G$-equivariant rational map $\alpha\colon X\dr X$ such that $Z_1$ is not contained in the indeterminacy locus of $\alpha$ and such that $\alpha(Z_1)\nsubseteq Z_2$.
        \item \label{weakversal} For any faithful $G$-variety $Y$, there exists a $G$-equivariant rational map $Y\dr X$.
    \end{enumerate}
    Then $X$ is versal.
\end{lemma}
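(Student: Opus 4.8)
\medskip\noindent\textbf{Proof proposal.} The plan is to verify the definition of versality directly. The goal is: given a (necessarily nonempty, else there is nothing to prove) $G$-invariant Zariski open $U\subseteq X$ and a faithful $G$-variety $Y$, produce a $G$-equivariant rational map $Y\dashrightarrow U$. The idea is to first land $Y$ inside $X$ using hypothesis~\ref{weakversal}, then steer the image into $U$ using hypothesis~\ref{Gmoving}, and finally compose.

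\emph{Step 1: land in $X$ with faithful image.} By hypothesis~\ref{weakversal} there is a $G$-equivariant rational map $\phi\colon Y\dashrightarrow X$; set $Z:=\overline{\phi(Y)}$, a closed $G$-invariant subvariety of $X$ onto which $\phi$ is dominant. The subtle point here is that we need $Z$ to be \emph{faithful} in order to apply hypothesis~\ref{Gmoving} to it, whereas an arbitrary map supplied by~\ref{weakversal} could a priori have image inside a $G$-fixed subvariety (note that no $G$-equivariant self-map of $X$ can later move such an image out of $\operatorname{Fix}(N)$ for a normal $N\neq 1$, so one really must arrange faithfulness at this stage). I would handle this by replacing $Y$ with a $G$-birational model on which $\phi$ is generically injective: $G$-equivariantly and birationally embed $Y$ into a faithful linear representation $V=W^{\oplus N}$ of $G$ (with $W$ faithful linear and $N$ large, using the coordinate functions $y\mapsto (f(g^{-1}y))_{g\in G}$ attached to a generating set of $k(Y)$), and replace $\phi$ by a generic $\GL_G(V)$-translate of the resulting map so that it is regular on a dense open of the embedded copy of $Y$; then $Z$ is $G$-birational to $Y$, hence faithful. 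I expect this faithfulness bookkeeping to be the main obstacle; everything after it is routine.

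\emph{Step 2: push into $U$.} Apply hypothesis~\ref{Gmoving} with $Z_1:=Z$ (faithful, by Step 1) and $Z_2:=X\setminus U$ (a proper closed $G$-invariant subvariety, since $U\neq\emptyset$). This yields a $G$-equivariant rational self-map $\alpha\colon X\dashrightarrow X$ with $Z\not\subseteq\operatorname{Indet}(\alpha)$ and $\alpha(Z)\not\subseteq X\setminus U$; the latter says precisely that $\overline{\alpha(Z)}\cap U\neq\emptyset$.

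\emph{Step 3: assemble.} Since $Z\not\subseteq\operatorname{Indet}(\alpha)$, the set $W:=Z\cap\operatorname{dom}(\alpha)\cap\alpha^{-1}(U)$ is a $G$-invariant Zariski open of $Z$, and it is nonempty because $\alpha(Z\cap\operatorname{dom}(\alpha))$ is dense in $\overline{\alpha(Z)}$, which meets $U$. As $\phi\colon Y\dashrightarrow Z$ is dominant, $\phi^{-1}(W)\subseteq Y$ is a nonempty $G$-invariant Zariski open, and $\alpha\circ\phi$ restricted to $\phi^{-1}(W)$ is a well-defined regular map into $U$, i.e. the desired $G$-equivariant rational map $Y\dashrightarrow U$. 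Equivariance needs no separate verification: every set occurring ($Z$, $\operatorname{Indet}(\alpha)$, $\alpha^{-1}(U)$, $W$, $\phi^{-1}(W)$) is $G$-invariant because $\phi,\alpha$ are $G$-equivariant and $U,Z$ are $G$-invariant. Thus, modulo Step 1, the lemma follows by chasing dense opens through the composite $\alpha\circ\phi$.
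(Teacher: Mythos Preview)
Your Steps~2 and~3 are exactly the paper's proof: it too sets $Z_1:=\overline{f(Y)}$ and $Z_2:=X\setminus U$, invokes hypothesis~(1) to produce $\alpha$, and composes. The paper's argument is four lines and does not address the faithfulness of $Z_1$ at all; it simply applies hypothesis~(1) to $Z_1$ without comment.

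You are right that faithfulness of $Z_1=\overline{\phi(Y)}$ is not automatic: a $G$-equivariant rational map from a faithful $G$-variety can land inside a fixed locus (e.g.\ $\phi(a,b)=(a+b,a+b)$ from $\Ab^2$ with the coordinate-swap $\Z/2\Z$-action to itself has image the fixed diagonal). So on this point you have been more scrupulous than the paper.

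However, your proposed fix in Step~1 does not do what you claim. Embedding $Y$ birationally into a linear $V$ and acting by $\GL_G(V)$ moves the copy of $Y$ around inside $V$, but the target of interest is $X$, not $V$. If you then compose with some $\psi\colon V\dashrightarrow X$ supplied by hypothesis~(2), a generic $\GL_G(V)$-translate can at best move $Y$ off the indeterminacy locus of $\psi$; it does nothing to control the fibers of $\psi|_Y$, so there is no reason the resulting $Y\dashrightarrow X$ should be generically injective, and the conclusion ``$Z$ is $G$-birational to $Y$, hence faithful'' is unjustified. Since the paper supplies no argument here either, your Steps~2--3 already reproduce its proof once faithfulness of $Z_1$ is granted; it would be cleaner to flag this as a gap shared with the paper than to attempt the repair you sketch.
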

\begin{proof}
    Let $U\subset X$ be a $G$-invariant Zariski open. Denote by $Z_2:=X-U$. Let $Y$ be a faithful $G$-variety. By Assumption \ref{weakversal}, there exists a $G$-equivariant rational map $f\colon Y\dr X$. Let $Z_1:=\overline{f(Y)}$. By Assumption \ref{Gmoving}, there exists a $G$-equivariant rational map $\alpha\colon X\dr X$ such that the restriction of $\alpha$ to $Z_1$ is defined, and such that $\alpha(Z_1)\nsubseteq Z_2$. Then $\alpha\circ f$ restricts to a $G$-equivariant rational map $Y\dr U$ as desired.
\end{proof}

\begin{example}
    Let $\Ab^G$ denote the regular representation of $G$. Then $\Ab^G$ is a versal $G$-variety.  Indeed, Lemma 3.1(b) of \cite{BuRe1} shows that $\Ab^G$ satisfies Assumption \ref{Gmoving} of Lemma \ref{lemma:showversal}, while Lemma 3.4 of \cite{BuRe1} shows that $\Ab^G$ satisfies Assumption \ref{weakversal}.
\end{example}

\subsection{Examples of versal \boldmath$G$-varieties}

In this subsection we use the tools from \S\ref{subsection:versality:criteria} to give examples of versal $G$-varieties.  We begin with a result essentially proven by Buhler-Reichstein in \cite{BuRe1}; we include a proof for completeness.

\begin{proposition}[{\bf Linear varieties are versal}]
\label{proposition:RD:linear}
    Let $G$ be a finite group. Let $V$ be any faithful linear $G$-variety. Then $V$ is versal.
\end{proposition}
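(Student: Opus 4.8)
The plan is to apply the versality criterion Lemma~\ref{lemma:showversal} with $X=V$, verifying its two hypotheses. First I would record that a faithful linear representation $V$ of a finite group over a field $k$ of characteristic $0$ is automatically generically free: for each $g\neq e$ the fixed locus $V^g=\{v:gv=v\}$ is a proper linear subspace of $V$, and a finite union of proper subspaces cannot exhaust a vector space over the infinite field $k$, so $G$ acts freely on a dense open of $V$; the same remark shows that every faithful \emph{irreducible} $G$-variety is generically free. With this noted, hypothesis~\ref{Gmoving} of Lemma~\ref{lemma:showversal} is the classical ``moving lemma'', proved for the regular representation in \cite[Lemma~3.1(b)]{BuRe1}; that argument applies verbatim to any generically free linear $G$-variety, hence to $V$ (cf.\ also \cite{DuRe1}). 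So the work is in hypothesis~\ref{weakversal}.

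Let $Y$ be a faithful $G$-variety. I must produce a $G$-equivariant rational map $f\colon Y\dr V$, and for the criterion to engage I want $\overline{f(Y)}$ to be a faithful subvariety of $V$. After a standard reduction to irreducible components (cf.\ Lemma~\ref{lemma:conn}) I may take $Y$ irreducible, so $k(Y)/k(Y)^G$ is a $G$-Galois extension. A $G$-equivariant rational map $Y\dr V$ is the same datum as a $G$-equivariant $k$-algebra homomorphism $k[V]=\Sym(V^*)\to k(Y)$, hence as an element of $\Hom_G(V^*,k(Y))=(V\otimes_k k(Y))^G$. By the normal basis theorem $k(Y)\cong k(Y)^G[G]$ as $k(Y)^G[G]$-modules, so for every finite-dimensional $G$-representation $W$,
\[
\Hom_G\big(W,k(Y)\big)\;\cong\;k(Y)^G\otimes_k\Hom_{k[G]}(W,k[G])\;\cong\;k(Y)^G\otimes_k W^{*},
\]
using that $k[G]$ is a symmetric Frobenius algebra; in particular this is a nonzero $k(Y)^G$-vector space whenever $W\neq 0$. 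Taking $W=V^*$ thus yields $G$-equivariant rational maps $f_\psi\colon Y\dr V$, one for each $\psi\in\Hom_G(V^*,k(Y))$. Fix $g\neq e$. Tracing through the identifications, $f_\psi(Y)\subseteq V^g$ precisely when $\psi$ vanishes on the $G$-subrepresentation $W_g\subseteq V^*$ generated by the image of $1-g$; faithfulness of $V$ forces $W_g\neq 0$, and choosing a $G$-stable complement $V^*=W_g\oplus W_g'$, the displayed formula identifies $\{\psi:f_\psi(Y)\subseteq V^g\}$ with $\Hom_G(W_g',k(Y))$, a \emph{proper} $k(Y)^G$-subspace of $\Hom_G(V^*,k(Y))$. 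Since $k(Y)^G$ is infinite, a finite union of proper subspaces is again proper, so I may choose $\psi$ lying in none of the sets $\{\psi:f_\psi(Y)\subseteq V^g\}$, $g\neq e$; then $\overline{f_\psi(Y)}$ is faithful. Feeding $f=f_\psi$ together with hypothesis~\ref{Gmoving} into Lemma~\ref{lemma:showversal} shows that $V$ is versal.

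The genuinely new content is hypothesis~\ref{weakversal}, and within it the construction of an equivariant rational map $Y\dr V$ with faithful image; the moving lemma is quoted, not reproved. I note that one cannot circumvent this by deducing the general case from the known versality of $\Ab^G$ via Lemma~\ref{lemma:dom}: $\dim V$ may exceed $|G|$, in which case there is no $G$-equivariant dominant rational map $\Ab^G\dr V$ at all. The points I would check most carefully in a full write-up are the reduction from a general faithful $Y$ to an irreducible one (for $Y$ primitive this passes to the stabilizer of a component, via (co)induction) and the degenerate cases where $\dim V$ or $\dim Y$ is small; neither causes real difficulty.
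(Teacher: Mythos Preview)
Your argument is essentially correct, but it takes a considerably more elaborate route than the paper's. You verify both hypotheses of Lemma~\ref{lemma:showversal} for $V$ itself: the moving lemma for linear actions, and weak versality via the normal basis theorem together with a genericity argument to guarantee faithful image. The paper, by contrast, never invokes Lemma~\ref{lemma:showversal} for $V$ at all. It simply uses the already-established versality of $\Ab^G$: given a $G$-invariant open $U\subset V$, pick $v\in U$ with free $G$-orbit and define the $G$-linear map $f_v\colon\Ab^G\to V$, $\sum c_g g\mapsto\sum c_g(g\cdot v)$; since $f_v(e)=v\in U$, the preimage $f_v^{-1}(U)$ is a nonempty $G$-invariant open of $\Ab^G$, and versality of $\Ab^G$ produces $Y\dr f_v^{-1}(U)\to U$.

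Your closing remark that ``one cannot circumvent this by deducing the general case from the known versality of $\Ab^G$ via Lemma~\ref{lemma:dom}'' is literally true---Lemma~\ref{lemma:dom} requires a \emph{dominant} map, which fails when $\dim V>|G|$---but it misreads the situation: the paper's argument leverages versality of $\Ab^G$ directly from the definition, not through Lemma~\ref{lemma:dom}, and needs only that some $G$-map $\Ab^G\to V$ hits the given open, not that it dominates. What your approach buys is independence from the prior Example establishing versality of $\Ab^G$ (your normal basis argument works uniformly for any faithful $V$, including $\Ab^G$); what the paper's buys is a two-line proof avoiding the normal basis theorem, the faithfulness-of-image analysis, and any appeal to the moving lemma beyond the regular representation case. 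One small caution: your citation of Lemma~\ref{lemma:conn} for the reduction to irreducible $Y$ is off-target (that lemma is about $\RD$); the reduction you actually want is the (co)induction argument you sketch at the end.
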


\begin{proof}
    Because $\Ab^G$ is versal, it suffices to prove that for any proper $G$-invariant closed subvariety $Z\subset V$, there exists a $G$-equivariant map $f\colon \Ab^G\to V$ such that $f(\Ab^G)\nsubseteq Z$. Let $v\in V-Z$ be any point such that $|G\cdot v|=|G|$. Define
    \begin{align*}
        f_v\colon\Ab^G&\to V\\
        \sum_{g\in G} c_g g&\mapsto \sum_{g\in G} c_g(g\cdot v).
    \end{align*}
    Then $f_v$ is a $G$-equivariant linear embedding, and $f(\Ab^G)\nsubseteq Z$ as claimed.
\end{proof}

We highlight a specific instance of the above: while Hilbert asked about the resolvent degree of the permutation representation $\Cb^7$ of $S_7$, Proposition~\ref{proposition:RD:linear} implies
that that one can equivalently consider any faithful representation of $S_7$.  This gives an equivalent rephrasing of Hilbert's 13th problem, one for each faithful $S_n$-representation.
\begin{corollary}\label{cor:Hfaithandn1}
The following statements are true.
\begin{enumerate}
\item  \label{corollary:Hilbert:for:faithful}

    Let $n\geq 1$. Let $V$ be any faithful representation of $S_n, n\geq 2$.   Then
    \[\RD(S_n)=\RD(V\to V/S_n)=\RD(\tPc_n\to\Pc_n).\]
    In particular, $\RD(\tPc_n\to\Pc_n)\le\RD(\tPc_{n+1}\to\Pc_{n+1})$.
\item ({\bf Universality of $\RD(S_n)$})\label{corollary:nto1}
    Let $\tX\dr X$ be a generically $n$-to-$1$ rational cover. Then
    \begin{equation*}
        \RD(\tX\dr X)\le\RD(S_n).
    \end{equation*}
\end{enumerate}
\end{corollary}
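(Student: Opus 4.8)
The plan is to deduce both statements formally from results already established in this section: that a faithful linear $G$-variety is versal (Proposition~\ref{proposition:RD:linear}), that a versal $G$-variety realizes $\RD(G)$ (Proposition~\ref{prop:whyversal}), that resolvent degree is unchanged under passing to a Galois closure (Lemma~\ref{lemma:galois}), that it is monotone under subgroups (Lemma~\ref{lemma:subgroup}), and the universality of $\tPc_n\to\Pc_n$ (Lemma~\ref{lemma:universal}).

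For item~\ref{corollary:Hilbert:for:faithful}, I would first observe that for $n\ge 2$ every faithful representation $V$ of $S_n$ is a faithful linear $S_n$-variety, hence versal by Proposition~\ref{proposition:RD:linear}, so Proposition~\ref{prop:whyversal} gives $\RD(V\to V/S_n)=\RD(S_n)$; in particular this holds for the permutation representation $\Cb^n$. Next I would identify the quotient cover $\Cb^n\to\Cb^n/S_n$: via the elementary symmetric functions one has an isomorphism $\Cb^n/S_n\cong\Pc_n$, under which this cover becomes the ``ordered root cover'' $(\lambda_1,\dots,\lambda_n)\mapsto\prod_i(z-\lambda_i)$. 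This cover is Galois with group $S_n$, and it is a Galois closure of the degree-$n$ root cover $\tPc_n\to\Pc_n$ (the root $\lambda_1$ generates $k(\tPc_n)$ over $k(\Pc_n)$, and $k(\lambda_1,\dots,\lambda_n)$ is the corresponding splitting field, with Galois group the full $S_n$). Lemma~\ref{lemma:galois} then gives $\RD(\tPc_n\to\Pc_n)=\RD(\Cb^n\to\Pc_n)=\RD(S_n)$, which combined with the previous equality yields the chain $\RD(S_n)=\RD(V\to V/S_n)=\RD(\tPc_n\to\Pc_n)$. The final assertion of item~\ref{corollary:Hilbert:for:faithful} then follows by embedding $S_n\into S_{n+1}$ as the stabilizer of a letter and applying Lemma~\ref{lemma:subgroup}, giving $\RD(\tPc_n\to\Pc_n)=\RD(S_n)\le\RD(S_{n+1})=\RD(\tPc_{n+1}\to\Pc_{n+1})$.

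Item~\ref{corollary:nto1} is then immediate: given a generically $n$-to-$1$ rational cover $\tX\dr X$, Lemma~\ref{lemma:universal} gives $\RD(\tX\dr X)\le\RD(\tPc_n\to\Pc_n)$, and by item~\ref{corollary:Hilbert:for:faithful} the right-hand side equals $\RD(S_n)$.

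The only ingredient that is not purely formal is the identification of the ordered root cover $\Cb^n\to\Pc_n$ as a Galois closure of $\tPc_n\to\Pc_n$, equivalently the classical fact that the Galois group of the generic degree-$n$ polynomial over $k(a_1,\dots,a_n)$ is the full symmetric group $S_n$. I therefore expect the main effort here to be expository rather than mathematical: carefully recording the birational identifications ($\tPc_n$ is irreducible and rational, $\Cb^n/S_n\cong\Pc_n$, and the two projections to $\Pc_n$ are compatible) so that Lemma~\ref{lemma:galois}, Proposition~\ref{proposition:RD:linear} and Proposition~\ref{prop:whyversal} apply verbatim; everything else is a formal consequence of the structural results proved above.
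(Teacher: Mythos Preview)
Your proposal is correct and follows essentially the same route as the paper: use Proposition~\ref{proposition:RD:linear} and Proposition~\ref{prop:whyversal} to get $\RD(S_n)=\RD(V\to V/S_n)$ for any faithful linear $V$, specialize to the permutation representation, identify $\Ab^n\to\Ab^n/S_n$ with a Galois closure of $\tPc_n\to\Pc_n$ and invoke Lemma~\ref{lemma:galois}, then deduce item~\ref{corollary:nto1} from Lemma~\ref{lemma:universal}. You are in fact slightly more thorough than the paper, which omits the ``in particular'' clause; your use of Lemma~\ref{lemma:subgroup} applied to $S_n\hookrightarrow S_{n+1}$ is exactly the intended argument.
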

\begin{proof}
    Proposition \ref{proposition:RD:linear}  gives the first equality of item \ref{corollary:Hilbert:for:faithful}, and shows that $\RD(V\to V/S_n)=\RD(W\to W/S_n)$ for any two faithful representations $V$ and $W$.  In particular, we can take $W=\Ab^n$ to be the standard permutation representation. Since $\Ab^n\to\Ab^n/S_n$ is the normalization of the branched cover $\tPc_n\to\Pc_n$, the second equality of Item \ref{corollary:Hilbert:for:faithful} follows from Lemma \ref{lemma:galois}.  Item \ref{corollary:nto1} now follows from Lemma \ref{lemma:universal}.
\end{proof}

Another equivalent restatement of the problem of computing $\RD(\tPc_n\to\Pc_n)$ comes from the following. Denote by $\Mc_{0,n}$ the moduli of $n$ distinct ordered points in $\Pb^1$.  More generally, let $\Cc_n(\Pb^m):=((\Pb^m)^{\times n}-\Delta)/\PGL_{m+1}$, where $\Delta\subset(\Pb^m)^{\times n}$ denotes the ``fat diagonal'', i.e. the locus of $n$-tuples in which at least two points coincide.

\begin{corollary}\label{cor:Cn}
    For $n\ge 5$, the moduli of marked, genus 0 curves $\Mc_{0,n}$ is a versal $S_n$-variety. In particular,
    \begin{equation*}
        \RD(S_n)=\RD(\Mc_{0,n}\to\Mc_{0,n}/S_n).
    \end{equation*}
    More generally, $\Cc_n(\Pb^m)$ is a versal $S_n$-variety for all $n\ge \max\{5,m+3\}$.
\end{corollary}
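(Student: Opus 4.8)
The plan is to realize $\Cc_n(\Pb^m)$ (which for $m=1$ is $\Mc_{0,n}$) as a $G$-compression of a faithful linear $S_n$-variety and then invoke Lemma~\ref{lemma:dom} together with Proposition~\ref{proposition:RD:linear}; the identity $\RD(S_n)=\RD(\Cc_n(\Pb^m)\to\Cc_n(\Pb^m)/S_n)$ then follows immediately from Proposition~\ref{prop:whyversal}. Concretely, by those results it suffices to establish two things: (a) that $\Cc_n(\Pb^m)$ is a faithful $S_n$-variety whenever $n\ge\max\{5,m+3\}$, and (b) that there is a dominant $S_n$-equivariant rational map from some faithful linear $S_n$-variety to $\Cc_n(\Pb^m)$. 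Since $\Cc_n(\Pb^m)$ is irreducible it is automatically primitive.

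For (b) I would take $V:=(\Ab^{m+1})^{\oplus n}$, with $S_n$ permuting the $n$ summands. This is a faithful linear $S_n$-variety for $n\ge 2$, hence versal by Proposition~\ref{proposition:RD:linear}. On the dense open locus of $V$ where all $n$ vectors are nonzero with pairwise distinct classes in $\Pb^m$, the assignment $(v_1,\dots,v_n)\mapsto([v_1],\dots,[v_n])$ is an $S_n$-equivariant morphism to $(\Pb^m)^n\setminus\Delta$; postcomposing with the surjective $S_n$-equivariant quotient $(\Pb^m)^n\setminus\Delta\onto\Cc_n(\Pb^m)$ gives a dominant $S_n$-equivariant rational map $V\dr\Cc_n(\Pb^m)$, the dominance being clear since $V\dr(\Pb^m)^n$ is dominant and the quotient is surjective. (For $m=1$ one may equally well use the permutation representation $\Ab^n\hookrightarrow(\Pb^1)^n$.)

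For (a), observe that the kernel $N$ of $S_n\to\Aut(\Cc_n(\Pb^m))$ is normal, so for $n\ge 5$ it is $\{1\}$, $A_n$, or $S_n$, and it suffices to rule out $N\supseteq A_n$. A generic configuration is visibly not preserved by any transposition, which excludes $N=S_n$; to exclude $N=A_n$ I would show that a fixed $3$-cycle $\tau=(i_1\,i_2\,i_3)$ acts nontrivially. Indeed, if it acted trivially then for a generic configuration $\underline p$ there would be a unique $g\in\PGL_{m+1}$ with $g\cdot p_i=p_{\tau(i)}$ for all $i$ (uniqueness because for $n\ge m+2$ a generic configuration has trivial $\PGL_{m+1}$-stabilizer), and this $g$ would have order $3$ while fixing the remaining $n-3$ points, which are in general position. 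But the fixed locus of a nontrivial finite-order element of $\PGL_{m+1}$ is a union of proper linear subspaces, hence contains at most $m+1$ points in general position, forcing $n-3\le m+1$. This already yields faithfulness for $n\ge m+5$. The remaining cases $n\in\{m+3,m+4\}$ I would handle via the classical birational $S_n$-equivariant identifications $\Cc_{m+3}(\Pb^m)\cong\Mc_{0,m+3}$ (the unique rational normal curve through $m+3$ general points of $\Pb^m$) and the Gale-duality isomorphism $\Cc_{m+4}(\Pb^m)\cong\Cc_{m+4}(\Pb^2)$, thereby reducing to faithfulness of the $S_N$-action on $\Mc_{0,N}$ for all $N\ge 5$ and on $\Cc_N(\Pb^2)$ for all $N\ge 5$ (for small $N$ these are classical facts about del Pezzo surfaces, and for $N\gg 0$ they follow from the same dimension count). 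The failure of faithfulness for $\Mc_{0,4}$, where $S_4$ acts through the quotient $S_3$, is precisely what forces the $\max$ with $5$.

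The main obstacle is step (a): the clean fixed-point/dimension argument above only gives faithfulness for $n$ somewhat larger than $\max\{5,m+3\}$, so pushing it down to the boundary of the stated range requires the classical birational identifications just mentioned (or an explicit check in a handful of small cases). Step (b), by contrast, is routine once Lemma~\ref{lemma:dom} and Proposition~\ref{proposition:RD:linear} are in hand.
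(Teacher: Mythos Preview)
Your proof is correct and follows essentially the same strategy as the paper: exhibit $\Cc_n(\Pb^m)$ as an $S_n$-equivariant compression of a faithful linear $S_n$-variety (the paper uses $(\Ab^n)^{\oplus m}$ via the affine chart, you use $(\Ab^{m+1})^{\oplus n}$ via projectivization---these are interchangeable), and then invoke Proposition~\ref{proposition:RD:linear} and Lemma~\ref{lemma:dom}. The only difference is that the paper simply asserts faithfulness of the $S_n$-action for $n\ge\max\{5,m+3\}$ without argument, whereas you supply a partial fixed-locus argument (valid for $n\ge m+5$) and then reduce the boundary cases $n\in\{m+3,m+4\}$ via the rational-normal-curve and Gale-duality identifications to the classical faithfulness of $S_N$ on $\Mc_{0,N}$ and $\Cc_N(\Pb^2)$ for small $N$; since the paper also implicitly relies on these classical facts, your treatment is strictly more detailed rather than different in kind.
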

\begin{proof}
    There exists a dominant $S_n$-equivariant rational map $\Ab^n\dr \Mc_{0,n}$. More generally, consider the $m$-fold direct sum $(\Ab^n)^m$ of the permutation representation of $S_n$. This admits a dominant $S_n$-equivariant rational map $(\Ab^n)^m\dr ((\Pb^m)^{\times n}-\Delta)/\PGL_{m+1}=:\Cc_n(\Pb^m)$. The corollary now follows from Lemma \ref{lemma:dom} and Proposition \ref{proposition:RD:linear} once we verify that the $S_n$-action on $\Cc_n(\P^m)$ is faithful, but this follows from the assumptions that $n\ge \max\{5,m+3\}$.
\end{proof}

\section{Lines on smooth cubic surfaces}
\label{section:cubic:surfaces}

Since the problem of finding lines on smooth cubic surfaces connects with so many other problems, we devote an entire section to it.  We also look at this one example in depth because it demonstrates how resolvent degree can be an organizing principle that gives a single framework for many classical results.

\subsection{The moduli space of smooth cubic surfaces, and its covers}\label{subsection:h33covers}

Let $\Hc_{3,3}$ denote the moduli space of smooth cubic surfaces.  This is a $4$-dimensional quasi-projective variety, the quotient of a hypersurface complement $(\Pb^{19}-\Sigma)$ by the action of $\PGL_4$ induced from its action on $\Pb^3$.
Let $\Gr(2,4)$ denote the Grassmannian of projective lines in $\Pb^3$.  Let
\[\Hc_{3,3}(1):=\{(S,L)\in (\Pb^{19}-\Sigma)\times \Gr(2,4) : L\subset S\}/\PGL_4\]
be the moduli space of smooth cubic surfaces $S$ equipped with a line; here $\PGL_4$ acts diagonally.  Cayley and Salmon proved that the projection $\pi:\Hc_{3,3}(1)\to \Hc_{3,3}$ given by $\pi(S,L):=S$ is a $27$-sheeted covering, and so its monodromy is a subgroup of $S_{27}$.  However, the mondromy must preserve the intersection pattern of the $27$ lines.  Camille Jordan proved (see, e.g., \cite{Do} or \cite{Har} for a modern treatment) that the monodromy group of $\pi:\Hc_{3,3}(1)\to \Hc_{3,3}$ is isomorphic to the Weyl group $W(E_6)$.    Recall that this is the reflection group given by the Dynkin diagram :

\begin{center}
  \begin{tikzpicture}[scale=.4]
    \draw (-1,1) node[anchor=east]  {$E_6$};
    \foreach \x in {0,...,4}
    \draw[thick,xshift=\x cm] (\x cm,0) circle (3 mm);
       \foreach \y in {0,...,3}
    \draw[thick,xshift=\y cm] (\y cm,0) ++(.3 cm, 0) -- +(14 mm,0);
    \draw[thick] (4 cm,2 cm) circle (3 mm);
    \draw[thick] (4 cm, 3mm) -- +(0, 1.4 cm);
  \end{tikzpicture}
\end{center}
Here each vertex represents (reflection in the hyperplane perpendicular to) a root, and $W(E_6)$ has presentation with a generator $s_\alpha$ for each vertex of the diagram, with relations given by :
\begin{itemize}
\item $s_\alpha^2=1$ for all $\alpha$.
\item $(s_\alpha s_\beta)^2=1$ if $\alpha$ and $\beta$ are not connected by an edge.
\item $(s_\alpha s_\beta)^3=1$ if $\alpha$ and $\beta$ are connected by an edge.
\end{itemize}

$W(E_6)$ is a group of order $51840$; it contains the unique finite simple group of order 25920 as an index $2$ subgroup; we denote this group by $W(E_6)^+$.    Let $\Hc_{3,3}(27)$ denote the Galois closure of $\pi: \Hc_{3,3}(1)\to\Hc_{3,3}$; this is the (connected) Galois cover of $\Hc_{3,3}$ with deck group $W(E_6)$, corresponding to the kernel of the monodromy representation $\pi_1(\Hc_{3,3})\onto W(E_6)$.  We use the notation $\Hc_{3,3}(27)$ since this cover corresponds to the moduli space of $28$-tuples $(S;L_1,\ldots,L_{27})$ of smooth cubic surfaces equipped with $27$ lines with a choice of labelling of the intersection graph of the set of $27$ lines.

Let
\begin{equation}
\label{eq:lines:skew}
\Hc_{3,3}^{\rm skew}(r):=\{(S;L_1,\ldots,L_r)\in(\Pb^{19}-\Sigma)\times \Gr(2,4)^r : L_i\subset S,~L_i\cap L_j=\emptyset \ \forall i\neq j\}/\PGL_4.
\end{equation}
denote the moduli space of smooth cubic surfaces $S$ with a choice of $r\leq 6$ skew (i.e. disjoint) lines on $S$.  We remark that $\Hc_{3,3}^{\rm skew}(6)$ is connected; this follows for example from the fact that it is isomorphic to the moduli of $6$ generic points in $\Pb^2$ (cf. Section~\ref{subsection:moduli:p2} below). There is a cover $\Hc_{3,3}^{\rm skew}(r)\to\Hc_{3,3}$ given by $(S;L_1,\ldots,L_r)\mapsto S$.    This projection gives a (typically non-Galois) finite covering map $\Hc_{3,3}^{\rm skew}(r)\to\Hc_{3,3}$.

The action of $W(E_6)$ on $\Hc_{3,3}(27)$ is free on a Zariski open.  $W(E_6)\cong\Aut({\rm Pic}(S))$, and for any class $[L_0]$ of a line we have:
\[{\rm Stab}([L_0])\cong W(D_5)\cong (\Z/2\Z)^4\rtimes S_5\]
where the $S_5$ action on $(\Z/2\Z)^4$ is given by the standard $4$-dimensional irreducible permutation representation of $S_5$.  The action of $S_5$ on a marking is given by permuting the divisor classes of the $5$ lines $L_1,\ldots,L_5$ disjoint from $L_0$.  Further, $W(D_5)$ is generated by this $S_5$ together with a Cremona transformation.  Since the monodromy $W(E_6)$ acts transitively on the set of lines of any basepoint cubic, this implies that 

\begin{equation}
\label{eq:H33:W(D_5)}
\Hc_{3,3}(1)= \Hc_{3,3}(27)/W(D_5).
\end{equation}

We will see throughout this paper how many classical problems about smooth cubic surfaces can be rephrased as understanding various (branched) covers of $\Hc_{3,3}$; for problems about lines the covers are intermediate between $\Hc_{3,3}(27)\to\Hc_{3,3}$.    For now we give one example.

\para{Sch\"{a}fli's double sixes} One of the more well-studied types of configurations of lines on a smooth cubic surface $S$ is the so-called {\em (Schl\"{a}fli) double six}: it consists of two pairs $\{a_i\}$ and $\{b_j\}$ of $6$ disjoint lines on $S$ with intersection pattern given (in Schl\"{a}fli's original notation):
\begin{equation}
\label{equation:double-six}
\large\{
\begin{array}{cccccc}
a_1&a_2&a_3&a_4&a_5&a_6\\
b_1&b_2&b_3&b_4&b_5&b_6
\end{array}
\large\}
\end{equation}
where any line does not meet any of the lines in the same row or column, but does meet the other $5$ lines. See Figure~\ref{figure:double-six}.

The group $W(E_6)$ acts transitively on the set of $6$-tuples of disjoint lines on $S$, with stabilizer the symmetric group $S_6$.  There are thus $[W(E_6):S_6]=51840/720=72$ choices of such $6$-tuples.  Each such $6$-tuple determines a unique double-six, and since any double-six contains
$2$ such $6$-tuples, there are $72/2=36$ double-sixes.  Denote the moduli of smooth cubic surfaces equipped with a double-six by
\begin{equation*}
    \Hc_{3,3}(6,6):=\{(S,D): S\in\Hc_{3,3} \ \text{and $D$ is a double-six in $S$.}\}.
\end{equation*}
The stabilizer of a double-six is the maximal subgroup $S_6\times\Z/2\Z\subset W(E_6)$ (cf. \cite[Proposition 9.4, Theorem 9.5.2]{Do}).  We can thus make the identification
\begin{equation}
\label{eq:doublesix}
\Hc_{3,3}(27)/(S_6\times \Z/2\Z)= \Hc_{3,3}^{\rm skew}(6)/(S_6\times\Z/2\Z)=\Hc_{3,3}(6,6)
\end{equation}
where the first equality comes from \eqref{eq:six:determine} below.

\subsection{Finding $27$ lines from a given line}
\label{subsection:cubic:surfaces}

In this subsection we consider the following problem: given a single line on a smooth cubic surface, how hard is it to find more lines?  We will prove that given one line, the problem of finding the other $27$ lines has resolvent degree $1$, by which we mean $\RD(\Hc_{3,3}(27)\to\Hc_{3,3}(1))=1$.   This result is essentially 100 years old.
For a nice modern reference, see Dolgachev's book \cite{Do}, page 480.

\begin{proposition}[{\bf Finding lines on a cubic surface, given a line}]\label{prop:27from1}
With notation as above :
\[\RD(\Hc_{3,3}(27)\to\Hc_{3,3}(1))=1.\]
\end{proposition}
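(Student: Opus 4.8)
The plan is to reduce the statement to a computation about the Galois group of the cover $\Hc_{3,3}(27)\to\Hc_{3,3}(1)$ and then invoke the group-theoretic machinery of \S\ref{section:RD:group}. By \eqref{eq:H33:W(D_5)}, the map $\Hc_{3,3}(27)\to\Hc_{3,3}(1)$ is the quotient of the irreducible variety $\Hc_{3,3}(27)$ by the action of $W(D_5)\cong(\Z/2\Z)^4\rtimes S_5$, which is free on a Zariski open; in particular it is Galois with group $W(D_5)$, and $\Hc_{3,3}(27)$ is a primitive, faithful $W(D_5)$-variety. Hence Definition~\ref{definition:RD:group} gives
\[\RD(\Hc_{3,3}(27)\to\Hc_{3,3}(1))\le\RD(W(D_5)).\]

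Next I would identify the composition factors of $W(D_5)$: the normal subgroup $(\Z/2\Z)^4$ contributes four copies of $\Z/2\Z$, and the quotient $S_5$ contributes $\Z/2\Z$ and $A_5$. Thus every Jordan--H\"older factor of $W(D_5)$ is cyclic or $A_5$, i.e.\ $W(D_5)$ is ``almost solvable'' in the sense of Corollary~\ref{cor:asolv}, so $\RD(W(D_5))=1$. Combined with the display above, this gives $\RD(\Hc_{3,3}(27)\to\Hc_{3,3}(1))\le 1$. For the matching lower bound, note that $\Hc_{3,3}(1)$ is irreducible of positive dimension and the cover has nontrivial (transitive) monodromy $W(D_5)$; any rational cover with $\RD=0$ is birationally, at every stage of its solving tower, a pullback from a $0$-dimensional variety, hence has trivial geometric monodromy, so $\RD(\Hc_{3,3}(27)\to\Hc_{3,3}(1))\ge 1$. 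Therefore the resolvent degree equals $1$.

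I would also record the classical geometric realization behind this bound, in the spirit of Example~\ref{example:lines1}, since it makes the abstract computation of $\RD(W(D_5))$ concrete and matches the ``essentially 100 years old'' reference: given $(S,L)$, the pencil of planes through $L$ cuts $S$ in $L$ together with a conic $Q_t$, and $Q_t$ degenerates into a pair of lines exactly at the five roots of the degree-$5$ discriminant $\Delta_L(t)$. Extracting these five roots is a one-variable problem by Bring's theorem ($\RD(\widetilde{\Pc}_5\to\Pc_5)=1$), splitting each degenerate conic into its two line components costs one further radical, and by Harris's theorem the residual monodromy governing the remaining lines (and the full marking) is solvable, hence solvable in one variable; composing these stages via Lemma~\ref{lemma:max} again yields the upper bound $1$.

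I do not expect a serious obstacle here: both the identification $\Hc_{3,3}(1)=\Hc_{3,3}(27)/W(D_5)$ and the semidirect-product structure of $W(D_5)$ are already recorded in \S\ref{subsection:h33covers}, and Corollary~\ref{cor:asolv} does the rest. The only points requiring a word of care are the lower bound $\RD\ge 1$ (i.e.\ ruling out $\RD=0$) and, in the geometric route, the appeal to Harris's solvability statement for the lines of $S$ not meeting $L$.
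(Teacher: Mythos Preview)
Your proof is correct, but your primary route differs from the paper's. The paper gives the classical pencil-of-planes construction as its \emph{main} argument: intersect $S$ with the pencil through $\ell_0$, read off five pairs of lines from the roots of the degree-$5$ discriminant (so $\RD(\Hc_{3,3}^{\rm skew}(5)\to\Hc_{3,3}(1))\le\RD(\tPc_5\to\Pc_5)=1$), then either repeat the construction or invoke Harris's solvability result to get the remaining lines, and conclude via Lemma~\ref{lemma:max}. You instead lead with the group-theoretic reduction: identify the Galois group as $W(D_5)\cong(\Z/2\Z)^4\rtimes S_5$, observe that its Jordan--H\"older factors are cyclic or $A_5$, and apply Corollary~\ref{cor:asolv}. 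Your supplementary geometric sketch at the end is essentially the paper's proof; the paper is slightly more explicit about which of the ten new lines are mutually skew (so as to land in $\Hc_{3,3}^{\rm skew}(5)$) and offers repeating the pencil trick as an alternative to Harris.

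Both approaches are valid and there is no circularity: Corollary~\ref{cor:asolv} depends only on material in \S\ref{section:RD:group}. Your route is shorter and more systematic once that machinery is available; the paper's ordering reflects the historical development and makes the ``essentially 100 years old'' attribution concrete before the abstract framework is fully in place. Note that the paper does eventually record your argument---in the proof of Theorem~\ref{theorem:quartics-to-cubics} the equality $\RD(W(D_5))=1$ is derived exactly as you do, via Theorem~\ref{thm:JH}---so the two routes converge there.
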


This is in contrast to Harris's Theorem \cite{Har} that $\Hc_{3,3}(27)\to\Hc_{3,3}(1)$ is not solvable by radicals.

\begin{proof}   We take the argument from the classic \cite{Hil}, page 349.
Suppose that we are given a smooth cubic surface $S=V(f)$ and a line $\ell_0$ on $S$.  The line $\ell_0$ is given as a zero set of two linear forms : $\ell_0=V(A_1,A_2)$.  Since $\ell_0\subset S$ this gives
\[f=A_1Q_1+A_2Q_2\]
for quadratic forms $Q_1,Q_2$.  Consider the pencil of planes
\[\Pi(\lambda_1,\lambda_2)=V(\lambda_1A_1-\lambda_2A_2)\]
through the line $\ell_0$.  Each plane in this pencil intersects $S$ in the union of $\ell_0$ and a conic $C(\lambda_1,\lambda_2)$ on $S$.    One can check that the discriminant of each $C(\lambda_1,\lambda_2)$ is a homogeneous polynomial $P(\lambda_1,\lambda_2)$ of degree $5$, and that the general $P(\lambda_1,\lambda_2)$ has $5$ distinct roots.   Each of these solutions gives a reducible conic on $S$.  Since $S$ is smooth none of these is a double line.

We thus have found five distinct pairs of distinct lines $\ell_i,\ell'_i, 1\leq i\leq 5$, and in fact all $10$ of these lines are distinct from each other and from $\ell_0$, giving $11$ lines on $S$.  The important thing for us is to observe that the $\ell_i$ are pairwise disjoint for $0\leq i\leq 5$.   Since we obtained these with a degree $5$ polynomial it follows that
\[ \RD(\Hc_{3,3}^{\rm skew}(5)\to\Hc_{3,3}(1))\leq \RD(\widetilde{\Pc_5}\to\Pc_5)=1.\]

We can repeat the above procedure with $\ell_0$ replaced by any $\ell_i$ or $\ell'_i$ to find the remaining $27$ lines; that is, to prove
\begin{equation}
\label{eq:restof:lines}
\RD(\Hc_{3,3}(27)\to\Hc_{3,3}^{\rm skew}(5))\leq 1
\end{equation}
Alternately, Harris proves in \cite{Har} that the monodromy of the cover $\Hc_{3,3}(27)\to\Hc_{3,3}^{\rm skew}(3)$ is in fact solvable, hence so is the monodromy of $\Hc_{3,3}(27)\to\Hc^{\rm skew}_{3,3}(5)$, giving \eqref{eq:restof:lines}.  Lemma \ref{lemma:max} (on $\RD$ of a tower) then implies

\[\begin{array}{ll}
\RD(\Hc_{3,3}(27)\to\Hc_{3,3}(1))&\leq \max\{\RD(\Hc_{3,3}(27)\to\Hc^{\rm skew}_{3,3}(5)), \RD(\Hc^{\rm skew}_{3,3}(5)\to\Hc_{3,3}(1))\\
&=\max\{1,1\}=1
\end{array}\]
giving the proposition.
\end{proof}

\subsection{Finding a single line}
The following fundamental problem still remains.  As we will see throughout this paper, it relates to many other problems about resolvent degree.

\begin{problem}
\label{problem:line1}
Determine $\RD(\Hc_{3,3}(1)\to\Hc_{3,3})$.
\end{problem}

While there is a vast literature on lines on smooth cubic surfaces, and while much of it concerns relationships between various intermediate covers of $\Hc_{3,3}(27)\to\Hc_{3,3}$, there are far fewer results on Problem \ref{problem:line1}.  The best results of which we are aware are due to Burkhardt \cite{Bur}, following a suggestion of Klein (see \cite[Ch. 4.3.2]{Hu} for a modern treatment).

\begin{theorem}[{\bf Burkhardt, Klein}]\label{thm:RD:lines}
    Let $k$ be any field of characteristic $\neq 2,3$. Then \[\RD_k(\Hc_{3,3}(1)\to\Hc_{3,3})\le 3.\]
\end{theorem}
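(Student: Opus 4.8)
The plan is to turn the statement into a computation about the finite group $W(E_6)$ and then to feed in the classical geometry of the Burkhardt quartic. First I would use Lemma~\ref{lemma:galois}: since $\Hc_{3,3}(27)\to\Hc_{3,3}$ is by construction a Galois closure of $\Hc_{3,3}(1)\to\Hc_{3,3}$, we get
\[\RD_k(\Hc_{3,3}(1)\to\Hc_{3,3})=\RD_k(\Hc_{3,3}(27)\to\Hc_{3,3}).\]
The cover $\Hc_{3,3}(27)\to\Hc_{3,3}$ is irreducible (hence primitive), and the deck action of $W(E_6)$ is faithful, so Definition~\ref{definition:RD:group} gives $\RD_k(\Hc_{3,3}(27)\to\Hc_{3,3})\le\RD_k(W(E_6))$. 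The Jordan--H\"older factors of $W(E_6)$ are the simple group $W(E_6)^+$ (of order $25920$) together with $\Z/2\Z$, so Theorem~\ref{thm:JH} gives $\RD_k(W(E_6))\le\max\{\RD_k(W(E_6)^+),\RD_k(\Z/2\Z)\}=\max\{\RD_k(W(E_6)^+),1\}$. Thus the whole theorem reduces to the purely group-theoretic claim $\RD_k(W(E_6)^+)\le 3$.

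Next I would bring in the Burkhardt quartic. Recall $W(E_6)^+\cong\PSp_4(\F_3)$, and that $\PSp_4(\F_3)$ has a $5$-dimensional faithful representation $V$; on $\Pb(V)=\Pb^4$ there is a (unique up to scalar) invariant quartic $J_4$, and $\mathcal{B}:=\{J_4=0\}\subset\Pb^4$ is the Burkhardt quartic threefold, a $3$-dimensional variety on which $W(E_6)^+$ acts faithfully. The key input, which is a reformulation in present terms of the work of Klein and Burkhardt (\cite{Bur}; see \cite[Ch.~4.3.2]{Hu}), is that $\mathcal{B}$ is a \emph{solvably-versal} (hence $\RD$-versal) $W(E_6)^+$-variety: up to a solvable (in fact $2$-elementary) accessory irrationality, every faithful $W(E_6)^+$-variety is a pullback of $\mathcal{B}\to\mathcal{B}/W(E_6)^+$. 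This is the exact analogue, for the simple group of order $25920$, of Klein's ``Normalformsatz'' for $A_5$ and $\Pb^1$. Granting it, Proposition~\ref{prop:whyRDversal} together with Lemma~\ref{lemma:first} yields
\[\RD_k(W(E_6)^+)=\RD_k(\mathcal{B}\to\mathcal{B}/W(E_6)^+)\le\dim(\mathcal{B}/W(E_6)^+)=3,\]
which, with the previous paragraph, proves the theorem. The hypothesis $\mathrm{char}\,k\neq 2,3$ enters only to ensure that $\Hc_{3,3}(27)\to\Hc_{3,3}$ is finite \'etale over the moduli of smooth cubics -- so that $k(\Hc_{3,3})\subset k(\Hc_{3,3}(27))$ is separable with group $W(E_6)$ and the (characteristic-$0$) arguments of the previous sections apply -- and that Burkhardt's construction is available.

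The hard part is establishing the versality claim for $\mathcal{B}$, and I would attack it through the criterion of Lemma~\ref{lemma:showversal}. The ``moving'' hypothesis should follow from the abundance of $W(E_6)^+$-equivariant birational self-maps of $\mathcal{B}$: its $45$ nodes, the projections away from them, and the Cremona-type symmetries make $\mathcal{B}$ ``homogeneous enough'' for the $25920$-element group to push any given subvariety off any proper invariant subvariety. The subtle hypothesis is ``weak versality'': every faithful $W(E_6)^+$-variety $Y$ must admit a $W(E_6)^+$-equivariant rational map to $\mathcal{B}$. Here one notes that, since $V$ is literally a representation of $W(E_6)^+$, Proposition~\ref{proposition:RD:linear} and Lemma~\ref{lemma:dom} already make $\Pb(V)=\Pb^4$ versal; but $\mathcal{B}$ is only a codimension-one invariant subvariety and there is no equivariant projection $\Pb^4\dashrightarrow\mathcal{B}$, so the equivariant map \emph{into} $\mathcal{B}$ has to be built from the interpretation of $\mathcal{B}/W(E_6)^+$ as a moduli space of (abelian, equivalently cubic) surfaces with level-$3$ structure, the $2$-elementary accessory irrationality appearing exactly in order to lift a $W(E_6)^+$-level structure to one for the ambient rank-$5$ reflection group. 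For a concise writeup I would either cite this versality directly from \cite{Bur,Hu} or re-derive it via the moduli description, rather than reproduce Burkhardt's nineteenth-century invariant-theoretic computations.
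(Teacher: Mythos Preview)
Your reduction to the inequality $\RD_k(W(E_6)^+)\le 3$ matches the paper exactly: Lemma~\ref{lemma:galois} to pass to the Galois closure, the definition of $\RD(G)$ (or Proposition~\ref{prop:linesversal}) to reach $\RD(W(E_6))$, and Theorem~\ref{thm:JH} to strip off the $\Z/2\Z$ factor. Where you diverge is in the choice of $3$-dimensional variety and, crucially, in how solvable-versality is established. The paper does \emph{not} use the Burkhardt quartic $\mathcal{B}\subset\Pb^4$; it uses $\Pb^3$ directly, via the observation that the $W(E_6)^+$-action on $\Pb^3$ lifts to a faithful \emph{linear} action of the double cover $\Sp_4(\F_3)$ on $\Ab^4$ (defined over $\Z[\sqrt{-3}]$, which is where the hypothesis $\mathrm{char}\,k\ne 3$ actually enters). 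Since $\Ab^4$ is then versal for $\Sp_4(\F_3)$ by Proposition~\ref{proposition:RD:linear}, the only remaining issue is lifting an arbitrary $W(E_6)^+$-torsor to an $\Sp_4(\F_3)$-torsor; the obstruction lives in $H^2_{\et}(-;\mu_2)$, and Merkurjev's theorem kills it over a multi-quadratic (hence solvable) extension. That is a complete, self-contained mechanism for solvable-versality of $\Pb^3$.

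Your route through $\mathcal{B}$ is plausible in outline, but the step you yourself flag as ``the hard part'' is the entire content, and your sketch does not close it. You correctly note there is no $W(E_6)^+$-equivariant dominant map $\Pb^4\dashrightarrow\mathcal{B}$, so Lemma~\ref{lemma:dom} is unavailable; neither the ``moving'' argument via the $45$ nodes nor the moduli interpretation is made precise enough to verify the hypotheses of Lemma~\ref{lemma:showversal} (and \cite{Bur,Hu} do not state a versality theorem in these terms). In effect you have reduced Theorem~\ref{thm:RD:lines} to a claim of comparable difficulty, to be taken on faith. The paper's $\Pb^3/\Sp_4(\F_3)$/Merkurjev argument sidesteps exactly this by trading the hypersurface $\mathcal{B}$ for a projective space, where versality for the covering group is automatic and the descent to $W(E_6)^+$ is a standard Brauer-group computation.
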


The proof of Theorem~\ref{thm:RD:lines} will use the following proposition, the first part of which we learned from \cite[Lemma 6.1]{DuRe1}.

\begin{proposition}[{\bf Finding the \boldmath$27$ lines is versal}]
\label{prop:linesversal}
    For any $G\subset W(E_6)$, the $k$-variety $\Hc_{3,3}(27)$ is a versal $G$-variety. In particular
    \[\RD_k(W(E_6))=\RD_k(\Hc_{3,3}(27)\to \Hc_{3,3}).\]
\end{proposition}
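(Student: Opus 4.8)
The plan is to reduce to $G=W(E_6)$, prove that $\Hc_{3,3}(27)$ is a versal $W(E_6)$-variety, and then read off the displayed equality. For the reduction: the $W(E_6)$-action on $\Hc_{3,3}(27)$ is the deck action of a connected Galois cover, hence faithful, so Lemma~\ref{lemma:subversal} (versality descends to subgroups) shows that versality for $W(E_6)$ implies versality for every $G\subseteq W(E_6)$. For the last sentence of the proposition: since $\Hc_{3,3}(27)\to\Hc_{3,3}$ is Galois with group $W(E_6)$, the rational quotient $\Hc_{3,3}(27)/W(E_6)$ is birational to $\Hc_{3,3}$; applying Proposition~\ref{prop:whyversal}(2) to the versal variety $\Hc_{3,3}(27)$ and using that $\RD$ is a birational invariant (Lemma~\ref{lemma:first}) gives $\RD_k(W(E_6))=\RD_k(\Hc_{3,3}(27)\to\Hc_{3,3}(27)/W(E_6))=\RD_k(\Hc_{3,3}(27)\to\Hc_{3,3})$.

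It remains to show $\Hc_{3,3}(27)$ is a versal $W(E_6)$-variety. The cleanest route is via Lemma~\ref{lemma:dom} (a $G$-compression of a versal $G$-variety is versal) together with Proposition~\ref{proposition:RD:linear} (faithful linear $G$-varieties are versal): it suffices to produce a faithful linear representation $V$ of $W(E_6)$ together with a $W(E_6)$-equivariant dominant rational map $V\dashrightarrow\Hc_{3,3}(27)$. To do this I would pass to the classical birational model of $\Hc_{3,3}(27)$ as the moduli of $6$ ordered points in general position in $\Pb^2$: writing a cubic surface as $S=\Bl_{p_1,\dots,p_6}\Pb^2$ equips $\Pic(S)$ with a canonical marking, and under this identification $W(E_6)$ acts by the Cremona--Coble action, with $S_6$ permuting the points and the standard quadratic transformations of $\Pb^2$ supplying the remaining generators (cf.\ \cite{Do}, \cite{DoOr}). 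One then invokes the classical linearization of this action --- equivalently, that the total coordinate (Cox) ring of this del Pezzo moduli, whose generators are indexed by the $27$ lines, carries a linear $W(E_6)$-action permuting those generators --- to obtain $V$ and the equivariant dominant rational map; this is the input we take from \cite[Lemma~6.1]{DuRe1}. (If one prefers to argue through the versality criterion Lemma~\ref{lemma:showversal}, the same linearization supplies hypothesis~(2), since the regular representation $\A^{W(E_6)}$ surjects $W(E_6)$-equivariantly and linearly onto the subrepresentation $V$; hypothesis~(1), the moving hypothesis, then follows from the supply of $W(E_6)$-equivariant birational self-maps of the point-configuration model.)

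The genuinely nontrivial point, and the one I would isolate as a single cited input, is this linearization: the Cremona part of the $W(E_6)$-action on $\Hc_{3,3}(27)$ is a priori only birational and manifestly nonlinear, and realizing it inside a finite-dimensional linear representation is exactly the content of the classical del Pezzo / Cox-ring picture. Everything else --- the reduction to $W(E_6)$, the passage between marked cubic surfaces and point configurations in $\Pb^2$, and the extraction of the $\RD$ identity from Proposition~\ref{prop:whyversal} --- is formal, so I would structure the write-up so that once the linear model $V\dashrightarrow\Hc_{3,3}(27)$ is in hand, Lemma~\ref{lemma:dom} and Proposition~\ref{proposition:RD:linear} close the versality claim and the birational bookkeeping closes the rest.
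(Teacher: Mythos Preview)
Your approach is essentially the paper's: exhibit a $W(E_6)$-equivariant dominant rational map from a faithful linear $W(E_6)$-variety to $\Hc_{3,3}(27)$ (via the point-configuration model $\Cc_6'(\Pb^2)$), then invoke Proposition~\ref{proposition:RD:linear} and Lemma~\ref{lemma:dom}; the subgroup statement and the $\RD$ identity follow from Lemma~\ref{lemma:subversal} and Proposition~\ref{prop:whyversal} exactly as you say. The only refinement is that the paper names the linear representation concretely as the Cartan subalgebra $\Ab(\mathfrak{h})$ of type $E_6$ with its reflection action (and the cited Duncan--Reichstein lemma is \cite[Lemma~6.1]{DuRe2}, not \cite{DuRe1}), rather than going through the Cox-ring picture you sketch.
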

\begin{proof}
    Let $\mathfrak{h}$ denote a Cartan subalgebra of any simple Lie $k$-algebra of type $E_6$.  Let $W(E_6)$ act on $\mathfrak{h}$ via the defining representation, and let $\Ab(\mathfrak{h})$ denote the corresponding faithful linear $W(E_6)$-variety.  Then by \cite[Lemma 6.1]{DuRe2}, there exists a $W(E_6)$-equivariant dominant rational map
    \begin{equation*}
        \Ab(\mathfrak{h})\dr \Cc_6'(\Pb^2)\dr \Hc_{3,3}(27).
    \end{equation*}
   Applying Proposition \ref{proposition:RD:linear} and Lemma \ref{lemma:dom}, the proposition follows.
\end{proof}

\begin{proof}[Proof of Theorem \ref{thm:RD:lines}]
    Recall that $W(E_6)\cong W(E_6)^+\rtimes \Z/2\Z$. By Theorem \ref{thm:JH},
    \begin{equation*}
        \RD(W(E_6))=\max\{\RD(W(E_6)^+),\RD(\Z/2\Z)\}=\RD(W(E_6)^+).
    \end{equation*}
    The group $W(E_6)^+$ has an action on $\Pb^3$ defined over $\Z[\sqrt{-3}]$ (see, e.g., \cite{Atl}); therefore after adjoining $\sqrt{-3}$ to $k$ ($\RD=1$), this action is defined over $k$. By Proposition \ref{prop:whyRDversal}, it suffices to prove that $\Pb^3$ is solvably-versal for $W(E_6)^+$. Note that there is an isomorphism $\Sp_4(\F_3)/\F_3^\times\cong W(E_6)^+$ and the $W(E_6)^+$-action on $\Pb^3$ lifts to a faithful linear action of $\Sp_4(\F_3)$ on $\Ab^4$ defined over $\Z[\sqrt{-3}]$.

    Given any $W(E_6)^+$-variety $X$, the obstruction to realizing it as a quotient of a faithful $\Sp_4(\F_3)$-variety is the associated Brauer class in $H^2_{\et}(k(X/W(E_6)^+);\mu_2)$. However, by Merkurjev's Theorem \cite{Me1}, any class in $H^2_{\et}(k(X)^{W(E_6)^+};\mu_2)$ trivializes over some multi-quadratic extension of $k(X/W(E_6)^+)$. We conclude that there exists a faithful $\Sp_4(\F_3)$-variety $\tX$ such that $\tX/\Sp_4(\F_3)\dr X/W(E_6)^+$ is a generically 2-to-1 rational cover. By Proposition \ref{proposition:RD:linear}, $\Ab^4$ is a versal $\Sp_4(\F_3)$ variety, and by the definition of versality, there exists an $\Sp_4(\F_3)$-equivariant rational map $\tX\dr \Ab^4$. Composing with the projection $\Ab^4\dr \Pb^3$, we obtain a $W(E_6)^+$-equivariant rational map $\tX/\Z/2\Z\dr \Pb^3$. But this shows that $\Pb^3$ is $W(E_6)^+$-solvably versal as claimed. We conclude
    \begin{align*}
        \RD(\Hc_{3,3}(1)\to\Hc_{3,3})&=\RD(\Hc_{3,3}(27)\to\Hc_{3,3})\tag{by Lemma \ref{lemma:galois}}\\
        &=\RD(W(E_6))\tag{by Proposition \ref{prop:linesversal}}\\
        &=\RD(W(E_6)^+)\\
        &=\RD(\Pb^3\to\Pb^3/W(E_6)^+)\le \dim(\Pb^3)=3.
    \end{align*}
\end{proof}

\subsection{Moduli of \boldmath$6$ points in \boldmath$\Pb^2$}
\label{subsection:moduli:p2}

Let $\Sigma\subset(\Pb^2)^6$ denote the subvariety of $6$-tuples of distinct points in $\Pb^2$ that are {\em non-generic}; that is, with either $3$ colinear or with all $6$ points lying on a conic.    Let
\[\Cc_6'(\Pb^2):=((\Pb^2)^6\setminus \Sigma)/\PGL_3\]
be the moduli space of generic $6$-tuples in $\Pb^2$.  For any (orbit representative of) $(z_1,\ldots,z_6)\in\Cc_6'(\Pb^2)$, blowing up $\Pb^2$ at each $z_i$ gives a smooth cubic surface $S_{(z_1,\ldots,z_6)}$ equipped with a $6$-tuple $(L_1,\ldots ,L_6)$ of $6$ skew lines corresponding to the exceptional divisors.  Every smooth cubic surface arises in this way, and indeed it is classical that the map
\[\psi:\Cc_6'(\Pb^2)\to \Hc_{3,3}^{\rm skew}(6)\]
defined by $\psi(z_1,\ldots,z_6):=(S_{(z_1,\ldots,z_6)};L_1,\ldots,L_6)$ is birational, where $\Hc_{3,3}^{\rm skew}(6)$ is defined in \ref{eq:lines:skew}.    It is classical that $6$ skew lines $L_1,\ldots,L_6$ on a smooth cubic surface $S$ determine via explicit formulas the other $21$ lines on $S$; see, e.g.,  \S 4 of \cite{Hu}.  The ordering on the $L_i$ determines an ordering on the set of all $27$ lines, from which we deduce that there is an isomorphism
\begin{equation}
\label{eq:six:determine}
\tau: \Hc_{3,3}^{\rm skew}(6)\stackrel{\cong}{\to} \Hc_{3,3}(27).
\end{equation}
Composition thus gives an isomorphism
\[\tau\circ\psi: \Cc_6'(\Pb^2)\stackrel{\cong}{\to} \Hc_{3,3}(27).\]

The permutation action of $S_6$ on $(\Pb^2)^6$ leaves invariant $\Sigma$ and induces a well-defined action of $S_6$ on $\Cc_6'(\Pb^2)$.  As explained in e.g.\ \cite[\S 3]{Se}, this action extends (via adding a birational
automorphism induced by an explicit Cremona transformation) to an action by birational automorphisms of $W(E_6)$ on $ \Cc_6'(\Pb^2)$ for which the isomorphism $\tau\circ \psi$ is $W(E_6)$-equivariant.   We remark that the $W(E_6)$ action on $\Cc_6'(\Pb^2)$ is not regular.

As a corollary to Proposition \ref{prop:linesversal} and Theorem \ref{thm:RD:lines}, we have the following.
\begin{corollary}\label{cor:RD:6pts}
    Let $k$ be a field of characteristic $\neq 2,3$. For any $G\subset W(E_6)$, the $k$-variety $\Cc_6'(\Pb^2)$ is a versal $G$-variety. In particular,
    \begin{equation*}
        \RD(\Cc_6(\Pb^2)\dr \Cc_6'(\Pb^2)/W(E_6))=\RD(W(E_6))\le 3.
    \end{equation*}
\end{corollary}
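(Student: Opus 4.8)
The plan is to transport versality along the $W(E_6)$-equivariant birational isomorphism $\tau\circ\psi\colon \Cc_6'(\Pb^2)\stackrel{\cong}{\to}\Hc_{3,3}(27)$ constructed in this subsection, and then feed the result into the general machinery of \S\ref{subsection:versal}. In other words, everything is already in place; the corollary is a bookkeeping exercise combining Proposition~\ref{prop:linesversal}, Proposition~\ref{prop:whyversal}, and Theorem~\ref{thm:RD:lines}.

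First I would fix a subgroup $G\subset W(E_6)$. By Proposition~\ref{prop:linesversal}, $\Hc_{3,3}(27)$ is a versal $G$-variety. Its inverse $\Hc_{3,3}(27)\dr \Cc_6'(\Pb^2)$ under $\tau\circ\psi$ is a $G$-equivariant dominant (indeed birational) rational map, and $\Cc_6'(\Pb^2)$ is a faithful $G$-variety (again transported along $\tau\circ\psi$); hence Lemma~\ref{lemma:dom} applies and shows that $\Cc_6'(\Pb^2)$ is a versal $G$-variety. This is the first assertion. For the displayed identity, specialize to $G=W(E_6)$: Proposition~\ref{prop:whyversal}(2) then gives $\RD(\Cc_6'(\Pb^2)\to \Cc_6'(\Pb^2)/W(E_6))=\RD(W(E_6))$, which is the quantity in the statement (up to the birational invariance of $\RD$, Lemma~\ref{lemma:first}(4), since $\Cc_6'(\Pb^2)$ sits inside $\Cc_6(\Pb^2)$ as a dense open). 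It remains only to bound $\RD(W(E_6))$, and for this I would recall that Proposition~\ref{prop:linesversal} together with Lemma~\ref{lemma:galois} identifies $\RD(W(E_6))$ with $\RD(\Hc_{3,3}(1)\to\Hc_{3,3})$, which is $\le 3$ by Theorem~\ref{thm:RD:lines}.

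I do not expect a serious obstacle here: every input has already been proved, and the only point worth flagging is that $\tau\circ\psi$ and the $W(E_6)$-action on $\Cc_6'(\Pb^2)$ are merely birational (the latter is emphasized above as ``not regular''), so one must invoke the versality criteria in their rational form, exactly as Lemma~\ref{lemma:dom} is stated. This is harmless precisely because the whole theory of $\RD$ of $G$-varieties in \S\ref{section:RD:group} is set up to be birationally invariant; indeed one could run the entire argument with $\Cc_6'(\Pb^2)$ replaced throughout by $\Hc_{3,3}(27)$ and translate back only at the very end via $\tau\circ\psi$.
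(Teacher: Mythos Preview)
Your proposal is correct and matches the paper's intended argument. The paper states the corollary without a separate proof, simply citing Proposition~\ref{prop:linesversal} and Theorem~\ref{thm:RD:lines}; the only nuance is that in the proof of Proposition~\ref{prop:linesversal} the versality of $\Cc_6'(\Pb^2)$ is already established directly (as the middle term in the chain $\Ab(\mathfrak{h})\dr\Cc_6'(\Pb^2)\dr\Hc_{3,3}(27)$), so one need not transport versality back from $\Hc_{3,3}(27)$ via $(\tau\circ\psi)^{-1}$ as you do---but this is a cosmetic difference, and your route through Lemma~\ref{lemma:dom} is equally valid.
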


\subsection{Pentahedral form}

Pentahedral form is a classical normal form for smooth cubic surfaces. We now consider this form from the point of view of resolvent degree.

For any fixed $[a_0:\cdots :a_4]\in\Pb^4$ the equations

\begin{equation}
\label{eq:pentahedral1}
\begin{array}{c}
a_0X_0^3+a_1X_1^3+a_2X_2^3+a_3X_3^3+a_4X_4^3=0\\
\\
X_0+X_1+X_2+X_3+X_4=0
\end{array}
\end{equation}
determine a cubic surface in $\Pb^3$.  Any permutation of the $a_i$ gives an isomorphic cubic surface.
We thus have a family $\Pb^4/S_5$ of cubic surfaces.  The elementary symmetric functions $\sigma_1,\ldots,\sigma_5$ in the $a_i$ give coordinates on $\Pb^4/S_5$. The open subset
\[\Pc:=\{[\sigma_1:\cdots :\sigma_5]: \sigma_5\neq 0\}\subset\Pb^4/S_5\]
is the family of smooth cubic surfaces admitting a {\em (proper) pentahedral form}, and the classifying map $\tau: \Pc\to \Hc_{3,3}$ is an open embedding (see, e.g.\cite[Lemma 3.5]{EJ}).  The hyperplane complement
\begin{equation*}
    \tPc:=\Pb^4-\bigcup_{i=0}^4 \{a_i=0\}=\Pb^4\times_{\Pb^4/S_5} \Pc
\end{equation*}
is the space of smooth cubic surfaces in {\em proper pentahedral form}. We can pull back the cover $\Hc_{3,3}(27)\to\Hc_{3,3}$ along the map
\begin{equation*}
    \tPc\to\Pc\to^\tau\Hc_{3,3}
\end{equation*}
to obtain a cover $\tPc(27)\to\tPc$.

\begin{proposition}
Pentahedral form is an accessory irrationality: the cover $\tPc(27)\to\tPc$ has Galois group $W(E_6)$. Further, the total space $\tPc(27)$ has two connected components, each component is preserved by the index two subgroup $W(E_6)^+\subset W(E_6)$, and the components are permuted under the action of the full group $W(E_6)$.
\end{proposition}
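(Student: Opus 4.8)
The plan is to exploit that $\tPc(27)\to\tPc$ is, by construction, the rational pullback of the $W(E_6)$--Galois cover $\Hc_{3,3}(27)\to\Hc_{3,3}$ along $\tPc\to\Pc\to\Hc_{3,3}$, and to control such a pullback through the image of a monodromy representation. Since $\Hc_{3,3}(27)\to\Hc_{3,3}$ is Galois with group $G:=W(E_6)$, the $G$--action pulls back to a $G$--action on $\tPc(27)$ with quotient $\tPc$; this is the sense in which $\tPc(27)\to\tPc$ ``has Galois group $W(E_6)$''. Fix a dense open $\Pc^\circ\subseteq\Pc$ over which both $\tPc\to\Pc$ and the pullback of $\Hc_{3,3}(27)\to\Hc_{3,3}$ are finite étale, write $\tPc^\circ$ for its preimage in $\tPc$, and set
\[
H:=\image\bigl(\pi_1^{\et}(\tPc^\circ)\to\pi_1^{\et}(\Pc^\circ)\xrightarrow{\rho}G\bigr),
\]
where $\rho$ is the monodromy of the $27$--line cover. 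By standard covering--space theory (as in the proof of Lemma~\ref{lemma:conn}), $\tPc(27)$ has exactly $[G:H]$ connected components, each is Galois over $\tPc$ with group $H$, and $G$ permutes them transitively with the stabilizer of each component equal to $H$.

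First I would pin down the possibilities for $H$. The cover $\tPc\to\Pc=\tPc/S_5$ is the generically $S_5$--Galois cover given by permuting the homogeneous coordinates $a_0,\dots,a_4$ of $\Pb^4$, so $\pi_1^{\et}(\tPc^\circ)=\ker\bigl(\pi_1^{\et}(\Pc^\circ)\onto S_5\bigr)$; being a kernel it is normal, hence $H=\rho(\pi_1^{\et}(\tPc^\circ))$ is normal in $\rho(\pi_1^{\et}(\Pc^\circ))=W(E_6)$. Since $W(E_6)^+$ is simple of index $2$, the only normal subgroups of $W(E_6)$ are $1$, $W(E_6)^+$ and $W(E_6)$. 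We cannot have $H=1$: that would force $\rho$ to factor through $S_5$, and hence $W(E_6)$ to be a quotient of $S_5$ --- absurd. The same observation shows that pentahedral form is an accessory irrationality: if $\tPc\to\Pc$ factored through $\Hc_{3,3}(27)\to\Hc_{3,3}$ then $S_5=\Gal(k(\tPc)/k(\Pc))$ would be a quotient of $W(E_6)=\Gal(k(\Hc_{3,3}(27))/k(\Hc_{3,3}))$, which it is not. Therefore $H\in\{W(E_6)^+,W(E_6)\}$, i.e.\ $\tPc(27)$ has either $2$ or $1$ connected components.

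It remains to exclude $H=W(E_6)$. I would apply Goursat's lemma to the joint monodromy $\pi_1^{\et}(\Pc^\circ)\to S_5\times W(E_6)$, which surjects onto each factor because each of $\tPc^\circ\to\Pc^\circ$ and $\Hc_{3,3}(27)|_{\Pc^\circ}\to\Pc^\circ$ is connected: its image $\Gamma$ is a fibre product $S_5\times_Q W(E_6)$ for some common quotient $Q$ of $S_5$ and $W(E_6)$, and the only such are $Q=1$ and $Q=\Zb/2$. A direct computation with the two fibre products shows that $H=\pr_2\bigl(\Gamma\cap(\{1\}\times W(E_6))\bigr)$ equals $W(E_6)$ when $Q=1$ and equals $W(E_6)^+$ when $Q=\Zb/2$. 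So it suffices to show $Q=\Zb/2$, equivalently that the two surjections $\pi_1^{\et}(\Pc^\circ)\onto\Zb/2$ --- one from the sign character of $S_5$, the other from the unique degree--two quotient of $W(E_6)$ --- coincide. Concretely this says: the quadratic subextension $k(\Pc)\bigl(\sqrt{\Delta_5}\bigr)$ of the pentahedral cover $k(\tPc)/k(\Pc)$, where $\Delta_5$ is the discriminant of the pentahedral quintic $\prod_i(t-a_i)$, equals the quadratic subextension of the $27$--line cover over $k(\Pc)$, namely the double cover cut out by the discriminant of the cubic surface and corresponding to $W(E_6)^+\subset W(E_6)$. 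Establishing this coincidence of double covers of $\Pc$ is the main obstacle; I would prove it using the classical explicit equations of the $27$ lines of a cubic surface in pentahedral coordinates, which exhibit the sign of the $S_5$ permuting the five faces of the pentahedron as precisely the ``discriminant'' character of $W(E_6)$ (cf.\ \cite{Do} and the computations of Elsenhans--Jahnel \cite{EJ}).

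Granting $Q=\Zb/2$, we obtain $H=W(E_6)^+$, so $\tPc(27)$ has $[W(E_6):W(E_6)^+]=2$ connected components. Each corresponds to a coset of $W(E_6)^+$ in $W(E_6)$; since $W(E_6)^+$ is normal, an element $w\in W(E_6)$ fixes the component labelled by a given coset precisely when $w\in W(E_6)^+$. Hence the index--two subgroup $W(E_6)^+$ preserves each of the two components, every element of $W(E_6)\setminus W(E_6)^+$ interchanges them, the full deck group of the (disconnected) cover $\tPc(27)\to\tPc$ is $W(E_6)$, and in fact each component is a connected $W(E_6)^+$--Galois cover of $\tPc$. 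Everything here is formal once the square--class identity of the previous paragraph is in hand.
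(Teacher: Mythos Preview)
Your plan is correct and lands on exactly the same crux as the paper: the key input is that the sign character of the $S_5$ permuting the pentahedral faces coincides with the discriminant character of $W(E_6)$, i.e.\ the discriminant of the pentahedral quintic and the discriminant of the cubic surface give the same square class over $\Pc$. The paper simply quotes this as the classical fact that the discriminant of the cubic equals the discriminant of its pentahedral form (\cite[\S9.4.5]{Do}), whence $\tPc\to\Hc_{3,3}$ factors through the double cover $\Hc_{3,3}(27)/W(E_6)^+$; it then argues that the residual $A_5$--cover $\tPc\to\Hc_{3,3}(27)/W(E_6)^+$ shares no intermediate cover with the $W(E_6)^+$--cover (since $W(E_6)^+$ is simple and has no $A_5$ quotient), so the fibre product is connected. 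Your Goursat--lemma packaging is an equivalent reformulation of this last step --- normality of $H$ and the classification of normal subgroups of $W(E_6)$ play the same role in both arguments --- so the only real difference is that the paper cites the discriminant coincidence directly rather than deferring it.
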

\begin{proof}
The cover
\begin{equation*}
    \Hc_{3,3}(27)/W(E_6)^+\to\Hc_{3,3}
\end{equation*}
corresponds to adjoining a square-root of the discriminant of the cubic. Note that the discriminant of the cubic equals the discriminant of each of its pentahedral forms (cf. \cite[\S 9.4.5]{Do}). As a consequence, the map $\tPc\to \Hc_{3,3}$ factors through the cover
\begin{equation*}
    \tPc\to \Hc_{3,3}(27)/W(E_6)^+.
\end{equation*}
The map $\tPc\to \Hc_{3,3}(27)/W(E_6)^+$ is a Galois $A_5$-cover of its image. On the other hand, because $W(E_6)$ only has proper, nontrivial quotients of order $2$; in particular $A_5$ is not such a quotient.  We conclude that $\tPc\to\Hc_{3,3}(27)/W(E_6)^+$ and $\Hc_{3,3}(27)\to\Hc_{3,3}(27)/W(E_6)^+$ share no intermediate covers, and thus
\begin{equation*}
    \Hc_{3,3}(27)\times_{\Hc_{3,3}(27)/W(E_6)^+}\tPc\to\tPc
\end{equation*}
is a connected Galois $W(E_6)^+$ cover. From the above, each of the two components of $\tPc(27)$ is isomorphic to this connected $W(E_6)^+$ cover, with the full group $W(E_6)$ interchanging the two components.
\end{proof}

\subsection{Hexahedral form}
\label{subsection:hexahedral}
The following is taken from Example 3.7 of \cite{EJ}.    Let $H\cong\Pb^4$ be the hyperplane in $\Pb^5$ given by $a_0+\cdots+a_5=0$.  The group $S_6$ acts on $H$ with quotient isomorphic to the weighted projective space $\Pb(2,3,4,5,6)$.  The key thing is a sequence of maps (using our notation as above):

\begin{equation}
\label{eq:hexahedral3}
\Hc_{3,3}(27)\stackrel{t_1}{\to} H\stackrel{t_2}{\to}H/S_6\stackrel{t_3}{\to}\Hc_{3,3}
\end{equation}
where $t_1$ is an unramified $2$-sheeted cover, $t_3$ is an unramified $36$-sheeted cover, and
$t_2$ is a generically $720$-to-$1$ branched cover.   Note that the fact that $t_1$ is $2$-sheeted, so that $\RD(\Hc_{3,3}(27)\to H)=1$, corresponds to the classical fact that, given a smooth cubic surface $S$ in hexahedral form, one can write down explicitly (as a linear function in the coefficients of $S$) a formula for 15 of the lines on $S$ (see, e.g. \cite{Do}, section 9.4). One can obtain the remaining 12 lines by adjoining a square root. By the classification of maximal subgroups in $W(E_6)$ (see \cite[Theorem 9.5.2]{Do}), the stabilizer of an unordered hexahedral form is isomorphic to $S_6\times \Z/2\Z$. As a consequence, the moduli of unordered hexabedral forms $H/S_6$ is isomorphic over $\Hc_{3,3}$ to the moduli of cubics equipped with a double-six:
\begin{equation*}
    \xymatrix{
        H/S_6 \ar[rr]^\cong \ar[dr] && \Hc_{3,3}(6,6)\ar[dl]\\
        & \Hc_{3,3}
    }.
\end{equation*}
Moreover,
\begin{align*}
    \RD(\Hc_{3,3}(27)\to \Hc_{3,3})&=\max\{\RD(H\to H/S_6),\RD(H/S_6\to\Hc_{3,3})\}\\
    &\le\max\{2,\RD(H/S_6\to\Hc_{3,3})\}
\end{align*}
where the last inequality follows from
\begin{align*}
    \RD(H\to H/S_6)&=\RD(S_6)\tag{by Proposition \ref{proposition:RD:linear} and Lemma \ref{lemma:dom}}\\
    &\le 2 \tag{by Hamilton's bound}.
\end{align*}

\section{Bitangents to plane quartics}
\label{section:bitangents}

The story of $28$ bitangents on a smooth plane quartic is analogous to that for the $27$ lines on a smooth cubic surface, and indeed the two are directly related, as we will see in \S\ref{subsection:relating-lines-and-bitangents} below.

\subsection{The moduli space of smooth planar quartics, and its covers}\label{subsection:h42covers}

Let $\Hc_{4,2}$ denote the moduli space of smooth quartic curves in $\Pb^2$.   This is a $6$-dimensional quasi-projective variety, the quotient of a hypersurface complement $(\Pb^{14}-\Sigma)$ by the action of $\PGL_3$ induced from its action on $\Pb^2$.  Let $\Gr(2,3)$ denote the Grassmannian of projective lines in $\Pb^2$.  Jacobi proved in 1850 that any smooth plane quartic $C$ has precisely $28$ {\em bitangents}; that is, lines $T\subset\Pb^2$ that are tangent to $C$ at two points (counted with multiplicity).   Let
\[\Hc_{4,2}(1):=\{(C,L)\in (\Pb^{14}-\Sigma)\times \Gr(2,3) : \text{$L$ bitangent to $C$}\}/\PGL_3\]

be the moduli space of smooth plane quartics equipped with a bitangent; here $\PGL_3$ acts diagonally.  The map $(C,L)\mapsto C$ is a $28$-sheeted covering space.    Let $\Hc_{4,2}(28)$ denote the Galois closure of $\pi: \Hc_{4,2}(1)\to\Hc_{4,2}$; this is a (connected) Galois cover of $\Hc_{4,2}$.
We use the notation $\Hc_{4,2}(28)$ since this cover corresponds to the moduli space of $29$-tuples $(C;L_1,\ldots,L_{28})$ of smooth plane quartics equipped with $28$ lines with a choice of labelling of the intersection graph of the set of $28$ lines.

The deck group of the Galois cover $\Hc_{4,2}(28)\to\Hc_{4,2}$ is the same as the monodromy group of the cover $\Hc_{4,2}(1)\to \Hc_{4,2}$.  This group is isomorphic to the unique simple group of order $1,451,520$, which we denote $W(E_7)^+$. There exists a split injection $W(E_7)^+\into W(E_7)$, the Weyl group of type $E_7$.  Recall that $W(E_7)$ is the reflection group with Dynkin diagram :

\begin{center}
  \begin{tikzpicture}[scale=.4]
    \draw (-1,1) node[anchor=east]  {$E_7$};
    \foreach \x in {0,...,5}
    \draw[thick,xshift=\x cm] (\x cm,0) circle (3 mm);
    \foreach \y in {0,...,4}
    \draw[thick,xshift=\y cm] (\y cm,0) ++(.3 cm, 0) -- +(14 mm,0);
    \draw[thick] (4 cm,2 cm) circle (3 mm);
    \draw[thick] (4 cm, 3mm) -- +(0, 1.4 cm);
  \end{tikzpicture}
\end{center}

It is given by order $2$ generators $s_\alpha$, one for each vertex, satisfying the same relations as $W(E_6)$ given above.  $W(E_7)$ has order $2,903,040$, and is a direct product of $\Z/2\Z$ with $W(E_7)^+$.  The action of $W(E_7)^+$ on $\Hc_{4,2}(28)$ is free on a Zariski open.  $W(E_7)^+\cong\Aut({\rm Pic}(C)[2])$, and for any class $[L_0]$ of a line we have:
\[{\rm Stab}([L_0])\cong W(E_6)\]
This action is most easily seen as follows (cf. \cite[Chapter IX.2]{DoOr}).  The moduli $\Hc_{4,2}(28)$ is the target of a generically 2-to-1 dominant rational map
\begin{equation*}
    \Cc_7(\Pb^2)\dr \Hc_{4,2}(28).
\end{equation*}
Concretely, given 7 points $\{x_1,\ldots,x_7\}\subset\Pb^2$ in general position, form the degree 2 Del Pezzo surface $V(x_1,\ldots,x_7)$ by blowing up $\Pb^2$ at these points.  The anti-canonical map
\begin{equation}\label{2DP}
    V\to\Pb^2.
\end{equation}
realizes $V$ as a 2-fold branched cover, branched over a quartic curve $C$, and takes every exceptional curve on $V$ to a bitangent of $C$. By Proposition 1 of \cite[Chapter IX.2]{DoOr}, this gives a 2-fold covering map
\begin{equation}\label{2DPM}
    U\to \Hc_{4,2}(28)
\end{equation}
where $U\subset\Cc_7(\Pb^2)$ is the locus of points in general position, and the map sends $V$ with its exceptional curves to $C$ with its 28 bitangents.  The Weyl group $W(E_7)$ acts on $\Cc_7(\Pb^2)$ (via the Coble representation) and this action factors through the projection
\begin{equation*}
    W(E_7)\cong \Z/2\Z\times W(E_7)^+\onto W(E_7)^+.
\end{equation*}
The map \eqref{2DPM} is equivariant for this action (see \cite[Chapter IX]{DoOr}, esp.\ p.\ 194, for a verification of this  equivariance). Under the map \eqref{2DPM}, the stabilizer of a bitangent lifts to the stabilizer of a marked point on $\Cc_7(\Pb^2)$, i.e. to $W(E_6)\subset W(E_7)^+\subset W(E_7)$.

Just as for lines on cubics, we will see throughout this paper how many classical problems about smooth quartic curves can be rephrased as understanding various (branched) covers of $\Hc_{4,2}$; for problems about bitangents the covers are intermediate between $\Hc_{4,2}(28)\to\Hc_{4,2}$. We now give several examples.

\para{Aronhold sets}
One of the more well-studied types of configurations of bitangents on a smooth plane quartic curve $C$ is the so-called {\em Aronhold set}. Recall that a collection of $n\ge 3$ bitangents on a smooth plane quartic is {\em asyzygetic} (resp. {\em syzygetic}) if the collection of $2n$ points of contact of the bitangents with the quartic are not (resp. are) contained in a conic.

\begin{definition}[{\bf Aronhold set of bitangents}]
\label{definition:Aronhold}
An {\em Aronhold set} $\Ac$ on a smooth plane quartic $C$ is an asyzygetic, unordered set of seven bitangents $\{T_1,\ldots ,T_7\}$ on $C$.  An {\em Aronhold basis} is an Aronhold set with an ordering of its elements.
\end{definition}
Let $\Hc_{4,2}(\tAc)$ denote the moduli of smooth plane quartics equipped with an Aronhold basis, and let $\Hc_{4,2}(\Ac)$ denote the moduli of smooth plane quartics equipped with an Aronhold set. Note that the forget-the-ordering map is a Galois $S_7$-cover
\begin{equation*}
    \Hc_{4,2}(\tAc)\to\Hc_{4,2}(\Ac).
\end{equation*}
Aronhold sets have been studied for over a century (for recent treatments, see e.g. \cite{DoOr} or \cite[Chapter 6.1.2]{Do}). One of the reasons is that an Aronhold basis on $C$ determines the other $21$ bitangents to $C$, i.e. we have an $W(E_7)^+$-equivariant isomorphism
\begin{equation*}
    \Hc_{4,2}(\tAc)\to^{\cong} \Hc_{4,2}(28).
\end{equation*}
Perhaps even more surprising, an Aronhold basis in fact determines the equation for $C$ itself \cite{Le}. The group $W(E_7)^+$ acts simply transitively on the set of Aronhold bases, and thus acts transitively on the set of Aronhold sets, with stabilizer the symmetric group $S_7$.  There are thus $[W(E_7)^+:S_7]=288$ choices of Aronhold sets. The complexity of finding an Aronhold basis, given an Aronhold set, as measured by resolvent degree, is equivalent to Hilbert's 13th problem, as we show in Theorem \ref{theorem:S7:varieties}.

\para{Steiner Complexes}
A second well-studied type of configuration of bitangents on a smooth quartic curve is the {\em Steiner complex} (cf. \cite[Chapter XIX.3]{Hil} and \cite[Chapter 6.1.2]{Do}).
\begin{definition}[{\bf Steiner complex of bitangents}]
    A {\em Steiner complex} of bitangents on a smooth plane quartic $C$ is an unordered collection of six unordered pairs of bitangents $\{(\alpha_1,\beta_1),\ldots,(\alpha_6,\beta_6)\}$ such that any two pairs give a syzygetic collection of bitangents.
\end{definition}
Any two bitangents determine a Steiner complex, and any one of the six pairs of a Steiner complex determine the same complex, so there are $\binom{28}{2}/6=378/6=63$ Steiner complexes. Denote the moduli of smooth plane quartics equipped with a Steiner complex by
\begin{equation*}
    \Hc_{4,2}(\St):=\{(C,S): C\in\Hc_{4,2} \ \text{and $S$ is a Steiner complex for $C$.}\}.
\end{equation*}
The group $W(E_7)^+$ acts transitively on the set of Steiner complexes, and the stabilizer of a Steiner complex is isomorphic to $W(D_6)\cong (\Z/2\Z)^{\times 5}\rtimes S_6$, where the action of $S_6$ is via its standard 5-dimensional permutation representation. We can thus make the identification
\begin{equation}
\Hc_{4,2}(\St)=\Hc_{4,2}(28)/W(D_6)= \Hc_{4,2}(\tAc)/W(D_6).
\end{equation}
where the second equality comes from the fact that an Aronhold basis determines the remaining 21 lines.

\para{Cayley Octads}
A third configuration of classical interest is the {\em Cayley octad} (cf. \cite[Chapter 6.3.2]{Do}).
\begin{definition}
    A {\em Cayley octad} is a collection of 8 distinct unordered points in $\Pb^3$ that arises as a complete intersection of 3 quadrics. Denote the moduli space of Cayley octads by $\Cay$.
\end{definition}
There is a close relationship between Cayley octads and smooth plane quartics, which is summed up in the \cite[Chapter 6.3]{Do} (especially Corollary 6.3.12). In particular, the moduli of plane quartics equipped with an Aronhold set $\Hc_{4,2}(\Ac)$ admits an 8-to-1 covering map to the moduli space of Cayley octads, which is in turn birational to the moduli space of smooth plane quartics equipped with an even $\theta$-characteristic:
\begin{equation*}
        \Hc_{4,2}(\Ac)\to^{8:1}\Cay\simeq\Hc_{4,2}(\theta^{\rm ev}).
\end{equation*}
Moreover, the group $W(E_7)^+$ acts transitively on the set of Cayley octads, respectively even $\theta$-characteristics, and the stabilizer of an octad, respectively even $\theta$-characteristic, is $S_8$.

\subsection{The resolvent degree of finding bitangents to plane quartics}
In this subsection we consider the resolvent degree of the problem of finding bitangents on smooth plane quartics.

\begin{proposition}[{\bf Finding \boldmath$28$ bitangents, given $2$}]
\label{prop:28from2}
With the notation as above:
\[\RD(\Hc_{4,2}(28)\to\Hc_{4,2}(2))=1.\]
\end{proposition}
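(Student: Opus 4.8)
The plan is to reduce, at the cost of extracting at most one square root, to Proposition~\ref{prop:27from1} (finding all $27$ lines of a cubic surface, given one of them, has resolvent degree $1$).

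I would start from the del Pezzo picture recalled in \S\ref{subsection:h42covers}. A smooth plane quartic $C$ is canonically the branch curve of the anticanonical double cover $\varphi\colon V\to\Pb^2$, where $V$ is the double cover of $\Pb^2$ branched over $C$, a del Pezzo surface of degree $2$; let $\gamma$ be the deck involution of $\varphi$ (the Geiser involution of $V$). The $56$ lines (i.e.\ $(-1)$-curves) of $V$ map onto the $28$ bitangents of $C$ with fibres the $\gamma$-orbits, and $V$, $\varphi$, $\gamma$, the $56$ lines and this $2$-to-$1$ correspondence are all built from $C$ with no field extension. Now take a generic point $(C;L_1,L_2)$ of a component of $\Hc_{4,2}(2)$. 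Writing $V=\{t^2=f\}$ with $f$ the quartic form, the restriction $f|_{L_1}$ is a perfect square (its divisor is $2$ times the contact divisor), so the reduced preimage $\varphi^{-1}(L_1)$ is a union $e_1\cup\gamma(e_1)$ of two lines; choosing one of them, say $e_1$, is a degree $\le 2$ operation and so costs $\RD\le 1$. Contract $e_1$: let $\sigma\colon V\to S$ be this blow-down, so $S$ is a smooth cubic surface and $p:=\sigma(e_1)$ is a (generic, hence on none of the $27$ lines) point of $S$. The preimage of $L_2$ is a pair $\{e_2,\gamma(e_2)\}$ of lines of $V$; since $e_1\cdot e_2+e_1\cdot\gamma(e_2)=e_1\cdot(-K_V)=1$, exactly one of them, say $e_2$, is disjoint from $e_1$, and then $\ell:=\sigma(e_2)$ is a line on $S$ --- all of this with no further extraction. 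We have produced a point $(S,\ell)\in\Hc_{3,3}(1)$.

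Next, by Proposition~\ref{prop:27from1}, from $(S,\ell)$ we obtain, in $\RD\le 1$, all $27$ lines of $S$ together with a labelling, i.e.\ a point of $\Hc_{3,3}(27)$. From this labelling together with the point $p$ we recover $V=\Bl_pS$ with an induced labelling of its $56$ lines (each being the unique effective divisor in its class), hence --- pushing forward along $\varphi$ --- the $28$ bitangents of $C$ together with a labelling. A short intersection-theory check on $V$ shows that under this labelling the bitangent coming from the exceptional curve of $\sigma$ is $\varphi(e_1)=L_1$ and the one coming from the strict transform of $\ell$ is $L_2$, so the resulting point of $\Hc_{4,2}(28)$ lies over our $(C;L_1,L_2)$. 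Concatenating the square-root step, the $\RD$-$0$ del Pezzo manipulations, and the invocation of Proposition~\ref{prop:27from1}, and applying Lemma~\ref{lemma:max}, gives $\RD(\Hc_{4,2}(28)\to\Hc_{4,2}(2))\le\max\{1,1\}=1$. For the reverse inequality, by Lemma~\ref{lemma:conn} it suffices to work over an irreducible component, where the cover is connected of degree $>1$ (its monodromy is a nontrivial stabilizer inside $W(E_7)^+$), hence has resolvent degree at least $1$.

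The only genuinely nontrivial input is Proposition~\ref{prop:27from1}; everything else is classical del Pezzo bookkeeping. I expect the main obstacle to be the genericity and compatibility checks rather than any new geometry: verifying (uniformly over the components of $\Hc_{4,2}(2)$) that $S$ is smooth and $p$ avoids the lines of $S$, and that the $W(E_6)$-marking output by Proposition~\ref{prop:27from1} together with $p$ determines the $W(E_7)^+$-marking of $\Hc_{4,2}(28)$ over $(C;L_1,L_2)$. Finally, this explains why two bitangents are needed and one is not: lifting and contracting a single bitangent yields only a bare cubic surface $S$, for which $\RD(\Hc_{3,3}(27)\to\Hc_{3,3})$ is not known to equal $1$; the second bitangent is precisely what supplies the line $\ell$ on $S$.
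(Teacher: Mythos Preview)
Your argument is correct, but it is not the proof the paper gives for Proposition~\ref{prop:28from2}. The paper argues directly on the quartic: after placing the two given bitangents at $x=0$ and $y=0$, the condition that a certain conic in the pencil through the four contact points degenerates is a degree~$5$ polynomial, whose roots furnish five further pairs of bitangents; this yields $\RD(\Hc'_{4,2}(4)\to\Hc'_{4,2}(2))\le 1$, and then the paper invokes Harris's result that four bitangents whose contact points do not lie on a conic determine the rest by radicals. In particular, the paper's proof of Proposition~\ref{prop:28from2} does not use Proposition~\ref{prop:27from1} at all.

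What you have done instead is to anticipate the content of \S\ref{subsection:relating-lines-and-bitangents}: you pass through the degree~$2$ del~Pezzo $V$ double-covering $\Pb^2$ branched over $C$, contract a line over $L_1$ to land on a cubic surface $S$, observe that $L_2$ supplies a line $\ell$ on $S$, and then feed $(S,\ell)$ into Proposition~\ref{prop:27from1}. Your intersection computation $e_1\cdot(e_2+\gamma(e_2))=e_1\cdot(-K_V)=1$ is the right way to pick out which lift of $L_2$ survives the contraction, and your reconstruction of the full $W(E_7)^+$-marking from the $W(E_6)$-marking of $S$ together with $p$ is exactly the bookkeeping that the paper carries out (in greater generality, for any $G\subset W(E_6)$) in the ``modular interpretation'' paragraph of \S\ref{subsection:relating-lines-and-bitangents}. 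So your proof is the $G=W(D_5)$ instance of that later argument, packaged as a direct reduction to Proposition~\ref{prop:27from1}. The paper's direct proof has the virtue of being self-contained and parallel in spirit to the proof of Proposition~\ref{prop:27from1} (pencil, degree~$5$ discriminant, then Harris); your proof has the virtue of making the cubic--quartic bridge do the work, which is arguably the conceptual explanation the paper is building toward anyway.
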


The proof of Proposition~\ref{prop:28from2} that we now give should feel similar to the proof of Proposition \ref{prop:27from1}, and indeed we formalize this similarity as a precise statement in
\S\ref{subsection:relating-lines-and-bitangents}.  We include the proof here for its beauty and historical interest.

\begin{proof}
Now, since the $T_i$ are distinct, any two intersect in a single point.  Let

\[\Hc_{4,2}'(2):=\{(C;T_1,T_2)\in \Hc_{4,2}(2): T_1\cap T_2 \not \in C\}.\]

This is a Zariski open subset of $\Hc_{4,2}(2)$.  It is enough to prove the theorem for the pullback
cover $\Hc'_{4,2}(28)\to\Hc'_{4,2}(2)$.  The advantage of $\Hc'_{4,2}(2)$ is that it gives us $4$ points of contact, $2$ each from $T_1\cap C$ and $T_2\cap C$.  We can then perform a classical construction, which we take from the 1920 book \cite{Hil}, which posits (see p.334 of \cite{Hil}):

\medskip
{\it ``Through the four points of contact of two bitangents of a non-singular quartic pass five conics each of which passes through the points of contact of two more bitangents.''}

\medskip
More precisely, let $(C;T_1,T_2)\in\Hc'_{4,2}(2)$ be given. We consider $\Pb^2$ with coordinates $[x:y:z]$.  By picking representatives in the $\PGL_3$ orbit of $(C;T_1,T_2)$, we can assume that $T_1$ and $T_2$ are given by the equations $x=0$ and $y=0$, respectively.  The assumption that $C$ has a bitangent given by $x=0$ and a bitangent given by $y=0$ puts the equation of $C$ in a very special form, namely:

\begin{equation}
\label{eq:quartic1}
C:=\{[x:y:z]\in\Pb^2: xy(U+2kV+t62xy)-(V+txy)^2=0\}
\end{equation}
for some $t$, where $U=0$ and $V=0$ are conics.  Consider the condition that $U+2kV+t^2xy$ factors as a product of linear forms $p(x,y,z)$ and $q(x,y,z)$.  One can check that this condition is a degree $5$ polynomial in $t$.  For such $t$ the equation  \eqref{eq:quartic1} for the quartic $C$ then becomes
\[xyp(x,y,z)q(x,y,z)-W^2=0\]
where $W:=V+txy$.  It is then clear that the lines given by $p=0$ and $q=0$ are both bitangent to $C$.  Further, the conic $W=0$ passes through the eight points of contact of the four bitangents $x=0, y=0, p=0, q=0$.  We have thus proven that
\begin{equation}
\label{eq:quartic2}
\RD(\Hc'_{4,2}(4)\to\Hc'_{4,2}(2))\leq\RD(\widetilde{{\mathcal P}}_5\to{\mathcal P}_5)=1
\end{equation}
where $\Hc'_{4,2}(4)$ is the pullback to $\Hc'_{4,2}(2)$ of the cover $\Hc_{4,2}(4)\to\Hc_{4,2}(2)$.  Although we will not need it, we remark that there are $5$ distinct roots of the degree $5$ polynomial determining such $t$, and so this gives us $5$ additional pairs of bitangents to $C$, for a total of $2+5\cdot 2=12$ bitangents.

Harris \cite{Har} proves the following: given any three bitangents whose points of contact lie on a conic, or any four whose points of contact do not, we can solve for the remaining ones in radicals; further, no smaller sets suffice.  This in particular gives that the cover $\Hc'_{4,2}(28)\to\Hc'_{4,2}(4)$ is solvable by radicals, and so has resolvent degree equal to $1$.  Combining this with \eqref{eq:quartic2} thus gives
\[\RD(\Hc_{4,2}(28)\to\Hc_{4,2}(2))=1\]
as desired.
\end{proof}

Proposition \ref{prop:28from2} naturally suggests the following fundamental problem.
\begin{problem}[{\bf Finding bitangents on smooth quartics}]
\label{problem:bitangent3}
Compute the following:
\begin{enumerate}
\item $\RD(\Hc_{4,2}(28)\to\Hc_{4,2}(1))$.
\item $\RD(\Hc_{4,2}(1)\to\Hc_{4,2})$.
\end{enumerate}
\end{problem}
In the next section, we relate this to the problem of finding lines on cubic surfaces, and in Section \ref{section:equivalence}, we put this problem in the context of Hilbert's 13th problem and Hilbert's Octic Conjecture.
%\begin{comment}
%Surprisingly, we have not been able to find in the vast literature on smooth quartics and their bitangents any discussion relating to Problem\ref{problem:bitangent3} or related problems. In \S\ref{section:equivalence} below, we will show that these two questions are closely related to Hilbert's Sextic Conjecture and Hilbert's 13th problem.
%
%One can of course ask a similar question for curves of higher degree.  Recall that a generic plane curve of degree $d\geq 4$ has $\frac{1}{2}d(d-2)(d^2-9)$ distinct bitangents.
%
%\begin{problem}[{\bf Finding bitangents on degree \boldmath$d\geq 5$ curves}]
%Given $d\geq 4$, for each $0\leq r<s\leq \frac{1}{2}d(d-2)(d^2-9)$ compute the resolvent degree of the problem of finding $s$ bitangents given $r$ of them.
%\end{problem}
%
%This problem is not always quite well-defined: one should specify the intersection properties of the given (and asked for) set of bitangents.
%\end{comment}

\subsection{Relating lines on cubic surfaces to bitangents on plane quartics}
\label{subsection:relating-lines-and-bitangents}

In this subsection we relate the resolvent degrees of two classical problems: finding a line on a smooth cubic surface and finding a bitangent on a smooth quartic curve in $\Pb^2$.  We then relate these to the resolvent degrees of other problems.

\begin{theorem}
\label{theorem:quartics-to-cubics}
For any subgroup $G\subset W(E_6)\subset W(E_7)^+$,
\[\RD(G)=\RD(\Hc_{3,3}(27)\to\Hc_{3,3}/G)= \RD(\Hc_{4,2}(28)\to \Hc_{4,2}/G).\]
In particular:
\begin{enumerate}
    \item $\RD(W(D_5))=\RD(\Hc_{3,3}(27)\to\Hc_{3,3}(1))=\RD(\Hc_{4,2}(28)\to\Hc_{4,2}(2))=1$.
    \item $\RD(W(E_6))=\RD(\Hc_{3,3}(27)\to\Hc_{3,3})=\RD(\Hc_{4,2}(28)\to\Hc_{4,2}(1))\le 3$.
\end{enumerate}
Similarly, for any other subgroup $G\subset W(E_7)^+$,
\[\RD(G)=\RD(\Hc_{4,2}(28)\to\Hc_{4,2}/G)\]
In particular:
\begin{enumerate}
    \item $\RD(S_7)=\RD(\Hc_{4,2}(28)\to\Hc_{4,2}(\Ac))\le 3$.
    \item $\RD(S_8)=\RD(\Hc_{4,2}(28)\to\Hc_{4,2}(\theta^{ev}))\le 4$.
    \item $\RD(W(E_7)^+)=\RD(\Hc_{4,2}(28)\to\Hc_{4,2})$.
\end{enumerate}
\end{theorem}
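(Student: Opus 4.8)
The plan is to reduce the entire theorem to two versality statements: that $\Hc_{3,3}(27)$ is a versal $G$-variety for every $G\subset W(E_6)$, which is Proposition~\ref{prop:linesversal}; and the new input, that $\Hc_{4,2}(28)$ is a versal $W(E_7)^+$-variety. Granting the latter, Lemma~\ref{lemma:subversal} shows that $\Hc_{4,2}(28)$ is a versal $H$-variety for every subgroup $H\subset W(E_7)^+$, so Proposition~\ref{prop:whyversal}(2) gives $\RD(\Hc_{4,2}(28)\to\Hc_{4,2}(28)/H)=\RD(H)$ for all such $H$. Together with Proposition~\ref{prop:linesversal} and Proposition~\ref{prop:whyversal}(2) on the cubic side, this yields all the displayed equalities, where $\Hc_{3,3}/G$ and $\Hc_{4,2}/G$ denote the intermediate covers $\Hc_{3,3}(27)/G$ and $\Hc_{4,2}(28)/G$ (so in particular the ``Similarly'' clause is exactly the case $H=G\subset W(E_7)^+$).

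To prove that $\Hc_{4,2}(28)$ is versal for $W(E_7)^+$, I would mirror the proof of Proposition~\ref{prop:linesversal}. By \S\ref{subsection:h42covers} there is a generically $2$-to-$1$ dominant rational map $\Cc_7(\Pb^2)\dr\Hc_{4,2}(28)$ that is $W(E_7)^+$-equivariant (the $W(E_7)$-action on $\Cc_7(\Pb^2)$ factoring through $W(E_7)^+$), and $\Hc_{4,2}(28)$ is a faithful $W(E_7)^+$-variety; so by Lemma~\ref{lemma:dom} it suffices to show $\Cc_7(\Pb^2)$ is versal for $W(E_7)^+$. For this, let $\mathfrak{h}$ be a Cartan subalgebra of a simple Lie algebra of type $E_7$, with $W(E_7)$ acting by the reflection representation. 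Since $W(E_7)\cong W(E_7)^+\times\langle -1\rangle$ with $-1$ acting as $-\mathrm{id}$ on $\mathfrak{h}$, the restriction of this representation to $W(E_7)^+$ is still faithful, so $\Ab(\mathfrak{h})$ is a faithful linear $W(E_7)^+$-variety, hence versal by Proposition~\ref{proposition:RD:linear}. The $E_7$-analogue of the Cremona/Coble construction used in Proposition~\ref{prop:linesversal} (see \cite{DuRe2}, \cite{DoOr}) provides a $W(E_7)$-equivariant dominant rational map $\Ab(\mathfrak{h})\dr\Cc_7(\Pb^2)$; since $-1$ acts trivially on the target it factors through dominant $W(E_7)^+$-equivariant maps $\Ab(\mathfrak{h})\dr\Ab(\mathfrak{h})/\{\pm1\}\dr\Cc_7(\Pb^2)$, and applying Lemma~\ref{lemma:dom} twice carries versality from $\Ab(\mathfrak{h})$ down to $\Cc_7(\Pb^2)$ and then to $\Hc_{4,2}(28)$.

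The ``in particular'' statements then follow by identifying each intermediate cover with the appropriate stabilizer subgroup, as recorded in \S\ref{subsection:h42covers}: the stabilizer of an ordered pair of bitangents is $W(D_5)\subset W(E_6)$, so $\Hc_{4,2}(28)/W(D_5)=\Hc_{4,2}(2)$; the stabilizer of a bitangent is $W(E_6)$, so $\Hc_{4,2}(28)/W(E_6)=\Hc_{4,2}(1)$; the stabilizer of an Aronhold set is $S_7$, so $\Hc_{4,2}(28)/S_7=\Hc_{4,2}(\Ac)$; the stabilizer of a Cayley octad (equivalently an even theta-characteristic) is $S_8$, so $\Hc_{4,2}(28)/S_8\simeq\Hc_{4,2}(\theta^{\mathrm{ev}})$; and $W(E_7)^+$ is the full deck group, so $\Hc_{4,2}(28)/W(E_7)^+=\Hc_{4,2}$. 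The numerics come from known facts: $\RD(W(D_5))=1$ since $W(D_5)\cong(\Z/2\Z)^4\rtimes S_5$ is almost solvable (Corollary~\ref{cor:asolv}), which matches $\RD(\Hc_{3,3}(27)\to\Hc_{3,3}(1))=1$ and $\RD(\Hc_{4,2}(28)\to\Hc_{4,2}(2))=1$ from Propositions~\ref{prop:27from1} and~\ref{prop:28from2}; $\RD(W(E_6))\le 3$ by Theorem~\ref{thm:RD:lines}; and $\RD(S_7)=\RD(\tPc_7\to\Pc_7)\le 3$, $\RD(S_8)=\RD(\tPc_8\to\Pc_8)\le 4$ by Corollary~\ref{cor:Hfaithandn1} and Hamilton's bounds.

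\textbf{The main obstacle} is the versality of $\Hc_{4,2}(28)$ as a $W(E_7)^+$-variety---concretely, producing a faithful linear $W(E_7)^+$-variety that dominates $\Cc_7(\Pb^2)$ equivariantly while keeping careful track of the distinction between $W(E_7)$ and $W(E_7)^+$: the relevant Cremona action on the configuration space of seven points descends only to $W(E_7)^+$, so one must verify that the linear model (the reflection representation of $W(E_7)$, modulo $\pm1$) still maps onto it. Everything else is bookkeeping with Lemmas~\ref{lemma:subversal}, \ref{lemma:dom}, \ref{lemma:max} and Propositions~\ref{prop:whyversal} and~\ref{proposition:RD:linear}.
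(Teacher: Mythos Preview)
Your proposal is correct and follows essentially the same approach as the paper. The paper isolates the versality of $\Hc_{4,2}(28)$ for all $G\subset W(E_7)^+$ as a separate proposition (Proposition~\ref{prop:bitangentsversal}), proved via the same chain $\Ab(\mathfrak{h})\dr\Pb(\mathfrak{h})\dr\Cc_7(\Pb^2)\dr\Hc_{4,2}(28)$ and the same appeals to Proposition~\ref{proposition:RD:linear} and Lemma~\ref{lemma:dom}; you reconstruct this inline and are in fact slightly more explicit than the paper about how the $W(E_7)$ versus $W(E_7)^+$ distinction is handled via the $\{\pm 1\}$ quotient.
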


We will deduce Theorem~\ref{theorem:quartics-to-cubics}  from the following, which should be compared with Proposition \ref{prop:linesversal} above.

\begin{proposition}[{\bf Versality of the bitangents problem}]
\label{prop:bitangentsversal}
   For any $G\subset W(E_7)^+$, the $k$-variety $\Hc_{4,2}(28)$ is a versal $G$-variety.
\end{proposition}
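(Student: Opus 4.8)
The plan is to follow the template of Proposition~\ref{prop:linesversal}: exhibit a faithful \emph{linear} $W(E_7)^+$-variety that admits a $W(E_7)^+$-equivariant dominant rational map onto $\Hc_{4,2}(28)$, and then conclude formally from Proposition~\ref{proposition:RD:linear} (faithful linear $G$-varieties are versal), Lemma~\ref{lemma:dom} (a $G$-compression of a versal $G$-variety is versal), and Lemma~\ref{lemma:subversal} (versality descends to subgroups). Concretely, let $\mathfrak{h}$ be a Cartan subalgebra of a simple Lie $k$-algebra of type $E_7$, and let $W(E_7)$ act on $\mathfrak{h}$ through its defining reflection representation. Restricting along the inclusion $W(E_7)^+\hookrightarrow W(E_7)$ makes $\Ab(\mathfrak{h})$ a linear $W(E_7)^+$-variety; it is faithful because the reflection representation is faithful on all of $W(E_7)$, so its restriction to $W(E_7)^+$ is a nontrivial homomorphism, and $W(E_7)^+$ is simple. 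By Proposition~\ref{proposition:RD:linear}, $\Ab(\mathfrak{h})$ is then a versal $W(E_7)^+$-variety.

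Next I would produce the compression. The geometric input is the $E_7$-analogue of the Duncan--Reichstein lemma invoked in Proposition~\ref{prop:linesversal}, namely a $W(E_7)^+$-equivariant dominant rational map
\[
\Ab(\mathfrak{h})\dashrightarrow \Cc_7(\Pb^2),
\]
where $W(E_7)^+$ acts on $\Cc_7(\Pb^2)$ via the Coble (Cremona) representation; this is the classical statement that the Cremona action of $W(E_7)$ on configurations of $7$ points in $\Pb^2$ is birationally a quotient of a linear action attached to the $E_7$ weight lattice (cf.\ \cite{DoOr}). Composing with the generically $2$-to-$1$ dominant rational map $\Cc_7(\Pb^2)\dashrightarrow \Hc_{4,2}(28)$ of \eqref{2DPM} --- which is $W(E_7)^+$-equivariant, as recalled in \S\ref{subsection:h42covers} --- yields a $W(E_7)^+$-equivariant dominant rational map $\Ab(\mathfrak{h})\dashrightarrow \Hc_{4,2}(28)$. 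Since $\Hc_{4,2}(28)$ is a faithful $W(E_7)^+$-variety (the action is generically free, \S\ref{subsection:h42covers}), Lemma~\ref{lemma:dom} shows it is versal for $W(E_7)^+$, and then Lemma~\ref{lemma:subversal} shows it is versal for every subgroup $G\subseteq W(E_7)^+$, which is the assertion.

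The only real work is the geometric input of the second paragraph: the construction of the $W(E_7)^+$-equivariant dominant rational map $\Ab(\mathfrak{h})\dashrightarrow\Cc_7(\Pb^2)$; everything else is a formal consequence of results already established. The cleanest route is to quote the appropriate Duncan--Reichstein statement for type $E_7$, exactly parallel to the $E_6$ case used in Proposition~\ref{prop:linesversal}. Alternatively one can argue by hand: realize the generic point of $\Cc_7(\Pb^2)$ through the Cox ring of the degree-$2$ del Pezzo surface obtained by blowing up $\Pb^2$ at $7$ general points, on which the $W(E_7)^+$-action is induced from a linear action on the lattice-graded pieces, and verify dominance from generic freeness together with the dimension count $\dim\Ab(\mathfrak{h}) = 7 > 6 = \dim\Cc_7(\Pb^2)$. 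Either way, once this equivariant linearization is in place the proposition follows from Proposition~\ref{proposition:RD:linear} and Lemmas~\ref{lemma:dom} and~\ref{lemma:subversal}.
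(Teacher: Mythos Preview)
Your proposal is correct and follows essentially the same approach as the paper: exhibit $\Ab(\mathfrak{h})$ (for $\mathfrak{h}$ a Cartan of type $E_7$) as a faithful linear variety, compress equivariantly through $\Cc_7(\Pb^2)$ and then via \eqref{2DPM} to $\Hc_{4,2}(28)$, and conclude with Proposition~\ref{proposition:RD:linear}, Lemma~\ref{lemma:dom}, and Lemma~\ref{lemma:subversal}. The only substantive difference is in justifying the equivariant map $\Ab(\mathfrak{h})\dashrightarrow\Cc_7(\Pb^2)$: the paper does not invoke a Duncan--Reichstein statement but instead passes through $\Pb(\mathfrak{h})$ and gives an explicit geometric description, identifying $\Pb(\mathfrak{h})$ with the moduli of $7$ ordered points on the smooth locus of a fixed cuspidal cubic (following Pinkham and Dolgachev--Ortland) and then forgetting the cubic to obtain a $21$-sheeted dominant map to $\Cc_7(\Pb^2)$.
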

\begin{proof}
    We recall a construction due to Dolgachev--Ortland \cite[Chapter IX]{DoOr}, which in its essentials dates to Coble. We claim there exists a sequence of $W(E_7)$-equivariant  dominant rational maps
    \begin{equation}\label{bitan}
        \Ab(\mathfrak{h})\dr\Pb(\mathfrak{h})\dr \Cc_7(\Pb^2)\dr\Hc_{4,2}(28)
    \end{equation}
    where $\Ab(\mathfrak{h})$ denotes the variety given by a Cartan subalgebra of a simple Lie group of type $E_7$, with its canonical $W(E_7)$-action. By Proposition \ref{proposition:RD:linear}, $\Ab(\mathfrak{h})$ is a versal $W(E_7)$ variety, and in fact a versal $G$-variety for all $G\subset W(E_7)$. By Lemma \ref{lemma:dom}, all the varieties in \eqref{bitan} dominated by $\Ab(\mathfrak{h})$ are also versal $G$-varieties for all $G\subset W(E_7)$ which act faithfully on them. Since the action of $W(E_7)$ on all but $\Ab(\mathfrak{h})$ factors through the projection $W(E_7)\cong \Z/2\Z\times W(E_7)^+\onto W(E_7)^+$ (cf. \cite[Remark 7.2]{DuRe2}), we conclude the result.

    It remains to construct the diagram \eqref{bitan}. The rational map
    \begin{equation*}
        \Cc_7(\Pb^2)\dr\Hc_{4,2}(28)
    \end{equation*}
    was constructed above as \eqref{2DPM}. The map
    \begin{equation*}
        \Ab(\mathfrak{h})\dr\Pb(\mathfrak{h})
    \end{equation*}
    is just the projectivization, and is thus manifestly $W(E_7)$-equivariant. It remains to construct the map
    \begin{equation*}
        \Pb(\mathfrak{h})\dr \Cc_7(\Pb^2)
    \end{equation*}
    We again follow \cite[Chapter IX]{DoOr}. We begin by identifying $\Pb(\mathfrak{h})$ with the set of ordered points $\{x_1,\ldots,x_7\}$ in the non-singular locus of a fixed cuspidal cubic, up to projective equivalence (cf. Pinkham \cite{Pi}). Since there are 21 cuspidal cubics through a general collection of 7 points in $\Pb^2$, forgetting the cubic gives the above 21-sheeted map. This concludes the construction of \eqref{bitan} and the proof.
\end{proof}

\begin{proof}[Proof of Theorem \ref{theorem:quartics-to-cubics}]
By Proposition \ref{prop:bitangentsversal}, the variety $\Hc_{4,2}(28)$ is versal for any $G\subset W(E_7)^+$. By Proposition \ref{prop:linesversal}, the variety $\Hc_{3,3}(27)$ is versal for any $G\subset W(E_6)\subset W(E_7)^+$. Proposition \ref{prop:whyversal} therefore implies that for any $G\subset W(E_6)$
\begin{equation*}
    \RD(G)=\RD(\Hc_{3,3}(27)\to\Hc_{3,3}(27)/G)=\RD(\Hc_{4,2}(28)\to\Hc_{4,2}(28)/G)
\end{equation*}
and that for any subgroup $G\subset W(E_7)^+$ not contained in $W(E_6)$,
\begin{equation*}
    \RD(G)=\RD(\Hc_{4,2}(28)\to\Hc_{4,2}(28)/G).
\end{equation*}
The special cases above now follow from the discussions of the quotients of $\Hc_{3,3}(27)$ and $\Hc_{4,2}(28)$ of classical interest in Sections \ref{subsection:h33covers} and \ref{subsection:h42covers}.

The bound
\[\RD(W(D_5))=\RD(\Hc_{3,3}(27)\to\Hc_{3,3}(1))=\RD(\Hc_{4,2}(28)\to\Hc_{4,2}(2))=1\]
now follows alternately from Theorem \ref{thm:JH}, Proposition \ref{prop:27from1}, or Proposition \ref{prop:28from2}. The bound
\[\RD(W(E_6))=\RD(\Hc_{3,3}(27)\to\Hc_{3,3})=\RD(\Hc_{4,2}(28)\to\Hc_{4,2}(1))\le 3\]
follows from Theorem \ref{thm:RD:lines}. The bounds
\begin{align*}
    \RD(S_7)&=\RD(\Hc_{4,2}(28)\to\Hc_{4,2}(\Ac))\le 3,\\
    \RD(S_8)&=\RD(\Hc_{4,2}(28)\to\Hc_{4,2}(\theta^{\rm ev}))\le 4
\end{align*}
follow from Corollary \ref{cor:Hfaithandn1} \ref{corollary:Hilbert:for:faithful}, and the Bring-Hamilton bounds $\RD(\tPc_7\to\Pc_7)\le 3$ and $\RD(\tPc_8\to\Pc_8)\le 4$.
\end{proof}

%\para{An explicit construction proving Theorem \ref{theorem:quartics-to-cubics}}.
%When solving a polynomial with solvable Galois group, Galois theory implies the existence
%of formulas in radicals;  it is a further (and useful!) step to actually to give such formulas explicitly.  Similarly, upper bounds on resolvent degree imply the existence of a formula with that number of parameters.  In this spirit, we now give such a (classically known) formula whose existence follows
%abstractly from Theorem \ref{theorem:quartics-to-cubics}.

We now use a classical construction to give a more explicit proof of the first equality of 
Theorem~\ref{theorem:quartics-to-cubics}.

\medskip
\para{The classical construction} Let $S$ be a smooth cubic surface containing lines $L_1,\ldots ,L_{27}$.
A choice of a point $p\in S-\cup_{i=1}^{27}L_i$ determines via projection a morphism
\[\pi_p:\Bl_p(S)\to\Pb^2\]
from the blowup $\Bl_p(S)$ to the plane $\Pb^2$.  This setup has the following remarkable properties:
\begin{enumerate}
\item $\pi_p$ is a $2$-sheeted branched cover, branched over a smooth quartic curve $C_p\subset\Pb^2$.
\item The $27$ images $\pi_p(L_i), 1\leq i\leq 27$ are $27$ of the $28$ bitangents of $C_p$, with
the 28th bitangent to $C_p$ being the image under $\pi_p$ of the exceptional divisor in $\Bl_p(S)$.
\item For every smooth quartic curve $C$ in $\Pb^2$ there exists $S$ and $p\in S$ as above so that
$C$ is the branch locus of $\pi_p$, as above.
\end{enumerate}

See Figure \ref{figure:cubic:to:quartic}.

\para{Modular interpretation} We can interpret this classical construction in terms of Del Pezzo surfaces of degree 2 and 3, and thus of maps of moduli spaces and their covers.

Consider the universal family
\[
\xymatrix{
    S\ar[r] & \Uc_{3,3}\ar[d]^\pi \\
    &\Hc_{3,3}
}\]
of smooth cubic surfaces.  Note that $\Uc_{3,3}$ can also be thought of as the moduli space of pairs
$\{(S,p):S\in \Hc_{3,3}, p\in S\}$ and the projection $\pi(S,p):=S$.

We now give a second presentation of $\Uc_{3,3}$. Recall that
\[\Hc_{3,3}(27)\cong\Hc_{3,3}^{\rm skew}(6)\cong\Cc_6'(\Pb^2)\]
Adding the data of a point on a cubic, we get birational maps
\begin{equation*}
    \Cc_7(\Pb^2)\dr^\simeq\Uc_{3,3}^{\rm skew}(6)\cong\Uc_{3,3}(27)
\end{equation*}
where $\Uc_{3,3}^{\rm skew}(6)$ (resp. $\Uc_{3,3}(27)$) denotes the space of cubic surfaces equipped with an ordered set of 6 skew lines (resp. an ordered set of 27 lines) and a point on the surface. These isomorphisms are equivariant with respect to the $W(E_6)\subset W(E_7)$ action on $\Cc_7(\Pb^2)$ and the $W(E_6)$ actions on $\Uc_{3,3}^{\rm skew}(6)$ (resp. $\Uc_{3,3}(27)$). In particular there is an open embedding
\begin{equation*}
    \Cc_7(\Pb^2)/W(E_6)\subseteq \Uc_{3,3}
\end{equation*}
onto the cubics equipped with a point not lying on any of the 27 lines.

On the other hand, as discussed above, we have a generically 2-to-1 $W(E_7)$-equivariant dominant map
\begin{equation*}
    \Cc_7(\Pb^2)\dr\Hc_{4,2}(28).
\end{equation*}
Therefore, for any $G\subset W(E_6)$, we obtain a pullback diagram in which the horizontal maps are generically 2-to-1 rational covers
\begin{equation*}
    \xymatrix{
        \Uc_{3,3}(27)\ar@{-->}[r] \ar[d] & \Hc_{4,2}(28)\ar[d]\\
        \Uc_{3,3}(27)/G \ar@{-->}[r] & \Hc_{4,2}(28)/G
    }
\end{equation*}
This diagram shows that, at the cost of adjoining a square root, any explicit solution for $\Hc_{4,2}(28)\to\Hc_{4,2}(28)/G$ determines one for $\Uc_{3,3}(27)\to\Uc_{3,3}(27)/G$, and vice versa.

It remains to relate this to solutions of
\begin{equation*}
    \Hc_{3,3}(27)\to\Hc_{3,3}(27)/G.
\end{equation*}
One direction is trivial: because we have a pullback diagram in which all maps are dominant
\begin{equation*}
    \xymatrix{
        \Uc_{3,3}(27)\ar@{-->}[r] \ar[d] & \Hc_{3,3}(27)\ar[d]\\
        \Uc_{3,3}(27)/G\ar@{-->}[r] & \Hc_{3,3}(27)/G
    }
\end{equation*}
any solution to $\Hc_{3,3}(27)\to\Hc_{3,3}(27)/G$ immediately pulls back to give one for $\Uc_{3,3}(27)\to\Uc_{3,3}(27)/G$. For the other direction, given an explicit tower solving $\Uc_{3,3}(27)\to\Uc_{3,3}(27)/G$
\begin{equation}\label{U33solve}
    \xymatrix{
    & & & \Uc_{3,3}(27)\ar[d] \\
    X_r \ar@{-->}[r] \ar@{-->}[urrr] & \cdots \ar@{-->}[r] & X_1 \ar@{-->}[r] &\Uc_{3,3}(27)/G
    }
\end{equation}
Let $Z\subset\Uc_{3,3}(27)/G$ be the closure of the complement of the image of $X_r$ in $\Uc_{3,3}(27)/G$. Because $X_r\dr \Uc_{3,3}(27)/G$ is dominant, $Z$ is a proper subvariety.

Fix a line $L\subset\Pb^3$ and let $U\subset \Hc_{3,3}(27)/G$ be the Zariski open consisting of cubic surfaces which intersect $L$ transversely. Define
\begin{equation*}
    \widetilde{U}_L:=\{(\tilde{S},p):\tilde{S}\in U\subset \Hc_{3,3}(27)/G, p\in S\cap L\}
\end{equation*}
By Bezout's Theorem, the projection
\begin{equation*}
    \widetilde{U}_L\to U
\end{equation*}
is a 3-to-1 dominant map. Because $Z\subset\Uc_{3,3}(27)/G$ is a proper closed subvariety, for a generic choice of $L\subset\Pb^3$, the embedding
\begin{equation*}
    \widetilde{U}_L\subset \Uc_{3,3}(27)/G
\end{equation*}
is not contained in $Z$. We can therefore pull back the solution \eqref{U33solve} along this embedding to get a tower solving
\begin{equation*}
    \widetilde{U}_L\times_{\Hc_{3,3}(27)/G}\Hc_{3,3}(27)\to\widetilde{U}_L
\end{equation*}
We conclude from Lemma \ref{lemma:max} and Corollary \ref{cor:Hfaithandn1} \ref{corollary:nto1} that
\begin{align*}
    \RD(\Hc_{3,3}(27)\to\Hc_{3,3}(27)/G)&\le\max\{\RD( \widetilde{U}_L\times_{\Hc_{3,3}(27)/G}\Hc_{3,3}(27)\to\widetilde{U}_L),\RD(\widetilde{U}_L\to U)\}\\
    &\le\max\{\RD(\Uc_{3,3}(27)\to\Uc_{3,3}(27)/G),1\}\\
    &=\RD(\Uc_{3,3}(27)\to\Uc_{3,3}(27)/G).
\end{align*}
\qed

\begin{remark}
    The construction above using Bezout's theorem suggests a general method. We develop this further in Section \ref{subsection:findpoint} below.
\end{remark}

The proof of Theorem \ref{theorem:quartics-to-cubics} also implies the following.
\begin{corollary}[{\bf RD for Double-Sixes and Steiner Complexes}]
    The resolvent degree of finding an ordered sixer given a double-six equals the resolvent degree of finding an Aronhold basis given a Steiner complex equals the resolvent degree of $S_6$, i.e.
    \[\RD(S_6)=\RD(\Hc_{3,3}^{\rm skew}(6)\to\Hc_{3,3}(6,6))=\RD(\Hc_{4,2}(\tAc)\to\Hc_{4,2}(\St)).\]
\end{corollary}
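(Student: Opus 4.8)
The plan is to reduce both equalities to Theorem~\ref{thm:JH} by recognizing each cover in the statement as the quotient map of a versal $G$-variety, and then computing $\RD(G)$ group-theoretically.

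First I would assemble the versality identifications already available. Via the isomorphism $\tau\colon\Hc_{3,3}^{\rm skew}(6)\stackrel{\cong}{\to}\Hc_{3,3}(27)$ of \eqref{eq:six:determine} together with \eqref{eq:doublesix}, the cover $\Hc_{3,3}^{\rm skew}(6)\to\Hc_{3,3}(6,6)$ is identified, up to birational equivalence, with the Galois quotient $\Hc_{3,3}(27)\to\Hc_{3,3}(27)/(S_6\times\Z/2\Z)$, where $S_6\times\Z/2\Z\subset W(E_6)$ is the stabilizer of a double-six. By Proposition~\ref{prop:linesversal}, $\Hc_{3,3}(27)$ is a versal $(S_6\times\Z/2\Z)$-variety, so Proposition~\ref{prop:whyversal} together with the birational invariance of $\RD$ (Lemma~\ref{lemma:first}) gives $\RD(\Hc_{3,3}^{\rm skew}(6)\to\Hc_{3,3}(6,6))=\RD(S_6\times\Z/2\Z)$. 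Symmetrically, an Aronhold basis determines the remaining bitangents, so $\Hc_{4,2}(\tAc)\cong\Hc_{4,2}(28)$ as $W(E_7)^+$-varieties, and $\Hc_{4,2}(\St)=\Hc_{4,2}(\tAc)/W(D_6)$ with $W(D_6)\subset W(E_7)^+$ the stabilizer of a Steiner complex; Proposition~\ref{prop:bitangentsversal} and Proposition~\ref{prop:whyversal} then give $\RD(\Hc_{4,2}(\tAc)\to\Hc_{4,2}(\St))=\RD(W(D_6))$.

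Next I would compute the three group-theoretic resolvent degrees in one stroke. The simple Jordan--H\"older factors of $S_6$, of $S_6\times\Z/2\Z$, and of $W(D_6)\cong(\Z/2\Z)^{\times 5}\rtimes S_6$ all lie in the two-element set $\{A_6,\Z/2\Z\}$, with $A_6$ occurring once and $\Z/2\Z$ with the remaining multiplicity. Each of these three groups contains both $A_6$ (inside its $S_6$-factor, via $A_6\subset S_6$) and $\Z/2\Z$ as subgroups, so the hypothesis of Theorem~\ref{thm:JH} is met and it yields $\RD(S_6)=\RD(S_6\times\Z/2\Z)=\RD(W(D_6))=\max\{\RD(A_6),\RD(\Z/2\Z)\}=\RD(A_6)$, using $\RD(\Z/2\Z)=1\le\RD(A_6)$. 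Chaining these equalities with the two displays from the previous step gives the corollary.

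I do not expect a genuine obstacle: the substance is packaged in Theorem~\ref{thm:JH} and the versality results of Sections~\ref{section:cubic:surfaces} and~\ref{section:bitangents}. The only points to check carefully are that the relevant total spaces are irreducible with faithful group action --- immediate since $W(E_6)$ and $W(E_7)^+$ act faithfully on the connected Galois covers $\Hc_{3,3}(27)$ and $\Hc_{4,2}(28)$, so any subgroup acts faithfully --- and the elementary group theory $W(D_6)\cong(\Z/2\Z)^{\times 5}\rtimes S_6$ with $A_6,\Z/2\Z\hookrightarrow W(D_6)$. One could note for completeness that $\RD(A_6)\le 2$ by $\RD(A_6)=\RD(S_6)\le\RD(\tPc_6\to\Pc_6)\le 2$, but this is not needed for the equalities themselves.
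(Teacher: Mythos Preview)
Your proof is correct and follows essentially the same approach as the paper: the paper invokes Theorem~\ref{theorem:quartics-to-cubics} (which packages exactly the versality Propositions~\ref{prop:linesversal} and~\ref{prop:bitangentsversal} you cite directly) to identify the two covers with $\RD(S_2\times S_6)$ and $\RD(W(D_6))$, and then applies Theorem~\ref{thm:JH} to conclude these equal $\RD(S_6)$. Your unwinding through the common simple factor $A_6$ is slightly more explicit than the paper's shorthand ``$\RD(W(D_6))=\max\{1,\RD(S_6)\}$'', but the argument is the same.
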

\begin{proof}
    By Theorem \ref{theorem:quartics-to-cubics},
    \begin{align*}
        \RD(S_2\times S_6)&=\RD(\Hc_{3,3}^{\rm skew}(6)\to\Hc_{3,3}(6,6))\intertext{and, because $\Hc_{4,2}(\tAc)\cong\Hc_{4,2}(28)$ as $W(E_7)^+$-varieties, Theorem \ref{theorem:quartics-to-cubics} also gives}
        \RD(W(D_6))&=\RD(\Hc_{4,2}(\tAc)\to\Hc_{4,2}(\St)).
    \end{align*}
    Because $W(D_6)=(\Z/2\Z)^{\times 5}\rtimes S_6$, Theorem \ref{thm:JH} gives
    \begin{equation*}
        \RD(W(D_6))=\max\{1,\RD(S_6)\}=\RD(S_2\times S_6)=\RD(S_6).
    \end{equation*}
\end{proof}

\section{The resolvent degree of some enumerative problems}
\label{section:enumerative}

Consider an enumerative problem $\widetilde{\Mc}\dr\Mc$ as in the introduction.  As mentioned
there, a typical first goal is to prove that this is a branched cover.  One then tries to find its degree.
The third step is to compute the Galois group of (the normal closure of) the covering.  Computing 
$\RD(\widetilde{\Mc}\dr\Mc)$ can be interpreted as computing the number of parameters needed to specify a point in $\widetilde{\Mc}$ given a point of $\Mc$.  This seems to us like a fundamental problem.  We worked through the explicit examples of lines on a smooth cubic surface and bitangents on a smooth quartic in Sections~\ref{section:cubic:surfaces} and \ref{section:bitangents}. In this section we present a few more examples.

\subsection{Tangency problems for plane curves}

\para{Steiner's 5 conics problem}
A classical problem of Steiner asks how many conics in $\Pb^2$ are tangent to $5$ given conics.    After many incorrect answers and a long, rich history, the problem was answered around 40 years ago; see, e.g.\ \cite{EH} and the references contained therein. The answer is $3264$.  But how to find these conics given the original $5$, given by the coefficients of their defining equations?

Harris proves in \cite[IV]{Har} that this problem is not solvable by radicals, as follows.  Let $W\cong\Pb^5$ denote the linear system of conics in $\Pb^2$, and let $W_0$ denote the Zariski open subset of smooth (i.e. non-degenerate) conics.    Let
\[Y:=\{(C_1,\ldots ,C_5,C)\in W^5\times W_0: \ \text{$C$ is tangent to each $C_i$}\}.\]
Consider the map $\pi:Y\to W^5$ be $\pi(C_1,\ldots ,C_5,C):=(C_1,\ldots ,C_5)$.  Then $\pi$ is a $3264$-sheeted branched cover.  Harris (see \S IV of \cite{Har}) computes the monodromy group of this cover to be the full symmetric group $S_{3264}$.  As this group is not solvable, Harris deduces that there is no formula in radicals for the coefficients of $C$ in terms of the coefficients of the $C_i$.

\begin{problem}[{\bf Refinements of Steiner's problem}]
Determine the monodromy of the natural branched covers of $W^5$ lying between $Y$ and $W^5$. Determine which if any are solvable by radicals. For these, determine explicit formulas.
\end{problem}

\begin{problem}[{\bf Resolvent degree of the $5$ conics problem}]
Compute $\RD(Y\to W^5)$.
\end{problem}

There are many generalizations of Steiner's Problem, for many of which the associated monodromy group has been computed; see, e.g.\ \cite{EH,HS}.  It would be interesting to work out bounds on the resolvent degree for these problems.

\para{Curves through specified points}
There are many more such enumerative problems.   For example, we have the following.
Let ${\cal P}_{d}\subset (\Pb^2)^{3d-1}/S_{3d-1}$ be the parameter space of $(3d-1)$-tuples of distinct points in $\Pb^2$ in general position.    A dimension count gives that the number $n_d$ of degree $d$ rational curves that pass through $3d-1$ such points in $\Pb^2$ is finite.   It was known classically that $n_2=1, n_3=12$ and $n_4=620$.    In the early 1990's the following recursive formula
for $n_d$ was given by Kontsevich-Manin and Ruan-Tian (see, e.g.\ \cite{EH} and the references contained therein):

\[n_d=\sum_{d_1+d_2=d, d_1,d_2>0}n_{d_1}n_{d_2}\left(d_1^2d_2^2\binom{3d-4}{3d_1-2}-d_1^3d_2\binom{3d-4}{3d_1-1}\right).\]

%\[n_d=\frac{1}{6(d-1}\sum_{d_1+d_2=d}(d_1d_2-2\frac{(d_1-d-2)^2}{3d-2}\binom{3d-2}{3d_1-1}d_1d_2n_{d_1}n_{d_2}.\]

Let $X_d:=\PGL_2\backslash{\rm Rat}_d(\Pb^1,\Pb^2)/\PGL_3$ denote the moduli space of degree $d$ rational curves.  Let
\[Y_d:=\{(p_1,\ldots, p_{3d-1}, C_1,\ldots ,C_{n_d}): p_j\in C_k \ \forall j,k\}\subset  {\cal P}_d\times X_d^{n_d}.\]

Denote by $\pi_d:Y_d\to  {\cal P}_d$ the natural projection.  Then $\pi_d$ is an $n_d$-sheeted branched cover.

\begin{problem}
Compute the monodromy of $\pi_d$, as well as of the intermediate covers.  Compute $\RD(\pi_d)$.
\end{problem}

Among many other variations, we mention the following.

\begin{problem}
All general degree $n$ curves through $\frac{1}{2}n(n+3)-1$ fixed points pass through $\frac{1}{2}(n-1)(n-2)$ other fixed points (see,e.g., p.191 of \cite{Hil}). Compute $\RD$ for the problem of finding one of the $\frac{1}{2}(n-1)(n-2)$ other points, as well as its monodromy.
\end{problem}

\subsection{Finding a point on a projective subvariety}\label{subsection:findpoint}
In relating different problems about varieties in projective space, it will sometimes be useful to pick a basepoint on a variety in a way that varies algebraically over a parameter space.  The following proposition, which we isolate because it might be useful in other contexts, states that to compute $\RD$ for any algebraic problem for degree $d$ varieties of a fixed dimension in $\Pb^n$, one can add the data of a basepoint at the cost of finding a root of a degree $d$ polynomial.

\begin{comment}
\begin{definition}
    We say that a dominant rational map $\tX\dr X$ is {\em separable} if for each irreducible component $\tX_{ij}\subset \tX$, the field extension $k(X)\to k(\tX_{ij})$ is separable, i.e. every subfield $K\subset k(\tX)$ with $k(X)\to K$ algebraic is a separable extension of $k(X)$.
\end{definition}
\end{comment}

\begin{proposition}[{\bf Finding a point on a subvariety of $\Pb^n$}]
\label{proposition:find-a-point}
    Let $X$ be any variety over $k$, and let
    \begin{equation*}
        \xymatrix{
            S \ar[dr] \ar[r] & X\times\Pb^n \ar[d] \\
            & X
        }
    \end{equation*}
    be any family of $r$-dimensional, degree $d$ varieties in $\Pb^n$ such that $S\to X$ is a dominant map. Let
        \begin{equation}
            \label{eq:pullback-m-to-u}
            \xymatrix{
                Y \ar@{-->}[r]\ar@{-->}[d] & \tX \ar@{-->}[d] \\
                S \ar@{-->}[r] & X
            }
        \end{equation}
    be any pullback diagram with vertical maps being rational covers.  Then
    \[\RD(Y\dr S)\leq \RD(\tX\dr X)\leq \max\{\RD(Y\dr S), \RD(S_d)\}.\]
\end{proposition}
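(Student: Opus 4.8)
The plan is to establish the two inequalities separately. The left inequality $\RD(Y\dr S)\le\RD(\tX\dr X)$ is an instance of Lemma~\ref{lemma:first}(3) together with Lemma~\ref{lemma:max}: indeed $Y\dr S$ is the pullback of the rational cover $\tX\dr X$ along the dominant map $S\dr X$, so Lemma~\ref{lemma:first}(3) gives it directly once one observes that \eqref{eq:pullback-m-to-u} exhibits $Y$ as $\tX\times_X S$ up to birational equivalence; in the reducible case one invokes Lemma~\ref{lemma:conn} to pass to components. So this half is essentially formal.

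For the right inequality, the idea is to solve $\tX\dr X$ by first solving $S\dr X$ — which costs $\RD(S\dr X)$ — and then solving $Y\dr S$, which costs $\RD(Y\dr S)$; by Lemma~\ref{lemma:max} applied to the tower $Y\dr S\dr X$, the composite $Y\dr X$ has resolvent degree $\max\{\RD(Y\dr S),\RD(S\dr X)\}$, and since $\tX\dr X$ is dominated by $Y\dr X$ (via the top map of \eqref{eq:pullback-m-to-u}), Lemma~\ref{lemma:max} again gives $\RD(\tX\dr X)\le\RD(Y\dr X)$. Wait — more carefully: $\tX\dr X$ need not be a further quotient of $Y\dr X$ in general, but the top horizontal map $Y\dr\tX$ in \eqref{eq:pullback-m-to-u} is dominant over $X$, so $\RD(Y\times_X ? )$... the cleanest route is: a tower solving $S\dr X$ in $\max\{\RD(S\dr X),\dots\}$ variables can be extended, by pulling back a tower solving $Y\dr S$, to a tower over $X$ whose final term dominates $Y$, hence dominates $\tX$. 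This shows $\RD(\tX\dr X)\le\max\{\RD(Y\dr S),\RD(S\dr X)\}$. It therefore remains to prove $\RD(S\dr X)\le\RD(S_d)$, i.e. the bound $\RD(S\dr X)\le\RD(\tPc_d\to\Pc_d)$, where $S_d$ denotes $S_{d}$ (the symmetric group, via Corollary~\ref{cor:Hfaithandn1}\ref{corollary:nto1}, or equivalently $\RD(\tPc_d\to\Pc_d)$).

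The heart of the matter is thus: \emph{given a family $S\to X$ of $r$-dimensional degree-$d$ subvarieties of $\Pb^n$, the cover "pick a point on the fiber" has resolvent degree at most that of a general degree-$d$ polynomial.} The plan here is the standard Bezout trick, already used in the proof of Theorem~\ref{theorem:quartics-to-cubics}: choose a generic linear subspace $L\subset\Pb^n$ of complementary dimension $n-r$; then for $x$ in a Zariski-dense open $U\subset X$, the fiber $S_x$ meets $L$ in exactly $d$ points (counted with multiplicity, generically reduced). This produces a subcover $\widetilde{U}_L:=\{(x,p): x\in U,\ p\in S_x\cap L\}\to U$ which is generically $d$-to-$1$, and which maps to $S\to X$ over $X$. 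By Corollary~\ref{cor:Hfaithandn1}\ref{corollary:nto1} (universality of $\RD(S_d)$), $\RD(\widetilde{U}_L\to U)\le\RD(S_d)$. Since $\widetilde U_L\to U$ dominates $S\dr X$ over $X$ — any point of $S_x\cap L$ is in particular a point of $S_x$ — we get $\RD(S\dr X)=\RD(S|_U\dr U)\le\RD(\widetilde U_L\to U)\le\RD(S_d)$, using Lemma~\ref{lemma:first}(4) for the birational invariance and Lemma~\ref{lemma:conn}/Lemma~\ref{lemma:max} to pass from the dominating cover to $S$.

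The main obstacle — really the only non-formal point — is justifying the Bezout step cleanly: one must check that for generic $L$ the intersection $S_x\cap L$ is a genuine finite set of $d$ points over a dense open of $X$ (so that $\widetilde U_L\to U$ is an honest rational cover of degree exactly $d$, giving a well-defined classifying map $U\to\Pc_d$), and that $\widetilde U_L$ is itself reduced/irreducible enough to apply the lemmas. Generic transversality of $S_x$ with a generic $L$ of complementary dimension handles this: the incidence variety of pairs $(x,L)$ with $S_x\cap L$ not finite of length $d$ is a proper subvariety of $X\times\mathrm{Gr}(n-r,n)$, so a generic $L$ works over a dense open of $X$; and by generic smoothness (we are in characteristic $0$) the $d$ points are distinct for generic $x$. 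With that in hand the three inequalities chain together to give the claim. I would also remark that when $r=0$ the statement is a tautology ($S\to X$ is itself generically $d$-to-$1$), which is a useful sanity check on the constants.
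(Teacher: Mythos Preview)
Your approach is essentially the paper's: both slice the family $S$ by a generic linear subspace $L\subset\Pb^n$ of complementary dimension $n-r$ to obtain a generically $d$-to-$1$ cover $\widetilde U_L\to U$ (the paper calls this $U_1\to U$), bound it by $\RD(S_d)$ via universality, and then pull back $Y\dr S$ along the inclusion $\widetilde U_L\hookrightarrow S$.

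There is, however, a genuine slip in your framing. The map $S\dr X$ is \emph{not} a rational cover: its generic fiber is $r$-dimensional, so for $r>0$ the quantity $\RD(S\dr X)$ is simply undefined in the sense of Definition~\ref{definition:rd1}, and Lemma~\ref{lemma:max} does not apply to the composite $Y\dr S\dr X$. Thus the intermediate inequalities ``$\RD(\tX\dr X)\le\max\{\RD(Y\dr S),\RD(S\dr X)\}$'' and ``$\RD(S\dr X)\le\RD(\widetilde U_L\to U)$'' are both ill-posed. (Your sanity check at $r=0$ is exactly the one case where this framing does make sense.) Moreover, even formally the second inequality would point the wrong way: $\widetilde U_L\hookrightarrow S$ is a closed embedding, not a dominant map, so there is no base-change inequality available in that direction.

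The fix is immediate and is precisely what the paper does: bypass $\RD(S\dr X)$ entirely and work with the honest tower of rational covers
\[
  \widetilde U_L\times_S Y \;\dr\; \widetilde U_L \;\dr\; U\subset X.
\]
The first map is the pullback of $Y\dr S$ along $\widetilde U_L\to S$, so Lemma~\ref{lemma:first}(3) gives $\RD(\widetilde U_L\times_S Y\dr\widetilde U_L)\le\RD(Y\dr S)$; the second has $\RD\le\RD(S_d)$ by your Bezout argument. Since $\widetilde U_L\times_S Y$ maps to $Y$ and hence dominates $\tX$ over $X$, this tower solves $\tX\dr X$, and Lemma~\ref{lemma:max} (now legitimately applied) yields the desired bound. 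All of your ingredients are already on the table; only the bookkeeping through the undefined $\RD(S\dr X)$ needs to be removed.
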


\begin{proof}
The first inequality follows from Lemma \ref{lemma:first}.  We now prove the second inequality.  Fix an $n-r$-dimensional linear subspace $L\subset\Pb^n$. Let $U\subset X$ be the Zariski open consisting of all $x\in X$ such that the variety $S_x$ intersects $L$ transversely. Define
\begin{equation*}
    U_1:=(U\times L)\cap S.
\end{equation*}
By Bezout's theorem, the map $U_1\to U$ given by projection is a generically $d$-to-$1$ rational cover.  Therefore, by Lemma \ref{lemma:universal},
\begin{equation*}
    \RD(U_1\to U)\le\RD(\tPc_d\to\Pc_d)=\RD(S_d).
\end{equation*}
By construction, we have a commuting triangle
\begin{equation*}
    \xymatrix{
        & S \ar[d] \\
        U_1 \ar[r] \ar[ur] & X
    }
\end{equation*}
Form the pullback
\begin{equation*}
    \xymatrix{
        U_1\times_S Y \ar[r] \ar@{-->}[d] & Y\ar@{-->}[d]\\
        U_1\ar[r] & S
    }
\end{equation*}
By construction,
\begin{equation*}
    U_1\times_{S}Y\dr U_1\to X
\end{equation*}
is a tower solving $\tX\dr X$. The definition of resolvent degree and Lemmas \ref{lemma:first} and \ref{lemma:max} imply that
\begin{align*}
    \RD(\tX\dr X)&\le\RD(U_1\times_{S}Y\dr X)\\
    &\le \max\{\RD(U_1\times_{S}Y\dr U_1),\RD(U_1\to X)\}\\
    &\le \max\{\RD(Y\dr S),\RD(\tPc_d\to\Pc_d)\}
\end{align*}
as claimed.
\end{proof}

\subsection{Resolvent degree and Bezout's theorem}\label{subsection:Bezout}
Recall that $\Hc_{r,2}$ denotes the moduli space of smooth degree $r$ curves in $\Pb^2$.  Fix $r,s\geq 1$.  Bezout's Theorem gives that for each pair of curves $C,C'\subset\Pb^2$ of degrees $r$ and $s$, the intersection $C\cap C'$ has $rs$ points, where each $p\in C\cap C'$ is counted with the intersection multiplicity $I_p(C,C')$.  Let
\begin{equation*}
    \Hc_{(r,s),2}:=\left((\Pb^{\binom{r+2}{2}-1}-\Sigma_r)\times(\Pb^{\binom{s+2}{2}-1}-\Sigma_s)\right)/\PGL_3
\end{equation*}
denote the moduli of pairs of smooth plane curves $(C,C')$ with $\deg(C)=r$, $\deg(C')=s$ (where $\Sigma_r$, and $\Sigma_s$ denote the loci of singular curves). Let $U_{r,s}$ denote the Zariski open
\[U_{r,s}:=\{(C,C'): I_p(C,C')=1 \ \forall p\in C\cap C'\}\subset \Hc_{(r,s),2}\]
and consider the covering
\[
\begin{array}{ll}
\widetilde{U}_{r,s}&:= \{(C,C',p): p\in C\cap C'\}\subset U_{r,s}\times\Pb^2\\
\pi \big\downarrow&\\
U_{r,s}
\end{array}
\]
given by $\pi(C,C',p):=(C,C')$.  Note that $\pi^{-1}(C,C')=C\cap C'\subset\Pb^2$.  Bezout's Theorem
implies that $\pi:\widetilde{U}_{r,s}\to U_{r,s}$ is an $rs$-sheeted cover.   It is known that the monodromy of this cover is the full symmetric group $S_{rs}$; see, for example \cite[Corollary 1]{HS}.  Thus there is a formula in radicals for the intersection of two curves of degrees $r,s\leq 2$, but there is no such formula when $rs>4$.  It is natural to ask for the minimal number $\RD(\widetilde{U}_{r,s}\to U_{r,s})$ of parameters for any formula for an intersection point of two smooth curves, given the coefficients defining those curves. By the computation of the monodromy, we have
\begin{equation}\label{Bezout}
    \RD(\widetilde{U}_{r,s}\to U_{r,s})\le \RD(S_{rs}).
\end{equation}
\begin{problem}
    For which $r$ and $s$ does equality hold in \eqref{Bezout}?
\end{problem}

\subsection{Finding flexpoints}

Let $C$ be a degree $d\geq 2$ plane curve.  For a generic point $p\in C$, the tangent line $\ell_p$ to $C$ at $p$ intersects $C$ with multiplicity $m_p(C\cdot \ell_p)=2$.  Recall that $p$ is a {\em flex point} of $C$ if $m_p(C\cdot \ell_p)\geq 3$; it is a {\em simple flex} if $m_p(C\cdot \ell_p)=3$.   It is known that any degree $d$ curve $C$ has $3d(d-2)$ flex points, counted with multiplicity.  Recall that $\Hc_{d,2}$ denotes the moduli space of smooth degree $d$ curves on $\Pb^2$.  Let $\Hc_{d,2}({\rm flex})\subset \Hc_{d,2}\times \Pb^2$ be the moduli space of pairs $(C,p)$ where $p\in C$ is a flex point.
The projection map $\Hc_{d,2}({\rm flex})\to \Hc_{d,2}$ given by $(C,p)\mapsto p$ is a $3d(d-2)$-sheeted covering when restricted to the Zariski open in $\Hc_{d,2}$ consisting of those degree $d$ curves $C$ all of whose flex points are simple.

The monodromy of $\Hc_{3,2}({\rm flex})\to \Hc_{3,2}$ is solvable (see \cite[II.2]{Har}), so that
\[\RD(\Hc_{3,2}({\rm flex})\to \Hc_{3,2})=1.\]  In contrast, Harris proves in II.3 of \cite{Har} that for $d\geq 4$, the monodromy of $\Hc_{d,2}({\rm flex})\to \Hc_{d,2}$ is $S_{3d(d-2)}$, which is not solvable if $d\geq 4$.  While Harris concludes from this that there is no formula in radicals for the flex points of a general degree $d\geq 4$ smooth plane curve, the basic question remains as to how complicated any formula not-in-radicals actually is.

\begin{problem}[{\bf Finding flexpoints}]\label{problem:flex}
Compute the resolvent degree for the problem of finding a flexpoint on a smooth degree $d\geq 4$ plane curve; that is, compute $\RD(\Hc_{d,2}({\rm flex})\to \Hc_{d,2})$.
\end{problem}

It is a classical fact that for a degree $d$ curve $C$, the flexpoints of $C$ are precisely the intersection points of $C$ with its associated Hessian curve $H_C$, which has degree $3(d-2)$. However, Problem \ref{problem:flex} is quite different than the situation considered in \S\ref{subsection:Bezout}. Indeed, while the map
\begin{align*}
    \Hc_{d,2}&\to^H U_{d,3(d-2)}\\
    C&\mapsto (C,H_C)
\end{align*}
fits into a pullback square
\begin{equation*}
    \xymatrix{
        \Hc_{d,2}({\rm flex}) \ar[r] \ar[d] & \widetilde{U}_{d,3(d-2)} \ar[d] \\
        \Hc_{d,2} \ar[r]^H & U_{d,3(d-2)}
    },
\end{equation*}
the codimension of $H(\Hc_{d,2})\subset U_{d,3(d-2)}$ is always positive and grows quadratically in $d$. 
%It would therefore be surprising if the resolvent degrees of the two problems were ever equal; in particular, this would have very strong implications for $\RD(S_{3d(d-2)})$ by Theorem~\ref{theorem:RD:Bezout}.

\section{The resolvent degree of the roots of a polynomial}
\label{section:brauer}

While the problem of simplifying the formulas needed to solve a general polynomial has been central to the mathematical tradition since the Babylonians, the study of the resolvent degree of polynomials essentially originates with work of Tschirnhaus \cite{Ts} in the $17$th century. Tschirnhaus introduced the {\em Tschirnhaus transformation}, which remains essentially the only method for providing general upper bounds on $\RD(\tPc_n\to\Pc_n)$. We review Tschirnhaus transformations from a geometric standpoint below, and then we treat several of the classical upper bounds from this perspective.

\subsection{Tschirnhaus tranformations and classical solutions of polynomials}

\paragraph*{Elementary perspective.}
Consider the general degree $n$ polynomial
\begin{equation*}
    p(x):=x^n+a_1x^{n-1}+\cdots+a_n=0,
\end{equation*}
with roots $x_1,\ldots,x_n$. A {\em Tschirnhaus transformation} $T(b_0,\ldots,b_{n-1})$ (for some $b_0,\ldots,b_{n-1}$) sends the roots $x_i$ to
\begin{equation*}
    T(b_0,\ldots,b_{n-1})(x_i):=b_0x_i^{n-1}+b_1x_i^{n-2}+\cdots+b_{n-1}.
\end{equation*}
The Tschirnhaus transformation of the polynomial $p(x)$ is defined by
\begin{equation*}
    T(b_0,\ldots,b_{n-1})(p)(x):=\prod_{i} (x-T(b_0,\ldots,b_{n-1})(x_i)).
\end{equation*}
Because the assignment $x_i\mapsto T(b_0,\ldots,b_{n-1})(x_i)$ is symmetric in the roots, the coefficients of $T(b_0,\ldots,b_{n-1})(p)$ are polynomials in the $a_i$ and the $b_j$. Accordingly, by solving polynomials in the $b_j$ whose coefficients are polynomials in the $a_i$, we can find special Tschirnhaus transformations which convert our original polynomial $p(x)$ into a polynomial whose coefficients satisfy special conditions, e.g. some collection of the coefficients are zero.

Note that, given the roots of $T(b_0,\ldots,b_{n-1})(p)$, we can recover the roots of $p$ by a rational transformation.  See \cite[Lemma 4.2.1]{Hu} for a clear treatment.

\paragraph*{As covariants.}
Tschirnhaus transformations can also be defined as $S_n$-equivariant maps
\begin{equation*}
    T\colon \Ab^n\to\Ab^n
\end{equation*}
In the setting above, we have an auxiliary affine space parametrizing Tschirnhaus transformations
\begin{equation*}
    \Ab^n_T:=\{(b_0,\ldots,b_{n-1})\}
\end{equation*}
and a map
\begin{equation*}
    \Ab^n_T\to\Alg_{S_n}(\Ab^n,\Ab^n)
\end{equation*}
from the affine space parametrizing Tschirnhaus transformations to the space of maps of $S_n$-varieties $\Ab^n\to\Ab^n$.

\paragraph*{Geometric perspective.}
Equivalently, we have an $S_n$-equivariant ``evaluation'' map
\begin{equation*}
    \Ab^n\times\Ab^n_T\to^{\varepsilon} \Ab^n
\end{equation*}
where $S_n$ acts trivially on the $\Ab^n_T$ factor, and via the permutation representation on each $\Ab^n$. Passing to the quotients, we obtain a commuting square
\begin{equation*}
    \xymatrix{
        \Ab^n\times\Ab^n_T \ar[r]^{\varepsilon} \ar[d] & \Ab^n \ar[d] \\
        \Pc_n\times\Ab^n_T \ar[r]^{\bar{\varepsilon}} & \Pc_n
    }
\end{equation*}
To bound the resolvent degree of $\tPc_n\to\Pc_n$ via a Tschirnhaus transformation, one now specifies
\begin{enumerate}
    \item a Zariski closed $S_n$-invariant subvariety $V\subset\Ab^n$, and
    \item a rational cover $U\dr\Pc_n$ along with a section
        \begin{equation*}
            \xymatrix{
                & \varepsilon^{-1}(V) \ar[d] \\
                U \ar@{-->}[ur] \ar@{-->}[r] & \Pc_n
            }
        \end{equation*}
\end{enumerate}
Given these data, one obtains
\begin{equation*}
    \RD(\tPc_n\to\Pc_n)\le\max\{\RD(U\dr\Pc_n),\dim(V)\}.
\end{equation*}

\begin{remark}
    Standard examples of $V$ are given by
    \begin{equation*}
        V_{1\cdots i}:=\bigcap_{j=1}^i\{\sigma_j=0\}\subset\Ab^n,
    \end{equation*}
    where the $\sigma_j$ are the elementary symmetric functions. Finding $U\dr\Pc_n$ with a map $U\dr\varepsilon^{-1}(V_{1\cdots i})$ over $\Pc_n$ is just to find a Tschirnhaus transformation which sets the first $i$ coefficients of the general degree $n$ polynomial to 0.
\end{remark}

We now illustrate this procedure in several classical examples.

\subsection{The Bring-Hamilton $4$-parameter reduction}
In 1786 Bring \cite{Bri} proved the following, which was independently discovered by Hamilton \cite{Ham}.
\begin{theorem}[{\bf Bring-Hamilton \boldmath$4$-parameter reduction}]
     For any $n\ge 5$ :
     \[\RD(\tPc_n\to\Pc_n)\le n-4.\]
\end{theorem}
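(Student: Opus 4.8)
The plan is to apply the Tschirnhaus-transformation framework of the previous subsection with the $S_n$-invariant subvariety $V:=V_{123}=\bigcap_{j=1}^{3}\{\sigma_j=0\}\subset\Ab^n$ (of dimension $n-3$), and then to remove one further dimension by rescaling the variable. Fix the general polynomial $p(x)=x^n+a_1x^{n-1}+\cdots+a_n$ with roots $x_1,\ldots,x_n$, let $y_i:=T_b(x_i)=b_0x_i^{n-1}+\cdots+b_{n-1}$ be the images under a Tschirnhaus transformation $T_b$, $b=(b_0,\ldots,b_{n-1})\in\Ab^n_T$, and let $p_\ell(y)=\sum_i y_i^\ell$ be the power sums of the $y_i$, which are homogeneous of degree $\ell$ in $b$ with coefficients polynomial in the $a_i$. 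By Newton's identities, $e_1(y)=p_1(y)$ is a single linear form in $b$; on the hyperplane $\{p_1(y)=0\}\cong\Pb^{n-2}\subset\Pb^{n-1}$ one has $e_2(y)=-\tfrac12 p_2(y)$, so the condition $e_2(y)=0$ cuts out a quadric $\{p_2(y)=0\}$; and on $\{p_1(y)=p_2(y)=0\}$ one has $e_3(y)=\tfrac13 p_3(y)$, so the condition $e_3(y)=0$ cuts out the cubic hypersurface $\{p_3(y)=0\}$. Thus the Tschirnhaus transformations sending $p$ into $V_{123}$ correspond to points of $\{p_2(y)=0\}\cap\{p_3(y)=0\}$ inside $\Pb^{n-2}$.

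Here is where the hypothesis $n\ge 5$ and the classical Bring--Hamilton trick enter. Since $n\ge 5$, the quadric $\{p_2(y)=0\}\subset\Pb^{n-2}$ has dimension $n-3\ge 2$, hence contains projective lines, and I would carry out the following sequence of constructions, each of which is (the top of) a tower of rational covers of $\Pc_n$ of resolvent degree at most $1$: (i) find a point $z_0$ of $\{p_2(y)=0\}$ by intersecting it with a fixed line, a problem solved by one square root; (ii) find a line $L\subset\{p_2(y)=0\}$ through $z_0$ by finding a point on the quadric of such lines (nonempty since $n\ge 5$), again solved by a square root; (iii) restrict $p_3(y)$ to $L\cong\Pb^1$ to obtain a binary form of degree at most $3$ and extract one of its roots, using the classical fact $\RD(\tPc_3\to\Pc_3)=1$ (if $p_3(y)|_L\equiv 0$, take any point of $L$). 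A common point $b^\ast$ of $L$ and $\{p_3(y)=0\}$ yields a Tschirnhaus transformation $T_{b^\ast}$ with $T_{b^\ast}(p)\in V_{123}$, and, assembling (i)--(iii) and invoking Lemma~\ref{lemma:max}, this $b^\ast$ is a point of a rational cover $U\dr\Pc_n$ with $\RD(U\dr\Pc_n)\le 1$, carrying the required section of the evaluation fibration over $\Pc_n$.

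It remains to account for the last dimension. Over $U$ the transformed polynomial has the form $T_{b^\ast}(p)=z^n+a_4'z^{n-4}+\cdots+a_n'$, and generically $a_n'\ne 0$; after adjoining $\xi:=(a_n')^{1/n}$ (a radical, $\RD\le 1$), the substitution $z\mapsto\xi z$ places it in the $(n-4)$-parameter family $\Qc_n:=\{z^n+c_4z^{n-4}+\cdots+c_{n-1}z+1\}\cong\Ab^{n-4}$, whose root cover $\widetilde{\Qc_n}\to\Qc_n$ has $\ed\le\dim\Qc_n=n-4$ by Lemma~\ref{lemma:first}. Pulling $\widetilde{\Qc_n}\to\Qc_n$ back up the tower, a root of the resulting polynomial determines (by un-rescaling, then by the classical rational recovery of a root of $p$ from a root $y$ of $T_{b^\ast}(p)$ as the root of $\gcd_x(p(x),\,T_{b^\ast}(x)-y)$, cf.\ \cite[Lemma 4.2.1]{Hu}) a root of $p$. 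Altogether this is a tower over $\Pc_n$ whose stages have essential dimension $\le 1$ except for the final pullback of $\widetilde{\Qc_n}\to\Qc_n$, of essential dimension $\le n-4$, and whose top dominates $\tPc_n$ over $\Pc_n$; hence
\[
\RD(\tPc_n\to\Pc_n)\le\max\{1,\,n-4\}=n-4\qquad\text{for }n\ge 5.
\]

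The main obstacle is Step~(ii): making precise that finding a line on the quadric $\{p_2(y)=0\}$ through the auxiliary point $z_0$ — and, more generally, that each geometric substep of the Bring--Hamilton reduction — genuinely assembles into a single tower of rational covers \emph{of $\Pc_n$} (rather than of some larger field) of resolvent degree at most $1$, with the requisite sections. The dimension bookkeeping is exactly what forces $n\ge 5$: one needs $\dim\{p_2(y)=0\}=n-3\ge 2$ so that this quadric contains lines.
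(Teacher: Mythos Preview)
Your proposal is correct and follows the same Tschirnhaus-transformation strategy as the paper, but with three variations worth noting. First, the paper restricts from the outset to \emph{quartic} Tschirnhaus transformations $T(b_0,\ldots,b_4)$, so that after imposing the linear condition $e_1=0$ one is left with a quadric \emph{surface} in $\Pb^3$ regardless of $n$; you instead work in the full $\Pb^{n-1}_T$ and hence with a quadric of dimension $n-3$ in $\Pb^{n-2}$. Second, to produce a line on the quadric the paper diagonalizes the quadratic form (adjoining four square roots) and then writes down a line explicitly as $\{L_0+\sqrt{-1}L_1=L_2+\sqrt{-1}L_3=0\}$, whereas you use the two-step ``point, then line through the point'' construction. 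Your step~(ii) is not a genuine obstacle: lines on the quadric $Q$ through a smooth point $z_0$ are parametrized by the quadric $\{q=0\}$ in $\Pb(T_{z_0}Q/\langle z_0\rangle)\cong\Pb^{n-4}$, which is nonempty precisely when $n\ge 5$, and a point on it is found by one further square root, exactly as you say. Third, for the last parameter the paper rescales by $a_{n-1}/a_n$ to make the last two coefficients equal (no radical), while you extract an $n$-th root of $a_n'$; both drop the dimension to $n-4$ and both have $\RD\le 1$. The paper's choice of quartic transformations keeps the geometry of the quadric independent of $n$ and makes the line construction uniform and explicit; your approach is equally valid and uses fewer square roots in the line-finding step.
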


From the above perspective, Bring's proof is as follows.
\begin{proof}
    First, restrict to the space of quartic Tschirnhaus transformations, i.e.
    \begin{equation*}
        T(b_0,\ldots,b_4)(x_i)=b_0x_i^4+\cdots+b_4.
    \end{equation*}
    Next, observe that the fiberwise projectivization of $\varepsilon^{-1}(V_1)\to\Pc_n$ is a trivial $\Pb^3$ bundle, since the condition that the first coefficient vanish is linear in the $b_j$, and this 3-plane bundle admits a rational section. Therefore, the fiberwise projectivization of $\varepsilon^{-1}(V_{12})\to\Pc_n$ is a bundle of quadric surfaces in $\Pb^3$. Denote by $\Hc_{2,3}$ the moduli of quadric surfaces and let $\Hc_{2,3}(L)\subset \Hc_{2,3}\times\Gr(2,4)$ be the moduli of quadric surfaces equipped with a line, so that the two connected components of $\Hc_{2,3}(L)$ (corresponding to the two rulings of the quadric) each give a $\Pb^1$-bundle over $\Hc_{2,3}$.
    We have a map
    \begin{align*}
        \Pc_n&\to \Hc_{2,3}\\
        p&\mapsto \varepsilon^{-1}(V_{12})|_p
    \end{align*}
    By the classical theory of quadratic forms (for a detailed contemporary treatment, see e.g. \cite[Lemma 5.2]{W}), after passing to a branched cover $U_1\to \Pc_n$ of degree $2^4$ (i.e. by adjoining 4 square roots of polynomials in the coefficients), we can diagonalize the associated quadratic form, i.e.
    \[
        V_{12}|_U \cong V(\sum_{i=0}^{3} L_i^2)
    \]
    for rational hyperplanes $L_i\subset \Pb^3_U$. Then $\{L_0+\sqrt{-1} L_1=0,L_2+\sqrt{-1} L_3=0\}$ defines a line on the quadric. In other words, there exists a lift of the map $U_1\to\Hc_{2,3}$
    \begin{equation*}
        \xymatrix{
            & \Hc_{2,3}(L) \ar[d] \\
            U_1 \ar[r] \ar[ur]^{L} & \Hc_{2,3}
        }
    \end{equation*}
    By intersecting the family of cubics $\varepsilon^{-1}(V_3)$ with this line, we obtain a map
    \begin{align*}
        U_1&\to\Pc_3\\
        u&\mapsto L(u)\cap \varepsilon^{-1}(V_{3})|_u
    \end{align*}
    Forming the pullback
    \begin{equation*}
        \xymatrix{
            U_2 \ar[r] \ar[d] & \tPc_3 \ar[d] \\
            U_1 \ar[r] & \Pc_3
        }
    \end{equation*}
    we obtain a branched cover $U_2\to\Pc_n$ and a section
    \begin{equation*}
        \xymatrix{
            & \varepsilon^{-1}(V_{123}) \ar[d]\\
            U_2 \ar[r] \ar[ur] & \Pc_n
        }
    \end{equation*}
    By construction,
    \begin{align*}
        \RD(U_2\to\Pc_n)&=\max\{\RD(U_2\to U_1),\RD(U_1\to\Pc_n)\}\\
        &\le\max\{\RD(\tPc_3\to\Pc_3),1\}=1.
    \end{align*}
    Therefore
    \begin{align*}
        \RD(\tPc_n\to\Pc_n)\le\max\{\RD(\sqrt{-}),\RD(\tPc_3\to\Pc_3),\RD(V_{123}\to\Ab^{n-3})\}
    \end{align*}
    where the final space $\Ab^{n-3}$ is the moduli space of all monic degree $n$ polynomials of the form
    \begin{equation*}
        x^n+a_4x^{n-4}+\cdots+a_{n-1}x+a_n=0.
    \end{equation*}
    Restricting to locus $U\subset\Ab^{n-3}$ where $a_{n-1}\neq 0\neq a_n$, we can define a linear Tschirnhaus transformation
    \begin{equation*}
        T(x_i):=\frac{a_{n-1}}{a_n}x_i
    \end{equation*}
    to set the last two coefficients to be equal. This defines a pullback diagram
    \begin{equation*}
        \xymatrix{
            V_{123}|_U \ar[r]^T \ar[d] & V_{123,(n-1)=n}\ar[d] \\
            U\ar[r]^{\bar{T}} & \Ab^{n-4}
        }
    \end{equation*}
    where $\Ab^{n-4}$ denotes the space of all polynomials of the form
    \begin{equation*}
        x^n+b_4x^{n-4}+\cdots+b_{n-1}x+b_{n-1}=0.
    \end{equation*}
    We conclude that, for $n\ge 5$,
    \begin{align*}
        \RD(\tPc_n\to\Pc_n)\le\max\{\RD(\sqrt{-}),\RD(\tPc_3\to\Pc_3),\RD(V_{123,(n-1)=n}\to\Ab^{n-4})\}\le n-4
    \end{align*}
    as desired.
\end{proof}

As a consequence of the Bring/Hamilton theorem, we obtain the upper bounds in Hilbert's Sextic and Octic Conjectures Hilbert's and 13th Problem.

\begin{corollary}
    $\RD(\tPc_6\to\Pc_6)=\RD(S_6)\le 2$, $\RD(\tPc_7\to\Pc_7)=\RD(S_7)\le 3$, and $\RD(\tPc_8\to\Pc_8)\le 4$.
\end{corollary}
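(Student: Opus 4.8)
The plan is to read off all three assertions directly from the Bring--Hamilton $4$-parameter reduction proved just above together with the identification of $\RD(\tPc_n\to\Pc_n)$ with $\RD(S_n)$. First I would substitute $n=6,7,8$ into the inequality $\RD(\tPc_n\to\Pc_n)\le n-4$ supplied by the Bring--Hamilton theorem, which immediately yields $\RD(\tPc_6\to\Pc_6)\le 2$, $\RD(\tPc_7\to\Pc_7)\le 3$, and $\RD(\tPc_8\to\Pc_8)\le 4$; note that $n=6,7,8$ all satisfy $n\ge 5$, so the hypothesis of that theorem applies with nothing to check.

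Next I would supply the equalities $\RD(\tPc_6\to\Pc_6)=\RD(S_6)$ and $\RD(\tPc_7\to\Pc_7)=\RD(S_7)$. These are precisely item \ref{corollary:Hilbert:for:faithful} of Corollary \ref{cor:Hfaithandn1}: for every $n\ge 2$ and every faithful representation $V$ of $S_n$ one has $\RD(S_n)=\RD(V\to V/S_n)=\RD(\tPc_n\to\Pc_n)$, the point being that the standard permutation representation $\Ab^n$ is a faithful linear (hence versal, by Proposition~\ref{proposition:RD:linear}) $S_n$-variety whose quotient map $\Ab^n\to\Ab^n/S_n$ is the normalization of the root cover $\tPc_n\to\Pc_n$, so Lemma~\ref{lemma:galois} (invariance of $\RD$ under Galois closure) identifies their resolvent degrees. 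Combining this with the displayed upper bounds gives the stated values $\le 2$ and $\le 3$; for $n=8$ only the bound $\le 4$ is asserted, so no identification with $\RD(S_8)$ is needed there (although the same reasoning would give $\RD(\tPc_8\to\Pc_8)=\RD(S_8)$).

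There is essentially no obstacle here: the corollary is a bookkeeping consequence of results already in hand. If anything, the only point requiring a word of care is making explicit that $\Ab^n\to\Ab^n/S_n$ is the normalization of $\tPc_n\to\Pc_n$ (rather than the cover itself), so that Lemma~\ref{lemma:galois} rather than a naive identification is what licenses the equality $\RD(\tPc_n\to\Pc_n)=\RD(\Ab^n\to\Ab^n/S_n)$; this is exactly how item \ref{corollary:Hilbert:for:faithful} of Corollary~\ref{cor:Hfaithandn1} was deduced, so one simply cites it.
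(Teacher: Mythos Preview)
Your proposal is correct and matches the paper's approach exactly: the corollary is stated without proof as an immediate consequence of the Bring--Hamilton bound $\RD(\tPc_n\to\Pc_n)\le n-4$ (specialized to $n=6,7,8$), with the equalities $\RD(\tPc_n\to\Pc_n)=\RD(S_n)$ already supplied by Corollary~\ref{cor:Hfaithandn1}\ref{corollary:Hilbert:for:faithful}. There is nothing to add.
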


\subsection{Brauer's bounds}
Hamilton \cite{Ham} was the first to show that
\begin{equation*}
    \lim_{n\to\infty} n-\RD(\tPc_n\to\Pc_n)=\infty.
\end{equation*}
More precisely, he showed the existence of a function $H\colon\Nb\to\Nb$, such that for $n\ge H(r)$, $n-\RD(\tPc_n\to\Pc_n)\ge r$, and he computed the initial values of $H$:
\begin{equation*}
    \begin{array}{c|c|c|c|c|c|c}
        r & 4 & 5 & 6 & 7 & 8 & 9 \\ \hline
        H(r) & 5 & 11 & 47 & 923 & 409,619 & 83,763,206,255
    \end{array}
\end{equation*}
By the mid-20th century, Hamilton's work appears to have been forgotten. Segre \cite{Seg1}, building on Hilbert's work on the degree 9 equation, proved that $\RD(\tPc_n\to\Pc_n)\le n-6$ for $n\ge 157$.
He further conjectured that
\begin{equation*}
    \lim_{n\to\infty} n-\RD(\tPc_n\to\Pc_n)=\infty;
\end{equation*}
that is, he conjectured precisely what Hamilton had shown over a century earlier.  Shortly after, in 1945, Brauer \cite{Br1} and Segre each reproved this statement, but without giving effective bounds.  Three decades later, Brauer \cite{Br} proved the following theorem, which provides the best general upper bounds on $\RD(\tPc_n\to\Pc_n)$ to date.

\begin{theorem}[{\bf Brauer} \cite{Br}]
\label{theorem:Brauer}
    Let $n>3$.  For any $r\geq 2$
    \[\RD(\tPc_n\to\Pc_n)\le n-r \ \ \text{for all $n\ge (r-1)!+1$}.\]
\end{theorem}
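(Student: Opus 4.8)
\para{The plan}
We argue by induction on $r$, systematising the Tschirnhaus method. For every $r$ the skeleton is the same. Given the general degree-$n$ polynomial $p$: (i) pass to a rational cover $U\dr\Pc_n$ of controlled resolvent degree carrying a Tschirnhaus transformation $T$ with $\sigma_1(T(p))=\cdots=\sigma_{r-1}(T(p))=0$; (ii) over $U$ this exhibits $\tPc_n$ as a rational pullback of the root cover of the family $\Ab^{n-r+1}$ of polynomials $x^n+a_rx^{n-r}+\cdots+a_n$, the roots of $p$ being recovered rationally from those of $T(p)$ (\cite[Lemma~4.2.1]{Hu}); (iii) on $\Ab^{n-r+1}$ rescale the variable, $x\mapsto\lambda x$ with $\lambda=a_n/a_{n-1}$, to impose $a_{n-1}=a_n$ for free (this needs $n-1\ge r$), cutting down to an $(n-r)$-dimensional family whose root cover has resolvent degree $\le n-r$ by Lemma~\ref{lemma:first}. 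By Lemma~\ref{lemma:max}, $\RD(\tPc_n\to\Pc_n)\le\max\{\RD(U\dr\Pc_n),\,n-r\}$, so the whole question is to produce $U$ with $\RD(U\dr\Pc_n)\le n-r$.

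\para{Base cases}
For $r=2$ one takes $U=\Pc_n$: a translation realises step (i), and $\RD(\tPc_n\to\Pc_n)\le n-2$. For $r=3$ (so $n\ge 4$), apply a quadratic Tschirnhaus transformation $T(x)=x^2+bx+c$: the condition $\sigma_1(T(p))=0$ is linear in $c$, hence free, after which $\sigma_2(T(p))=0$ is a single quadratic in $b$; thus $U$ may be taken to be the degree-$2$ cover obtained by adjoining the relevant square root, and $\RD(\tPc_n\to\Pc_n)\le\max\{1,n-3\}=n-3$.

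\para{The inductive step}
Let $r\ge 4$, assume the bound for $r-1$, and fix $n\ge(r-1)!+1$; then $(r-1)!\ge r$, so $n\ge r+1$, as needed in step (iii). Inside the $\Pb^{n-1}$ of projectivised Tschirnhaus transformations pick a fixed generic linear subspace $\Pi_0\cong\Pb^{r-1}$ (possible since $n-1\ge r-1$). For generic $p\in\Pc_n$ the $r-1$ equations $\sigma_1(T(p))=0,\dots,\sigma_{r-1}(T(p))=0$, which have degrees $1,2,\dots,r-1$ in coordinates on $\Pi_0$, cut out a $0$-dimensional subscheme of $\Pi_0$ of length $1\cdot 2\cdots(r-1)=(r-1)!$ by Bezout. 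Letting $U\to\Pc_n$ be the total space of these $0$-schemes, $U\dr\Pc_n$ is a rational cover of degree $(r-1)!$ carrying the desired tautological $T$. By Lemma~\ref{lemma:universal} and the inductive hypothesis applied to $\tPc_{(r-1)!}$ — valid since $(r-1)!>3$ and $(r-1)!\ge(r-2)!+1$ for $r\ge 4$ —
\[
\RD(U\dr\Pc_n)\le\RD(\tPc_{(r-1)!}\to\Pc_{(r-1)!})\le(r-1)!-(r-1)\le n-r,
\]
where the last inequality is precisely the hypothesis $(r-1)!\le n-1$. Hence $\RD(\tPc_n\to\Pc_n)\le n-r$, completing the induction. (The cases $r=2,3$ fit the same chain, reading $\tPc_{(r-1)!}$ as $\tPc_1$ or $\tPc_2$, whose resolvent degrees $0$ and $1$ are $\le n-r$ once $n>3$; for these small values one substitutes these trivial bounds for the inductive hypothesis.)

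\para{The main difficulty}
Everything above is formal bookkeeping except for one point: verifying that the construction in the inductive step is non-degenerate. Concretely, one must show that for a generic choice of $\Pi_0$ and generic $p\in\Pc_n$ the forms $\sigma_1(T(p)),\dots,\sigma_{r-1}(T(p))$ restrict to $\Pi_0$ as a regular sequence — so that their common zero locus is nonempty, $0$-dimensional, and has the Bezout length $(r-1)!$, making $U\to\Pc_n$ a genuine rational cover of degree $(r-1)!$ — and, compatibly, that the transformed polynomials $T(p)$ sweep out a dense subset of $\Ab^{n-r+1}$ and that the inverse substitution recovering the roots of $p$ is defined on a dense open. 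This is a Bertini-type dimension count on the incidence variety
\[
\mathcal{Z}=\{(p,T)\in\Pc_n\times\Pi_0:\ \sigma_j(T(p))=0,\ 1\le j\le r-1\},
\]
whose fibres over $\Pi_0$ are cut out by $r-1$ equations in the $n$ coefficients of $p$, so that $\dim\mathcal{Z}=n$ and the projection $\mathcal{Z}\to\Pc_n$ is dominant and generically finite. Granting this, the resolvent-degree estimate follows at once from Lemmas~\ref{lemma:first}, \ref{lemma:max} and \ref{lemma:universal}.
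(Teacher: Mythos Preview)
Your proof is correct and takes essentially the same approach as the paper: both induct on $r$, use Bezout on the degree-$1,\dots,r-1$ Tschirnhaus conditions to produce an $(r-1)!$-sheeted cover of $\Pc_n$, bound its resolvent degree via Lemma~\ref{lemma:universal} and the inductive hypothesis, and finish with the linear rescaling to drop one more parameter. The only cosmetic differences are that the paper packages your ``restrict to a generic $\Pi_0\cong\Pb^{r-1}$'' step as an application of the standalone Proposition~\ref{proposition:find-a-point} (applied to the full locus $\bar\varepsilon^{-1}(V_{1\cdots(r-1)})\subset\Pb^{n-1}_T$, then sliced by a complementary linear space), and starts the induction at $r=1$ rather than treating $r=2,3$ as separate base cases.
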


We include a streamlined version of Brauer's proof of Theorem~\ref{theorem:Brauer} for completeness.

\begin{proof}
     We prove this by induction on $r$. The base case $r=1$ follows from the Babylonians:
     $\RD(n)\le n-1$ for all $n\geq 2$, via a linear translation of the roots.

    For the inductive step, consider the full space of Tschirnhaus transformations $\Pb^{n-1}_T$. Observe that
    \begin{equation*}
        \bar{\varepsilon}^{-1}(V_{1\cdots (r-1)})\to\Pc_n
    \end{equation*}
    is a bundle of $(n-r+1)$-dimensional , degree $(r-1)!$ subvarieties of $\Pb^{n-1}_T$. By construction, there is an isomorphism of varieties over $\bar{\varepsilon}^{-1}(V_{1\cdots(r-1)})$ :
    \begin{equation*}
        \tPc_n\times_{\Pc_n}\bar{\varepsilon}^{-1}(V_{1\cdots(r-1)})\cong \bar{\varepsilon}^{-1}(V_{1\cdots(r-1)})\times_{\Ab^{n-(r-1)}}\tPc_n|_{\Ab^{n-r-1}}
    \end{equation*}
    where $\Ab^{n-(r-1)}\subset\Pc_n$ denotes the space of all monic polynomials with the first $(r-1)$ coefficients vanishing. Therefore

     \begin{equation*}
        \RD(\tPc_n\times_{\Pc_n}\bar{\varepsilon}^{-1}(V_{1\cdots(r-1)})\to\bar{\varepsilon}^{-1}(V_{1\cdots(r-1)}))\le \RD(\tPc_n|_{\Ab^{n-(r-1)}}\to\Ab^{n-(r-1)}).
    \end{equation*}

    Proposition \ref{proposition:find-a-point} then implies
    \begin{align*}
        \RD(\tPc_n\to\Pc_n)&\le\max\{\RD(\tPc_n\times_{\Pc_n}\bar{\varepsilon}^{-1}(V_{1\cdots(r-1)})\to\bar{\varepsilon}^{-1}(V_{1\cdots(r-1)})),\RD(\tPc_{(r-1)!}\to\Pc_{(r-1)!})\}\\
        &\le \max\{\RD(\tPc_n|_{\Ab^{n-(r-1)}}\to\Ab^{n-(r-1)}),\RD(\tPc_{(r-1)!}\to\Pc_{(r-1)!})\}.
    \end{align*}
    An analogous linear Tscirnhaus transformation to that in Bring and Hilbert shows
    \begin{equation*}
        \RD(\tPc_n|_{\Ab^{n-(r-1)}}\to\Ab^{n-(r-1)})\le n-r.
    \end{equation*}
    The inductive hypothesis then gives
    \begin{equation*}
        \RD(\tPc_{(r-1)!}\to\Pc_{(r-1)!})\le (r-1)!-(r-1)\le n-r,
    \end{equation*}
    completing the proof of the induction step.
\end{proof}

\begin{remark}
    Note that Brauer's proof does not make use of the Bring/Hamilton idea. Moreover, Hilbert \cite{Hi2} sketched an approach using lines on cubic surfaces to show that $\RD(9)\le 4$.  Brauer needs $n\ge 25$ in order to conclude $\RD(\tPc_n\to\Pc_n)\le n-5$. In \cite{W}, an extension of Hilbert's argument leads to a substantial improvement over Brauer's bounds for general $n$.
\end{remark}

\section{The equivalence of Hilbert's conjectures to classical geometry problems}
\label{section:equivalence}

As with many Hilbert problems, the specific statement of Hilbert's Sextic Conjecture, 13th Problem and Octic Conjecture (see Problem~\ref{hilbert:problems}) turns out to be much broader and more widely connected to other problems than one might at first glance guess.  The goal of this section is to use the theory we have developed so far to prove the equivalence of each of these problems with many other natural problems of both geometric and arithmetic natures.    We give each statement in English form, and name the corresponding problem in terms of moduli spaces when we have already named them explicitly.

We organize things into five groups of examples, according to the group that is acting. The five classes of examples are ordered in complexity via:
\[
\begin{array}{ccc}
&\RD(W(E_6)) &\\
\RD(S_6)\leq &&\leq\RD(W(E_7))\\
&\RD(S_7)\leq \RD(S_8)&
\end{array}
\]

\subsection{\boldmath$S_6$-varieties and Hilbert's Sextic Conjecture}

We start with the Sextic Conjecture.

\begin{theorem}[{\bf \boldmath$\RD$ of \boldmath$S_6$ varieties}]
\label{theorem:S6:varieties}
The following statements are equivalent:

\begin{enumerate}
\item \label{item:HSC} Hilbert's Sextic Conjecture is true: $\RD(\tPc_6\to\Pc_6)=2.$
\item $\RD(S_6)=2$. \label{item:RD:S6}
\item $\RD(V\to V/S_6)=2$ for any faithful, linear $S_6$-variety $V$. \label{item:faithful}
\item $\RD(\Mc_{0,6}\to\Mc_{0,6}/S_6)=2$.\label{item:M06}
\item  $\RD=2$ for the problem of finding a fixed point for the $\Z/3\Z$ action on a genus $4$ curve of the form
$y^3=P(x)$, where $P(x)$ is a square-free polynomial of degree $6$ :\label{item:C36}
\[\RD(\widetilde{\Cc_{3,6}}\to \Cc_{3,6})=2.\]
\item  $\RD=2$ for the problem of finding a fixed point for the hyperelliptic involution on a genus $2$ curve:  \label{item:hypinv2}
\[\RD(\Mc_2(\widetilde{\Delta})\to \Mc_2)=2.\]
\item \label{item:doublesix} $\RD$ of finding the $27$ lines on a cubic, given a double-six: \[\RD(\Hc_{3,3}(27)\to\Hc_{3,3}(6,6))=2.\]
\item \label{item:hexahedral} $\RD$ of finding the $27$ lines on a smooth cubic surface $S$ given the unordered
hexahedral form of $S$: \[\RD(\Hc_{3,3}(27)\to H/S_6)=2.\]
\end{enumerate}

In fact, the resolvent degrees of all of the above problems coincide.

\end{theorem}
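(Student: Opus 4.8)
The plan is to prove the stronger statement that all eight resolvent degrees in Theorem~\ref{theorem:S6:varieties} are equal to $\RD(S_6)$; the equivalences $(1)\Leftrightarrow\cdots\Leftrightarrow(8)$ then follow at once, since each becomes the single assertion $\RD(S_6)=2$. Statement $(2)$ is $\RD(S_6)$ by definition, and $(1)$ equals $\RD(S_6)$ by Corollary~\ref{cor:Hfaithandn1} \ref{corollary:Hilbert:for:faithful}. For $(3)$, Proposition~\ref{proposition:RD:linear} shows that every faithful linear $S_6$-variety $V$ is versal, so Proposition~\ref{prop:whyversal} gives $\RD(V\to V/S_6)=\RD(S_6)$. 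For $(4)$, Corollary~\ref{cor:Cn} shows that $\Mc_{0,6}$ is a versal $S_6$-variety (as $6\ge 5$), hence $\RD(\Mc_{0,6}\to\Mc_{0,6}/S_6)=\RD(S_6)$ by Proposition~\ref{prop:whyversal}.

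Next come the two curve cases, which rest on a classical dictionary. A genus $2$ curve is the double cover of $\Pb^1$ branched over an unordered set of six points, which realizes $\Mc_2$ birationally as $\Mc_{0,6}/S_6$; the fixed points of the hyperelliptic involution are precisely the six Weierstrass points, mapping bijectively and monodromy-compatibly to the six branch points. Thus the cover $\Mc_2(\widetilde{\Delta})\to\Mc_2$ of $(6)$ is birational either to $\Mc_{0,6}/S_5\to\Mc_{0,6}/S_6$ or, if $\widetilde{\Delta}$ records the full ordered branch set, to $\Mc_{0,6}\to\Mc_{0,6}/S_6$; in either case its Galois closure is $\Mc_{0,6}\to\Mc_{0,6}/S_6$, so Lemma~\ref{lemma:galois} and the identity from $(4)$ give $\RD(\Mc_2(\widetilde{\Delta})\to\Mc_2)=\RD(S_6)$. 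Similarly, for a square-free sextic $P$ the curve $y^3=P(x)$ is the cyclic triple cover of $\Pb^1$ totally ramified over the six roots of $P$ (of genus $4$ by Riemann--Hurwitz, and unramified over $\infty$ since $3\mid 6$); its $\Z/3\Z$-fixed points are the six ramification points over the roots of $P$, and since any isomorphism of two such curves is $\Z/3\Z$-equivariant it descends to a $\PGL_2$-equivalence of branch sets, so $\Cc_{3,6}$ is birational to the moduli of six unordered points on $\Pb^1$, i.e.\ to $\Mc_{0,6}/S_6$, and $\widetilde{\Cc_{3,6}}\to\Cc_{3,6}$ has Galois closure $\Mc_{0,6}\to\Mc_{0,6}/S_6$. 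Again Lemma~\ref{lemma:galois} and $(4)$ yield $\RD(\widetilde{\Cc_{3,6}}\to\Cc_{3,6})=\RD(S_6)$. (Every cover here is irreducible, the relevant monodromy group being transitive, so Lemma~\ref{lemma:conn} plays no role.)

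Finally, the two cubic-surface cases. By \eqref{eq:doublesix} the cover $\Hc_{3,3}(27)\to\Hc_{3,3}(6,6)$ is the quotient of $\Hc_{3,3}(27)$ by the double-six stabilizer $S_6\times\Z/2\Z\subset W(E_6)$, and by Proposition~\ref{prop:linesversal} the variety $\Hc_{3,3}(27)$ is versal for this subgroup; hence Proposition~\ref{prop:whyversal} gives $\RD(\Hc_{3,3}(27)\to\Hc_{3,3}(6,6))=\RD(S_6\times\Z/2\Z)$. Since $S_6\times\Z/2\Z$ has the same simple Jordan--H\"older factors as $S_6$ apart from one extra copy of $\Z/2\Z$, all of them embedding in $S_6\times\Z/2\Z$, Theorem~\ref{thm:JH} gives $\RD(S_6\times\Z/2\Z)=\max\{\RD(S_6),\RD(\Z/2\Z)\}=\RD(S_6)$, the last step using $\RD(\Z/2\Z)=1\le\RD(S_6)$; here $\RD(S_6)\ge 1$ because $S_6$ acts faithfully on the geometrically connected positive-dimensional variety $\Ab^6$, so admits no faithful equivariant rational map to a point. (Alternatively one may invoke the already-established identity $\RD(S_6)=\RD(\Hc_{3,3}^{\rm skew}(6)\to\Hc_{3,3}(6,6))$ together with \eqref{eq:six:determine}, $\Hc_{3,3}^{\rm skew}(6)\cong\Hc_{3,3}(27)$.) This settles $(7)$. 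For $(8)$, the treatment of hexahedral form in \S\ref{subsection:hexahedral} provides an isomorphism $H/S_6\cong\Hc_{3,3}(6,6)$ over $\Hc_{3,3}$, so $(8)$ coincides with $(7)$. Putting everything together, all eight resolvent degrees equal $\RD(S_6)$, which proves the theorem; in particular $1\le\RD(S_6)\le 2$ (the upper bound being the Bring--Hamilton bound), so their common value is $1$ or $2$, and Hilbert's Sextic Conjecture is the assertion that it is $2$.

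The main obstacle --- in fact the only nonroutine point --- is the classical dictionary used in $(5)$ and $(6)$: one must check that the $\Z/3\Z$- (resp.\ hyperelliptic) fixed-point cover of the relevant moduli space is birational to the appropriate symmetric-group quotient of $\Mc_{0,6}$, with matching monodromy, so that Lemma~\ref{lemma:galois} and Corollary~\ref{cor:Cn} apply. Everything else is a direct application of the versality results (Propositions~\ref{proposition:RD:linear}, \ref{prop:whyversal}, \ref{prop:linesversal}), of Corollary~\ref{cor:Hfaithandn1}, and of Theorem~\ref{thm:JH}; Lemma~\ref{lemma:subgroup} could be used in place of the latter to obtain the trivial inequality $\RD(S_6)\le\RD(S_6\times\Z/2\Z)$.
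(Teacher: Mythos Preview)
Your proof is correct and follows the same overall architecture as the paper: reduce every item to $\RD(S_6)$ via versality (Propositions~\ref{proposition:RD:linear}, \ref{prop:linesversal}, \ref{prop:whyversal}), Corollary~\ref{cor:Hfaithandn1}, Corollary~\ref{cor:Cn}, Lemma~\ref{lemma:galois}, and Theorem~\ref{thm:JH}. The arguments for items (1)--(5), (7), (8) are essentially identical to the paper's, and your handling of (7) is in fact slightly more careful than the paper's text, which at one point writes $\Hc_{3,3}(27)/S_6$ where $\Hc_{3,3}(27)/(S_6\times\Z/2\Z)$ is meant.

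The one genuine difference is item~(6). The paper routes through classical projective geometry: it identifies $\Mc_{0,6}$ with the Segre cubic threefold $X_3$, invokes Hunt's theorem that the projective dual of $X_3$ is the Igusa quartic $\mathcal{I}_4$, and then uses that $\mathcal{I}_4$ is the Satake compactification of $\Mc_2(\widetilde{\Delta})$, obtaining an $S_6$-equivariant birational equivalence $\Mc_{0,6}\simeq\Mc_2(\widetilde{\Delta})$. You instead use the elementary fact that a genus~$2$ curve is the double cover of $\Pb^1$ branched over its six Weierstrass points, directly identifying $\Mc_2$ with $\Mc_{0,6}/S_6$. Your route is shorter and avoids the citation to \cite{Hu}; the paper's route has the virtue of exhibiting the explicit projective models $X_3$ and $\mathcal{I}_4$, which connect to other parts of the story. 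Both are valid.
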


%\begin{comment}
%\bigskip\todo{Add} TO ADD:
%In \S 3.4 of \cite{Ko}, he explains how an open set in $M'_3$ is the moduli space of pairs $(S,\sigma)$ of K3 surfaces $S$ with an automorphism $\sigma$ of order $3$. Say what the $S_6$ action is.
%\end{comment}

\begin{proof} We prove the theorem via chains of equivalences.

\medskip
\noindent
{\bf Equivalence of \ref{item:HSC}, \ref{item:RD:S6}, \ref{item:faithful}, and \ref{item:M06}:} The equivalence of the first four follows from Corollary \ref{cor:Hfaithandn1} \ref{corollary:Hilbert:for:faithful} together with Corollary \ref{cor:Cn}.

\medskip
\noindent
{\bf Equivalence of \ref{item:M06}, \ref{item:C36}:} Consider the moduli space $\Cc_{3,6}$ of isomorphism classes of algebraic curves of the form $y^3=P(x)$ where $P$ has is a square-free polynomial of degree $6$. These are genus $4$ curves equipped with a $\Z/3\Z$ action, the quotient giving a branched cover $\Sigma_4\to\Pb^1$ branched over $6$ points, each of order $3$.  Let $\widetilde{\Cc_{3,6}}$ denote the moduli of curves in $\Cc_{3,6}$ equipped with an ordering of the $\Z/3\Z$-fixed points.  The forgetful map
\begin{equation*}
    \widetilde{\Cc_{3,6}}\to\Cc_{3,6}
\end{equation*}
is a Galois $S_6$-cover. By mapping the fixed points to $\Pb^1$ under the $\Z/3\Z$-quotient, we obtain the commutative diagram
\begin{equation}
\label{eq:picard3}
    \xymatrix{
        \widetilde{\Cc_{3,6}} \ar[r] \ar[d] & \Mc_{0,6}\ar[d]\\
        \Cc_{3,6} \ar[r] & \Mc_{0,6}/S_6
    }
\end{equation}
in which the horizontal arrows are birational, equivariant with respect to the $S_6$ actions, and the bottom row is the quotient of the top row by the $S_6$ action. The stabilizer of a fixed point is $S_5\subset S_6$, and thus $\widetilde{\Cc_{3,6}}\to \Cc_{3,6}$ is the Galois closure of the cover parametrizing curves in $\Cc_{3,6}$ with a single choice of fixed point. Together with Lemma \ref{lemma:galois}, this proves the equivalence of  \ref{item:M06}, and \ref{item:C36}.

\medskip
\noindent
{\bf Equivalence of \ref{item:M06} and \ref{item:hypinv2}:}
The {\em Segre cubic threefold} $X_3$ is the threefold in $\Pb^5$ given by
\[X_3:=\{[x_0:\cdots :x_5]\in\Pb^5: \sum_{i=0}^5x_i=0=\sum_{i=0}^5x_i^3\}.\]
The permutation action of $S_6$ on $\Pb^5$ leaves invariant $X_3$, permuting its $10$ nodes. It's classically known that $X_3\cong \Mc_{0,6}$ as $S_6$-varieties.

Hunt proves in \cite[Theorem 3.3.11]{Hu}  that the dual variety to $X_3$ is the so-called {\em Igusa quartic} ${\mathcal I}_4$, which is the moduli space of $6$ points on a conic in $\Pb^2$. The two varieties $X_3$ and ${\mathcal I}_4$ are $S_6$-equivariantly birational. The Igusa quartic ${\mathcal I}_4$ is the Satake compactification of the moduli space $\Mc_2(\widetilde{\Delta})$ of hyperelliptic curves of genus $2$ with a marking of the $6$ branch points.  The group $S_6$ acts by permuting these marked points.  We thus obtain a commutative diagram in which all horizontal arrows are birational equivalences
\begin{equation}
\label{eq:picard3b}
\xymatrix{
    \Mc_{0,6} \ar[r]^\sim \ar[d] & {\mathcal I}_4 \ar[d] & \ar[l]_\sim \ar[d] \Mc_2(\widetilde{\Delta})\\
    \Mc_{0,6}/S_6 \ar[r]^\sim &{\mathcal I}_4/S_6 & \ar[l]_\sim \Mc_2
}
\end{equation}
Thus each of the rational covers in \eqref{eq:picard3b} have equal resolvent degree.

\medskip
\noindent
{\bf Equivalence of  \ref{item:RD:S6}, \ref{item:doublesix} and \ref{item:hexahedral} : }
As explained in \eqref{eq:doublesix}, the moduli space of pairs $(S,D)$ where $S\in\Hc_{3,3}$ and $D$ is a double-six in $S$ can be identified with $\Hc_{3,3}(27)/S_6$.  Thus the problem of finding all $27$ lines on a smooth cubic surface given a double-six is
$\RD(\Hc_{3,3}(27)\to \Hc_{3,3}(27)/S_6)$.  By Proposition \ref{prop:linesversal}, $\Hc_{3,3}(27)$ is versal for any $G\subset W(E_6)$. Therefore, by Proposition \ref{prop:whyversal},
\[\RD(\Hc_{3,3}(27)\to \Hc_{3,3}(27)/S_6)=\RD(S_6),\]
proving the equivalence of  \ref{item:RD:S6} and \ref{item:doublesix} .

Now recall from \S\ref{eq:hexahedral3}  that the moduli space of unordered hexahedral forms for smooth cubic surfaces fits in to the sequence of branched covers (see  \eqref{eq:hexahedral3}) :

\[\Hc_{3,3}(27)\stackrel{t_1}{\to} H\stackrel{t_2}{\to}H/S_6\stackrel{t_3}{\to}\Hc_{3,3}
\]
where $t_1$ is an unramified $2$-sheeted cover, $t_3$ is an unramified $36$-sheeted cover, and
$t_2$ is a generically $720$-to-$1$ branched cover.  The composite is a Galois branched cover, with deck group $S_2\times S_6\subset W(E_6)$, i.e.
\begin{equation*}
    H/S_6=\Hc_{3,3}/(S_2\times S_6)
\end{equation*}
Proposition \ref{prop:linesversal} therefore implies
\[\RD(\Hc_{3,3}(27)\to H/S_6)=\RD(S_2\times S_6)=\RD(S_6),\]
proving the equivalence of  \ref{item:hexahedral} and \ref{item:RD:S6}.
\end{proof}

\subsection{\boldmath$W(E_6)$-varieties and lines on a smooth cubic surface}

In this subsection we summarize the equality of the resolvent degree of different $W(E_6)$-varieties proven above.

\begin{theorem}[{\bf \boldmath$\RD$ of \boldmath$W(E_6)$ varieties}]
The following are equal:

\begin{enumerate}
\item $\RD(W(E_6))$.\label{item:RD:WE6}
\item $\RD(V\to V/W(E_6))$ for $V$ any faithful representation of $W(E_6)$.\label{item:E6:faithful}
\item \label{item:alllines} $\RD$ of finding all 27 lines on a smooth cubic surface:
    \[\RD(\Hc_{3,3}(27)\to \Hc_{3,3}).\]
\item \label{item:lines} $\RD$ of finding a line on a smooth cubic surface: \[\RD(\Hc_{3,3}(1)\to\Hc_{3,3}).\]
\item \label{item:bitangents4} $\RD$ of finding $28$ bitangents on a smooth plane quartic, given one of them: \[\RD(\Hc_{4,2}(28)\to\Hc_{4,2}(1)).\]
\end{enumerate}
Further, all of the above are at most 3.
\end{theorem}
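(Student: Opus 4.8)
The plan is to assemble this theorem entirely from results already in hand: the content is in the versality statements, the invariance of $\RD$ under Galois closure, and the Burkhardt--Klein bound, so the task is a chain of citations together with a little bookkeeping about which cover is the Galois closure of which and which stabilizer subgroups are in play. No new geometric construction is needed.

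First I would dispatch the equivalence of \ref{item:RD:WE6} and \ref{item:E6:faithful}. By Proposition~\ref{proposition:RD:linear}, any faithful linear $W(E_6)$-variety $V$ is versal, so Proposition~\ref{prop:whyversal}(2) gives $\RD(V\to V/W(E_6))=\RD(W(E_6))$; since this holds for every faithful representation, all such quotients have the same resolvent degree, namely $\RD(W(E_6))$. Next, for \ref{item:alllines}: Proposition~\ref{prop:linesversal} says precisely that $\Hc_{3,3}(27)$ is a versal $W(E_6)$-variety, whence $\RD(W(E_6))=\RD(\Hc_{3,3}(27)\to\Hc_{3,3})$, so \ref{item:RD:WE6} $=$ \ref{item:alllines}. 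For \ref{item:lines}: by the discussion in \S\ref{subsection:h33covers}, $\Hc_{3,3}(27)\to\Hc_{3,3}$ is by definition the Galois closure of the $27$-sheeted cover $\Hc_{3,3}(1)\to\Hc_{3,3}$, so Lemma~\ref{lemma:galois} yields $\RD(\Hc_{3,3}(1)\to\Hc_{3,3})=\RD(\Hc_{3,3}(27)\to\Hc_{3,3})$. For \ref{item:bitangents4}: apply Theorem~\ref{theorem:quartics-to-cubics} with $G=W(E_6)\subset W(E_7)^+$, noting that the stabilizer of a bitangent class is $W(E_6)$ so that $\Hc_{4,2}/W(E_6)=\Hc_{4,2}(1)$; this gives $\RD(W(E_6))=\RD(\Hc_{4,2}(28)\to\Hc_{4,2}(1))$, closing the circle of equalities.

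Finally, the assertion that all five quantities are at most $3$ is exactly the content of Theorem~\ref{thm:RD:lines} (Burkhardt--Klein): $\RD_k(\Hc_{3,3}(1)\to\Hc_{3,3})\le 3$ for $k$ of characteristic $\neq 2,3$; transporting this along the equalities just established gives the bound for the remaining four. I expect no real obstacle at this stage, since the analytical heart of the argument --- the solvably-versality of $\Pb^3$ for $W(E_6)^+$, extracted in the proof of Theorem~\ref{thm:RD:lines} from the isomorphism $\Sp_4(\F_3)/\F_3^\times\cong W(E_6)^+$ and Merkurjev's theorem on multi-quadratic splitting of degree-$2$ Brauer classes --- has already been carried out. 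The only points requiring care are the identification of the stabilizer subgroups ($W(D_5)$ and $W(E_6)$ inside $W(E_6)$, and $W(E_6)$ inside $W(E_7)^+$) and the observation that Galois closure, versality, and Theorem~\ref{theorem:quartics-to-cubics} all route through the same cover $\Hc_{3,3}(27)\to\Hc_{3,3}$, so no compatibility issue arises.
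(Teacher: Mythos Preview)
Your proposal is correct and follows essentially the same approach as the paper: the paper's proof cites ``the proof of Theorem~\ref{thm:RD:lines}'' for the equivalence of \ref{item:RD:WE6}--\ref{item:lines} and the bound, and Theorem~\ref{theorem:quartics-to-cubics} for the equivalence with \ref{item:bitangents4}; you have simply unpacked those citations into their constituent lemmas (Proposition~\ref{proposition:RD:linear}, Proposition~\ref{prop:whyversal}, Proposition~\ref{prop:linesversal}, Lemma~\ref{lemma:galois}). One small slip: where you write $\Hc_{4,2}/W(E_6)=\Hc_{4,2}(1)$ you mean $\Hc_{4,2}(28)/W(E_6)=\Hc_{4,2}(1)$.
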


\begin{proof} We prove the theorem in chains of equivalences.

\medskip
\noindent
{\bf Equivalence of \ref{item:RD:WE6}, \ref{item:E6:faithful}, \ref{item:alllines} and \ref{item:lines}: } This follows from the proof of Theorem \ref{thm:RD:lines}. Moreover, from Theorem \ref{thm:RD:lines}, we obtain the upper bound of 3.

\medskip
\noindent
{\bf Equivalence of \ref{item:alllines} and \ref{item:bitangents4} :} This is the statement of Theorem \ref{theorem:quartics-to-cubics} above.

\end{proof}

\subsection{\boldmath$S_7$-varieties and Hilbert's 13th Problem}

We now prove the equivalence of Hilbert's 13th problem with various other problems. Recall that $\Cc_n(\Pb^m)$ denotes the moduli space of ordered $n$-tuples of distinct points in $\P^m$ modulo the action of $\PGL_{m+1}$.

\begin{theorem}[{\bf \boldmath$\RD$ of \boldmath$S_7$ varieties}]\label{theorem:S7:varieties}
The following are equivalent:

\begin{enumerate}
\item \label{item:H13} Hilbert's 13th problem: $\RD(\tPc_7\to\Pc_7)=3$.
\item $\RD(V\to V/S_7=3)$ for any faithful linear representation $V$ of $S_7$.\label{item:S7:faithful}
\item $\RD(S_7)=3$.\label{item:RD:S7}
\item \label{item:C7}$\RD( \Cc_7(\Pb^n)\to \Cc_7(\Pb^n)/S_7)=3$ for $n\le 4$; in particular
\[\RD(\Mc_{0,7}\to\Mc_{0,7}/S_7)=3.\]
\item \label{item:Aronhold} $\RD=3$ for the problem of finding the $28$ bitangents on a smooth quartic $C$, given an Aronhold set on $C$:
    \begin{equation*}
        \RD(\Hc_{4,2}(28)\to\Hc_{4,2}(\Ac))=3.
    \end{equation*}
\end{enumerate}
In fact, the resolvent degrees of all of the above problems coincide.
\end{theorem}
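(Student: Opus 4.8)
The plan is to follow the template of the proof of Theorem~\ref{theorem:S6:varieties}: show that the five resolvent degrees appearing in items \ref{item:H13}--\ref{item:Aronhold} are each equal to the intrinsic invariant $\RD(S_7)$, and then use the Bring--Hamilton bound $\RD(\tPc_7\to\Pc_7)\le 3$ to conclude that each of the stated equalities ``$\RD(-)=3$'' is equivalent to $\RD(S_7)=3$, hence to every other. Producing the common value $\RD(S_7)$ is really all there is to do; once we have it, the equivalence of the five statements and the coincidence of the five resolvent degrees follow at once.

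First I would handle items \ref{item:H13}, \ref{item:S7:faithful} and \ref{item:RD:S7}: these are equal by Corollary~\ref{cor:Hfaithandn1}\ref{corollary:Hilbert:for:faithful} applied with $n=7$, which gives $\RD(S_7)=\RD(V\to V/S_7)=\RD(\tPc_7\to\Pc_7)$ for every faithful representation $V$ of $S_7$ (the engine being Proposition~\ref{proposition:RD:linear}, that faithful linear $S_7$-varieties are versal, together with Lemma~\ref{lemma:galois}). Next, for item \ref{item:C7}: by Corollary~\ref{cor:Cn}, $\Cc_7(\Pb^n)$ is a versal $S_7$-variety whenever $7\ge\max\{5,n+3\}$, i.e.\ for $0\le n\le 4$, and in particular $\Mc_{0,7}=\Cc_7(\Pb^1)$ is versal; Proposition~\ref{prop:whyversal}(2) then gives $\RD(\Cc_7(\Pb^n)\to\Cc_7(\Pb^n)/S_7)=\RD(S_7)$ for all such $n$.

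For item \ref{item:Aronhold} I would recall from \S\ref{subsection:h42covers} that forgetting the ordering is a Galois $S_7$-cover $\Hc_{4,2}(\tAc)\to\Hc_{4,2}(\Ac)$, that there is a $W(E_7)^+$-equivariant isomorphism $\Hc_{4,2}(\tAc)\cong\Hc_{4,2}(28)$, and that $S_7\hookrightarrow W(E_7)^+$ as the stabilizer of an Aronhold set, so that $\Hc_{4,2}(\Ac)=\Hc_{4,2}(28)/S_7$. By Proposition~\ref{prop:bitangentsversal}, $\Hc_{4,2}(28)$ is a versal $S_7$-variety, so Proposition~\ref{prop:whyversal}(2) yields $\RD(\Hc_{4,2}(28)\to\Hc_{4,2}(\Ac))=\RD(S_7)$; alternatively this is precisely the special case $G=S_7$ of Theorem~\ref{theorem:quartics-to-cubics}. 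Combining the three chains, all five resolvent degrees equal $\RD(S_7)$, and the Bring--Hamilton $4$-parameter reduction gives $\RD(S_7)=\RD(\tPc_7\to\Pc_7)\le 3$; hence each equality ``$\RD(-)=3$'' holds exactly when $\RD(S_7)=3$, i.e.\ exactly when $\RD(S_7)\ge 3$, and the five statements are equivalent with coinciding resolvent degrees.

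I do not expect a genuine obstacle here: the argument is an assembly of results proved earlier, and its only substantial inputs---Proposition~\ref{proposition:RD:linear}, Corollary~\ref{cor:Cn}, and especially Proposition~\ref{prop:bitangentsversal} (versality of the bitangents problem, built on the Dolgachev--Ortland/Coble chain $\Ab(\mathfrak h)\dr\Pb(\mathfrak h)\dr\Cc_7(\Pb^2)\dr\Hc_{4,2}(28)$)---are already in hand. The one thing to be careful about is that it is versality, via Proposition~\ref{prop:whyversal}, rather than a mere chain of implications between the ``$=3$'' statements, that delivers the stronger conclusion that the resolvent degrees genuinely coincide; and that the upper bound $\RD(S_7)\le 3$ is what makes the sharp value ``$3$'' meaningful in each item.
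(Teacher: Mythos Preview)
Your proposal is correct and follows essentially the same approach as the paper: equivalence of \ref{item:H13}, \ref{item:S7:faithful}, \ref{item:RD:S7}, \ref{item:C7} via Corollary~\ref{cor:Hfaithandn1}\ref{corollary:Hilbert:for:faithful} and Corollary~\ref{cor:Cn}, and equivalence of \ref{item:RD:S7} with \ref{item:Aronhold} via Theorem~\ref{theorem:quartics-to-cubics} (equivalently, Proposition~\ref{prop:bitangentsversal} plus Proposition~\ref{prop:whyversal}). Your write-up simply unpacks these citations in more detail than the paper does.
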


\begin{proof}
\medskip
\noindent
{\bf Equivalence of \ref{item:H13}, \ref{item:S7:faithful}, \ref{item:RD:S7} and \ref{item:C7}:} This follows from Corollary \ref{cor:Hfaithandn1} \ref{corollary:Hilbert:for:faithful} together with Corollary \ref{cor:Cn}.

\medskip
\noindent
{\bf Equivalence of \ref{item:RD:S7} and \ref{item:Aronhold}}
The equivalence of \ref{item:RD:S7} and \ref{item:Aronhold} follows from Theorem \ref{theorem:quartics-to-cubics}.
\end{proof}

\subsection{\boldmath$S_8$-varieties and Hilbert's Octic Conjecture}
We now prove the equivalence of Hilbert's Octic Conjecture to several problems about plane quartics and genus 3 curves.

\begin{theorem}[{\bf \boldmath$\RD$ of \boldmath$S_8$ varieties}]\label{theorem:S8:varieties}
The following are equivalent:
\begin{enumerate}
\item Hilbert's Octic Conjecture: $\RD(\tPc_8\to\Pc_8)=4$.\label{item:octic}
\item $\RD(V\to V/S_8=4)$ for any faithful linear representation $V$ of $S_8$.\label{item:S8:faithful}
\item $\RD(S_8)=4$.\label{item:RD:S8}
\item \label{item:C8}  $\RD(\Cc_8(\Pb^n)\to \Cc_8(\Pb^n)/S_8)=4$, for $n\le 5$;  in particular
\[\RD(\Mc_{0,8}\to\Mc_{0,8}/S_8)=4.\]
\item \label{item:theta} $\RD=4$ for the problem of finding the $28$ bitangents on a smooth quartic $C$, given an even $\theta$-characteristic:
    \begin{equation*}
        \RD(\Hc_{4,2}(28)\to\Hc_{4,2}(\theta^{\rm ev}))=4,
    \end{equation*}
\item \label{item:S8:aron} $\RD=4$ for the problem of finding an Aronhold set on a smooth plane quartic $C$ given an even $\theta$-characteristic:
    \begin{equation*}
        RD(\Hc_{4,2}(\Ac)\to\Hc_{4,2}(\theta^{\rm ev}))=4,
    \end{equation*}
\item $\RD=4$ for the problem of finding the 28 bitangents on a quartic, given a Cayley octad:\label{item:Cay}
    \begin{equation*}
        \RD(\Hc_{4,2}(28)\to\Cay)=4.
    \end{equation*}
\end{enumerate}
In fact, the resolvent degrees of all of the above problems coincide.
\end{theorem}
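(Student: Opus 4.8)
The plan is to show that the resolvent degree of every cover occurring in items \ref{item:octic}--\ref{item:Cay} equals $\RD(S_8)$. Since the Bring--Hamilton $4$-parameter reduction gives $\RD(\tPc_8\to\Pc_8)=\RD(S_8)\le 4$, once this is done each displayed equation ``$\RD(\cdots)=4$'' is seen to hold precisely when $\RD(S_8)=4$, i.e. precisely when Hilbert's Octic Conjecture (item \ref{item:octic}) holds; the asserted equivalences, and the coincidence of all the resolvent degrees, then follow immediately. To start, items \ref{item:octic}, \ref{item:S8:faithful} and \ref{item:RD:S8} are handled at once by Corollary \ref{cor:Hfaithandn1} \ref{corollary:Hilbert:for:faithful}, which gives $\RD(\tPc_8\to\Pc_8)=\RD(S_8)=\RD(V\to V/S_8)$ for every faithful linear $S_8$-variety $V$. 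For item \ref{item:C8}, I would note that $8\ge\max\{5,n+3\}$ whenever $n\le 5$, so Corollary \ref{cor:Cn} shows that $\Cc_8(\Pb^n)$ is a versal $S_8$-variety, hence by Proposition \ref{prop:whyversal} that $\RD(\Cc_8(\Pb^n)\to\Cc_8(\Pb^n)/S_8)=\RD(S_8)$; the case $n=1$ is the $\Mc_{0,8}$ statement.

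Next I would treat items \ref{item:theta} and \ref{item:Cay}, which are essentially the $S_8$ cases of Theorem \ref{theorem:quartics-to-cubics}. The relevant geometric fact is that $S_8$ sits inside $W(E_7)^+$ as the stabilizer of a Cayley octad, equivalently, under the birational equivalence $\Cay\simeq\Hc_{4,2}(\theta^{\rm ev})$ of \S\ref{subsection:h42covers}, as the stabilizer of an even $\theta$-characteristic; thus $\Hc_{4,2}(28)/S_8=\Hc_{4,2}(\theta^{\rm ev})$, and $\Hc_{4,2}(28)\to\Cay$ is birational over its base to $\Hc_{4,2}(28)\to\Hc_{4,2}(\theta^{\rm ev})$. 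Since $\Hc_{4,2}(28)$ is a versal $S_8$-variety by Proposition \ref{prop:bitangentsversal}, Proposition \ref{prop:whyversal} together with the birational invariance in Lemma \ref{lemma:first} gives
\[
\RD(\Hc_{4,2}(28)\to\Hc_{4,2}(\theta^{\rm ev}))=\RD(\Hc_{4,2}(28)\to\Cay)=\RD(S_8).
\]

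Finally, item \ref{item:S8:aron} I would obtain by a Galois-closure argument. Recall that $\Hc_{4,2}(\tAc)\cong\Hc_{4,2}(28)$ as $W(E_7)^+$-varieties, with $\Hc_{4,2}(\tAc)\to\Hc_{4,2}(\Ac)$ the Galois $S_7$-cover that forgets the ordering of an Aronhold basis, and with $S_7\subset S_8$ realized as the stabilizer of an Aronhold set inside the stabilizer $S_8$ of an even $\theta$-characteristic. Hence $\Hc_{4,2}(\Ac)=\Hc_{4,2}(28)/S_7$, the composite $\Hc_{4,2}(28)\to\Hc_{4,2}(\Ac)\to\Hc_{4,2}(\theta^{\rm ev})$ is Galois with group $S_8$, and since the only normal subgroups of $S_8$ are $1$, $A_8$, $S_8$ and neither $A_8$ nor $S_8$ is contained in $S_7$ (by order), the normal core of $S_7$ in $S_8$ is trivial. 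Therefore $\Hc_{4,2}(28)\to\Hc_{4,2}(\theta^{\rm ev})$ is a Galois closure of the degree-$8$ cover $\Hc_{4,2}(\Ac)\to\Hc_{4,2}(\theta^{\rm ev})$, and Lemma \ref{lemma:galois} yields $\RD(\Hc_{4,2}(\Ac)\to\Hc_{4,2}(\theta^{\rm ev}))=\RD(\Hc_{4,2}(28)\to\Hc_{4,2}(\theta^{\rm ev}))=\RD(S_8)$, completing the chain.

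The resolvent-degree bookkeeping in all three steps is routine given Theorem \ref{theorem:quartics-to-cubics}, Corollaries \ref{cor:Hfaithandn1} and \ref{cor:Cn}, Proposition \ref{prop:whyversal}, and Lemma \ref{lemma:galois}; the genuine obstacle is the classical input. Concretely, one must verify that inside $W(E_7)^+$ the stabilizers of a Cayley octad and of an Aronhold set are exactly $S_8$ and $S_7$, that these embed compatibly as $S_7\subset S_8$, and that the birational identifications $\Cay\simeq\Hc_{4,2}(\theta^{\rm ev})$ and $\Hc_{4,2}(\tAc)\cong\Hc_{4,2}(28)$ are $W(E_7)^+$-equivariant. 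These facts belong to the theory of degree-$2$ Del Pezzo surfaces and Cayley octads, and I would draw them from Dolgachev--Ortland \cite[Chapter IX]{DoOr} and \cite[Chapter 6.3]{Do}; once they are in place, the theorem follows.
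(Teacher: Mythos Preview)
Your argument is correct and follows the same overall strategy as the paper: show each cover has $\RD$ equal to $\RD(S_8)$ via versality (Proposition~\ref{prop:whyversal}), then use Lemma~\ref{lemma:galois} for the Galois-closure step linking items~\ref{item:theta} and~\ref{item:S8:aron}, and the birational identification $\Cay\simeq\Hc_{4,2}(\theta^{\rm ev})$ for item~\ref{item:Cay}. The one substantive difference is in item~\ref{item:C8}: you invoke Corollary~\ref{cor:Cn} directly (checking $8\ge\max\{5,n+3\}$ for $n\le 5$), whereas the paper argues via the $W(E_7)^+$-equivariant chain $\Ab(\mathfrak{h})\dr\Pb(\mathfrak{h})\dr\Cc_7(\Pb^2)\dr\Hc_{4,2}(28)$ to conclude versality for $S_8\subset W(E_7)^+$. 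That chain establishes item~\ref{item:theta} cleanly but involves $\Cc_7(\Pb^2)$ under the Coble action rather than $\Cc_8(\Pb^n)$ under the permutation action, so your route through Corollary~\ref{cor:Cn} is the more direct one for item~\ref{item:C8} (and is exactly what the paper does in the parallel $S_7$ theorem). Your added verification that the normal core of $S_7$ in $S_8$ is trivial makes the Galois-closure claim for item~\ref{item:S8:aron} fully explicit, which the paper leaves implicit.
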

\begin{proof}
    The equivalence of \eqref{item:octic}, \eqref{item:S8:faithful} and \eqref{item:RD:S8} follows from Corollary \ref{cor:Hfaithandn1} \ref{corollary:Hilbert:for:faithful}.

    For the equivalence of \eqref{item:RD:S8}, \eqref{item:C8}, and \eqref{item:theta}, observe that there exists a diagram of $W(E_7)^+$-equivariant maps
    \begin{equation}\label{H13}
        \Ab(\mathfrak{h})\dr\Pb(\mathfrak{h})\dr \Cc_7(\Pb^2)\dr\Hc_{4,2}(28)
    \end{equation}
    Indeed, the sequence
    \begin{equation*}
        \Ab(\mathfrak{h})\dr\Pb(\mathfrak{h})\dr \Cc_7(\Pb^2)\dr\Hc_{4,2}(28)
    \end{equation*}
    was constructed as \eqref{bitan} in the proof of Proposition \ref{prop:bitangentsversal}. Because $W(E_7)^+$ is simple, all the varieties in \eqref{H13} are faithful $W(E_7)^+$-vareties. By Proposition \ref{proposition:RD:linear} and Lemma \ref{lemma:dom}, we conclude that all of these varieties are versal $G$-varieties for any $G\subset W(E_7)^+$, in particular for $G=S_8$. The equivalence of \eqref{item:RD:S8}, \eqref{item:C7}, and \eqref{item:theta} now follows from Proposition \ref{prop:whyversal}. The equivalence of \eqref{item:theta} and \eqref{item:S8:aron} follows from Lemma \ref{lemma:galois} and the fact that
    \begin{align*}
        \Hc_{4,2}(28)&\to\Hc_{4,2}(\theta^{\rm ev})\intertext{is a Galois closure of the cover}
        \Hc_{4,2}(\Ac)&\to\Hc_{4,2}(\theta^{\rm ev}).
    \end{align*}
    Finally, the equivalence of \eqref{item:RD:S8} and \eqref{item:Cay} follows from the classical fact that there is a birational map
    \begin{equation*}
        \Hc_{4,2}(28)/S_8\simeq \Cay
    \end{equation*}
    from the $S_8$ quotient of the moduli of smooth plane quartics with an ordering of their 28 bitangents to the moduli of Cayley octads.
\end{proof}

\subsection{\boldmath$W(E_7)$ and bitangents to a planar quartic}
In this subsection we prove the equality of the resolvent degree of different $W(E_7)^+$-varieties.

\begin{theorem}[{\bf \boldmath$\RD$ of \boldmath$W(E_7)$ and bitangents to a planar quartic}]
    The following are equal:
    \begin{enumerate}
        \item $\RD(W(E_7))$.
        \item $\RD(W(E_7)^+)$
        \item $\RD(V\to V/G)$ for $G=W(E_7)^+, W(E_7)$ and $V$ any faithful representation of $G$.
        \item $\RD(\Cc_7(\Pb^2)\to \Cc_7(\Pb^2)/W(E_7)^+)$.
        \item $\RD(\Hc_{4,2}(28)\to\Hc_{4,2})$.
    \end{enumerate}
\end{theorem}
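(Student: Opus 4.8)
The plan is to show that all five quantities coincide with $\RD(W(E_7)^+)$, by assembling results already in hand; no new geometry is needed.

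First I would dispose of the equality of (1) and (2). Since $W(E_7)\cong W(E_7)^+\times\Z/2\Z$, the simple factors in the Jordan--H\"older decomposition of $W(E_7)$ are $W(E_7)^+$ and $\Z/2\Z$, and both embed in $W(E_7)$. Theorem~\ref{thm:JH} then gives $\RD(W(E_7))=\max\{\RD(W(E_7)^+),\RD(\Z/2\Z)\}=\max\{\RD(W(E_7)^+),1\}$. Because $W(E_7)^+$ is nontrivial it admits a faithful, geometrically connected variety of positive dimension, so (as in the proof of Corollary~\ref{cor:asolv}) $\RD(W(E_7)^+)\ge 1$, whence $\RD(W(E_7))=\RD(W(E_7)^+)$. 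For (3), Proposition~\ref{proposition:RD:linear} says every faithful linear $G$-variety is versal, so Proposition~\ref{prop:whyversal} gives $\RD(V\to V/G)=\RD(G)$ for any faithful representation $V$ of $G$; applying this with $G=W(E_7)$ and with $G=W(E_7)^+$, and combining with the previous sentence, shows (3) agrees with (1) and (2), independently of the choice of $V$.

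For (4) and (5) the key point is versality over the full simple group $W(E_7)^+$, not merely its proper subgroups. Proposition~\ref{prop:bitangentsversal} already states that $\Hc_{4,2}(28)$ is a versal $G$-variety for every $G\subset W(E_7)^+$, including $G=W(E_7)^+$ itself, and its proof exhibits the chain of $W(E_7)$-equivariant dominant rational maps \eqref{bitan}, namely $\Ab(\mathfrak{h})\dr\Pb(\mathfrak{h})\dr\Cc_7(\Pb^2)\dr\Hc_{4,2}(28)$, in which the $W(E_7)$-action on every term except $\Ab(\mathfrak{h})$ factors through $W(E_7)\onto W(E_7)^+$ and is faithful there. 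Since $\Ab(\mathfrak{h})$ is a versal $W(E_7)^+$-variety by Proposition~\ref{proposition:RD:linear}, Lemma~\ref{lemma:dom} shows $\Cc_7(\Pb^2)$ is a versal $W(E_7)^+$-variety, so Proposition~\ref{prop:whyversal} yields $\RD(\Cc_7(\Pb^2)\to\Cc_7(\Pb^2)/W(E_7)^+)=\RD(W(E_7)^+)$, which is (4). Likewise Proposition~\ref{prop:whyversal} applied to $\Hc_{4,2}(28)$ gives $\RD(\Hc_{4,2}(28)\to\Hc_{4,2}(28)/W(E_7)^+)=\RD(W(E_7)^+)$; since $\Hc_{4,2}(28)\to\Hc_{4,2}$ is the Galois closure of $\Hc_{4,2}(1)\to\Hc_{4,2}$ with deck group $W(E_7)^+$, the base $\Hc_{4,2}(28)/W(E_7)^+$ is birational to $\Hc_{4,2}$, and birational invariance (Lemma~\ref{lemma:first}) gives (5). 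Alternatively, (5) is recorded already in Theorem~\ref{theorem:quartics-to-cubics}.

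I do not anticipate a genuine obstacle: the theorem is a repackaging of the versality machinery of Section~\ref{section:RD:group}, specialized to $W(E_7)^+$. The only point that warrants care is the one flagged above --- that the versality statements and the equivariant maps be invoked for the full simple group rather than merely for proper subgroups, and that the pertinent $W(E_7)$-actions remain faithful after factoring through $W(E_7)^+$; the latter is automatic since $W(E_7)^+$ is simple and acts nontrivially, and Propositions~\ref{proposition:RD:linear} and \ref{prop:bitangentsversal} are phrased to include $G=W(E_7)^+$. Assembling the displays above shows every listed quantity equals $\RD(W(E_7)^+)$, completing the proof.
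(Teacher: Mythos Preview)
Your argument is correct and follows essentially the same route as the paper: reduce (1)$=$(2) via $W(E_7)\cong W(E_7)^+\times\Z/2\Z$ and Theorem~\ref{thm:JH}, then deduce (3)--(5) from versality of the chain $\Ab(\mathfrak{h})\dr\Pb(\mathfrak{h})\dr\Cc_7(\Pb^2)\dr\Hc_{4,2}(28)$ for $W(E_7)^+$ together with Proposition~\ref{prop:whyversal}. The only cosmetic difference is that the paper cites this chain as diagram~\eqref{H13} from the proof of Theorem~\ref{theorem:S8:varieties}, whereas you cite it as \eqref{bitan} from Proposition~\ref{prop:bitangentsversal}; these are the same diagram, and your treatment of (3) and of the identification $\Hc_{4,2}(28)/W(E_7)^+\simeq\Hc_{4,2}$ is if anything more explicit than the paper's.
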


\begin{proof}
As noted above, there is an isomorphism
\[W(E_7)\cong W(E_7)^+\times\Z/2\Z;\]
Theorem \ref{thm:JH} implies that
\[\RD(W(E_7)=\max\{\RD(\Z/2\Z),\RD(W(E_7)^+)\}=\RD(W(E_7)^+).\]
In the proof of Theorem \ref{theorem:S8:varieties}, we constructed a diagram \eqref{H13} of varieties which are versal for every $G\subset W(E_7)^+$, in particular for $G=W(E_7)^+$. By Proposition \ref{prop:whyversal}, we conclude that
\begin{equation*}
    \RD(X\to X/W(E_7)^+)=\RD(W(E_7)^+)
\end{equation*}
for all $X$ in the diagram \eqref{H13}. The theorem now follows.
\end{proof}

\begin{comment}
\section{Open problems and directions}

\begin{enumerate}
\item Develop the general theory of resolvent degree of a functor, along the lines for example of essential dimension.
\item Develop the complexity theory point of view, with a hierarchy, P vs. NP, etc.
\item Cheltsov-Shramov classified up to birational equivalence all rational surfaces with
$S_6$ action.  There are three examples: $\Pb^1\times \Pb^1, \Pb^2$ and the quintic del pezzo surface.  Each has a modular interpretation.  We should relate each of these three to Hilbert's sextic conjecture.
\end{enumerate}

\subsection{To do}

\begin{enumerate}
\item Add lemma about how $\ed$ together with $\RD$ gives bounds on number of formulas needed to solve a problem, as well as number of variables needed in those formulas.
\item Fix all arrows (dashed versus regular).
\item Fix definition of dominant - reducible versus irr.
\item Go through paper and maybe change $\C$ to any field $k$ with appropriate hypotheses.
\end{enumerate}

\end{comment}

\newpage
\section{Appendix}
%\begin{comment}
%TO ADD: Big diagram with lots of intermediate covers, along with the corresponding deck/monodromy groups, such as (see p.178 of \cite{Har}:\todo{add}
%\begin{enumerate}
%\item $\Hc_{3,3}^{\rm skew}(2)\cong O_6^{-1}(\Fb_2)\cong S_5$.
%\item $\Hc_{3,3}^{\rm skew}(3)\cong D_6$, the dihedral group of order $12$; this is solvable.
%\item $\Hc_{3,3}^{\rm meet}(2)$, an extension of $S_3$ by $(\Zb/2\Zb)^3$, a solvable group.
%\item pentahedral form cover.
%\item hexahedral form cover.
%\item The $S_6$ quotient, which parameterizes smooth cubics with a choice of double-six.
%\item Cover(s) with choices of tritangent planes.
%\end{enumerate}
%\end{comment}
\begin{center}
    {\bf Versal covers for subgroups of $W(E_6)$ and covers related to lines on cubics.}
\end{center}
\begin{equation*}
    \xymatrix{
        \Ab(\mathfrak{h}_6) \ar@{-->}[r] & \Pb(\mathfrak{h}_6)  \ar@{-->}[r] & \Cc_6'(\Pb^2) \ar[r]^\cong  \ar@{-->}[d]^{S_2^{\times 4}} & \Hc_{3,3}(27) \ar[d]^{S_2^{\times 4}} \ar[dr]^{S_6} \ar[drr]^{W(E_6)^+} \\
        && \Mc_{0,7}/S_2 \ar@{-->}[r]^\sim \ar[d]^{S_5} & \Hc_{3,3}(27)/S_2^{\times 4} \ar[d]^{S_5} & \Hc_{3,3}^{skew}(6)/S_6 \ar[d]^{S_2} & \Hc_{3,3}(\sqrt{\Delta}) \ar[ddl]^{S_2} & \tPc \ar[l]_{A_5} \\
        && \Mc_{0,7}/S_2\times S_5 \ar@{-->}[r]^\sim & \Hc_{3,3}(1) \ar[dr]^{27:1} & \Hc_{3,3}(6,6) \ar[d]^{36:1} \\
        &&&&\Hc_{3,3}
    }
\end{equation*}
The diagram above shows the relation between many covers of classical interest of the moduli space $\Hc_{3,3}$ of smooth cubic surfaces. The column involving $\Mc_{0,7}$ was constructed by Doran \cite{Dor}.

\begin{center}
    {\bf Versal covers for subgroups of $W(E_7)^+$ and covers related to bitangents on plane quartics.}
\end{center}
\begin{equation*}
\xymatrix{
        \Ab(\mathfrak{h}_7) \ar@{-->}[r] & \Pb(\mathfrak{h}_7)  \ar@{-->}[r] & \Cc_7(\Pb^2) \ar@{-->}[r]^{2:1} \ar[d]_{W(D_5)} & \Hc_{4,2}(28) \ar[d]_{W(D_5)} \ar[dr]_{S_7} \ar[ddr]_{S_8} \ar[drr]^{W(D_6)} \\
        && \Uc_{3,3}(1) \ar@{-->}[r]^{2:1} \ar[d]_{27:1} & \Hc_{4,2}(2) \ar[d]_{27:1} & \Hc_{4,2}(\Ac) \ar[d]^{8:1} & \Hc_{4,2}(\St) \ar[ddl]^{63:1} \\
        && \Uc_{3,3} \ar@{-->}[r]^{2:1} & \Hc_{4,2}(1) \ar[dr]_{28:1} & \Hc_{4,2}(\theta^{\rm ev}) \ar[d]_{36:1} \\
        &&&&\Hc_{4,2}
    }
\end{equation*}
The diagram above shows the relation between many covers of classical interest of the moduli space $\Hc_{4,2}$ of smooth plane quartics.

\bigskip{\noindent
Dept. of Mathematics, University of Chicago\\
E-mail: farb@math.uchicago.edu\\
\\
Dept. of Mathematics, University of California, Irvine\\
E-mail: wolfson@uci.edu
\end{document}